\numberwithin{equation}{section}
\theoremstyle{plain}
\newcommand{\N}{{\mathbb N}}
\newcommand{\NN}{{\mathcal N}}
\newcommand{\n}{{\mathscr N}}
\newcommand{\Gg}{{\mathbf G}}
\newcommand{\Z}{{\mathbb Z}}
\newcommand{\II}{{\mathbb I}}
\newcommand{\R}{{\mathbb R}}
\newcommand{\Q}{{\mathbb Q}}
\newcommand{\QQ}{{\mathcal Q}}
\newcommand{\B}{{\mathcal B}}
\newcommand{\F}{{\mathbb F}}
\newcommand{\one}{{\mathbf 1}}
\newcommand{\two}{{\mathbf 2}}
\newcommand{\three}{{\mathbf 3}}
\newcommand{\A}{{\mathscr A}}
\newcommand{\T}{{\mathscr T}}
\newcommand{\tT}{{\mathbf t}}
\newcommand{\iI}{{\mathbf i}}
\newcommand{\eL}{{\mathscr L}}
\newtheorem{theorem}{Theorem}[section]
\newtheorem{proposition}[theorem]{Proposition}
\newtheorem{lemma}[theorem]{Lemma}
\newtheorem{corollary}[theorem]{Corollary}
\newtheorem{remark}[theorem]{Remark}
\newtheorem{definition}[theorem]{Definition}
\begin{document}
	\title{Almost sure dimensional properties for the spectrum and the density of states of Sturmian Hamiltonians}
	
	\author{Jie CAO}
	\address[J. Cao]{Chern Institute of Mathematics and LPMC, Nankai University, Tianjin 300071, P. R. China.}
	\email{caojie@nankai.edu.cn}
	\author{Yanhui QU}
	\address[Y.H. QU]{Department of Mathematical Science, Tsinghua University, Beijing 100084, P. R. China.}
	\email{yhqu@tsinghua.edu.cn}

	\begin{abstract}
		In this paper, we find a full Lebesgue measure set of frequencies $\hat \II\subset [0,1]\setminus \Q$ such that for any $(\alpha,\lambda)\in \hat \II\times [24,\infty)$, the Hausdorff and box dimensions of the spectrum of the Sturmian Hamiltonian $H_{\alpha,\lambda,\theta}$ coincide and are independent of $\alpha$. Denote the common value by $D(\lambda)$, we show that $D(\lambda)$ satisfies a Bowen's type formula, and  is locally Lipschitz. We also obtain the exact asymptotic behavior of $D(\lambda)$ as $\lambda$ tends to $ \infty.$ This considerably improves the result of Damanik and Gorodetski (Comm. Math. Phys. 337, 2015). We also show that for any $(\alpha,\lambda)\in \hat \II\times [24,\infty)$, the density of states measure of $H_{\alpha,\lambda,\theta}$ is exact-dimensional;  its Hausdorff and packing dimensions coincide and are independent of $\alpha$. Denote the common value by $d(\lambda)$, we  show that $d(\lambda)$ satisfies a Young's type formula, and is locally Lipschitz. We also obtain the exact asymptotic behavior of $d(\lambda)$ as $\lambda$ tends to $ \infty.$ During the course of study, we also answer or partially answer several questions in the same paper of Damanik and Gorodetski.
	\end{abstract}
	
	\maketitle	
	\section{Introduction}

	The {\it Sturmian Hamiltonian} is a discrete Schr\"odinger operator defined on $\ell^2(\Z)$ with Sturmian potential:
	\begin{equation*}
		(H_{\alpha,\lambda,\theta}\psi)_n:=\psi_{n+1}+\psi_{n-1}+\lambda \chi_{[1-\alpha,1)}(n\alpha+\theta\pmod 1)\psi_n,
	\end{equation*}
	where $\alpha\in [0,1]\setminus \Q$ is the {\it frequency}, $\lambda >0$ is the {\it coupling constant} and $\theta\in [0,1)$ is the {\it phase} (where $\chi_A$ is the indicator function of the set $A$). It is well-known that the spectrum of $H_{\alpha,\lambda,\theta}$ is independent of $\theta$ (see \cite{BIST}).
	We denote the spectrum by $\Sigma_{\alpha,\lambda}$.
	
	Another spectral object  is the so-called {\it density of states measure (DOS)} $\NN_{\alpha,\lambda}$ supported on $\Sigma_{\alpha,\lambda}$, which is defined by
	\begin{equation}\label{def-dos}
		\int_{\Sigma_{\alpha,\lambda}} g(E)d \NN_{\alpha,\lambda}(E):=\int_{\mathbb T}\langle \delta_0,g(H_{\alpha,\lambda,\theta})\delta_0\rangle d\theta, \ \ \ \forall g\in C(\Sigma_{\alpha,\lambda}).
	\end{equation}
	
	In this paper, we will study the fractal dimensional properties  of $\Sigma_{\alpha,\lambda}$
	and $\NN_{\alpha,\lambda}$ for typical $\alpha\in [0,1]\setminus \Q$. We also study the exact-dimensional property of $\NN_{\alpha,\lambda}$.
	
	We recall some notions, especially the definition of exact-dimensionality  from fractal geometry (see for example \cite{Fal97}).

Given a subset $A$ of a metric space $X$, we denote the {\it Hausdorff,  lower-box, upper-box} and {\it box dimensions} of $A$ by
\begin{equation*}
  \dim_HA,\ \  \underline\dim_BA,\ \ \overline \dim_BA,\ \  \dim_B A,
\end{equation*}
respectively.

Assume  $\mu$ is a finite Borel measure supported  on a  metric space $X$.   Fix $x\in X$, we define the {\it  lower} and {\it upper } local dimensions of $\mu$ at $x$ as
	\begin{equation}\label{def-loc-dim}
		\underline{d}_\mu(x):=\liminf_{r\to0}\frac{\log \mu(B(x,r))}{\log r}\ \ \ \text{ and }\ \ \ \overline{d}_\mu(x):=\limsup_{r\to0}\frac{\log \mu(B(x,r))}{\log r}.
	\end{equation}
	If  $\underline{d}_\mu(x)=\overline{d}_\mu(x)$, we say that the {\it local  dimension} of   $\mu$ at $x$ exists and denote it by  $d_\mu(x)$.

	The {\it Hausdorff} and {\it packing dimensions} of $\mu$ are defined as
	\begin{equation}\label{dim-meas}
		\begin{cases}
\dim_H\mu:=\sup\{s: \underline{d}_\mu(x)\ge s \text{ for  } \mu \text{ a.e. }x\in X\},\\
\dim_P\mu:=\sup\{s: \overline{d}_\mu(x)\ge s \text{ for  } \mu \text{ a.e. }x\in X\}.
		\end{cases}
	\end{equation}
	
	If there exists a constant $d$ such that $\underline{d}_\mu(x)=d\  (\overline{d}_\mu(x)=d)$ for $\mu$ a.e. $x\in X$, then necessarily $\dim_H\mu=d$ ($\dim_P\mu=d$). In this case we say that  $\mu$ is  {\it  exact lower- (upper-)dimensional}. If there exists a constant $d$ such that ${d}_\mu(x)=d$ for $\mu$ a.e. $x\in X$, then necessarily $\dim_H\mu=\dim_P\mu=d$. In this case we say that  $\mu$ is {\it exact-dimensional} (for more details, see \cite[Chapter 10]{Fal97}).

	\subsection{Background and previous results}\
	
	Sturmian Hamiltonians are popular models of one-dimensional quasicrystals and have been extensively studied since the pioneer works \cite{BIST,KKT,OPRSS}, see for example the survey papers \cite{Da07,Da17,DEG15} and references therein. In the following, we give a brief survey on the known  results  for Sturmian Hamiltonians, with an emphasis on the dimensional properties of the spectra.
	
	The most prominent model among Sturmian Hamiltonians is the {\it Fibonacci Hamiltonian}, for  which the frequency is taken to be  the golden number $\alpha_1:=(\sqrt{5}+1)/2.$ This model was  introduced by physicists  to model the quasicrystal, see  \cite{KKT,OPRSS}.  Casdagli \cite{Ca} first studied this model mathematically. He introduced the notion of pseudospectrum $B_\infty(\lambda)$ of the operator and showed that $B_\infty(\lambda)$ is a Cantor set of Lebesgue measure zero for $\lambda\ge 8$. Later, S\"ut\"o \cite{Su87,Su} showed that for any $\lambda>0,$ the pseudospectrum $B_\infty(\lambda)$ is the spectrum of the operator and the related spectral measure  is purely singular continuous. After that, the Fibonacci Hamiltonian has been extensively studied. Killip, Kiselev and Last \cite{KKL} studied the dynamical upper bounds on wavepacket spreading of Fibonacci Hamiltonian. The arguments they used inspired  later works on estimates of fractal dimensions of the spectra.
Now the spectral picture of Fibonacci Hamiltonian is quite complete, see \cite{C,DEGT,DG,DG2,DG3,DG4,JL,P,R}, especially \cite{DGY} for detail.   We summarize the results which are related to our paper in  Theorem A. To state it, we need to introduce the trace map dynamics related to Fibonacci Hamiltonian.
	
	Define the {\it Fibonacci trace map} $\mathbf T: \R^3\to \R^3$ as
	\begin{equation*}
		\mathbf T(x,y,z):=(2xy-z,x,y).
	\end{equation*}
	It is known that the function $G(x,y,z):=x^2+y^2+z^2-2xyz-1$
	is invariant under $\mathbf T$. Hence for any $\lambda>0$, the map $\mathbf T$ preserves the cubic surface
	\begin{equation*}
		S_\lambda:=\{(x,y,z)\in \R^3: G(x,y,z)=\lambda^2/4\}.
	\end{equation*}
	Write $\mathbf T_\lambda:=\mathbf T|_{S_\lambda}$ and let $\Lambda_\lambda$ be the set of points in $S_\lambda$ with bounded $\mathbf T_\lambda$-orbits. It is known that $\Lambda_\lambda$ is the non-wandering set of $\mathbf T_\lambda$ and is a locally maximal compact transitive hyperbolic set of $\mathbf T_\lambda$, see \cite{C,Ca,DG,Mei}.\\

	\noindent {\bf Theorem A}(\cite{C,DEGT,DG,DG2,DG3,DG4,DGY,JL,P})\  {\it
		Assume $\lambda>0$. Then the following hold:
		
		(1) The spectrum $\Sigma_{\alpha_1,\lambda}$ satisfies
		\begin{equation*}
			\dim_H \Sigma_{\alpha_1,\lambda}=\dim_B \Sigma_{\alpha_1,\lambda}=:D(\alpha_1,\lambda).
		\end{equation*}
		
		(2) $D(\alpha_1,\lambda)$ satisfies Bowen's formula: $D(\alpha_1,\lambda)$ is the unique zero of the  pressure function $P(t\psi_\lambda)$, where $\psi_\lambda$ is the geometric potential defined by
		$$\psi_\lambda(x):=-\log\|D\mathbf T_\lambda(x)|_{E^u_x}\|,$$
 where $E^u_x$ is the unstable subspace of the tangent space $T_xS_\lambda$.
		
		(3) The function $D(\alpha_1,\cdot)$ is analytic on $(0,\infty)$ and
		\begin{equation}\label{asym-golden-spectra}
			\lim_{\lambda\to 0} D(\alpha_1,\lambda)=1;\ \ \ \lim_{\lambda\to \infty} D(\alpha_1,\lambda)\log \lambda=\log (1+\sqrt{2}).
		\end{equation}
		
		(4) The DOS $\NN_{\alpha_1,\lambda}$ is exact-dimensional and consequently
		\begin{equation*}
			\dim_H \NN_{\alpha_1,\lambda}=\dim_P \NN_{\alpha_1,\lambda}=:d(\alpha_1,\lambda).
		\end{equation*}
		
		(5) $d(\alpha_1,\lambda)$ satisfies Young's  formula:
		\begin{equation}\label{dim-dos-golden}
			d(\alpha_1,\lambda)=\dim_H \mu_{\lambda,\max}=\frac{\log \alpha_1}{{\rm Lyap}^u\mu_{\lambda,\max}},
		\end{equation}
		where  $\mu_{\lambda,\max}$ is the measure of maximal entropy of $\mathbf T_\lambda$, and $\log\alpha_1, \ { \rm Lyap}^u\mu_{\lambda,\max}$ are the  entropy and the unstable Lyapunov exponent of $\mu_{\lambda,\max}$, respectively.		
		
		(6) The function $d(\alpha_1,\cdot)$ is analytic on $(0,\infty)$ and
		\begin{equation}\label{asym-golden-dos}
			\lim_{\lambda\to 0} d(\alpha_1,\lambda)=1;\ \ \ \lim_{\lambda\to \infty} d(\alpha_1,\lambda)\log \lambda=\frac{5+\sqrt{5}}{4}\log \alpha_1.
		\end{equation}
	}
	
See \cite{Bowen,Walters} for the definition of topological pressure and more generally the theory of thermodynamical formalism. See \cite{Young} for the original Young's dimension formula.

Now we turn to the spectral properties of general Sturmian Hamiltonians.
The first work on Sturmian Hamiltonians is \cite{BIST}. In this work, Bellissard et. al. laid the foundation for all the future studies on  Sturmian Hamiltonians. Among other things, they showed that for all $\lambda>0$, the spectrum $\Sigma_{\alpha,\lambda}$ is of Lebesgue measure zero. This motivates the study on the fractal dimensions of the spectrum.
   Based on \cite{BIST}, Raymond \cite{R}  showed that for $\lambda>4$, the spectrum $\Sigma_{\alpha,\lambda}$ has a natural covering structure. Using this structure, he could show that all the gaps of the spectrum  predicted by the gap labelling theory are open.
    See also \cite{BBBRT} for a new form of \cite{R}.
    In a  very recent work \cite{BBL}, Band, Beckus and Loewy  extended the above result to all $\lambda>0$ and solved the dry ten Martini problem for Sturmian Hamiltonians.

	The results for Fibonacci Hamiltonian are generalized to Sturmian Hamiltonians with ``nice" frequencies to various extents.
	Girand \cite{Gi} and  Mei \cite{Mei} considered the frequency $\alpha$ with eventually periodic continued fraction expansion. In both papers they showed that
	$\NN_{\alpha,\lambda}$ is exact-dimensional for small $\lambda$ and $\displaystyle\lim_{\lambda\to 0}\dim_{H} \NN_{\alpha,\lambda}=1.$
	This generalizes \eqref{asym-golden-dos}.
	For $\lambda>20$ and $\alpha$ with constant continued fraction expansion,  Qu \cite{Q1}   showed that $\NN_{\alpha,\lambda}$ is exact-dimensional and obtained similar asymptotic behaviors   as \eqref{asym-golden-spectra} and \eqref{asym-golden-dos} for $\lambda\to\infty.$  We remark that, for all works mentioned  above, the dynamical method is applicable due to the special types of the frequencies. Very recently, Luna \cite{Luna} generalized the results \cite{Gi,Mei} to including all the irrational frequencies of bounded-type.
	
	For general frequencies, the dimensional properties of the spectra are more complex.
Fix an irrational $\alpha\in (0,1)$  with continued fraction expansion $[a_1,a_2,\cdots]$. Based on the subordinacy theory,
	Damanik, Killip and Lenz \cite{DKL} showed that, if   $\limsup\limits_{k\rightarrow\infty}\frac{1}{k}\sum_{i=1}^k
	a_i<\infty$, then  $\dim_H \Sigma_{\alpha,\lambda}>0$. Later works  \cite{FLW,LPW07,LQW,LW,LW05}  were mainly built on Raymond's construction, let us explain them in more detail. The covering structure in \cite{R} makes it possible to define the so-called pre-dimensions $s_\ast(\alpha,\lambda)$ and $s^\ast(\alpha,\lambda)$ (see \eqref{pre-dim} for the exact definitions).
	Write
	\begin{equation*}\label{K-ast}
		K_\ast(\alpha):=\liminf_{k\rightarrow\infty}
		\left(\prod_{i=1}^k a_i\right)^{1/k}\ \ \ \text{ and }\ \ \  K^\ast(\alpha):=
		\limsup_{k\rightarrow\infty}\left(\prod_{i=1}^k a_i\right)^{1/k}.
	\end{equation*}

	The current  picture for the dimensions of the spectra is  the following:
	
	\noindent {\bf Theorem B}(\cite{ FLW,LPW07, LQW,LW})\  {\it
		Assume $\lambda\ge 24$ and $\alpha\in [0,1]\setminus \Q$. Then
		
		(1) The following dichotomies  hold:
		$$
		\begin{cases}
			\dim_H \Sigma_{\alpha,\lambda} \in(0,1) & \text{ if   }\ \  K_\ast(\alpha)< \infty\\
			\dim_H \Sigma_{\alpha,\lambda} =1 & \text{ if   }\ \  K_\ast(\alpha)= \infty
		\end{cases}
		\ \text{ and }\
		\begin{cases}
			\overline{\dim}_B \Sigma_{\alpha,\lambda} \in(0,1) & \text{ if   }\ \  K^\ast(\alpha)< \infty\\
			\overline{\dim}_B \Sigma_{\alpha,\lambda} =1 & \text{ if   }\ \  K^\ast(\alpha)= \infty
		\end{cases}.
		$$

		(2) $s_*(\alpha,\cdot)$ and $s^*(\alpha,\cdot)$ are
		Lipschitz continuous on any bounded interval of $[24,\infty)$ and
		$$
		\dim_H \Sigma_{\alpha,\lambda}=s_*(\alpha,\lambda)
		\ \ \ \text{ and }\ \ \  \overline{\dim}_B \Sigma_{\alpha,\lambda}=s^*(\alpha,\lambda).
		$$
		
		(3) There exist two constants $1<\rho_\ast(\alpha)\le \rho^\ast(\alpha)\le\infty $ such that
		 $$
		\lim_{\lambda\to \infty} s_*(\alpha,\lambda)\log \lambda
		=\log \rho_\ast(\alpha)\ \ \ \ \text{ and }\ \ \ \
		\lim_{\lambda\to \infty} s^*(\alpha,\lambda)\log \lambda
		= \log\rho^\ast(\alpha).
		$$

	}
	
	The dimensional properties of $\NN_{\alpha,\lambda}$ are less studied for general frequencies.
	In \cite{Q2}, Qu showed that for $\alpha$ with bounded continued fraction expansion and $\lambda>20$, the DOS $\NN_{\alpha,\lambda}$ is both exact upper- and lower-dimensional. He also constructed certain $\alpha$ such that the related  $\NN_{\alpha,\lambda}$ is not exact-dimensional. In \cite{JZ}, Jitomirskaya and Zhang constructed certain Liouvillian frequency $\alpha$ such that for any $\lambda>0$, the related DOS satisfies
	$\dim_H \NN_{\alpha,\lambda}<1$ but $\dim_P \NN_{\alpha,\lambda}=1$. Consequently, $\NN_{\alpha,\lambda}$ is not exact-dimensional.
	
	Until now, all the results are stated for deterministic frequencies. A natural question is that: how about the dimensional properties of $\Sigma_{\alpha,\lambda}$ and  $\NN_{\alpha,\lambda}$ for Lebesgue typical frequencies? For this question, Bellissard had the following conjecture in 1980s (see also \cite{DG15}):
	
	\noindent{\bf Conjecture:} {\it For every $\lambda>0$, the Hausdorff dimension of $\Sigma_{\alpha,\lambda}$ is Lebesgue a.e. constant in $\alpha$.}
	
	This conjecture was solved by Damanik and Gorodetski under a large coupling assumption:\\
	\noindent {\bf Theorem C}(\cite{DG15})\ {\it
		For every $\lambda\ge 24$, there exists two numbers $0<\underline{D}(\lambda)\le \overline{D}(\lambda)$ such that for Lebesgue a.e. $\alpha\in [0,1]\setminus \Q$,
		\begin{equation*}\label{DG-a.e.}
			\dim_H \Sigma_{\alpha,\lambda}=\underline{D}(\lambda)\ \ \text{ and }\ \ \overline{\dim}_B \Sigma_{\alpha,\lambda}=\overline{D}(\lambda).
		\end{equation*}
	}

Besides this work, Damanik et. al. \cite{DGLQ} studied the transport exponents of Sturmian Hamiltonians and they obtained a uniform upper bound for all time-averaged transport exponents in the large-coupling regime for Lebesgue a.e. $\alpha\in [0,1]\setminus \Q$. Munger \cite{Munger} showed that for $\lambda$ large and for Lebesgue a.e. $\alpha\in [0,1]\setminus \Q$, the DOS $\NN_{\alpha,\lambda}$ is not H\"older continuous.
	
	Based on Theorem C, one can raise several natural questions such as: for fixed $\lambda\ge 24,$ whether  $\underline{D}(\lambda)=\overline{D}(\lambda)$ holds? Do the full measure sets of frequencies depend on $\lambda$? How regular are the functions $\underline{D}(\lambda)$ and $\overline{D}(\lambda)$? What can one say about the DOS? etc.

Notice that, it is quite easy to construct a frequency $\alpha$ such that $K_\ast(\alpha)<\infty$ and $K^\ast(\alpha)=\infty$. Consequently by Theorem B, for such $\alpha$ and $\lambda\ge24$, we have $\dim_H\Sigma_{\alpha,\lambda}\in (0,1)$ but $\overline{\dim}_B\Sigma_{\alpha,\lambda}=1$. So a priori, it is possible that $\underline{D}(\lambda)<\overline{D}(\lambda)$.
	
In  this paper, we try to  answer  these questions and achieve a deeper understanding on the almost sure dimensional properties of Sturmian Hamiltonians.

\subsection{Main results}\
	
	
	Our main result for the dimensions of the spectrum is as follows:
	
	\begin{theorem}\label{main-dim-spectra}
		There exist  a   subset $\hat\II\subset [0,1]\setminus \Q$ of full Lebesgue measure and a function $D:[24,\infty)\to (0,1)$  such that the following hold:
		
		(1) For any $(\alpha,\lambda)\in\hat\II\times [24,\infty)$, the spectrum $\Sigma_{\alpha,\lambda}$ satisfies
		\begin{equation*}
			\dim_H \Sigma_{\alpha,\lambda}=\dim_B \Sigma_{\alpha,\lambda}=D(\lambda).
		\end{equation*}
		
		(2) $D(\lambda) $ satisfies a Bowen's type formula:  $D(\lambda)$ is the unique zero of the relativized pressure function $\mathbf{P}_{\lambda}(t)$ (see \eqref{relative-pre} for the exact definition).
		
		(3) $D(\lambda)$ is Lipschitz continuous on any bounded interval of $[24,\infty)$ and
		there exists a constant $\rho\in (1,\infty)$ such that
		\begin{equation}\label{dim-spectra-asym}
			\lim_{\lambda\to \infty} D(\lambda)\log \lambda= \log\rho.
		\end{equation}
	\end{theorem}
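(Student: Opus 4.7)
The strategy is to identify the pre-dimensions $s_\ast(\alpha,\lambda)$ and $s^\ast(\alpha,\lambda)$ from Theorem B with a $\liminf$ and $\limsup$ of Birkhoff-type averages over the Gauss map $T:\alpha\mapsto \{1/\alpha\}$ on $[0,1]\setminus\Q$, and then collapse them to a single deterministic value $D(\lambda)$ by ergodicity of the Gauss measure $\nu$ (which is equivalent to Lebesgue). Raymond's construction produces at level $k$ families of bands indexed by prefixes $(a_1,\dots,a_k)$ of the continued fraction of $\alpha$, whose lengths admit two-sided estimates in terms of products of transfer-matrix norms evaluated along the orbit $\alpha_i := T^{i-1}\alpha$. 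Inserting these into the Moran-type equations defining $s_\ast,s^\ast$ (see \eqref{pre-dim}) lets one write, schematically,
\[
s_\ast(\alpha,\lambda)=\liminf_{k\to\infty}\frac{\sum_{i=0}^{k-1}\Phi_\lambda(T^i\alpha)}{\sum_{i=0}^{k-1}\Psi(T^i\alpha)},\qquad s^\ast(\alpha,\lambda)=\limsup_{k\to\infty}\frac{\sum_{i=0}^{k-1}\Phi_\lambda(T^i\alpha)}{\sum_{i=0}^{k-1}\Psi(T^i\alpha)},
\]
with explicit observables $\Phi_\lambda,\Psi$ depending on the current digit $a(\alpha)=\lfloor 1/\alpha\rfloor$ coming from the band recursion.

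With this set-up, the Birkhoff ergodic theorem applied to $(T,\nu)$ gives a full $\nu$-measure set $\hat\II_\lambda\subset [0,1]\setminus \Q$ on which both limits exist and coincide with the ratio $\int\Phi_\lambda\,d\nu/\int\Psi\,d\nu\in(0,1)$, the bounds being inherited from Theorem B(1). Call this common value $D(\lambda)$. To identify it with the unique zero of the relativized pressure $\mathbf{P}_\lambda$, I would cast the Moran equation as a variational principle over the Gauss base: $\mathbf{P}_\lambda(t)$ is the topological pressure (relative to $T$) of an appropriate combination of $-t\,\Phi_\lambda$ and $\Psi$, is strictly decreasing in $t$, and the identity $\mathbf{P}_\lambda(D(\lambda))=0$ is precisely the ergodic reformulation of the defining equation for the pre-dimensions, giving Bowen's formula (2). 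To pass from the $\lambda$-dependent full-measure set to a single $\hat\II$ working simultaneously for all $\lambda\in[24,\infty)$, I would fix a countable dense $\{\lambda_n\}\subset[24,\infty)$, set $\hat\II:=\bigcap_n\hat\II_{\lambda_n}$, and use the Lipschitz continuity of $\lambda\mapsto s_\ast(\alpha,\lambda),s^\ast(\alpha,\lambda)$ on bounded intervals (Theorem B(2)) together with continuity of $D$ to sandwich for every $\lambda$ and every $\alpha\in\hat\II$. Since $\dim_H\Sigma_{\alpha,\lambda}=s_\ast$ and $\overline\dim_B\Sigma_{\alpha,\lambda}=s^\ast$, this yields $\dim_H\Sigma_{\alpha,\lambda}=\dim_B\Sigma_{\alpha,\lambda}=D(\lambda)$, establishing (1) and (2).

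For (3), local Lipschitz continuity of $D$ follows from joint real-analytic dependence of $(\lambda,t)\mapsto\mathbf{P}_\lambda(t)$ and the implicit function theorem at the simple zero $D(\lambda)$. For the asymptotic \eqref{dim-spectra-asym}, Theorem B(3) already provides pointwise limits $s_\ast(\alpha,\lambda)\log\lambda\to\log\rho_\ast(\alpha)$ and $s^\ast(\alpha,\lambda)\log\lambda\to\log\rho^\ast(\alpha)$. I would express $\log\rho_\ast$ and $\log\rho^\ast$ as a $\liminf$ and $\limsup$ of a Birkhoff sum of the leading-order-in-$\lambda$ contribution of $\Phi_\lambda$ (extractable from the known large-$\lambda$ asymptotics of the band lengths), and invoke ergodicity once more to show they $\nu$-a.e. agree with a deterministic $\log\rho$. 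The main obstacle throughout is integrability with respect to $\nu$: the observables $\Phi_\lambda,\Psi$ depend on the unbounded digit $a(\alpha)$, so one must prove bounds of the form $|\Phi_\lambda(\alpha)|\le C(\lambda)\log(1+a(\alpha))$, exploiting $\int\log(1+a)\,d\nu<\infty$ while $\int a\,d\nu=\infty$. A secondary subtlety is that the $\alpha$-dependence of the Lipschitz constants in Theorem B(2) must be controlled uniformly on $\hat\II$ to legitimize the $\lambda$-sandwich, which I expect to extract directly from Raymond's recursions.
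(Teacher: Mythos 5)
Your high-level plan—exploit ergodicity of the Gauss map/measure, obtain a $\lambda$-dependent a.e.-set, then kill the $\lambda$-dependence by intersecting over a countable dense set and using Lipschitz continuity of $s_\ast,s^\ast$—matches the paper. However, there are two concrete gaps in the middle of the argument.

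First, the ``schematic'' reduction of $s_\ast(\alpha,\lambda)$ and $s^\ast(\alpha,\lambda)$ to a $\liminf/\limsup$ of ratios of Birkhoff sums
$\sum_{i<k}\Phi_\lambda(T^i\alpha)\big/\sum_{i<k}\Psi(T^i\alpha)$
does not hold, and the subsequent invocation of the Birkhoff ergodic theorem on numerator and denominator separately collapses with it. The pre-dimension $s_n(\alpha,\lambda)$ is defined \emph{implicitly} as the zero of the partition function $\sum_{w\in\Omega_n^a}|B_w^a(\lambda)|^t$, which sums over an exponentially growing collection of bands of widely varying lengths; it does not factorize along the orbit of the Gauss map, so its logarithm is not an additive cocycle. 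Consequently $D(\lambda)$ is \emph{not} the ratio $\int\Phi_\lambda\,d\nu/\int\Psi\,d\nu$: that would be a Young-type formula, whereas the theorem asserts a Bowen-type formula in which $D(\lambda)$ is the zero of a genuinely nonlinear-in-$t$ pressure. The paper resolves this by proving (Lemma \ref{additive-on-A}) that $\QQ(a,\lambda,t,n):=\log\sum_{w\in\Omega_n^a}|B_w^a(\lambda)|^t$ is \emph{almost sub-additive} over $(\N^\N,S,\Gg)$ and then applying Kingman's subadditive ergodic theorem, for each fixed $t$, to get $\frac1n\QQ(a,\lambda,t,n)\to\mathbf P_\lambda(t)$ a.e.; it then upgrades to all $t$ simultaneously using monotonicity and convexity of $\overline{\rm P}_{a,\lambda}(\cdot)$ (Lemma \ref{pressure-lip}, Proposition \ref{as-pressure}) and identifies $D(\lambda)$ as the zero of $\mathbf P_\lambda$ via the generalized Bowen formula of Proposition \ref{Bowen-formula}. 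The subadditivity lemma is the nontrivial new input (it uses bounded covariation, Proposition \ref{bco-cor}, and a delicate combinatorial correspondence), and nothing in your plan substitutes for it.

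Second, your route to Lipschitz continuity of $D(\lambda)$ in part (3)---``joint real-analytic dependence of $(\lambda,t)\mapsto\mathbf P_\lambda(t)$ and the implicit function theorem''---is unjustified: analyticity, and even $C^1$ regularity, of the relativized pressure is not established here, and the authors explicitly flag real-analyticity of $D(\lambda)$ and $d(\lambda)$ as open (``our method can hardly be adapted to deal with this problem''). The paper gets Lipschitz continuity in a much more elementary way: for a common full-measure set of $a$ one has $D(\lambda_i)=s_\ast(a,\lambda_i)$, and \cite[Proposition 8.2]{LQW} (quoted as Proposition \ref{dim-formula-14}(2)) gives a Lipschitz bound on $s_\ast(a,\cdot)$ that is independent of $a$; this both proves (3) and also supplies the continuity needed in your intersection-over-rational-$\lambda$ step without any analyticity. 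For the asymptotic, rather than extracting a ``leading-order Birkhoff sum,'' the paper introduces a matrix cocycle $M_x(n,a)$, applies Kingman once more to define its Lyapunov exponent $\varphi(x)$, defines $\rho^{-1}$ as the zero of $\varphi$, and then plugs \eqref{coincide-phi-psi} into the quantitative bounds of \cite[Proposition 5.3]{LQW}.
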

	
	\begin{remark}\label{rem-main-1}
		
			{\rm (i) Our result improve Theorem C in two aspects: firstly, the full measure set in Theorem C may depend on $\lambda$; while in our theorem, the full measure set $\hat\II$ is independent of $\lambda$. Secondly, our result shows that $\underline{D}(\lambda)=\overline{D}(\lambda)$ (Compare Theorem A(1)). That is, the two dimensions coincide. From fractal geometry point of view,   the spectrum $\Sigma_{\alpha,\lambda}$ is quite regular for typical frequency $\alpha$.
				
				(ii)  Through (2), we determine the  dimension $D(\lambda)$ explicitly and hence   answer \cite[Question 6.2]{DG15}.  Compare with the classical Bowen's formula for the expanding attractor (see for example \cite[Chapter 5]{Fal97}) and Theorem A(2), Theorem \ref{main-dim-spectra}(2) can be viewed as a random version of Bowen's formula.
				
				(iii)  The equation \eqref{dim-spectra-asym} gives the exact asymptotic behavior of $D(\lambda)$ as $\lambda \to \infty.$  It is a random version of \eqref{asym-golden-spectra}. In Section \ref{sec-proof-main-spec}, we determine $\rho^{-1}$ as the ``zero" of certain function $\varphi$ defined on $[0,1]$ (see \eqref{def-rho}), where each $\varphi(x)$ is the Lyapunov exponent of certain matrix-valued function $M_x$ (see \eqref{def-var-phi}).
  Thus we answer \cite[Question 6.3]{DG15}.

			}
	\end{remark}

	Our main result for the dimensions of the DOS is as follows:
	
	\begin{theorem}\label{main-dim-dos}
		 Let $\hat\II$ be the set in Theorem \ref{main-dim-spectra}. Then there exists	a function $d:[24,\infty) \to (0,1)$  such that the following hold:
		
		(1) For any $(\alpha,\lambda)\in\hat\II\times [24,\infty)$, the DOS $\NN_{\alpha,\lambda}$ is exact-dimensional and
		\begin{equation*}
			\dim_H \NN_{\alpha,\lambda}=\dim_P \NN_{\alpha,\lambda}=d(\lambda).
		\end{equation*}
		
		(2) $d(\lambda)$ satisfies a Young's type formula:
		\begin{equation}\label{young-form-dos}
			d(\lambda)=\frac{\gamma}{-(\Psi_\lambda)_*(\n)},
		\end{equation}
		where $\gamma$ is the Levy's constant(see \eqref{L-K}), $\n$ is a Gibbs measure (see Proposition \ref{Gauss-measure-Omega}) supported on the global symbolic space $\Omega$ (see \eqref{def-Omega}), and $\Psi_\lambda$ is the geometric potential  on $\Omega$ (see \eqref{def-Psi-lambda}).
		
		(3) $d(\lambda)$ is Lipschitz continuous  on any bounded interval of $[24,\infty)$  and there exists a constant $\varrho\in(1,\infty)$ such that
		\begin{equation}\label{dim-dos-asym}
			\lim_{\lambda\to\infty}d(\lambda)\log\lambda=\log\varrho.
		\end{equation}
	\end{theorem}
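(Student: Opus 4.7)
The plan is to realize the DOS $\NN_{\alpha,\lambda}$, via Raymond's covering structure for $\Sigma_{\alpha,\lambda}$, as essentially the pushforward of the natural ``uniform-on-cylinders'' measure on the global symbolic space $\Omega$. The gap labelling theorem, combined with the definition \eqref{def-dos}, implies that $\NN_{\alpha,\lambda}$ assigns mass comparable to $1/q_n(\alpha)$ to each band at level $n$ of Raymond's construction, where $q_n(\alpha)$ is the denominator of the $n$-th continued-fraction convergent of $\alpha$. This reduces the computation of $\dim_H\NN_{\alpha,\lambda}$ and $\dim_P\NN_{\alpha,\lambda}$ to a ratio of two ergodic-theoretic quantities on $\Omega$: the growth rate of $\log q_n(\alpha)$ (the ``entropy'' numerator) and the decay rate of the band length (the ``Lyapunov'' denominator, controlled by the geometric potential $\Psi_\lambda$).

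First I would prove a uniform metric-to-symbolic comparison lemma: for $\n$-typical $\omega \in \Omega$ projecting to $(\alpha, E(\omega)) \in \hat\II \times \Sigma_{\alpha,\lambda}$, and for $r>0$ such that the unique level-$n$ band $B_n(\omega) \ni E(\omega)$ has length comparable to $r$, one has $\NN_{\alpha,\lambda}(B(E(\omega),r)) \asymp 1/q_n(\alpha)$, with constants independent of $\alpha$, $\lambda$, $n$. This bounded-distortion estimate relies on the large-coupling regime $\lambda \ge 24$, which forces sufficient separation between adjacent bands so that $B(E(\omega),r)$ meets only $O(1)$ bands at the appropriate level. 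Granting this, the local dimension of $\NN_{\alpha,\lambda}$ at $E(\omega)$ equals
\[
\lim_{n\to\infty}\frac{\log q_n(\alpha)}{-\log |B_n(\omega)|},
\]
provided the limit exists.

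Next I would apply Birkhoff's theorem to both numerator and denominator. By Levy's theorem, which is precisely the Birkhoff average for the log-derivative of the Gauss map integrated against the Gauss measure, $\log q_n(\alpha)/n \to \gamma$ for Lebesgue-typical $\alpha$. For the denominator, $\Psi_\lambda$ is constructed so that $\log|B_n(\omega)|$ equals a Birkhoff sum of $\Psi_\lambda$ along the shift orbit of $\omega$; since $\n$ is a shift-ergodic Gibbs measure, this Birkhoff sum converges to $(\Psi_\lambda)_*(\n) < 0$ almost surely. Taking the ratio yields the Young-type formula \eqref{young-form-dos} as the common $\n$-almost sure local dimension, establishing exact-dimensionality on the same full-measure set $\hat\II$ already produced in Theorem \ref{main-dim-spectra}.

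Finally, for part (3), Lipschitz regularity of $d(\lambda)$ is inherited from Lipschitz regularity of $\lambda \mapsto (\Psi_\lambda)_*(\n)$, which follows from the joint Lipschitz smoothness of $\Psi_\lambda$ in $\lambda$ uniformly on $\Omega$; crucially the Gibbs measure $\n$ here is \emph{fixed}, independent of $\lambda$, which makes the Young formula simpler to perturb than the Bowen formula of Theorem \ref{main-dim-spectra}(2). For the asymptotic \eqref{dim-dos-asym}, I would expand $\Psi_\lambda$ at leading order: on the cylinder indexed by partial quotient $a$, $\Psi_\lambda$ equals $-a\log\lambda + O(1)$ as $\lambda \to \infty$, so integrating against $\n$ gives $-(\Psi_\lambda)_*(\n) = c\log\lambda + O(1)$ with $c$ the $\n$-integral of the first partial-quotient coordinate, identifying $\varrho = \exp(\gamma/c)$. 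The principal obstacle is the uniform comparison lemma of the first step: the band-geometry estimates must hold with constants independent of the (unbounded) partial quotients $a_i$, and obtaining this requires exploiting the self-similar fibered structure of Raymond's construction under the Gauss-map action on $\alpha$ in a manner compatible with the Gibbs property of $\n$.
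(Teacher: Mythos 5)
Your high-level outline matches the paper's strategy — encode the DOS on a global symbolic space, express its local dimension as a ratio of an entropy-type numerator $\gamma$ to a Lyapunov-type denominator $-(\Psi_\lambda)_*(\n)$, and apply ergodic theorems. However, the step you yourself flag as the ``principal obstacle,'' the uniform metric-to-symbolic comparison lemma, is in fact \emph{false} as you state it, and the workaround the paper uses is essential, not cosmetic. The gap-to-band ratio at level $n$ is only bounded below by $C(\lambda) a_{n+1}^{-3}$ (the gap lemma, Lemma~\ref{geo-lem-gap}), and since the partial quotients $a_{n+1}$ of a Lebesgue-typical $\alpha$ are unbounded, a ball $B(E,r)$ with $r$ comparable to the $n$-th band length can meet $\sim a_{n+1}^3$ bands — so there is no uniform bounded-distortion constant independent of $n$ and the partial quotients. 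The paper avoids this by showing (Corollary~\ref{weak-bi-lip-d_alpha}) that the inverse coding map is only \emph{weak-Lipschitz} ($s$-H\"older for every $s<1$), and then uses that $\log a_n / n \to 0$ for typical $\alpha$ so the subexponential loss $a_{n+1}^{-3}$ washes out when passing to local dimensions (one gets $\underline{d}_{\n_a}(x) \le t\,\underline{d}_{\NN}(\pi(x))$ for every $t>1$, and lets $t\downarrow 1$). This is a genuinely different and more delicate argument than the $O(1)$-band-overlap claim you propose.

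A second gap: the fiber $\Omega_a$ of the global symbolic space codes the spectrum $\Sigma_{\check a,\lambda}$ with $\check a = 1a$, not $\Sigma_{a,\lambda}$, because the shift acts by dropping the leading type-letter. Transferring the conclusion from $\NN_{\check a,\lambda}$ to $\NN_{a,\lambda}$ requires the tail lemma (Lemma~\ref{geo-lem-tail}) and Proposition~\ref{loc-bi-lip}, which your proposal does not address at all. Two further, smaller points: (i) $\log|B_n(\omega)|$ is an \emph{almost-additive} sequence, not an honest Birkhoff sum, so one needs Kingman rather than Birkhoff; (ii) your Kingman step only gives a full-measure set of $a$ depending on $\lambda$, and passing to a $\lambda$-independent set requires first establishing exact lower- and upper-dimensionality of $\n_a$ for every $\lambda$ (the Besicovitch density argument of Proposition~\ref{exact-H-P-dim}) so that the Lipschitz continuity in $\lambda$ can be used to fill in irrational $\lambda$; this does not follow automatically from the a.s.\ local dimension computation alone.
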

	
	\begin{remark}	\label{rem-main-2}	
{\rm	
(i)  Under a large coupling assumption ($\lambda\ge24$), we answer \cite[Question 6.5]{DG15}. Indeed, we say much more.
			
(ii)  Compare with the classical Young's dimension formula for ergodic measure supported on hyperbolic attractor (see \cite{Young}) and \eqref{dim-dos-golden}, the formula \eqref{young-form-dos} can be viewed as a random version of Young's formula, where $\gamma$ plays the role of entropy and $-(\Psi_\lambda)_*(\n)$ plays the role of  the Lyapunov exponent of the geometric potential $-\Psi_\lambda$ with respect to the ergodic measure $\n$.
			
(iii) See \eqref{def-var-rho} and \eqref{def-theta} for the exact value of $\varrho.$  \eqref{dim-dos-asym} is a random version of \eqref{asym-golden-dos}.
			
(iv) This theorem tells us that for typical frequency $\alpha$, the DOS $\NN_{\alpha,\lambda}$ behaves exactly as the special one $\NN_{\alpha_1,\lambda}$. On the other hand, the DOS appeared in \cite{Q2} and \cite{JZ} may behave badly. This is not a contradiction, since the set of frequencies in \cite{Q2} and \cite{JZ} are disjoint with $\hat \II,$ and  of zero Lebesgue measure, hence exceptional.			
}
\end{remark}

\subsection{Sketch of the main ideas}\label{sec-idea}\

At first we explain the proof of the  almost sure dimensional properties for the spectrum. In short, we use the ergodicity of Gauss measure together with the established deterministic result in \cite{LQW} to obtain the desired result.
Let us explain it in more detail and point out several key ingredients.

The basic tool is the symbolic coding introduced in \cite{R} and developed later  in \cite{FLW,LQW,LW,Q1,Q2}, see  Section \ref{sec-coding} for  detail.
	
	For each $\alpha\in [0,1]\setminus \Q$ and $\lambda>4$, Raymond  \cite{R} constructed a family $\{\B_n^\alpha(\lambda):n\ge0\}$ of decreasing coverings of $\Sigma_{\alpha,\lambda}$. 
By using this family,  Liu, Qu and Wen \cite{LQW} defined
 the so-called pre-dimensions $s_\ast(\alpha,\lambda)$ and $s^\ast(\alpha,\lambda)$ and showed that
\begin{equation*}
  \dim_H \Sigma_{\alpha,\lambda}=s_\ast(\alpha,\lambda);\ \ \overline{\dim}_B \Sigma_{\alpha,\lambda}=s^\ast(\alpha,\lambda).
\end{equation*}
Our primary goal is to show that $s_\ast(\alpha,\lambda)=s^\ast(\alpha,\lambda)$ for a.e. $\alpha.$

The first key observation is that: for typical $\alpha,$  the pre-dimensions are  ``zeros" of certain pressure-like  functions. This is inspired by  the classical Bowen's dimension formula.

 Let us define pre-dimensions.  For $t\ge 0$, define the $n$-th partition function as
 \begin{equation}\label{partition}
 \mathscr P_{\alpha,\lambda}(n,t):=\sum_{B\in\B_n^\alpha(\lambda) }|B|^{t},
\end{equation}
where $|B|$ is the length of the band $B.$ Let $s_n(\alpha,\lambda)$ be such that
$\mathscr P_{\alpha,\lambda}(n,s_n(\alpha,\lambda))=1.$
 Then the pre-dimensions are defined by
\begin{equation*}
  s_\ast(\alpha,\lambda):=\liminf_{n\to\infty} s_n(\alpha,\lambda)\ \ \text{ and }\ \  s^\ast(\alpha,\lambda):=\limsup_{n\to\infty} s_n(\alpha,\lambda).
\end{equation*}

We note that quantity similar with  \eqref{partition} appears naturally in  dynamical setting. We take a cookie-cutter system with two branches as example  (see \cite[Chapters 4 and 5]{Fal97}).  For the repeller $E$ of the system, one can naturally construct a covering $\{X_I: I\in \{0,1\}^n\}$ consisting of $n$-th basic sets $X_I$, and define the related partition function $\sum_{I\in \{0,1\}^n}|X_I|^t$. Moreover, the following limit  exists
$$
P(t):=\lim_{n\to \infty} \frac{1}{n}\log \sum_{I\in \{0,1\}^n}|X_I|^t
$$
 and the limit function $P$ is called the {\it pressure function} of the system. The classical Bowen's formula reads that: the Hausdorff and box dimensions of the repeller $E$ are the same and are given by the zero of  $P.$

Back to our case. Notice that,  $s_n(\alpha,\lambda)$ is the zero of $\log \mathscr P_{\alpha,\lambda}(n,t)/n$. If the
 limit
$$
\lim_{n\to \infty} \frac{1}{n}\log \mathscr P_{\alpha,\lambda}(n,t)
$$
exists for all $t$, one expect that $s_n(\alpha,\lambda)$ also tends to the zero of the limit function. However, it is not the case  for general $\alpha$. Instead, one consider the lower and upper limits:
$$
\underline{\rm P}_{\alpha,\lambda}(t):=\liminf_{n\to \infty} \frac{1}{n}\log \mathscr P_{\alpha,\lambda}(n,t);\ \ \overline{\rm P}_{\alpha,\lambda}(t):=\limsup_{n\to \infty} \frac{1}{n}\log \mathscr P_{\alpha,\lambda}(n,t).
$$
Then one expect that the pre-dimensions are the zeros of these two pressure functions respectively. We show that  these are true (after suitably defining the ``zero") for typical $\alpha$. We call these results generalized Bowen's formulas.

   Now, to show the coincidence of two dimensions, it is enough to show that the two pressure functions coincide. At this point, we will use the ergodicity of the Gauss measure. Then come the second key observation: the family of functions $\{\log \mathscr P_{\cdot,\lambda}(n,t):n\in \N \}$ are almost sub-additive over the system $([0,1],T,G)$, where $T$ is the Gauss map and $G$ is the Gauss measure (note that Gauss measure is equivalent to the Lebesgue measure on $[0,1]$). So one can define the so-called relativized pressure function $\mathbf P_\lambda(t)$ by
    $$
    \mathbf P_\lambda(t)=\lim_{n\to\infty}\frac{1}{n}\int_{[0,1]}\log \mathscr P_{\alpha,\lambda}(n,t)d G(\alpha).
    $$
 Since $G$ is ergodic, by Kingman's theorem,  we do have $\underline{\rm P}_{\alpha,\lambda}(t)=\overline{\rm P}_{\alpha,\lambda}(t)=\mathbf P_\lambda(t)$ for a.e. $\alpha$. Notice that the full measure set of frequencies depends on $t$. We can get rid of this dependence  by using an extra regularity of the upper pressure function $\overline{\rm P}_{\alpha,\lambda}(t)$. So we achieve the first goal. As a byproduct, we also show that the dimension as a function of $\alpha$ is almost constant.

Up to now, for fixed $\lambda$, we succeed to show that  for typical $\alpha$, the two fractal dimensions of the spectrum coincide and obtain a Bowen's type formula for the dimension. The last step is to find the $\lambda$-independent full measure set. But this is more or less standard.

\medskip

Now we turn to the dimensional properties of the DOS. As we mentioned earlier, except for nice frequencies, the dimensional properties of DOS are much less studied. For nice frequencies, all the existing results suggest that the related DOS is also nice since it is exact-dimensional and its dimension satisfies  Young's  formula.
See for example Theorem A(5) and \cite[Theorem 11]{Q1}.

Let us look more closely on the approach of \cite{Q1}, where the structure of the  coding plays essential role.
In \cite{LQW,Q1,Q2},  a coding $\Omega^\alpha$ for $\Sigma_{\alpha,\lambda}$  was constructed  based on Raymond's construction.  The coding is a generalization of topological Markov shift. The usual topological Markov shift is a subshift with one alphabet and one incidence matrix. In our case, $\Omega^\alpha$ is defined by a family of alphabets $\{\T_{0}, \A_{a_1},\A_{a_2},\cdots\}$ and a family of incidence matrices $\{A_{a_j a_{j+1}}:j\ge 1\}$, where $\{a_n:n\ge1\}$ come from the continued fraction expansion  $\alpha=[a_1,a_2,\cdots]$.
More precisely, let $\Omega_n^\alpha$ be the set of admissible words of order $n$. One can code the bands in $\B_n^\alpha(\lambda)$   such that
	$$
	\B_n^\alpha(\lambda)=\{B_w^\alpha(\lambda): w\in \Omega_n^\alpha\}.
	$$
	Now for any $E\in \Sigma_{\alpha,\lambda}$, there exists a unique infinite sequence $x\in \Omega^\alpha$ such that
	$$
	\{E\}=\bigcap_{n\ge1} B_{x|_n}^\alpha(\lambda),
	$$
where $x|_n$ is the $n$-th prefix of $x$.
	In this way, one defines a coding map $\pi^\alpha_\lambda: \Omega^\alpha\to \Sigma_{\alpha,\lambda}$ such that $\pi^\alpha_\lambda(x)=E$.
One can also endow a metric on $\Omega^\alpha$ as
$$
	\rho^\alpha_\lambda(x,y):=|B^\alpha_{x\wedge y}(\lambda)|,
	$$
where $x\wedge y$ denotes the common prefix of $x$ and $y$.  With this metric,  the coding map $\pi^\alpha_\lambda$ is always Lipschitz. It is bi-Lipschitz if the continued fraction expansion of  $\alpha$ are bounded (see \cite{Q2}).

Now assume $\alpha=[k,k,\cdots]$. Then two metric spaces $(\Omega^\alpha,\rho^\alpha_\lambda)$ and $(\Sigma_{\alpha,\lambda},|\cdot|)$ are bi-Lipschitz equivalent. If we denote the pull-back of $\NN_{\alpha,\lambda}$ through $\pi^\alpha_\lambda$  by $\mu^\alpha$, then the dimensional properties of $\mu^\alpha$ and $\NN_{\alpha,\lambda}$ are the same. Notice that in this case, $\Omega^\alpha$ is essentially a Markov shift with alphabet $\A_k$ and incidence matrix $A_{kk}$. It is not hard to see that
 \begin{equation}\label{idea-mu-asymp}
 \mu^\alpha([x|_n])\sim \frac{1}{\# \Omega^\alpha_n}\sim \frac{1}{q_n(\alpha)},
 \end{equation}
  where $q_n(\alpha)$ is the denominator of the $n$-th convergent of $\alpha.$
Thus $\mu^\alpha$ is essentially the measure of maximal entropy of this Markov shift.
It is well-known that $\mu^\alpha$ is exact-dimensional and the dimension is given by Young's formula:
$$
\dim_H \mu^\alpha=\frac{h(\mu^\alpha)}{-\left(\Psi_\lambda^\alpha\right)_\ast(\mu^\alpha)},
$$
where $h(\mu^\alpha)$ is the metric entropy of $\mu^\alpha$,   $\Psi_\lambda^\alpha:=\{\psi_{\lambda,n}^\alpha:n\ge0\}$ is the  geometric potential on $\Omega^\alpha$ defined by
	$$
	\psi_{\lambda,n}^\alpha(x):=\log |B^\alpha_{x|_n}(\lambda)|,
	$$
and $-\left(\Psi_\lambda^\alpha\right)_\ast(\mu^\alpha)$ is the Lyapunov exponent of $\Psi_\lambda^\alpha$ w.r.t. $\mu^\alpha.$

Next we explain why $\mu^\alpha$ is exact-dimensional.
Given $x\in \Omega^\alpha$, the local dimension of $\mu^\alpha$ at $x$  can be computed by
\begin{equation}\label{loc-dim-alpha}
  d_{\mu^\alpha}(x)=\lim_{n\to\infty}\frac{\log \mu^\alpha([x|_n])}{\log {\rm diam}([x|_n])}=\lim_{n\to\infty}\frac{-\log q_n(\alpha)}{\psi_{\lambda,n}^\alpha(x)}.
\end{equation}
It is known in this case that $\log q_n(\alpha)/n\to h(\mu^\alpha)=-\log \alpha.$
On the other hand,   one can show that $\Psi^\alpha_\lambda$ is almost-additive. By Kingman's ergodic theorem, for $\mu^\alpha$-a.e. $x$,
$$
\frac{\psi_{\lambda,n}^\alpha(x)}{n}\to\left(\Psi_\lambda^\alpha\right)_\ast(\mu^\alpha)
=\lim_{n\to\infty}\frac{1}{n}\int_{\Omega^\alpha} \psi^\alpha_{\lambda,n}d\mu^\alpha.
$$
Thus $\mu^\alpha$ is exact-dimensional.

Now we discuss the general frequency case. Fix $\alpha\in [0,1]\setminus\Q$, one can repeat all the constructions  above. In general, the situation become worse: at first, the inverse of $\pi^\alpha_\lambda$ is  not   Lipschitz any more; secondly, $\Omega^\alpha$ is not necessarily a Markov shift and $\mu^\alpha$ is not necessarily ergodic. However, since we only care about almost sure property, we can use the ergodicity of Gauss measure again to repair these. At this point, we will prove a geometric lemma--gap lemma, and use it to show that  for typical $\alpha$, the map $\pi^\alpha_\lambda$
 is ``almost bi-Lipschitz". Consequently, $\mu^\alpha$ and $\NN_{\alpha,\lambda}$ still have the same dimensional properties. One can further show that \eqref{idea-mu-asymp} still holds and one can still compute the local dimension of $\mu^\alpha$ by \eqref{loc-dim-alpha}. It is well-known that, for typical $\alpha$,
 $$
 \frac{\log q_n(\alpha)}{n}\to \gamma,
 $$
 where $\gamma$ is the L\'evy's constant.
 Thus to show that $\mu^\alpha$ is exact-dimensional for typical $\alpha$ and the dimension of $\mu^\alpha$  does not dependent on $\alpha,$ the remaining  thing is to check that there exists a constant $\mathscr L>0$ such that for $G$-a.e. $\alpha$ and $\mu^\alpha$- a.e. $x\in \Omega^\alpha,$
 $$
 \frac{\psi^\alpha_{\lambda,n}(x)}{n}\to -\mathscr L.
 $$

 This consideration motivates us to define a global symbolic space $\Omega$ and a measure $\n$ on $\Omega,$ which is related to entropy,  as follows:
 \begin{equation}\label{naive}
   \Omega:=\bigsqcup_{\alpha\in [0,1]\setminus \Q}\Omega^\alpha;\ \ \ \  \n:=\int_{\alpha\in [0,1]\setminus \Q}\mu^\alpha d G(\alpha).
 \end{equation}
Hopefully, $\Omega$ is a nice topological Markov shift and $\n$ is ergodic. Then by putting $\Psi_{\lambda}^\alpha$ together to form a global geometric potential $\Psi_\lambda$, and using the ergodicity of $\n$, we obtain the desired result for typical frequency.

 There are several technical difficulties. The first difficulty is that, by checking the definition of $\Omega^\alpha$ (see \eqref{Omega^alpha}) it is seen that the naive definition of $\Omega$ as in \eqref{naive} does not work since every sequence in $\Omega^\alpha$ starts with some special letter from $\T_0$, then after one shift, it is not in any $\Omega^\beta$. Indeed, the only reasonable definition of $\Omega$ is that: it is  a topological Markov shift with a full alphabet $\A$ which is the union of all $\A_n$ (see \eqref{def-A}) and an incidence matrix $A$ which is made of all the $A_{mn}$ in a right way (see \eqref{A-and-A-nm}). If we define $\Omega$ like that, it becomes a nice topological dynamical system, which has $([0,1]\setminus\Q,T,G)$ as its factor system through a natural projection map $\Pi:\Omega\to [0,1]\setminus\Q$.

  Now, one get a dynamical object: the fiber $\Omega_\alpha:=\Pi^{-1}(\alpha)$, which is different from $\Omega^\alpha$. A priori, it is not related to any spectrum. But it come as a surprise that there is a bijection between $\Omega_\alpha$  and $\Omega^{\check \alpha}$, where $\alpha=[a_1,a_2,\cdots]$ and $\check \alpha=[1,a_1,a_2,\cdots]$. Through this bijection, one can pull back the DOS $\NN_{\check \alpha,\lambda}$ to $\Omega_\alpha$ to obtain a kind of measure of maximal entropy $\n_\alpha$. Then one integrate the $\n_\alpha$ w.r.t. the Gauss measure $G$ to get a nice measure $\n$ on $\Omega.$
Once $\n$ is constructed, one can follow the line we have explained to derive the desired result.

  Up to now, we have solved the problem for   typical $\check\alpha$. To complete the proof, we need another geometric lemma--tail lemma, which says that if the expansions of $\alpha$ and $\beta$ have the same tail, then $\NN_{\alpha,\lambda}$ and $\NN_{\beta,\lambda}$
 have the same dimensional property.  By this lemma, we can relate the dimension property of $\Sigma_{\check \alpha,\lambda} $ to that of $\Sigma_{\alpha,\lambda} $ and conclude the proof.

\subsection{Further remarks}\
	
Given the recent result \cite{BBL}, one may expect that the method of the present paper can be applied to the small coupling regime. However, there are two technical difficulties to overcome. At first,  to apply the thermodynamical formalism, we heavily rely on  the bounded variation and covariation properties of Sturmian Hamiltonians established in \cite{LQW}, which are only available for large coupling for the moment. A more serious difficulty is that for $\lambda<4$, the covering structure become worse: the sub-bands in a father band may overlap each other. This will create essential difficulty for estimating the dimensions. Maybe a more promising approach  is by further developing the method used in \cite{Luna}.

 In Theorem \ref{main-dim-spectra}(3) and Theorem \ref{main-dim-dos}(3), we obtain the local Lipschitz regularity of  $D(\lambda)$ and $d(\lambda)$. Compare with Theorem A(3) and (6), it is reasonable to guess that both $D(\lambda)$ and $d(\lambda)$ are indeed real analytic. However, it seems that our method can hardly be adapted to deal with this problem. New idea is needed.

	At last, we say a few words about  notations.

In this paper, we use $\lhd$ and $\rhd$ to indicate the beginning and the end of a claim.

Throughout this paper, we write
\begin{equation*}\label{}
		\N:=\{1,2,3,\cdots\},\ \   \Z^+:=\{0\}\cup \N, \ \ \R^+:=[0,\infty),\ \ \Q^+:=\R^+\cap \Q.
	\end{equation*}

For two positive sequences $\{a_n:n\in \N\}$ and $\{b_n:n\in \N\}$,
   \begin{equation*}
     a_n\lesssim b_n\Leftrightarrow  a_n\le C b_n,\ \forall n,
   \end{equation*}
   where $C>1$ is a constant.
   \begin{equation*}
     a_n\sim b_n\Leftrightarrow a_n\lesssim b_n \text{ and } b_n\lesssim a_n.
   \end{equation*}

	Assume $X$ is a metric space and $\mu,\nu$ are two finite Borel measures on $X$. We say $\mu$ and $\nu$ are equivalent, denote by $\mu\asymp \nu$, if there exists a constant $C>1$ such that for any Borel set $B\subset X$, we have
	\begin{equation*}
		C^{-1}\nu(B)\le\mu(B)\le C\nu(B).
	\end{equation*}

As a final remark,  since later we mainly work on the symbolic space, we will substitute $\alpha$ to $a$, the continued fraction expansion of $\alpha$, to simplify the exposition. See especially the convention \eqref{convention}.
	
	The rest of the paper is organized as follows. In Section \ref{sec-preliminary}, we present the necessary materials which are needed for the proof of Theorem \ref{main-dim-spectra}. In Section \ref{sec-proof-main-spec}, we prove Theorem \ref{main-dim-spectra}. In Section \ref{sec-dim-fiber-symbolic}, we prove the symbolic version of Theorem \ref{main-dim-dos}.
In Section \ref{sec-geometric-lemma}, after stating two geometric lemmas, we finish the proof of Theorem \ref{main-dim-dos}. In Section \ref{sec-proof-tech}, we prove several technical results. In Section \ref{sec-proof-geo-lem}, we prove  two geometric lemmas. In Section \ref{sec-app}, we  provide a table of notations for the convenience of the readers.

\section{Preliminaries}\label{sec-preliminary}

	In this section, we prepare for the proof of Theorem \ref{main-dim-spectra}. At first, we review the continued fraction theory, with emphasis on the symbolic side. Then we recall  the covering structure and the coding of the spectrum. At last, we list several known results for  Sturmian Hamiltonians.

\subsection{Recall on the theory of continued fraction}\label{sec-continued-fra}\

We review briefly some aspects of the theory of continued fraction. For more details, see \cite{Bi,EW2011,IK}.

	From now on, we write  the set of irrationals in $[0,1]$ as
	\begin{equation}\label{def-I}
		\II:=[0,1]\setminus \Q.
	\end{equation}

	The  {\it Gauss map} $T: \mathbb I\to \II$ is defined by $T(x):=\{1/x\}$, where $\{x\}$ is the fractional part of $x$.
	
	Assume $\alpha\in \II$ has the continued fraction expansion
	\begin{equation}\label{cfe}
		\alpha=\dfrac{1}{a_1(\alpha)+ \dfrac{1}{a_2(\alpha)+ \dfrac{1}{\ddots}}}=[a_1(\alpha),a_2(\alpha),\cdots].
	\end{equation}
	It is classical  that $T(\alpha)=[a_2(\alpha),a_3(\alpha),\cdots].$
	
	The {\it Gauss measure} $G$ on $\II$ is defined by
	\begin{equation*}\label{gauss-meas}
		G(A):=\frac{1}{\log 2}\int_{A}\frac{dx}{1+x}.
	\end{equation*}
	It is seen that $G$ and the Lebesgue measure on $\II$ are equivalent, thus they have  the same null sets.
	It is well-known that $G$ is $T$-invariant and ergodic (see for example \cite[Theorem 3.7]{EW2011}).

Later in this paper, we will extensively work on symbolic space, so it is quite useful to present the symbolic representation of the system $(\II,T,G)$.
	
	At first, we recall  the definition  of the {\it  full shift}  over $\N$.  It is the topological dynamical system $(\N^\N, d, S)$, where $d$ is the metric:
\begin{equation*}
		d(a,b):=2^{-|a\wedge b|},
	\end{equation*}
where $a\wedge b$ denotes the common prefix of $a$ and $b$ and $|w|$ is the length of the word $w$;  $S:\N^\N\to\N^\N$ is the {\it shift map} defined by
$$S(a)=S((a_n)_{n\in\N}):=(a_{n+1})_{n\in\N}.$$
Later we often simplify the notations $S(a), S^n(a)$ to $Sa, S^na$.

For any $n\in\N$ and $\vec{a}=a_1a_2\cdots a_n\in\N^n$, define the {\it cylinder} $[\vec a]$ as
	\begin{equation*}
		[\vec{a}]:=\left\{a\in\N^\N :a|_n=\vec{a}\right\},
	\end{equation*}
where $a|_n$ is the prefix of $a$ of length $n$.
	
Now define a map $\Theta:\II\to \N^\N$ as
	\begin{equation}\label{def-Theta}
		\Theta(\alpha)=a(\alpha), \ \ \ \text{where }\ \  a(\alpha)=(a_n(\alpha))_{n\in \N},
	\end{equation}
	and $a_n(\alpha)$ is defined by \eqref{cfe}.
	The following property is standard:
	
	\begin{proposition}
		$\Theta:\II\to \N^\N$ is a topological conjugation between $(\II,T)$ and $(\N^\N,S)$. More precisely, $\Theta$ is a homeomorphism such that for any $\alpha\in \II$, we have
		$$
		\Theta\circ T(\alpha)=S\circ \Theta(\alpha).
		$$
	\end{proposition}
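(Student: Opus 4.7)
The plan is to verify the three claims---bijectivity, continuity in both directions, and the intertwining relation---by exploiting the standard facts about continued fraction expansions, which have already been invoked implicitly in \eqref{cfe} and the description of the Gauss map.

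First I would establish that $\Theta$ is a well-defined bijection. Well-definedness amounts to the fact that every $\alpha \in \II$ admits a unique infinite continued fraction expansion $[a_1(\alpha), a_2(\alpha), \ldots]$ with $a_n(\alpha) \in \N$; surjectivity and injectivity amount to the classical fact that the map sending $(a_n)_{n \in \N} \in \N^\N$ to the limit of the convergents $p_n/q_n$ is a bijection onto $\II$, where $p_n, q_n$ are defined by the recursions $p_n = a_n p_{n-1} + p_{n-2}$, $q_n = a_n q_{n-1} + q_{n-2}$ with the usual initial conditions. I would cite \cite{EW2011,IK} for both.

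Next I would verify the intertwining $\Theta \circ T = S \circ \Theta$. If $\alpha = [a_1(\alpha), a_2(\alpha), \ldots]$, then by definition $1/\alpha = a_1(\alpha) + [a_2(\alpha), a_3(\alpha), \ldots]$, so $T\alpha = \{1/\alpha\} = [a_2(\alpha), a_3(\alpha), \ldots]$. Hence $a_n(T\alpha) = a_{n+1}(\alpha)$ for all $n \in \N$, which is exactly $\Theta(T\alpha) = S(\Theta\alpha)$.

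For the topological part, I would use the classical fundamental-interval picture. For $\vec a = a_1 \cdots a_n \in \N^n$, set $J_{\vec a} := \Theta^{-1}([\vec a])$; this is (up to its irrational points) the fundamental interval of rank $n$ determined by $\vec a$, and it is well-known that its Lebesgue length is $1/(q_n(q_n + q_{n-1}))$, in particular tends to $0$ as $n \to \infty$ uniformly over $\vec a$ starting from a fixed prefix. Continuity of $\Theta$ then follows: if $\alpha, \beta \in \II$ share the first $n$ partial quotients, then both lie in $J_{a_1 \cdots a_n}$, so $|\alpha - \beta|$ is small whenever $n$ is large, and conversely if $|\alpha - \beta|$ is small they must share a long initial block of partial quotients (since distinct rank-$n$ fundamental intervals are separated). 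Continuity of $\Theta^{-1}$ is dual: if $d(a,b) = 2^{-|a \wedge b|}$ is small, then $|a \wedge b|$ is large, so $\Theta^{-1}(a)$ and $\Theta^{-1}(b)$ lie in a common small fundamental interval. The main (but still routine) technical step is the length estimate on fundamental intervals; I would simply cite it from \cite{Bi,EW2011,IK} rather than redo the convergent recursion. With bijectivity, continuity in both directions, and the conjugation relation verified, the proposition follows.
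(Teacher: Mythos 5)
The paper offers no proof of this proposition; it is explicitly labelled ``standard'' and the reader is pointed to references \cite{Bi,EW2011,IK}. Your proposal therefore does not have a paper proof to compare against, but it correctly fills in the expected argument: bijectivity and the intertwining relation from the continued fraction algorithm, continuity from the fundamental-interval picture.

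One phrase is worth tightening. You write that ``distinct rank-$n$ fundamental intervals are separated,'' but they are in fact adjacent---they tile $[0,1]$ and share rational endpoints. The statement you actually need is pointwise: a fixed irrational $\alpha$ lies in the \emph{interior} of its rank-$n$ fundamental interval $J_{a_1\cdots a_n}$ for every $n$ (since the endpoints $p_n/q_n$ and $(p_n+p_{n-1})/(q_n+q_{n-1})$ are rational), so every $\beta\in\II$ sufficiently close to $\alpha$ shares the first $n$ partial quotients. Equivalently, $\Theta^{-1}([\vec a]) = J_{\vec a}^\circ\cap\II$ is open in $\II$, so preimages of basic cylinders are open. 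With that correction the continuity argument for $\Theta$ is sound; the rest of the proposal is fine.
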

	
	Now we write the image of $G$ under $\Theta$ as
	\begin{equation*}
		\Gg:=\Theta_\ast(G).
	\end{equation*}
	Then $\Gg$ is invariant under $S$ and is also ergodic. We call $\Gg$ the {\it Gauss measure} on $\N^\N$. We summarize the above  consideration in the following commutative diagram:
	$$
	\xymatrix{
		(\II,G)\ar[d]_{\Theta} \ar[r]^{T}  &(\II,G)  \ar[d]^{\Theta}  \\
		(\mathbb{N}^{\mathbb{N}},\Gg) \ar[r]^{S} & (\mathbb{N}^{\mathbb{N}},\Gg)  }$$

	Next we recall the algorithm  of rational approximations of $\alpha\in\II$. Assume $\alpha=[a_1,a_2,\cdots]$.   Define  integer sequences $\{p_n(\alpha)\}_{n\geq-1}$ and $ \{q_n(\alpha)\}_{n\geq-1}$ recursively as
	\begin{eqnarray}\label{def-p-q-n}
		\begin{cases}
			p_{-1}(\alpha)=1,\ p_0(\alpha)=0, & p_{n+1}(\alpha)=a_{n+1}p_n(\alpha)+p_{n-1}(\alpha), \ (n\ge 0);\\
			q_{-1}(\alpha)=0,\ q_0(\alpha)=1, & q_{n+1}(\alpha)=a_{n+1}q_n(\alpha)+q_{n-1}(\alpha), \ (n\ge 0).
		\end{cases}
	\end{eqnarray}
	
	The fraction $p_n(\alpha)/q_n(\alpha)$ is called  the $n$-th {\it convergent}  of $\alpha$.

Assume $a=\Theta(\alpha)$. Later we always use the following convention:
	\begin{equation}\label{def-q-n-a}
		p_n(a):= p_n(\alpha);\ \ \ q_n(a):= q_n(\alpha).
	\end{equation}

\begin{remark}\label{q-n-vec-a}
  {\rm
  By \eqref{def-p-q-n}, it is seen that $p_n(a)$ and $q_n(a)$ only depend on the prefix $a|_n$ of $a$. Thus for any $\vec a\in \N^n$, we define
  \begin{equation*}
    p_k(\vec a):=p_k(a),\ \ q_k(\vec a):=q_k(a), \ \ \  k=-1,\cdots,n,
  \end{equation*}
  where $a\in \N^\N$ is such that $a|_n=\vec a.$
  }
\end{remark}
	
	The following two properties are well-known:
	\begin{lemma}\label{q-n}
		(1) For any $a\in \N^\N$ and $n,m\in \N$, we have
		\begin{equation*}\label{q-n-m}
			q_n(a)q_m(S^na)\le q_{n+m}(a)\le 2q_n(a)q_m(S^na).
		\end{equation*}
		
		(2) For any $\vec a\in\N^n$, we have $\Gg([\vec a])\sim q_n(\vec a)^{-2}.$
	\end{lemma}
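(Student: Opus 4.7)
The plan is to prove both parts by invoking the standard matrix encoding of the continuants, which turns the recursion \eqref{def-p-q-n} into matrix multiplication and so makes the interaction with the shift $S$ transparent.

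For Part (1), I would define $M(k):=\begin{pmatrix} k & 1 \\ 1 & 0 \end{pmatrix}$ for $k\in\N$ and verify by induction on $n$ (using \eqref{def-p-q-n}) that
$$M(a_1)M(a_2)\cdots M(a_n)=\begin{pmatrix} q_n(a) & q_{n-1}(a) \\ p_n(a) & p_{n-1}(a) \end{pmatrix}.$$
Splitting this product at position $n$ and identifying the tail $M(a_{n+1})\cdots M(a_{n+m})$ with the continuant matrix of $S^n a$ yields the identity
$$q_{n+m}(a)=q_n(a)\,q_m(S^n a)+q_{n-1}(a)\,p_m(S^n a).$$
Since every entry on the right is non-negative, the lower bound is immediate. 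For the upper bound I would use the elementary monotonicity $q_{n-1}(a)\le q_n(a)$ and $p_m(S^n a)\le q_m(S^n a)$, both of which follow inductively from \eqref{def-p-q-n} together with $a_j\ge 1$; these together give the factor $2$.

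For Part (2), I would combine a measure comparison with a geometric description of the cylinder. Via the conjugacy $\Theta$, the cylinder $[\vec a]\subset\N^\N$ corresponds to an interval $I_{\vec a}\subset\II$ whose endpoints are the rationals $p_n(\vec a)/q_n(\vec a)$ and $\bigl(p_n(\vec a)+p_{n-1}(\vec a)\bigr)/\bigl(q_n(\vec a)+q_{n-1}(\vec a)\bigr)$. The classical determinantal identity $p_n q_{n-1}-p_{n-1}q_n=(-1)^n$ then gives
$$|I_{\vec a}|=\frac{1}{q_n(\vec a)\bigl(q_n(\vec a)+q_{n-1}(\vec a)\bigr)},$$
which, combined with $q_n(\vec a)\le q_n(\vec a)+q_{n-1}(\vec a)\le 2q_n(\vec a)$, is comparable to $q_n(\vec a)^{-2}$. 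Since the Radon--Nikodym derivative $\frac{1}{(\log 2)(1+x)}$ of $G$ with respect to Lebesgue measure on $[0,1]$ is bounded between $\frac{1}{2\log 2}$ and $\frac{1}{\log 2}$, the Gauss and Lebesgue measures are comparable on $\II$, and pushing forward via $\Theta$ gives $\Gg([\vec a])\sim q_n(\vec a)^{-2}$, as required.

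Both assertions are classical facts from continued fraction theory, so I do not foresee any genuine obstacle; the only care needed is in fixing the matrix convention consistent with \eqref{def-p-q-n} and in tracking the constant $2$ in the upper bound of Part (1).
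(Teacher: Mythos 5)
Your proof is correct. The paper does not supply its own argument for this lemma --- it simply cites \cite[Lemma 4.1]{Q2} for part (1) and \cite[Eq.~(3.23)]{EW2011} for part (2) --- and the matrix-product encoding of the continuant recursion for (1), together with the interval description of the cylinder and the determinant identity $p_nq_{n-1}-p_{n-1}q_n=(-1)^n$ for (2), is precisely the standard route those references take; the bounds $q_{n-1}\le q_n$ and $p_m\le q_m$ (for $m\ge 0$) that you invoke for the factor $2$ both follow as you say from the recursion \eqref{def-p-q-n} with $a_j\ge 1$.
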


See for example \cite[Lemma 4.1]{Q2} for (1) and \cite[Eq. (3.23)]{EW2011} for (2).

	At last, we mention two famous constants related to  continued fraction theory, see for example \cite[Corollary 3.8]{EW2011}. The result is formulated in the system $(\N^\N,S,\Gg)$.
	
	\begin{theorem}[\cite{YK,LS}]\label{Levy-Khinchin}
		For $\Gg$-a.e. $a\in \N^\N$, we have
		\begin{equation}\label{L-K}
			\frac{\log q_n(a)}{n}\to\gamma:=\frac{\pi^2}{12\log2};\ \ \ \left(\prod_{i=1}^{n}a_i\right)^{1/n}\to \kappa:=2.68\cdots.
		\end{equation}
		\end{theorem}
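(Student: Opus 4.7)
The statement is the classical L\'evy--Khinchin theorem, and the natural route is via Birkhoff's ergodic theorem applied to the system $(\II,T,G)$, which is ergodic as recalled above; transporting through the conjugacy $\Theta$ then gives the symbolic statement. The plan is to reduce each of the two limits to a Birkhoff average of an explicit $L^1(G)$ function on $\II$, identify the integral, and evaluate.

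First I would dispose of the Khinchin limit. Let $f(\alpha):=\log a_1(\alpha)$. Since $a_1(\alpha)=k$ precisely on $(1/(k+1),1/k]$, we have $G\{f=\log k\}=\frac{1}{\log 2}\log\frac{(k+1)^2}{k(k+2)}\asymp k^{-2}$, so $f\in L^1(G)$. Birkhoff applied to $f$ over $(\II,T,G)$ yields, for $G$-a.e.\ $\alpha$,
\[
\frac1n\sum_{i=1}^n\log a_i(\alpha)=\frac1n\sum_{k=0}^{n-1}f(T^k\alpha)\;\longrightarrow\;\int_\II f\,dG=\frac{1}{\log 2}\sum_{k=1}^\infty\log k\cdot\log\frac{(k+1)^2}{k(k+2)}=\log\kappa,
\]
which, exponentiated and transported by $\Theta$, gives the second limit.

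For the L\'evy limit the integrand is $f(\alpha):=-\log\alpha$, and the key is the standard identity
\[
\prod_{k=0}^{n-1} T^k\alpha\;=\;\bigl|q_n(a)\alpha-p_n(a)\bigr|,
\]
combined with the best-approximation bound $\tfrac{1}{2q_{n+1}(a)}\le |q_n(a)\alpha-p_n(a)|\le \tfrac{1}{q_{n+1}(a)}$ and the elementary inequality $q_n(a)\le q_{n+1}(a)\le (a_{n+1}+1)q_n(a)$. Taking logarithms gives
\[
-\sum_{k=0}^{n-1}\log T^k\alpha\;=\;\log q_n(a)+O\bigl(\log a_{n+1}+1\bigr).
\]
Since $f\in L^1(G)$ (because $\int_0^1(-\log\alpha)\frac{d\alpha}{(1+\alpha)\log 2}$ converges) and the Khinchin step already shows $\log a_{n+1}=o(n)$ for $G$-a.e.\ $\alpha$, Birkhoff's theorem applied to $f$ yields, $G$-a.e.,
\[
\frac{\log q_n(a)}{n}\;\longrightarrow\;\int_\II\frac{-\log\alpha}{(1+\alpha)\log 2}\,d\alpha\;=\;\frac{1}{\log 2}\sum_{k=1}^\infty\frac{(-1)^{k-1}}{k^2}\;=\;\frac{\pi^2}{12\log 2}=\gamma,
\]
where the middle equality follows by expanding $1/(1+\alpha)=\sum(-1)^k\alpha^k$ and integrating termwise. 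Transporting via $\Theta$ and intersecting the two full-measure sets yields the theorem.

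The only nonroutine step is the identity $\prod_{k=0}^{n-1}T^k\alpha=|q_n(a)\alpha-p_n(a)|$, which is not recalled in the excerpt but follows by induction from $T^{k-1}\alpha=(q_{k-1}(a)\alpha-p_{k-1}(a))/(q_k(a)\alpha-p_k(a))$; this in turn comes from the matrix factorization of the convergents already implicit in \eqref{def-p-q-n}. The integrability of $-\log\alpha$ and the control of the error term $\log a_{n+1}=o(n)$ are the only delicate technical points, and both are handled precisely by the companion Khinchin limit.
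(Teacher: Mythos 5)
The paper does not prove this theorem; it cites it as a classical result (Khinchin and L\'evy) and points to \cite[Corollary~3.8]{EW2011}. Your proposal is the standard ergodic-theoretic proof of exactly the type given there — Birkhoff for $\log a_1$ to get the Khinchin limit, and Birkhoff for $-\log\alpha$ combined with the cocycle identity for $q_n$ to get the L\'evy limit — so in substance you are reproducing the reference proof, and the strategy is correct.

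One small indexing error: the correct identity is $\prod_{k=0}^{n-1}T^k\alpha=\bigl|q_{n-1}(a)\alpha-p_{n-1}(a)\bigr|$, not $\bigl|q_n(a)\alpha-p_n(a)\bigr|$. (From $\alpha=\dfrac{p_n+p_{n-1}T^n\alpha}{q_n+q_{n-1}T^n\alpha}$ one gets $T^n\alpha=-\dfrac{q_n\alpha-p_n}{q_{n-1}\alpha-p_{n-1}}$, and telescoping from $k=0$ to $n-1$ ends at index $n-1$ using $q_{-1}\alpha-p_{-1}=-1$; your stated recursion $T^{k-1}\alpha=(q_{k-1}\alpha-p_{k-1})/(q_k\alpha-p_k)$ is also off by one and would have modulus greater than $1$.) This does not damage the argument — in fact it makes it cleaner: since $\tfrac{1}{q_n+q_{n-1}}\le|q_{n-1}\alpha-p_{n-1}|\le\tfrac{1}{q_n}$, one gets $-\sum_{k=0}^{n-1}\log T^k\alpha=\log q_n(a)+O(1)$ directly, so the appeal to $\log a_{n+1}=o(n)$ is not even needed for the L\'evy limit. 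The integrability of $-\log\alpha$ and the evaluation $\int_0^1\frac{-\log\alpha}{(1+\alpha)\log 2}\,d\alpha=\frac{1}{\log 2}\sum_{j\ge1}\frac{(-1)^{j-1}}{j^2}=\frac{\pi^2}{12\log 2}$ are correct.
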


We have mentioned in the introduction that $\gamma$ is the   Levy's constant. Another constant  $\kappa$ is called  {\it Khinchin's constant}.

Now we pick up a $\Gg$-full measure set $\F_1$ as our initial set of frequencies. All the other $\Gg$-full measure sets appeared later are subsets of $\F_1$. We will see that, the spectral properties of the operator with frequency in $\F_1$ is already good enough (see Proposition \ref{Bowen-formula}).

\begin{proposition}\label{F-proposition}
There exists a $\Gg$-full measure set $\F_1\subset \N^\N$ such that for any $a\in\mathbb{F}_1$, the following hold:

(1) The following limits exist:
		\begin{equation*}
				\lim\limits_{n\to\infty}\frac{\log q_n(a)}{n}=\gamma;\ \ \ \lim_{n\to\infty}\left(\prod_{i=1}^{n}a_i\right)^{1/n}=\kappa;\ \ \lim\limits_{n\to\infty}\frac{\log a_n}{n}=0.
			\end{equation*}

(2) There  exists a sequence $n_k\uparrow \infty$, such that $a_{n_k+i}=1,\ i=1,\cdots,7$ for any $k\in \N$.
\end{proposition}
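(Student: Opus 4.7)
I would construct $\F_1$ as the intersection of three $\Gg$-full measure subsets of $\N^\N$, each one handling a single conclusion of the proposition; each piece is essentially routine, relying only on ergodicity of $\Gg$, Borel--Cantelli, and the explicit form of the Gauss density.

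The first two limits in (1) are exactly the L\'evy--Khinchin theorem (Theorem \ref{Levy-Khinchin}) transported through the conjugacy $\Theta$: they hold on a $\Gg$-full measure set $\F_1^{(a)}\subset\N^\N$. For the third limit $\log a_n/n\to 0$, I would run a first Borel--Cantelli argument based on the tail distribution of a single coordinate. Since $\Gg$ is $S$-invariant and $\Theta$ sends the cylinder $\{a_1=k\}$ to $(1/(k+1),1/k)\cap\II$,
\begin{equation*}
\Gg\{a:a_n=k\}=\Gg\{a:a_1=k\}=\frac{1}{\log 2}\log\frac{(k+1)^2}{k(k+2)}\lesssim \frac{1}{k^2},
\end{equation*}
so $\Gg\{a:a_n>M\}\lesssim 1/M$. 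For any fixed $\varepsilon>0$ the events $\{a_n>e^{\varepsilon n}\}$ thus have summable $\Gg$-measure, and Borel--Cantelli produces a $\Gg$-full measure set on which $a_n\le e^{\varepsilon n}$ eventually; hence $\limsup_n\log a_n/n\le\varepsilon$ a.s. Intersecting these sets over $\varepsilon=1/m$ with $m\in\N$ and using $a_n\ge 1$ yields a $\Gg$-full measure set $\F_1^{(b)}$ on which $\log a_n/n\to 0$.

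For claim (2), put $B:=\{a\in\N^\N: a_1=\cdots=a_7=1\}$. This cylinder has positive $\Gg$-measure (Lemma \ref{q-n}(2) gives $\Gg([\vec a])\sim q_7(\vec a)^{-2}$, which is bounded away from $0$ for $\vec a=(1,\ldots,1)$). Since $(\N^\N,S,\Gg)$ is ergodic, Birkhoff's theorem applied to $\mathbf{1}_B$ furnishes a $\Gg$-full measure set $\F_1^{(c)}$ on which
\begin{equation*}
\frac{1}{N}\sum_{n=0}^{N-1}\mathbf{1}_B(S^n a)\longrightarrow \Gg(B)>0.
\end{equation*}
In particular, for every $a\in\F_1^{(c)}$ there are infinitely many $n$ with $S^n a\in B$, i.e., with $a_{n+1}=\cdots=a_{n+7}=1$; enumerating them in increasing order produces the required sequence $n_k\uparrow\infty$.

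Taking $\F_1:=\F_1^{(a)}\cap\F_1^{(b)}\cap\F_1^{(c)}$ gives the claimed full measure set with all properties simultaneously. There is no real obstacle here; the only point needing a brief computation is the tail bound $\Gg\{a_n>M\}\lesssim 1/M$ that feeds the Borel--Cantelli step for the sub-exponential growth of $a_n$.
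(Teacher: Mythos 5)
Your proposal is correct and follows the same blueprint as the paper: invoke Theorem \ref{Levy-Khinchin} for the L\'evy and Khinchin limits in (1), apply Birkhoff's ergodic theorem to $\chi_{[1^7]}$ for (2), and take the intersection of the resulting $\Gg$-full measure sets. The one place you deviate is the third limit $\log a_n/n\to 0$: you derive it independently by a Borel--Cantelli argument from the tail bound $\Gg\{a_n>M\}\lesssim 1/M$, whereas the paper simply folds it into Proposition \ref{F-proposition}(1) as a consequence of Theorem \ref{Levy-Khinchin}. Strictly, the third limit is not stated in Theorem \ref{Levy-Khinchin}, but it does follow immediately from the Khinchin limit by a Ces\`aro-difference argument: since $\frac{1}{n}\sum_{i=1}^n\log a_i\to\log\kappa$, one has
\begin{equation*}
\frac{\log a_n}{n}=\frac{1}{n}\sum_{i=1}^{n}\log a_i-\frac{n-1}{n}\cdot\frac{1}{n-1}\sum_{i=1}^{n-1}\log a_i\longrightarrow\log\kappa-\log\kappa=0.
\end{equation*}
Your Borel--Cantelli route is self-contained and gives slightly more (it does not rely on the convergence of the geometric mean), but the Ces\`aro route the paper implicitly uses is shorter given that Theorem \ref{Levy-Khinchin} is already in hand. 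Both are fine; the proof of (2) and the overall assembly are identical to the paper's.
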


\begin{proof}
By Theorem \ref{Levy-Khinchin}, there exists a $\Gg$-full measure set $\mathbb H_1\subset \N^\N$ such that for any $a\in \mathbb H_1$, (1) holds.
Since $\Gg$ is ergodic, there exists a $\Gg$-full measure set $\mathbb H_2\subset \N^\N$ such that for any $a\in \mathbb H_2,$
  \begin{equation*}
\lim\limits_{n\to\infty}\frac{1}{n}\sum_{k=0}^{n-1}\chi_{[1^{7}]}(S^ka)=
\int_{\N^\N}\chi_{[1^7]}d\Gg=\Gg([1^{7}])>0,
\end{equation*}
 consequently, (2) holds. Now take $\F_1:=\mathbb H_1\cap \mathbb H_2.$	
\end{proof}

\subsection{The covering structure of the spectrum.}\

Since later we almost only work with the continued fraction expansion of the frequency, form now on we  take the following convention:
	
	\noindent {\bf Convention A}: {\it assume $\alpha\in \II$ and $a=\Theta(\alpha)\in\N^\N$. Then we write
		\begin{equation}\label{convention}
			\Sigma_{a,\lambda}:=\Sigma_{\alpha,\lambda};\ \ \  \NN_{a,\lambda}:=\NN_{\alpha,\lambda};\ \ \ H_{a,\lambda,\theta}:=H_{\alpha,\lambda,\theta}.
	\end{equation}}
	We will use this convention  throughout this paper without further mention.
	
	\vspace{1ex}
	Follow \cite{LW,R}, we describe  the covering structure of the spectrum $\Sigma_{a,\lambda}$.
	
	For $a\in \N^\N$, write $\alpha:=\Theta^{-1}(a)$.  The {\it standard} Strumian sequence $\{\mathscr S_k(a):k\in \Z\}$ is defined by
	\begin{equation*}
		\mathscr S_k(a):=\chi_{[1-\alpha,1)}(k\alpha\pmod1),\ \ \ (k\in\Z).
	\end{equation*}
	
	The standard Sturmian sequence has the following nice combinatorial property:
	\begin{lemma}\label{basic-sturm}
		If $a,b\in \N^\N$ satisfy $a|_n=b|_n$, then
		$$
		\mathscr S_k(a)=\mathscr S_k(b),\ \ \forall \ 1\le k\le q_n(a).
		$$
	\end{lemma}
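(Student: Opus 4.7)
The plan is to reformulate the claim as a rigidity statement for the map $x \mapsto \lfloor kx\rfloor$ on continued-fraction cylinders in $\II$, and then to invoke the classical Farey / Stern-Brocot description of the rationals inside such cylinders. The argument below is meaningful for $n \ge 1$; for $n = 0$ the hypothesis $a|_0 = b|_0$ is vacuous while $\mathscr S_1$ genuinely depends on $a_1$, so $n \ge 1$ is implicit.

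Set $\alpha := \Theta^{-1}(a)$. First I would unfold the definition to the identity
\[
\mathscr S_k(a) \;=\; \lfloor (k+1)\alpha\rfloor - \lfloor k\alpha\rfloor,
\]
which holds because $\{k\alpha\} \in [1-\alpha,1)$ is equivalent to $\{k\alpha\}+\alpha \ge 1$, i.e., to the integer part of $k\alpha$ jumping by one when $k$ is replaced by $k+1$. The same formula applies with $b$ in place of $a$. Hence it suffices to show that for every $1 \le k \le q_n(a)+1$, the function $x \mapsto \lfloor kx\rfloor$ is constant on $I_{\vec a}\cap \II$, where $\vec a := a|_n$ and $I_{\vec a} := \Theta^{-1}([\vec a])$ is the cylinder interval with endpoints $p_n/q_n$ and $(p_n+p_{n-1})/(q_n+q_{n-1})$.

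The function $x \mapsto \lfloor kx\rfloor$ is piecewise constant with jumps exactly at the rationals $x = m/k$, $m \in \Z$. So the only thing to check is that the open interval $I_{\vec a}$ contains no rational $m/k$ with $1 \le k \le q_n+1$. Now the two endpoints of $I_{\vec a}$ are consecutive Farey fractions, because $|p_n q_{n-1} - p_{n-1} q_n| = 1$ and one verifies the same coprimality for $(p_n+p_{n-1})/(q_n+q_{n-1})$. By the Stern-Brocot mediant property, every rational strictly between two Farey neighbors $p/q$ and $p'/q'$ has lowest-terms denominator $\ge q+q'$; hence every rational in the interior of $I_{\vec a}$ has lowest-terms denominator $\ge 2q_n+q_{n-1}$. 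Writing $m/k = p/q$ in lowest terms, $q \mid k$, so
\[
k \;\ge\; q \;\ge\; 2q_n+q_{n-1} \;=\; q_n + (q_n+q_{n-1}) \;\ge\; q_n + 2 \;>\; q_n+1,
\]
contradicting the range of $k$. This rules out any jump of $x \mapsto \lfloor kx\rfloor$ inside $I_{\vec a}$ for $k$ in the required range, and the lemma follows.

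I do not anticipate any real obstacle: once the identity $\mathscr S_k(a) = \lfloor (k+1)\alpha\rfloor - \lfloor k\alpha\rfloor$ is written down, the remaining step is an elementary application of the mediant bound. An equivalent route would be an induction on $n$ via the substitutive description of Sturmian sequences by standard words $s_0 = 0$, $s_1 = 0^{a_1-1}1$, $s_{n+1} = s_n^{a_{n+1}} s_{n-1}$, which have length $q_n(a)$, depend only on $a|_n$, and coincide with the prefix of length $q_n(a)$ of the Sturmian sequence; but the Farey argument above avoids the combinatorial bookkeeping.
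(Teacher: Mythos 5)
Your argument is correct (for $n\ge 1$), and it is a genuinely different and more self-contained route than the paper's, which simply cites \cite{BIST} (equation~(6) and Lemma~1~a), essentially the recursive description of standard Sturmian words $s_0,\,s_1=0^{a_1-1}1,\,s_{n+1}=s_n^{a_{n+1}}s_{n-1}$, whose length is $q_n$ and which prefix-determine the Sturmian sequence) together with Remark~\ref{q-n-vec-a}. Your reduction via $\mathscr S_k(\alpha)=\lfloor(k+1)\alpha\rfloor-\lfloor k\alpha\rfloor$, the identification of the cylinder $I_{\vec a}$ as the interval between the Farey neighbours $p_n/q_n$ and $(p_n+p_{n-1})/(q_n+q_{n-1})$, and the mediant denominator bound $\ge 2q_n+q_{n-1}>q_n+1$ is complete and checks out: the Farey-neighbour property follows from $|p_nq_{n-1}-q_np_{n-1}|=1$, and for $n\ge 1$ one has $q_n,q_{n-1}\ge 1$ so $2q_n+q_{n-1}\ge q_n+2$, which is what you need. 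The gain is a short, self-contained proof avoiding the combinatorics of standard words; what it loses relative to the BIST route is that the latter simultaneously gives the stronger structural fact that the prefix of length $q_n$ \emph{is} the standard word $s_n$, which is used repeatedly in the Sturmian literature. Your caveat about $n=0$ is well spotted: $\mathscr S_1(\alpha)=1$ iff $a_1=1$, so the literal $n=0$ case is false if $a|_0$ is read as the empty constraint; the paper implicitly restricts to $n\ge 1$ (and its application in Remark~\ref{B-n}(ii) covers $n=0$ trivially since $\B_0^a(\lambda)$ does not depend on $a$).
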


\begin{proof}
  It follows directly from \cite{BIST} equation (6),  \cite{BIST} Lemma 1 a)  and Remark \ref{q-n-vec-a}.
\end{proof}
	
	Recall that the Sturmian potential is given by
	$v_k=\lambda\chi_{[1-\alpha,1)}(k\alpha+\theta\pmod 1).$
	Since $\Sigma_{a,\lambda}$ is independent of the phase $\theta$, in the rest of the paper we will take $\theta=0.$ So in this case we have
	$v_k=\lambda \mathscr S_k(a).$

	For any $n\in \N$ and $ E,\lambda\in\mathbb{R}$, the {\it transfer matrix} $M_n(E;\lambda)$ over $q_n(a)$ sites is defined by
	$$
	{\mathbf M}_n(E;\lambda):=
	\left[\begin{array}{cc}E-v_{q_n(a)}&-1\\ 1&0\end{array}\right]
	\left[\begin{array}{cc}E-v_{q_n(a)-1}&-1\\ 1&0\end{array}\right]
	\cdots
	\left[\begin{array}{cc}E-v_1&-1\\ 1&0\end{array}\right].
	$$
	By convention we  define
	$$
	\begin{array}{l}
		{\mathbf M}_{-1}(E;\lambda):= \left[\begin{array}{cc}1&-\lambda\\
			0&1\end{array}\right]
	\end{array}
	\ \ \ \text{ and }\ \ \
	\begin{array}{l}
		{\mathbf M}_{0}(E;\lambda):= \left[\begin{array}{cc}E&-1\\
			1&0\end{array}\right].
	\end{array}
	$$
	For $n\ge0$ and $p\ge-1$, define  trace polynomials and related  periodic approximations of the spectrum as
	\begin{equation*}\label{trace-polynomial}
		h_{(n,p)}(E;\lambda):={\rm tr} ({\mathbf M}_{n-1}(E;\lambda) {\mathbf M}_n^p(E;\lambda)) \ \ \text{and }\ \  \sigma_{(n,p)}(\lambda):=\{E\in\mathbb{R}:|h_{(n,p)}(E;\lambda)|\leq2\}.
	\end{equation*}
	The set  $\sigma_{(n,p)}(\lambda)$ is made of finitely many  disjoint intervals.
	Moreover, for any $n\ge 0,$
	\begin{equation*}\label{covering}
		\sigma_{(n+2,0)}(\lambda)\cup\sigma_{(n+1,0)}(\lambda)\subset
		\sigma_{(n+1,0)}(\lambda)\cup\sigma_{(n,0)}(\lambda);\ \ \ \Sigma_{a,\lambda}=\bigcap_{n\ge0}\left(\sigma_{(n+1,0)}(\lambda)
		\cup\sigma_{(n,0)}(\lambda)\right).
	\end{equation*}
	Each interval of  $\sigma_{(n,p)}(\lambda)$ is called a  {\em band}. Assume
	$B$ is a band of $ \sigma_{(n,p)}(\lambda)$, then   $h_{(n,p)}(\cdot;\lambda)$ is monotone on $B$ and $h_{(n,p)}(B;\lambda)=[-2,2].$ We call $h_{(n,p)}(\cdot;\lambda)$ the {\em generating polynomial} of $B$ and denote it by $h_B(\cdot;\lambda):=h_{(n,p)}(\cdot;\lambda)$.
	
	Note that $\{\sigma_{(n+1,0)}(\lambda)\cup\sigma_{(n,0)}(\lambda):n\ge 0\}$ forms a decreasing family of coverings  of $\Sigma_{a,\lambda}$. However there are some repetitions between $\sigma_{(n+1,0)}(\lambda)\cup\sigma_{(n,0)}(\lambda)$
	and $\sigma_{(n+2,0)}(\lambda)\cup\sigma_{(n+1,0)}(\lambda)$. When $\lambda>4,$ Raymond observed that
	it is possible to choose a covering of $\Sigma_{a,\lambda}$ elaborately such that one can get rid of these repetitions. Now we  describe  this special covering.
	
	\begin{definition}[\cite{LW,R}]
		For $\lambda>4$ and  $n\ge0$, define three types of bands as:
		
		$(n,\one)$-type band: a band of $\sigma_{(n,1)}(\lambda)$ contained in a
		band of $\sigma_{(n,0)}(\lambda)$;
		
		$(n,\two)$-type band: a band of $\sigma_{(n+1,0)}(\lambda)$ contained
		in a band of $\sigma_{(n,-1)}(\lambda)$;
		
		$(n,\three)$-type band: a band of $\sigma_{(n+1,0)}(\lambda)$ contained
		in a band of $\sigma_{(n,0)}(\lambda)$.
	\end{definition}
	
	All three types of bands actually occur and they are disjoint. These bands are called  {\em spectral generating bands} of order $n$.
For any $n\ge0$, define
	\begin{equation}\label{def-B-n}
			\B_n^a(\lambda):=\{B: B \text{ is a spectral generating band of order } n\}.
	\end{equation}

	The basic covering structure of $\Sigma_{a,\lambda}$ is described in this proposition:
	\begin{proposition}[\cite{LW,R}]\label{basic-struc}
		Fix $a\in \N^\N $ and  $\lambda>4$. Define $\B_n^a(\lambda)$   as above.
		Then
		
		(1) The bands in $\B_n^a(\lambda)$ are disjoint and
		\begin{equation*}\label{diameter-to-0}
			\max\{|B|: B\in \B_n^a(\lambda)\}\to 0, \ \ (n\to\infty).
		\end{equation*}
			
		(2) For any $n\ge0$, we have
		\begin{equation*}
			\sigma_{(n+2,0)}(\lambda)\cup\sigma_{(n+1,0)}(\lambda)\subset
			\bigcup_{B\in\B_n^a(\lambda)}B
			\subset \sigma_{(n+1,0)}(\lambda)\cup\sigma_{(n,0)}(\lambda).
		\end{equation*}
		Consequently, $\{\B_n^a(\lambda):n\ge0\}$ are nested and $\Sigma_{a,\lambda}=\bigcap_{n\ge0}\bigcup_{B\in\B_n^a(\lambda)}B.$
			
		(3) Any $(n,\one)$-type band in $\B_n^a(\lambda)$ contains only one band in $\B_{n+1}^a(\lambda)$, which is of  $(n+1,\two)$-type.

		(4) Any $(n,\two)$-type band in $\B_n^a(\lambda)$ contains  $2a_{n+1}+1$ bands in $\B_{n+1}^a(\lambda)$, $a_{n+1}+1$ of which are of  $(n+1,\one)$-type and $a_{n+1}$ of which are of  $(n+1,\three)$-type. Moreover,   the $(n+1,\one)$-type bands interlace the $(n+1,\three)$-type bands.

		(5) Any $(n,\three)$-type band in $\B_n^a(\lambda)$ contains  $2a_{n+1}-1$ bands in $\B_{n+1}^a(\lambda)$,
		$a_{n+1}$ of which are of  $(n+1,\one)$-type and $a_{n+1}-1$ of which are of  $(n+1,\three)$-type. Moreover,   the $(n+1,\one)$-type bands interlace  the $(n+1,\three)$-type bands.
	\end{proposition}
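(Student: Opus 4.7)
The proof is essentially Raymond's construction; the starting point is the combinatorial structure of the Sturmian sequence (Lemma \ref{basic-sturm}), which after counting occurrences of the two letters gives the matrix recursion $\mathbf M_{n+1}(E;\lambda)=\mathbf M_{n-1}(E;\lambda)\,\mathbf M_n(E;\lambda)^{a_{n+1}}$ together with the initial relations $\mathbf M_1=\mathbf M_0^{a_1}\mathbf M_{-1}$ (this is where the convention for $\mathbf M_{-1}$ and $\mathbf M_0$ is used). I would begin by taking this matrix recursion as the foundation of everything that follows.

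From the SL$(2,\mathbb R)$ identity $\mathrm{tr}(AB)+\mathrm{tr}(AB^{-1})=\mathrm{tr}(A)\mathrm{tr}(B)$ and Chebyshev-type formulas $\mathrm{tr}(M^p)=2\,U_p(\mathrm{tr}(M)/2)-2\,U_{p-2}(\mathrm{tr}(M)/2)$, the matrix recursion translates into polynomial recursions among the traces $h_{(n,p)}$. In particular, on a band $B\in\sigma_{(n,0)}(\lambda)$, where $|\mathrm{tr}\,\mathbf M_n|\le 2$, one may write $\mathrm{tr}\,\mathbf M_n=2\cos\phi$ with $\phi$ a monotone parameter on $B$; then $\mathrm{tr}(\mathbf M_n^p)=2\cos(p\phi)$ so all traces of powers of $\mathbf M_n$ become Chebyshev images. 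This gives explicit control over how the zero sets of $|h_{(n+1,p)}(\cdot;\lambda)|=2$ are distributed inside the ancestor bands.

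Next I would verify the five claims by induction on $n$, using the large-coupling hypothesis $\lambda>4$ at each step. For (1) and (2), monotonicity of each $h_B(\cdot;\lambda)$ on its band $B$ plus the inclusion $\sigma_{(n+2,0)}\cup\sigma_{(n+1,0)}\subset\sigma_{(n+1,0)}\cup\sigma_{(n,0)}$ implies the nesting; disjointness of the three band types and the length decay $|B|\to 0$ follow from the strict monotonicity combined with the fact that the derivatives of $h_{(n,p)}$ grow exponentially in $n$ when $\lambda>4$. For (3), on an $(n,\one)$-band the trace $\mathrm{tr}\,\mathbf M_n$ equals $\pm 2$ at the endpoints and $0$ somewhere inside; one checks via the trace recursion that $h_{(n+1,0)}=\mathrm{tr}(\mathbf M_{n-1}\mathbf M_n^{a_{n+1}+1})$ has exactly one band of the required $(n+1,\two)$-type inside it. For (4) and (5), one parametrizes $\mathrm{tr}\,\mathbf M_n=2\cos\phi$ on the host band and writes $h_{(n+1,0)}$ explicitly in terms of $\phi$; the count $2a_{n+1}+1$ (resp.\ $2a_{n+1}-1$) then drops out as the number of oscillations of a Chebyshev polynomial of the corresponding degree, and the interlacing of $(n+1,\one)$- and $(n+1,\three)$-bands is automatic because successive roots of Chebyshev polynomials interlace.

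The main obstacle, and the place where the hypothesis $\lambda>4$ is indispensable, is proving that the sub-bands produced by the Chebyshev calculation are actually disjoint and of the declared types (i.e.\ that the $(n+1,\two)$-bands lie inside $(n,\one)$-bands, that both kinds of children of $(n,\two)$-bands are of the claimed types, and that no unwanted accidental crossings occur between $h_{(n+1,0)}$ and $h_{(n,0)}$). For $\lambda>4$ one obtains a separation estimate of the form $|h_{(n,0)}(E;\lambda)|>2+c(\lambda)$ on every gap of $\sigma_{(n,0)}(\lambda)$ with $c(\lambda)>0$, which rules out overlap between band types and completes the induction. Everything else is bookkeeping.
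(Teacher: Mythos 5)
The paper does not prove this proposition: it is stated as Proposition~\ref{basic-struc} with the citation [\cite{LW,R}], so the paper imports it from Raymond's preprint and the Liu--Wen paper rather than giving its own argument. Your sketch is a faithful reconstruction of the approach taken in those references: you start from the word-level decomposition (hence the matrix recursion $\mathbf M_{n+1}=\mathbf M_{n-1}\mathbf M_n^{a_{n+1}}$, which is consistent with the paper's definition $h_{(n,p)}=\mathrm{tr}(\mathbf M_{n-1}\mathbf M_n^p)$), pass to trace recursions, parametrize $\mathrm{tr}\,\mathbf M_n=2\cos\phi$ on a band, and then extract the band counts and the interlacing from the oscillations of the Chebyshev images. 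Identifying the hypothesis $\lambda>4$ with a uniform separation estimate that rules out simultaneous smallness of consecutive traces is exactly the right point to emphasize --- it is the load-bearing input of Raymond's argument and is what makes the three types of spectral generating bands well-defined and disjoint.

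Two small notes. Your Chebyshev identity has an extra factor of $2$: for an $\mathrm{SL}(2,\R)$ matrix one has $\mathrm{tr}(M^p)=U_p(\mathrm{tr}\,M/2)-U_{p-2}(\mathrm{tr}\,M/2)$, equivalently $2T_p(\mathrm{tr}\,M/2)$, not $2U_p-2U_{p-2}$; this is immaterial to the structure of the argument but worth fixing. And ``everything else is bookkeeping'' is somewhat generous to yourself: the verification that every band of $\sigma_{(n+1,0)}$ lands in exactly one of the three declared configurations relative to $\sigma_{(n,0)}$ and $\sigma_{(n,-1)}$ (and that no degenerate overlaps occur) is the bulk of the work in Raymond's preprint, and in \cite{LW} it occupies a nontrivial inductive analysis; the counts $2a_{n+1}+1$ and $2a_{n+1}-1$ in items (4) and (5) come precisely from tracking where $h_{(n+1,p)}$ oscillates relative to the host band's parameter $\phi$, and establishing that all those oscillations actually produce bands of the declared types (rather than being suppressed or merged) is where the separation estimate gets used repeatedly. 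Your outline identifies the correct ingredients; a full proof would need to spell out that inductive case analysis.
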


\begin{figure}
		\includegraphics[scale=0.5]{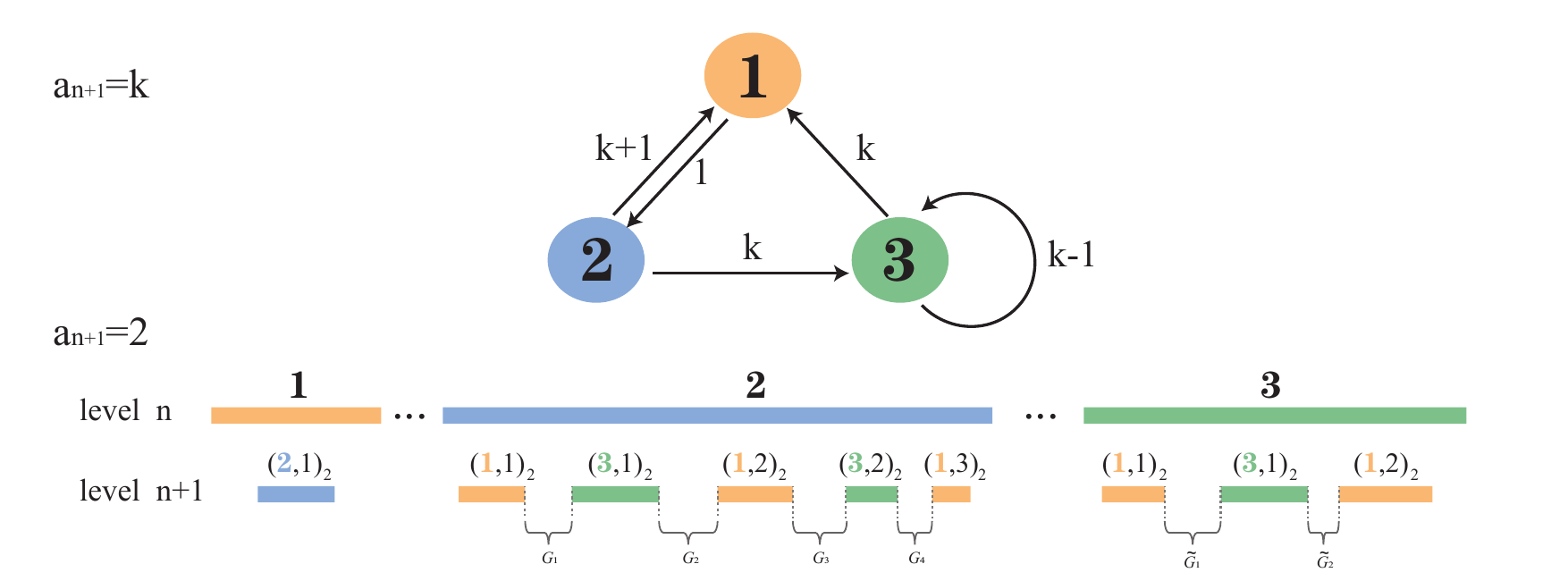}
		
		\caption{The covering structure}\label{type-evo}
 \end{figure}

Thus  $\{\B_n^a(\lambda): n\ge0\}$ form  natural coverings of the spectrum $\Sigma_{a,\lambda}$ (\cite{LPW07,LW05}).
 See Figure \ref{type-evo} for an illustration of Proposition \ref{basic-struc}. The figure above gives the rule of type evolution  and the number statistics of sub-bands. The figure below gives an illustration  how a band of order $n$  with certain type are divided into sub-bands of order $n+1$.

\begin{remark}\label{B-n}
		{\rm
		(i) Note that  there are only two spectral generating bands of order $0$: one is $\sigma_{(0,1)}(\lambda)=[\lambda-2,\lambda+2]$ with  generating polynomial $h_{(0,1)}(E;\lambda)=E-\lambda$ and type $(0,\one)$; the other is  $\sigma_{(1,0)}(\lambda)=[-2,2]$ with generating polynomial  $h_{(1,0)}(E;\lambda)=E$ and type $(0,\three)$. Thus
		\begin{equation*}\label{B-0}
			\B_0^a(\lambda)=\{[\lambda-2,\lambda+2], [-2,2]\}.
		\end{equation*}

		(ii) By Lemma \ref{basic-sturm} and the fact that $v_k=\lambda \mathscr S_k(a),$  we conclude that $\B_n^a(\lambda)$  only depends on $a|_n$.
		As a consequence, we have
		$
		\B_n^{a}(\lambda)=\B_n^b(\lambda)$ if $a|_n=b|_n.
		$
		}
\end{remark}

For later use,  we introduce
\begin{equation*}
  \B_{-1}(\lambda):=\{[-2,\lambda+2]\}.
\end{equation*}

Now we define the gaps of the spectrum. Let ${\rm Co}(\Sigma_{a,\lambda})$ be the convex hull of $\Sigma_{a,\lambda}$. Write
$${\rm Co}(\Sigma_{a,\lambda})\setminus\Sigma_{a,\lambda}=:\bigcup_iG_i(\lambda),$$
where each $G_i(\lambda)$ is an open interval.  $G_i(\lambda)$ is called a {\it gap} of $\Sigma_{a,\lambda}$. A gap $G(\lambda)$ is called {\it  of order  $ n$}, if $G(\lambda)$ is covered by a   band in $\B_n^a(\lambda)$ but not covered by any  band in $\B_{n+1}^a(\lambda)$.

For example, there exists a unique gap of order $-1$, which is contained in $[-2,\lambda+2]$, and contains at least the open interval $(2,\lambda-2)$.

	For any $n\ge-1$, define
	\begin{equation}\label{def-G-n}
\mathcal{G}_n^a(\lambda):=\{G: G \text{ is a gap  of } \Sigma_{a,\lambda}  \text{ of order }  n\}.
	\end{equation}

See Figure \ref{type-evo} for an illustration of some gaps of order $n$ (we remark that, the actual gaps may be bigger than what are indicated in the figure since at later steps the bands may shrink).


\subsection{The symbolic space and the coding of $\Sigma_{a,\lambda}$}\label{sec-coding}\

In the following we describe the coding of  the spectrum $\Sigma_{a,\lambda}$ based on \cite{LQW,Q1,Q2,R}. Here we essentially follow  \cite{Q2}, with some change of notations.

\subsubsection{The symbolic space $\Omega^a$}\label{sec-Omega^a}\

For each $a\in \N^\N$,
Proposition \ref{basic-struc} enables us to construct a symbolic space $\Omega^a$ encoding the spectrum $\Sigma_{a,\lambda}$. This space  is kind of  Markov shift with changing alphabets and incidence matrices. See Figure \ref{type-evo} again to gain some intuition for the following definitions.

Define two  alphabets of types as
\begin{equation*}
	\T:=\{\one,\two,\three\} \ \ \ \text{ and }\ \ \ \T_0:=\{\one,\three\}.
\end{equation*}

For each $n\in\N,$ define an alphabet $\A_n$ as
\begin{equation}\label{alphabet-n}
	\A_n:=\{(\one,k)_n:k=1,\cdots, n+1\}\cup \{(\two,1)_n\}\cup \{(\three,k)_n:k=1,\cdots, n\}.
\end{equation}
Then $\#\A_n=2n+2.$ Assume $e=(\tT,k)_n\in \A_n$, we call $\tT, k,n$ {\it the type, the index, the level} of $e$, respectively. We use the notations:
\begin{equation}\label{type-index-level}
	\tT_e:=\tT;\ \ \ \iI_e:=k; \ \ \  \ell_e:=n.
\end{equation}

Given $\tT\in \T$ and $\hat e\in \A_{n}$, we call $\tT\hat e$  {\it admissible}, denote by $\tT\to \hat e,$ if
\begin{align}\label{admissible-T-A}
	(\tT,\hat e)\in& \{(\one,(\two,1)_n)\}\cup\\
	\nonumber &\{(\two,(\one,k)_n): 1\le k\le n+1\}\cup \{(\two,(\three,k)_n): 1\le k\le n\}\cup\\
	\nonumber&\{(\three,(\one,k)_n):1\le k\le n\}\cup
	\{(\three,(\three,k)_n):1\le k\le n-1\}.
\end{align}
Given $e\in\A_n$ and $\hat e\in \A_m$, we call  $e\hat e$  {\it admissible}, denote by
\begin{equation}\label{admissible-A-A}
	e\to \hat e,  \ \ \text{ if }\ \ \tT_e\to \hat e.
\end{equation}

For pair $(\A_n, \A_m),$ define the {\it incidence matrix} $A_{nm}$   as
\begin{equation}\label{A_ij}
A_{nm}= (a_{e \hat e}), \ \ \text{where }\ \ a_{e\hat e}=\begin{cases}
	1,& \text{ if } e\to \hat e,\\
	0,& \text{ otherwise}.
\end{cases}
\end{equation}

\vspace{1ex}
The family $\{A_{nm}:n,m\ge1\}$ is {\it strongly primitive} in the following sense:
\begin{proposition}[{\cite[Proposition 4.4]{Q2}}]\label{sturm-primitive}
For any $k\ge 6$ and any  $a_1\cdots a_k\in \N^k$, the matrix
$A_{a_1a_2}A_{a_2a_3}\cdots A_{a_{k-1}a_k}$ is positive, i.e. all the entries of the matrix are positive.
\end{proposition}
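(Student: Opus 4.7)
The plan is to view the $(e,\hat e)$-entry of the product $A_{a_1a_2}\cdots A_{a_{k-1}a_k}$ as counting admissible paths $e=e_1\to e_2\to\cdots\to e_k=\hat e$ with $e_j\in\A_{a_j}$, so positivity amounts to producing at least one such path for every choice of endpoints. I will first settle the base case $k=6$ (five transitions) by a direct combinatorial analysis exploiting the asymmetric roles of the three types, and then extend to $k>6$ by a one-line induction.

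For the base case I will proceed in two phases. \emph{Phase one:} at most two transitions suffice to move from any letter to a type-$\two$ letter. A type-$\one$ letter goes directly to the unique type-$\two$ successor in one step; a type-$\two$ or type-$\three$ letter can first transition to $(\one,1)_{m}$ in the next alphabet (admissible in both cases by \eqref{admissible-T-A} since $a_j\ge 1$) and then to the unique $(\two,1)_{p}$ in the following alphabet. \emph{Phase two (spreading lemma):} starting from any type-$\two$ letter $(\two,1)_n$, three transitions hit every letter of the target alphabet $\A_p$. Using that $\two$ admits every non-$\two$ letter as a successor while $\one$ forces the next letter to be the unique $\two$, one checks that the two-step reachable set in $\A_{m_2}$ already contains $(\two,1)_{m_2}$, all $(\one,k)_{m_2}$ with $k\le m_2$, and all $(\three,k)_{m_2}$ with $k\le m_2-1$. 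In the third step, the route through $(\two,1)_{m_2}$ provides all non-$\two$ letters of $\A_p$ (in particular the extremal indices $(\one,p+1)_p$ and $(\three,p)_p$ that were previously missing), while the route through any $(\one,k)_{m_2}$ yields $(\two,1)_p$. Concatenating the two phases gives a path of length at most $2+3=5$, proving positivity for $k=6$.

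For $k>6$, I will use the observation that each $A_{nm}$ has no zero row: every letter of $\A_n$ admits at least one admissible successor in $\A_m$ for every $m\ge 1$ (type-$\one$ goes to $(\two,1)_m$, while type-$\two$ and type-$\three$ both admit $(\one,1)_m$). Hence if $Q=A_{a_2a_3}\cdots A_{a_{k-1}a_k}$ is positive, then so is $A_{a_1a_2}\cdot Q$, since the entry $(A_{a_1a_2}\cdot Q)_{e,\hat e}=\sum_{e'}(A_{a_1a_2})_{e,e'}Q_{e',\hat e}$ contains at least one positive summand. Induction from the base case $k=6$ completes the proof.

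The main technical nuisance I anticipate is the careful bookkeeping of index ranges in \eqref{admissible-T-A}, particularly when some $a_j$ equals $1$ and ranges such as ``$1\le k\le m-1$'' become empty; one must verify that the specific intermediate letters $(\one,1)_m$ and $(\two,1)_m$ used in the chains above are always available regardless of the values of the $a_j$'s. This amounts to a routine but slightly tedious inspection of the admissibility rule.
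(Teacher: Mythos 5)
Your plan---interpreting the $(e,\hat e)$-entry of the product as counting admissible paths, establishing the base case $k=6$ by an explicit construction, and prepending rows to induct upward---is sound, and the two auxiliary facts you isolate (reach a type-$\two$ letter in at most two steps; from $(\two,1)_n$ spread to the whole target alphabet in three steps) are both correct and straightforward to verify from \eqref{admissible-T-A}. The ``no zero row'' observation that drives the induction is also correct. However, there is a genuine gap in the base case: concatenating the two phases gives a path of length $4$ whenever the starting letter has type $\one$ (phase one then consists of the single forced transition $\one\to(\two,1)_{a_2}$), and length $5$ only when the starting letter has type $\two$ or $\three$. For $k=6$ the product has five factors, so positivity of the $(e,\hat e)$-entry requires a path of length \emph{exactly} five from $e\in\A_{a_1}$ to $\hat e\in\A_{a_6}$. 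A $4$-step path lands in $\A_{a_5}$, not $\A_{a_6}$, so ``a path of length at most $2+3=5$'' does not close the argument when $\tT_e=\one$.

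The fix is short and stays within your framework. Either strengthen the spreading lemma to ``for every $\ell\ge3$, $\ell$ transitions from $(\two,1)_n$ hit every letter of the target alphabet'' (once the full alphabet $\A_{a_{j+3}}$ is reached, one further step covers everything again, since $(\two,1)_{a_{j+3}}$ supplies all non-$\two$ letters of $\A_{a_{j+4}}$ and $(\one,1)_{a_{j+3}}$ supplies $(\two,1)_{a_{j+4}}$); or, dually to your ``no zero row'' remark, observe that each $A_{nm}$ also has no zero column (every $\hat e\in\A_m$ is the successor either of $(\two,1)_n$ when $\tT_{\hat e}\ne\two$, or of $(\one,1)_n$ when $\hat e=(\two,1)_m$), so having reached all of $\A_{a_5}$ in four steps one reaches any prescribed $\hat e\in\A_{a_6}$ in the fifth. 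With either addition the proof is complete.
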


\begin{remark}
  {\rm
   As a consequence, for   any $e\in\mathscr{A}_{a_1}$ and $\hat{e}\in\mathscr{A}_{a_k}$, there exists a word $w\in \prod_{i=1}^{k}\A_{a_i}$  such that $w_1=e$ and $w_k=\hat{e}$ and $w_i\to w_{i+1}$. In other words, there exists an admissible path  connecting $e$ and $\hat e$. This property will be used several times throughout  this paper. }
\end{remark}

Assume $a=a_1a_2a_3\cdots\in \N^\N$, define the {\it symbolic space}  $\Omega^{a}$ as
\begin{eqnarray}\label{Omega^alpha}
\Omega^{a}:=\left\{x=x_0x_1x_2\cdots \in \T_0\times\prod_{n=1}^\infty \A_{a_n}: x_n\to x_{n+1}, n\ge0 \right\}.
\end{eqnarray}
Intuitively, $\Omega^{a}$ consists of all the admissible infinite paths.

For any $n\ge0$, define the set of {\it admissible words of order $n$} as
\begin{equation}\label{def-Omega^alpha_n}
\Omega^{a}_{n}:=\left\{x|_{n}=x_0x_1\cdots x_n: x\in \Omega^{a}\right\}\ \ \text{ and }\ \  \Omega^{a}_{\ast}:=\bigcup_{n\ge 0} \Omega^{a}_{n}.
\end{equation}

\begin{remark}
{\rm
  (i) Here we warn that the notation $x|_n$ is different from $a|_n$ appeared in Section \ref{sec-continued-fra} in that $x|_n=x_0\cdots x_n$ starts with $0.$

  (ii) From the definitions \eqref{Omega^alpha} and \eqref{def-Omega^alpha_n}, it is seen that if $a,b\in \N^\N$ is such that $a|_n=b|_n$, then $\Omega^a_n=\Omega^b_n$. Thus for $\vec a\in \N^n$, we can choose any $a\in \N^\N$ with $a|_n=\vec a$ and define
  $
    \Omega_n^{\vec a}:=\Omega_n^a.
  $

  (iii) In symbolic dynamics and combinatorics on words, the set $\Omega^a_\ast$ is also known as language or dictionary.

  }
\end{remark}

For any $w=w_0w_1\cdots w_n\in\Omega^a_n$, define
\begin{equation}\label{type}
\tT_{w}:=\tT_{w_n},
\end{equation}
(More generally, if $w\in\prod_{j=1}^{n}\mathscr{A}_{m_j}$, define $\tT_w:=\tT_{w_n}$)
and define the cylinder $[w]^a$ as
\begin{equation*}
[w]^a:=\left\{x\in \Omega^{a}: x|_{n}=w\right\}.
\end{equation*}

\vspace{1ex}
We have the following estimate on the cardinality of $\Omega^a_n$:

\begin{lemma}[{\cite[Lemma 4.10]{Q2}}]\label{number-Omega-n-alpha}
For any $a\in \N^\N $, we have
\begin{equation*}
	q_n(a)\le \#\Omega_n^a\le 5 q_n(a).
\end{equation*}
\end{lemma}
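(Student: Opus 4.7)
My plan is to realize $\#\Omega_n^a$ as the cardinality of $\B_n^a(\lambda)$ and then count by type via induction. By the construction of the coding (see \eqref{Omega^alpha} and Proposition \ref{basic-struc}), each admissible word $x_0 x_1 \cdots x_n \in \Omega_n^a$ selects a unique nested chain of bands terminating at a distinguished element of $\B_n^a(\lambda)$, and every band in $\B_n^a(\lambda)$ arises in exactly one such way; hence $\#\Omega_n^a = \#\B_n^a(\lambda)$.

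Let $N_n^\tT$ denote the number of bands in $\B_n^a(\lambda)$ of type $\tT \in \T = \{\one,\two,\three\}$. Parts (3)--(5) of Proposition \ref{basic-struc} translate directly into the linear recursion
\begin{equation*}
N_n^{\one} = (a_n+1) N_{n-1}^{\two} + a_n N_{n-1}^{\three},\qquad N_n^{\two} = N_{n-1}^{\one},\qquad N_n^{\three} = a_n N_{n-1}^{\two} + (a_n-1) N_{n-1}^{\three},
\end{equation*}
with initial data $N_0^{\one} = N_0^{\three} = 1$ and $N_0^{\two} = 0$ coming from Remark \ref{B-n}. Guided by a short hand-calculation for small $n$, I would then prove by induction the closed form
\begin{equation*}
N_n^{\one} = q_n(a),\qquad N_n^{\two} = q_{n-1}(a),\qquad N_n^{\three} = q_n(a) - q_{n-1}(a);
\end{equation*}
the base case follows from $q_0(a) = 1$ and $q_{-1}(a) = 0$, and the inductive step is a direct computation using $q_{n+1}(a) = a_{n+1} q_n(a) + q_{n-1}(a)$.

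Summing the three identities yields the sharp equality $\#\Omega_n^a = 2\, q_n(a)$, from which the claimed bound $q_n(a) \le \#\Omega_n^a \le 5\, q_n(a)$ follows trivially (in fact with better constants than stated). The only step that requires any care is the bookkeeping: one must correctly extract the branching rules from Proposition \ref{basic-struc} --- in particular the type-changing structure and the ``off-by-one'' phenomenon whereby $\three$-type parents spawn one fewer $\three$-type child than $\two$-type parents --- and one must handle the level-zero step (in which $x_0 \in \T_0$ rather than in some $\A_{a_0}$) as a special base case. Beyond this I do not anticipate any conceptual obstacle.
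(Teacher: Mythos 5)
Your proof is correct, and in fact it is sharper than the stated lemma: the induction gives the exact identity
\begin{equation*}
N_n^{\one}=q_n(a),\qquad N_n^{\two}=q_{n-1}(a),\qquad N_n^{\three}=q_n(a)-q_{n-1}(a),
\end{equation*}
hence $\#\Omega_n^a=2q_n(a)$ on the nose, from which $q_n(a)\le\#\Omega_n^a\le 5q_n(a)$ is immediate. I checked the recursion against Proposition~\ref{basic-struc}(3)--(5) and the admissibility rules~\eqref{admissible-T-A}, and the inductive step with $q_{n+1}=a_{n+1}q_n+q_{n-1}$; it all works, including the base case $N_0^{\one}=N_0^{\three}=1$, $N_0^{\two}=0$ read off from Remark~\ref{B-n}(i) and the convention $q_{-1}=0$, $q_0=1$. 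The one point worth stating explicitly (which you flag) is that every finite admissible word extends to an infinite one (from any type one can always continue, e.g.\ $\one\to(\two,1)_m$ and $\two,\three\to(\one,1)_m$), so $\Omega_n^a$ really is the full set of admissible words of length $n+1$ and the identification $\#\Omega_n^a=\#\B_n^a(\lambda)$ is legitimate.

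The paper itself does not prove this lemma but cites \cite[Lemma 4.10]{Q2}, so there is no in-paper proof to compare against; your route via the type-by-type branching recursion is the natural one and, whether or not it matches the argument in \cite{Q2}, it yields the cleaner closed form $\#\Omega_n^a=2q_n(a)$, which the stated two-sided bound leaves implicit.
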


\subsubsection{The coding map $\pi^a_\lambda$ and basic sets}\label{sec-coding-map}\

Assume $a\in \N^\N$ and $\lambda>4$. Now we explain that $\Omega^{a}$ is a coding of the spectrum $\Sigma_{a,\lambda}$.

 From the definition, we see that  $\Omega_0^a=\{\one,\three\}$. Define
\begin{equation}\label{0}
B_{\one}^a(\lambda):=[\lambda-2,\lambda+2]\ \ \text{ and }\ \  B_{\three}^a(\lambda):=[-2,2].
\end{equation}
Then by Remark \ref{B-n}(i), we see that
$\B_0^a(\lambda)=\{B_w^a(\lambda): w\in \Omega_0^{a}\}.
$

We make the following convention:
\begin{equation*}
B_{\emptyset}^a(\lambda):=[-2,\lambda+2].
\end{equation*}
Thus $\B_{-1}(\lambda)=\{B_{\emptyset}^a(\lambda)\}$ and
\begin{equation*}
  B_{\one}^a(\lambda), \ \ B_{\three}^a(\lambda)\subset B_{\emptyset}^a(\lambda).
\end{equation*}

Assume $B_w^a(\lambda)$ is defined for any $w\in  \Omega_{n}^{a}$ and
$$
\B_{n}^a(\lambda)=\{B_w^a(\lambda): w\in \Omega_{n}^{a}\}.
$$
Given $w=w_0w_1\cdots w_{n+1}\in\Omega_{n+1}^{a}$. Write
$w':=w|_{n}$ and $w_{n+1}=(\tT,k)_{a_{n+1}}.$
Define $B_w^a(\lambda)$ to be the unique $k$-th band of $(n+1,\tT)$-type in $\B_{n+1}^a(\lambda)$ which is contained in $B_{w^\prime}^a(\lambda)$. See Figure \ref{type-evo} for an illustration.  By Proposition \ref{basic-struc}(3)-(5) and \eqref{admissible-T-A}, $B_w^a(\lambda)$ is well-defined for every $w\in \Omega_{n+1}^a$ and
\begin{equation*}\label{coding-B-n-a}
\B_{n+1}^a(\lambda)=\{B_w^a(\lambda): w\in \Omega_{n+1}^{a}\}.
\end{equation*}
By induction, we  code all the bands in $\B_n^a(\lambda)$ by $\Omega_n^{a}.$ Now  we can define a natural map $\pi^a_\lambda:\Omega^{a}\to \Sigma_{a,\lambda}$ as
\begin{equation}\label{pi^alpha}
\pi^a_\lambda(x):=\bigcap_{n\ge 0} B_{x|_{n}}^a(\lambda).
\end{equation}
By Proposition \ref{basic-struc}, it is seen that $\pi^a_\lambda$ is a bijection, thus  $\Omega^{a}$  is a {\it coding} of $\Sigma_{a,\lambda}$.

For any $w\in \Omega^a_n,$ define the {\it  basic set} $X_w^a(\lambda)$ of $\Sigma_{a,\lambda}$ of order $n$ as
\begin{equation}\label{def-basic-set}
X_w^a(\lambda):=\pi^a_\lambda([w]^a)=B_w^a(\lambda)\cap\Sigma_{a,\lambda}.
\end{equation}
Then, the spectrum $\Sigma_{a,\lambda}$ is the union of disjoint basic sets of order $n$:
\begin{equation}\label{spectrum-basic-set}
\Sigma_{a,\lambda}=\bigsqcup_{w\in\Omega^a_n}X_{w}^a(\lambda).
\end{equation}

\subsection{Useful facts for Sturmian Hamiltonians}\

Here are  some known results regarding  Sturmian Hamiltonians, which we will frequently refer to later.

The following lemma (\cite[Lemma 3.7]{LQW}) provides  estimates for the length of the spectral generating band:

\begin{lemma}[\cite{LQW}]{\label{esti-band-length}}
Assume $a\in \N^\N$ and $\lambda\geq20$. Write $\tau_1:=(\lambda-8)/3$ and $\tau_2:=2(\lambda+5)$. Then for any $w=w_0\cdot\cdot\cdot w_n\in\Omega_n^{a}$,
 we have
\begin{equation}\label{band-length-esti}
	\tau_2^{-n}\prod_{i=1}^na_i^{-3}\left(\prod_{1\le i\le n;\tT_{w_i}= \two}\tau_2^{2-a_i}\right)\leq|B_w^a(\lambda)|\leq4\tau_1^{-n}\left(\prod_{1\le i\le n;\tT_{w_i}= \two}\tau_1^{2-a_i}\right).
\end{equation}
In particular, we have $|B_w^a(\lambda)|\leq2^{2-n}.$
\end{lemma}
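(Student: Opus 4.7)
The natural approach is induction on $n$, reducing the length of a band $B$ to the derivative of its generating polynomial. Since $h_B(\cdot;\lambda)$ is monotone on $B$ with $h_B(B;\lambda)=[-2,2]$, the mean value theorem gives
\[
\frac{4}{\max_{E\in B}|h_B'(E;\lambda)|}\le |B|\le\frac{4}{\min_{E\in B}|h_B'(E;\lambda)|},
\]
so the task reduces to controlling the derivatives of trace polynomials along the bands. The base case $n=0$ is immediate: both bands in $\B_0^a(\lambda)$ have length $4$, which trivially satisfies the stated bounds $1\le 4\le 4$ obtained from the formula with empty products.

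For the inductive step, fix $w=w_0\cdots w_{n+1}\in\Omega_{n+1}^a$ and set $w'=w|_n$. Using the recursion $\mathbf{M}_{n+1}(E;\lambda)=\mathbf{M}_{n-1}(E;\lambda)\mathbf{M}_n(E;\lambda)^{a_{n+1}}$ and differentiating the trace polynomials, one relates $h_{B_w^a(\lambda)}'$ to $h_{B_{w'}^a(\lambda)}'$ and the operator norm $\|\mathbf{M}_n(E;\lambda)\|$. The analysis then splits into cases according to the type evolution $\tT_{w_n}\to\tT_{w_{n+1}}$ given in Proposition \ref{basic-struc}. In the transition $\one\to\two$, the unique sub-band lies in a region where $|h_{(n,0)}(E;\lambda)|\ge 2$, so $\mathbf{M}_n(E;\lambda)$ is hyperbolic and $\|\mathbf{M}_n(E;\lambda)^{a_{n+1}}\|$ grows at a rate sandwiched between appropriate powers of $\tau_1$ and $\tau_2$; this precisely produces the factor $\tau^{2-a_{n+1}}$ recorded in the product. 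In the remaining transitions (from types $\two$ and $\three$), the parent band is subdivided into roughly $2a_{n+1}$ sub-bands and the derivative of the new generating polynomial grows only by a single hyperbolic factor per step, giving a $\tau^{-1}$ contribution, with additional polynomial corrections in $a_{n+1}$.

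The main obstacle lies in the lower bound, where one must control the shortest sub-band within any given parent. The factor $a_{n+1}^{-3}$ compensates both for the number of sub-bands sharing a common parent and for the variation of $|h_B'|$ across the parent band; extracting it requires quantitative estimates on the derivative at the endpoints of each sub-band, together with the separation estimates coming from the strict monotonicity of $h_B(\cdot;\lambda)$. The upper bound is comparatively straightforward, since it suffices to bound $|h_B'|$ from below using the uniform expansion of $\mathbf{M}_n(E;\lambda)$ on its associated bands, which is guaranteed by the large-coupling hypothesis $\lambda\ge 20$. Iterating this inductive estimate from $n=0$ yields the telescoping product over indices $i$ with $\tT_{w_i}=\two$, producing the advertised bounds with the correct exponents on $\tau_1$ and $\tau_2$.
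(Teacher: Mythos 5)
The paper does not prove Lemma~\ref{esti-band-length}; it is imported verbatim as \cite[Lemma 3.7]{LQW}. So the comparison has to be against the source proof, and against the internal consistency of your own sketch. Your strategy -- reduce to $|h_w'|$ via the mean value theorem, induct on $n$ using $\mathbf M_{n+1}=\mathbf M_{n-1}\mathbf M_n^{a_{n+1}}$, split by type evolution, use hyperbolicity of $\mathbf M_n$ for the $\one\to\two$ step, and extract the $a_{n+1}^{-3}$ from the Chebyshev-like spread of sub-bands -- is indeed the route taken in \cite{LQW}. As an outline it is on target; as a proof it stops at the level of describing what must be estimated rather than carrying out the derivative estimates, so none of the claimed bounds (in particular the precise exponents $2-a_i$ and the factor $a_i^{-3}$) are actually established.

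There is also one concrete error in the inductive step. You write that in the $\one\to\two$ transition the sub-band lies where $|h_{(n,0)}(E;\lambda)|\ge 2$, and deduce hyperbolicity of $\mathbf M_n(E;\lambda)$ from this. This is wrong on two counts. First, the $(n,\one)$-type parent band is by definition inside $\sigma_{(n,0)}(\lambda)$, so on the whole parent (and hence on the sub-band) one has $|h_{(n,0)}|\le 2$, not $\ge 2$. Second, $h_{(n,0)}(E;\lambda)={\rm tr}(\mathbf M_{n-1}(E;\lambda))$, not ${\rm tr}(\mathbf M_n)$, so even the right sign on $h_{(n,0)}$ would not by itself say anything about $\mathbf M_n$. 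The correct statement is: on the $(n,\one)$-band one has both $|h_{(n,0)}|\le 2$ and $|h_{(n,1)}|\le 2$, and the Fricke--Vogt/S\"ut\H{o} invariant $h_{(n,1)}^2+h_{(n+1,0)}^2+h_{(n,0)}^2-h_{(n,1)}h_{(n+1,0)}h_{(n,0)}-4=\lambda^2$ then forces $|h_{(n+1,0)}|=|{\rm tr}(\mathbf M_n)|\ge\lambda-2>2$ when $\lambda>4$, giving the hyperbolicity you need. Your final conclusion is right, but the cited trace polynomial, the inequality direction, and the source of hyperbolicity are all misidentified, and in a written-out induction this mislabeling would propagate into the estimate for $\|\mathbf M_n^{a_{n+1}}\|$.
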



Fix any $a\in \N^\N $ and $\lambda\ge24$. Define the pre-dimensions of $\Sigma_{a,\lambda}$ as
\begin{equation}\label{pre-dim}
s_\ast(a,\lambda):=\liminf_{n\to\infty} s_n(a,\lambda);\ \ \ s^\ast(a,\lambda):=\limsup_{n\to\infty} s_n(a,\lambda),
\end{equation}
where $s_n(a,\lambda)$ is the unique number such that
\begin{equation}\label{def-s-n}
\sum_{w\in \Omega^a_n}|B_w^a(\lambda)|^{s_n(a,\lambda)}=1.
\end{equation}

The following proposition (\cite[Theorem 1.1 and Proposition 8.2]{LQW}) provides the dimension formulas of the spectrum and the regularities of the dimension functions.

\begin{proposition}[\cite{LQW}]\label{dim-formula-14}
Assume $a\in \N^\N $, then

(1)  For any $\lambda\ge24$, we have
$\dim_H \Sigma_{a,\lambda}=s_\ast(a,\lambda)$ and \ $\overline{\dim}_B \Sigma_{a,\lambda}=s^\ast(a,\lambda).$

(2) There exists an absolute constant $C>0$ such that for any $24\leq\lambda_1<\lambda_2$,
\begin{equation*}
	|s_*(a,\lambda_1)-s_*(a,\lambda_2)|,\ |s^*(a,\lambda_1)-s^*(a,\lambda_2)|\leq C\lambda_1|\lambda_1-\lambda_2|.
\end{equation*}
\end{proposition}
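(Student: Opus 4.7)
The plan is to separate the dimension identities of part (1) from the Lipschitz bound of part (2). For the upper bounds, the family $\{B_w^a(\lambda):w\in\Omega_n^a\}$ is a covering of $\Sigma_{a,\lambda}$ whose diameters tend to zero by Proposition \ref{basic-struc}(1). Since $\sum_{w\in\Omega_n^a}|B_w^a(\lambda)|^{s_n(a,\lambda)}=1$ by definition of $s_n$, one has $\mathcal H^{s_n(a,\lambda)}(\Sigma_{a,\lambda})\le 1$ for every $n$, and passing to $\liminf$ yields $\dim_H\Sigma_{a,\lambda}\le s_\ast(a,\lambda)$. For upper box dimension I would fix a target scale $\delta$, select the generation $n$ at which all bands $B_w^a(\lambda)$ have diameter at most $\delta$ while their parents do not, and use Lemma \ref{esti-band-length} to bound $\max_w|B_w^a|/\min_w|B_w^a|$ at each level; this yields $N_\delta(\Sigma_{a,\lambda})\lesssim\#\Omega_n^a\lesssim\delta^{-s_n(a,\lambda)}$ and hence $\overline{\dim}_B\Sigma_{a,\lambda}\le s^\ast(a,\lambda)$.

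The lower bounds carry the weight of the argument. The plan is to construct a probability measure $\mu_n$ on $\Sigma_{a,\lambda}$ assigning mass $|B_w^a(\lambda)|^{s_n(a,\lambda)}$ to each basic set $X_w^a(\lambda)$ of order $n$ and extended consistently to finer generations, and then apply the mass distribution principle. The key ingredient is a \emph{covariation property}: for any two words $w,w'\in\Omega_n^a$ with the same final type and any common admissible continuation $e_{n+1}\cdots e_{n+m}$, the ratio $|B_{we_{n+1}\cdots e_{n+m}}^a|/|B_{w'e_{n+1}\cdots e_{n+m}}^a|$ should be bounded by a constant depending only on $\lambda$. Combined with Lemma \ref{esti-band-length}, this produces a Frostman-type estimate $\mu(B(x,r))\lesssim r^{s_n(a,\lambda)}$ uniformly in $x$ and $r$, whence $\dim_H\Sigma_{a,\lambda}\ge s_n(a,\lambda)$; taking $\liminf$ gives $\dim_H\Sigma_{a,\lambda}\ge s_\ast(a,\lambda)$. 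A parallel covering estimate along a subsequence $n_k$ realizing $\limsup s_{n_k}=s^\ast(a,\lambda)$ produces the matching lower bound for $\overline{\dim}_B\Sigma_{a,\lambda}$.

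For part (2), the endpoints of $B_w^a(\lambda)$ are simple zeros of $h_B(E;\lambda)\mp 2$ where $h_B$ is polynomial in $(E,\lambda)$, so the implicit function theorem makes them real-analytic in $\lambda$. Implicit differentiation of $\sum_{w\in\Omega_n^a}|B_w^a(\lambda)|^{s_n(a,\lambda)}=1$ yields
\[
\partial_\lambda s_n(a,\lambda)=-\frac{\sum_{w}p_w\,\partial_\lambda\log|B_w^a(\lambda)|}{\sum_{w}p_w\log|B_w^a(\lambda)|},\qquad p_w:=|B_w^a(\lambda)|^{s_n(a,\lambda)}.
\]
I would then bound the numerator by $C\lambda\cdot n$ via a uniform derivative estimate for the trace polynomials in the large-coupling regime $\lambda\ge24$, and bound the denominator below by $cn$ via Lemma \ref{esti-band-length}; this gives $|\partial_\lambda s_n(a,\lambda)|\le C\lambda$ uniformly in $n$, and the bound survives taking $\liminf$ and $\limsup$.

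The main obstacle will be the uniform covariation estimate. When a digit $a_i$ is large, the parent band is subdivided into many sub-bands whose lengths, as Lemma \ref{esti-band-length} shows, may differ by large multiplicative factors arising from the type-$\two$ positions. Proving that these ratios stay uniformly bounded across arbitrarily deep generations, so that the measure $\mu$ is genuinely Frostman with the right exponent, is where the hypothesis $\lambda\ge 24$ is essential: the hyperbolic contraction furnished by the trace polynomials must dominate the combinatorial proliferation of sub-bands induced by large continued-fraction digits.
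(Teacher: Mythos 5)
This statement is imported from \cite{LQW} (Theorem~1.1 and Proposition~8.2 there); the paper does not prove it but cites it, so there is no internal proof to compare against. Your sketch nonetheless correctly identifies the right skeleton for part (1): nested covers for the upper bounds, a mass-distribution (Frostman) measure for the lower bounds, and the bounded-covariation estimate (Proposition~\ref{bco} in the present paper, Theorem~3.3 in \cite{LQW}) as the central technical tool. That said, two steps would fail as written.

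First, the upper box-dimension bound. You propose to pick the generation $n$ at which all bands have diameter $\le\delta$ and to control $N_\delta(\Sigma_{a,\lambda})$ via a uniform bound on $\max_w|B_w^a(\lambda)|/\min_w|B_w^a(\lambda)|$. Lemma~\ref{esti-band-length} shows this ratio is \emph{not} uniformly bounded: the length of $B_w^a(\lambda)$ depends on the product $\prod_{\tT_{w_i}=\two}\tau^{2-a_i}$, so bands of the same generation differ by factors growing with the continued-fraction digits. With unbounded digits, scales jump and a single generation does not cleanly match a given $\delta$; obtaining $\overline{\dim}_B\le s^\ast$ genuinely requires interpolating between generations, e.g.\ subdividing large-digit bands and counting sub-bands of comparable size, which is a real step in \cite{LQW} and not a consequence of a max/min ratio bound.

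Second, and more seriously, part (2). Your implicit-differentiation formula is correct in shape, but the claimed numerator bound ``$C\lambda\cdot n$ via a uniform derivative estimate'' is false. The correct scale of $|\partial_\lambda\log|B_w^a(\lambda)||$ is visible in Proposition~\ref{bco-cor}/\eqref{def-eta}: it is governed by $m(w)=|w|+\sum_{\tT_k=\two}(n_k-2)$, not by $|w|=n$, and $m(w)$ can be arbitrarily large relative to $n$ once the digits $a_k$ are unbounded. If you only bound the numerator by $\lambda n$ and the denominator below by $cn$, the denominator estimate is also wrong for the same reason: $-\log|B_w^a(\lambda)|\asymp m(w)\log\lambda$ rather than $\asymp n$. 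The argument can only close because \emph{both} the $\lambda$-derivative of $\log|B_w|$ and $-\log|B_w|$ itself scale linearly in the same quantity $m(w)$, so the ratio in the implicit-differentiation formula is bounded independently of the digit sequence. You must make that cancellation explicit (and this is precisely what the extra factor $(\log\lambda_1)^{-2}$ in Proposition~\ref{lip-fiber} is tracking in the DOS analogue); without it the proposed bound does not survive the limit in $n$.
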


The following proposition is a restatement of   \cite[Proposition 8.1]{LQW}:
\begin{proposition}[\cite{LQW}]\label{bco-cor}
Assume $a,b\in \N^\N$ and $\lambda_1,\lambda_2\ge 24$. Then there exist absolute constants $C_1,C_2,C_3>1$ such that if $w, wu\in  \Omega_\ast^{a}$ and $\tilde w, \tilde wu\in  \Omega_\ast^{b}$,
then
\begin{equation*}\label{strong-bco}
	\eta^{-1} \frac{|{B}_{wu}^a(\lambda_1)|}{|{B}_{w}^a(\lambda_1)|}\le
	\frac{|B_{\tilde wu}^{b}(\lambda_2)|}{|B_{\tilde w}^{b}(\lambda_2)|}\le \eta
	\frac{|{B}_{wu}^a(\lambda_1)|}{|{B}_{w}^a(\lambda_1)|},
\end{equation*}
with $\eta$ defined by
\begin{equation}\label{def-eta}
	\eta:=C_1\exp(C_2(\lambda_1+\lambda_2)+C_3m(u)\lambda_1|\lambda_1-\lambda_2|),
\end{equation}
where $u=u_1\cdots u_n$ with $u_k=(\tT_k,i_k)_{n_k}$ and
\begin{equation}\label{def-m-u}
	m(u):=\sum_{k:\tT_k=\two} (n_k-1)+\sum_{k:\tT_k\neq\two}1=n+\sum_{k: \tT_k=\two} (n_k-2).
\end{equation}
\end{proposition}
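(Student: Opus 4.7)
The statement is a bounded covariation property: the relative shrinkage factor $|B_{wu}|/|B_w|$ inside a parent band depends only mildly on the history $w$ and on the coupling $\lambda$, and the dependence is made quantitative in terms of $m(u)$ and $|\lambda_1-\lambda_2|$. My plan is to reduce the full estimate to a telescoping over the letters of $u$, to express each single-letter shrinkage through generating trace polynomials, and then to bound separately the distortion caused by changing the parent word (at fixed $\lambda$) and by changing the coupling (at fixed word).

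First, I would telescope
\[
\frac{|B^a_{wu}(\lambda_1)|}{|B^a_w(\lambda_1)|}=\prod_{j=1}^{n}\frac{|B^a_{wu_1\cdots u_j}(\lambda_1)|}{|B^a_{wu_1\cdots u_{j-1}}(\lambda_1)|},
\]
and likewise for the $(b,\lambda_2)$ ratio. Since $u_j=(\tT_j,i_j)_{n_j}$ encodes both the type and the position of the new child, Proposition \ref{basic-struc} says that the same $u_j$ produces a combinatorially identical child in both $(a,\lambda_1)$ and $(b,\lambda_2)$. It therefore suffices to show that each single-letter ratio is comparable across the two parameter choices, with the comparison constant behaving like $e^{C(\lambda_1+\lambda_2)}\exp(C'\tilde m(u_j)\lambda_1|\lambda_1-\lambda_2|)$, where $\tilde m(u_j)=1$ for type $\one,\three$ and $\tilde m(u_j)=n_j-1$ for type $\two$; summing $\tilde m(u_j)$ over $j$ recovers $m(u)$ from \eqref{def-m-u}, and multiplying over $j$ produces the stated $\eta$.

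Second, I would represent each one-step ratio analytically. If $B_v$ is a parent band with generating polynomial $h_v(\cdot;\lambda)$ mapping $B_v$ monotonically onto $[-2,2]$, then $|B_v|=\int_{-2}^{2}dt/|h'_v(h_v^{-1}(t);\lambda)|$, and the generating polynomial of the child $B_{vu_j}$ is obtained from $h_v$ and the grandparent polynomial through a Chebyshev-type identity coming from the trace-map recursion. For $\lambda\ge 24$ the classical hyperbolicity of the trace map on the spectrum (see \cite{Ca,DG} and the length estimate in Lemma \ref{esti-band-length}) controls $h'_v$ pointwise, so the one-step ratio can be written as a bounded integral involving derivatives of these trace polynomials on a definite sub-band. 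The dependence of this integral on the history $v$ enters only through the location of $h_v^{-1}([-2,2])$ inside a uniformly transverse region; by standard bounded-distortion estimates together with uniform hyperbolicity, this contributes at most $e^{C(\lambda_1+\lambda_2)}$ and is absorbed into the prefactor $C_1\exp(C_2(\lambda_1+\lambda_2))$.

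Third, I would pin down the Lipschitz dependence in $\lambda$. Differentiating the one-step ratio with respect to $\lambda$ at fixed combinatorial data, one gets the $\lambda$-derivative of a log-ratio of trace polynomials, which can be estimated by reading off the number of $E-\lambda$ factors in $h_{(n,p)}$. For a type $\one$ or $\three$ letter $u_j$ the relevant polynomial carries a bounded number of extra $E-\lambda$ factors relative to the parent, giving $|\partial_\lambda\log(\text{ratio})|=O(\lambda_1)$; for a type $\two$ letter of level $n_j$ the iterate $\mathbf M_n^{i_j}$ with $i_j\le n_j-1$ contributes up to $n_j-1$ new $E-\lambda$ factors, giving $O(n_j\lambda_1)$. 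Integrating from $\lambda_1$ to $\lambda_2$ and multiplying over $j$ yields the $\exp(C_3m(u)\lambda_1|\lambda_1-\lambda_2|)$ factor. The main obstacle is precisely this Lipschitz-in-$\lambda$ bookkeeping: one must propagate the derivative estimates through the Chebyshev-like recursion and verify that the combinatorial cost is exactly $m(u)$ as in \eqref{def-m-u}, distinguishing the short $p=1$ iteration of type $\one$ from the potentially long $p=i_j$ iteration of type $\two$; once this is pinned down, multiplicativity over the telescoping finishes the proof.
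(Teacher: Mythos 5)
The paper does not actually prove Proposition \ref{bco-cor}; as the authors note in the remark that follows it, this is cited verbatim from \cite[Proposition 8.1]{LQW} (with the observation that the $a\neq b$ and $\lambda_1\neq\lambda_2$ strengthening ``follows by checking the proof''). The underlying argument in \cite{LQW} runs through their machinery of \emph{modified ladders}, which the present paper deliberately does not reproduce. So you are supplying a proof where the paper supplies only a citation, and what you propose is a genuinely different route (telescoping over single letters plus direct trace-polynomial estimates, rather than the ladder construction).

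That said, as a stand-alone argument your sketch has two real gaps. First, the telescoping step is not free: writing $|B_{wu}^a(\lambda_1)|/|B_w^a(\lambda_1)|$ and the corresponding $(b,\lambda_2)$-ratio as products of $n$ one-step factors, and then bounding each single factor by something of the form $e^{C(\lambda_1+\lambda_2)}$, produces a total distortion of $e^{Cn(\lambda_1+\lambda_2)}$, which depends on $|u|$ and is far weaker than the $C_1\exp(C_2(\lambda_1+\lambda_2))$ prefactor in \eqref{def-eta}. To get an $n$-independent constant you need the one-step multiplicative errors to be summable in $\log$, e.g. geometrically decaying, and you have not said why that holds here (the usual bounded-distortion mechanism would require the per-step discrepancy to be controlled by the diameter of the relevant band, via Lemma \ref{esti-band-length} and Proposition \ref{bv}, and then the argument must be quantified). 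Second, the $\lambda$-derivative bookkeeping is exactly the crux and you concede it is not pinned down. Your heuristic --- each extra factor of $E-\lambda$ in the generating polynomial contributes $O(\lambda_1)$ to $|\partial_\lambda\log(\text{ratio})|$ --- is not clearly right: on the low band $E\in[-2,2]$ one has $|E-\lambda|\sim\lambda$, so $|\partial_\lambda\log(E-\lambda)|\sim 1/\lambda$, not $\lambda$. The quantity that actually controls the band-length ratio is $h'_w$ on $B_w$ (Proposition \ref{bv}), and the $\lambda_1|\lambda_1-\lambda_2|$ Lipschitz bound in \eqref{def-eta} must come from carefully tracking both the $E$- and $\lambda$-derivatives of the trace polynomials through the recursion, not from a simple count of $E-\lambda$ factors. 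Also note that for a type $\two$ letter one always has $i_k=1$ (the alphabet $\A_{n}$ contains only $(\two,1)_n$), so the contribution $n_k-1$ to $m(u)$ has nothing to do with an iterate $\mathbf{M}_n^{i_k}$ of length $i_k$; it reflects the $2-a_i$ exponent in \eqref{band-length-esti}, i.e. the number of ``steps'' hidden inside a single type-$\two$ letter of level $n_k$. These are the concrete points you would need to settle to turn the sketch into a proof matching the strength of \eqref{def-eta}.
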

\begin{remark}
{\rm
In \cite[Proposition 8.1]{LQW},  only  the case $a=b$ is considered. However by checking the proof, one  can indeed show  the  stronger result as stated in Proposition \ref{bco-cor}. The original proposition is stated for the modified ladders (see \cite[Section 6.1]{LQW} for the definition). Since we do not need this technical notation, we just extract the content we need, which is the present form.}
\end{remark}
Take $\lambda_1=\lambda_2$, we obtain the following consequence, which is   \cite[Theorem 3.3]{LQW}:
\begin{proposition}[Bounded covariation \cite{LQW}]\label{bco}
Assume $a,b\in \N^\N$ and $\lambda\ge 24$. Then there exists constant $\eta=\eta(\lambda)>1$ such that if $w, wu\in  \Omega_\ast^{a}$ and $\tilde w, \tilde wu\in  \Omega_\ast^{b}$,
then
$$\eta^{-1} \frac{|{B}_{wu}^a(\lambda)|}{|{B}_{w}^a(\lambda)|}\le
\frac{|B_{\tilde wu}^{b}(\lambda)|}{|B_{\tilde w}^{b}(\lambda)|}\le \eta
\frac{|{B}_{wu}^a(\lambda)|}{|{B}_{w}^a(\lambda)|}.$$
\end{proposition}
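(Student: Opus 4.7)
The plan is essentially an immediate specialization of Proposition~\ref{bco-cor}, so the proof proposal is very short. I would simply set $\lambda_1=\lambda_2=\lambda$ in the two-parameter bounded covariation estimate already established, and observe that the $u$-dependent term in the definition of $\eta$ collapses to a constant depending only on $\lambda$.

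More precisely, first I would note that Proposition~\ref{bco-cor} applies under the hypotheses of the current proposition: $a,b\in\N^\N$, $w,wu\in\Omega^a_\ast$, $\tilde w,\tilde wu\in\Omega^b_\ast$, and the coupling constants $\lambda_1=\lambda_2=\lambda\ge 24$. Inserting this into the formula \eqref{def-eta}, namely
\begin{equation*}
\eta=C_1\exp\bigl(C_2(\lambda_1+\lambda_2)+C_3 m(u)\lambda_1|\lambda_1-\lambda_2|\bigr),
\end{equation*}
the factor $|\lambda_1-\lambda_2|$ vanishes, so the entire $m(u)$-dependent term disappears. What remains is
\begin{equation*}
\eta(\lambda):=C_1\exp(2C_2\lambda),
\end{equation*}
which depends only on $\lambda$ and not on the words $w,\tilde w,u$ nor on the sequences $a,b$. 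Substituting this $\eta(\lambda)$ into the double inequality furnished by Proposition~\ref{bco-cor} gives exactly the claimed bound.

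There is no genuine obstacle here; the only point requiring attention is to make sure the admissibility hypotheses ``$w,wu\in\Omega^a_\ast$ and $\tilde w,\tilde wu\in\Omega^b_\ast$'' are exactly those of the more general Proposition~\ref{bco-cor}, which they are. Thus Proposition~\ref{bco} is a corollary rather than an independent result, and its proof is a single line of specialization. The harder content has already been absorbed into Proposition~\ref{bco-cor}, whose proof is credited to \cite[Proposition~8.1]{LQW} with the remark that the statement there can be strengthened to allow $a\ne b$ without changing the argument.
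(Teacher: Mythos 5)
Your proof is correct and is precisely the paper's own argument: the text immediately preceding Proposition~\ref{bco} reads ``Take $\lambda_1=\lambda_2$, we obtain the following consequence,'' and your computation that $\eta$ collapses to $C_1\exp(2C_2\lambda)$ because the $m(u)|\lambda_1-\lambda_2|$ term vanishes is exactly the intended one-line specialization of Proposition~\ref{bco-cor}.
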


The following lemma is implicitly proven in \cite{R}:
\begin{lemma}[\cite{R}]{\label{sigma-n+1-0}}
For any $a\in \N^\N,\lambda>4$ and  $n\ge0$, we have
$$
\sigma_{n+1,0}(\lambda)=\bigcup_{w\in\Omega^a_n:\  \tT_w=\two,\three}B_w^a(\lambda).
$$
\end{lemma}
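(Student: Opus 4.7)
The plan is to prove the two inclusions separately; the ``$\supset$'' direction is definitional while ``$\subset$'' reduces to a geometric disjointness statement that is essentially the content of \cite{R}.

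For ``$\supset$'', I would simply unpack the coding of Section~\ref{sec-coding-map}. The words $w\in\Omega_n^a$ with $\tT_w\in\{\two,\three\}$ are in bijection (via $w\mapsto B_w^a(\lambda)$) with the $(n,\two)$- and $(n,\three)$-type bands in $\B_n^a(\lambda)$, and by the very definition of these two types each such band is a band of $\sigma_{(n+1,0)}(\lambda)$. Hence the right-hand side of the claimed identity is contained in $\sigma_{(n+1,0)}(\lambda)$, and this direction requires nothing beyond unpacking definitions.

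For ``$\subset$'', I would combine Proposition~\ref{basic-struc}(2) with the claim that no $(n,\one)$-type band meets $\sigma_{(n+1,0)}(\lambda)$. Take an arbitrary $E\in\sigma_{(n+1,0)}(\lambda)$. By Proposition~\ref{basic-struc}(2), $E$ lies in some spectral generating band $B\in\B_n^a(\lambda)$. If $B$ has type $\two$ or $\three$, then $B=B_w^a(\lambda)$ for the corresponding word and we are done. Otherwise $B$ is of $(n,\one)$-type, a case I will rule out below. Granting this disjointness, the band of $\sigma_{(n+1,0)}(\lambda)$ containing $E$ sits inside $B$, while $B$ itself is (by definition of its type) a band of $\sigma_{(n+1,0)}(\lambda)$; by maximality the two must coincide, and the whole band is captured by the right-hand side.

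The main obstacle is therefore the claim that $(n,\one)$-type bands avoid $\sigma_{(n+1,0)}(\lambda)$. My plan for this step is to exploit the Fricke / Cayley--Hamilton identity for $SL(2,\mathbb R)$ applied to the transfer matrices $\mathbf M_{n-1}(E;\lambda)$ and $\mathbf M_n(E;\lambda)$,
\begin{equation*}
h_{(n,1)}(E;\lambda)+h_{(n,-1)}(E;\lambda)=h_{(n,0)}(E;\lambda)\,h_{(n+1,0)}(E;\lambda),
\end{equation*}
together with the trace-polynomial induction of Raymond~\cite{R}. On an $(n,\one)$-type band one has $|h_{(n,0)}(E;\lambda)|\le 2$ and $|h_{(n,1)}(E;\lambda)|\le 2$; the hyperbolicity of the Sturmian transfer cocycle for $\lambda>4$ forces $|h_{(n,-1)}(E;\lambda)|$ to be strictly larger than $6$, and the identity above then yields $|h_{(n+1,0)}(E;\lambda)|>2$, i.e.\ $E\notin\sigma_{(n+1,0)}(\lambda)$. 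This is exactly the content that is ``implicit in \cite{R}'' referenced in the statement. Once this disjointness is in place the two inclusions match and the lemma follows.
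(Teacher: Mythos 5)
Your ``$\supset$'' direction and the overall architecture of the ``$\subset$'' direction are sound: reduce via Proposition~\ref{basic-struc}(2) to showing that no $(n,\one)$-type band meets $\sigma_{(n+1,0)}(\lambda)$, then use disjointness of the bands of $\sigma_{(n+1,0)}(\lambda)$ to upgrade pointwise containment to equality of bands. The Fricke trace identity $h_{(n,1)}+h_{(n,-1)}=h_{(n,0)}h_{(n+1,0)}$ you invoke is also correct, since $h_{(n,0)}={\rm tr}\,\mathbf M_{n-1}$ and $h_{(n+1,0)}={\rm tr}\,\mathbf M_n$. However, the decisive step --- that $|h_{(n,-1)}|>6$ on every $(n,\one)$-type band --- is asserted without derivation and is in fact false. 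Take $a_1=1$ and $n=1$: then $h_{(1,0)}(E)=E$, $h_{(2,0)}(E)=E-\lambda$, $h_{(1,1)}(E)=E^2-\lambda E-2$, so the trace identity gives $h_{(1,-1)}\equiv 2$; yet the unique $(1,\one)$-type band, a band of $\sigma_{(1,1)}(\lambda)$ inside $[-2,2]$, certainly exists. More generally, whenever $h_{(n,0)}(E)\to 0$ the identity forces $h_{(n,-1)}(E)\to -h_{(n,1)}(E)$, whose modulus is $\le 2$ on an $(n,\one)$-type band, so no uniform lower bound on $|h_{(n,-1)}|$ can hold. Your derivation of $|h_{(n+1,0)}|>2$ therefore does not go through; it would also silently break when $h_{(n,0)}$ vanishes, since you divide by it.

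What is actually implicit in \cite{R} (and already in \cite{BIST}) is not the linear trace identity but the quadratic Fricke invariant of the trace map: for all $n\ge 0$ and all $E$,
\begin{equation*}
h_{(n,0)}(E;\lambda)^2+h_{(n+1,0)}(E;\lambda)^2+h_{(n,1)}(E;\lambda)^2-h_{(n,0)}\,h_{(n+1,0)}\,h_{(n,1)}=\lambda^2+4,
\end{equation*}
because ${\rm tr}\,[\mathbf M_{n-1},\mathbf M_n]=\lambda^2+2$ is preserved by the Sturmian recursion $\mathbf M_{n+1}=\mathbf M_{n-1}\mathbf M_n^{a_{n+1}}$. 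On an $(n,\one)$-type band one has $|h_{(n,0)}|\le 2$ and $|h_{(n,1)}|\le 2$; if also $|h_{(n+1,0)}|\le 2$, the left-hand side is bounded by $\max_{[-2,2]^3}(x^2+y^2+z^2-xyz)=20$, forcing $\lambda\le 4$. Hence for $\lambda>4$ one has $|h_{(n+1,0)}|>2$ on every $(n,\one)$-type band, which is exactly the disjointness you need. Replacing the $h_{(n,-1)}$-detour by this one-line invariant argument closes the gap.
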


The following proposition is \cite[Theorem 3.1 and Corollary 3.2]{LQW}:

\begin{proposition}[Bounded variation and Bounded distortion \cite{LQW}]\label{bv}
Assume  $a\in \N^\N$ and $\lambda\geq20$.  Then there exists a constant $C(\lambda)>1$ such that for any $w\in \Omega^a_\ast$ and $ E,E_1,E_2\in B_w^a(\lambda)$,
\begin{equation*}
	\frac{1}{C(\lambda)}\le \left|\frac{h_w'(E_1;\lambda)}{h_w'(E_2;\lambda)}\right|\le C(\lambda)\ \text{and}\ \ \frac{1}{C(\lambda)}\le |h_w'(E;\lambda)||B_w^a(\lambda)|\le C(\lambda),
\end{equation*}
where $h_w(E;\lambda):=h_{B^a_w(\lambda)}(E;\lambda)$ is the generating polynomial of $B_w^a(\lambda)$.
\end{proposition}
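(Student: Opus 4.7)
The plan is to establish the distortion inequality for $|h_w'(E)|$ first, and then deduce the normalization $|h_w'(E)|\,|B_w^a(\lambda)|\sim 1$ from it. Write $h_w(E;\lambda)={\rm tr}\,\mathbf{M}_w(E;\lambda)$, where $\mathbf{M}_w(E;\lambda)={\mathbf M}_{n-1}(E;\lambda){\mathbf M}_n^p(E;\lambda)$ is the product of transfer matrices attached to $B_w^a(\lambda)$ as in the definition of the generating polynomial $h_{(n,p)}$. By Jacobi's formula $h_w'(E)={\rm tr}\,\mathbf{M}_w'(E)$, and $\mathbf{M}_w'(E)$ unfolds by the Leibniz rule into a sum over the single-site factors. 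Since $h_w$ is monotone on $B_w^a(\lambda)$ with $h_w(B_w^a(\lambda);\lambda)=[-2,2]$, the change of variables identity $\int_{B_w^a(\lambda)}|h_w'(E;\lambda)|\,dE=4$ immediately upgrades the distortion estimate to the desired normalization.

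For the distortion itself I would induct on the order $n$ of $w$, exploiting the nesting $B_w^a(\lambda)\subset B_{w|_{n-1}}^a(\lambda)$ from Proposition \ref{basic-struc} together with the explicit multiplicative recursion satisfied by $\mathbf{M}_{(n,p)}$ as the type of $w_n$ evolves according to items (3)--(5). The natural quantity to control is the reciprocal curvature $|h_w''(E)|/|h_w'(E)|^2$: if it is bounded on $B_w^a(\lambda)$ by a constant times $1/|B_w^a(\lambda)|$, then integration across the band yields bounded variation of $\log|h_w'|$ and hence the first inequality. Propagation from level $n-1$ to level $n$ is achieved by differentiating the Sturmian trace recursion (the analogue of the Fibonacci identity underlying the trace map $\mathbf T$ of the introduction) and inserting the inductive hypothesis together with a uniform bound on the ``twin'' trace polynomials that accompany $h_w$ through the $SL(2,\R)$ Cayley--Hamilton relation.

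The auxiliary hyperbolic input powering the inductive step is supplied by the large coupling hypothesis $\lambda\ge 20$. By Lemma \ref{esti-band-length}, $|B_w^a(\lambda)|\le 4\tau_1^{-n}$ with $\tau_1=(\lambda-8)/3$ large, so that on $B_w^a(\lambda)$ the neighboring trace polynomials are uniformly large in modulus and their logarithmic derivatives are of the same order. This guarantees that the multiplicative error accumulated at each inductive step has the form $1+O(\tau_1^{-1})$ and, crucially, that the product of these errors over $n$ steps converges to a finite quantity depending only on $\lambda$, independent of $n$ and of the frequency word $a$.

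The main obstacle I expect is the treatment of $(n,\two)$-type bands, because by Proposition \ref{basic-struc}(4) such a band gives rise to a high power ${\mathbf M}_n^{a_{n+1}}$, so that a single inductive step must absorb the combinatorially large number $2a_{n+1}+1$ of sub-bands when the partial quotient $a_{n+1}$ is large. The mechanism that saves the estimate is the compensating decay $\tau_1^{2-a_{n+1}}$ for the band length provided again by Lemma \ref{esti-band-length}: a geometric-series computation based on the expanding eigenvalue of ${\mathbf M}_n(E;\lambda)$ at spectral energies exactly balances the combinatorial blow-up, producing a uniform constant $C(\lambda)$ depending only on $\lambda$. Once this delicate step is handled, the induction closes and both inequalities of the proposition follow.
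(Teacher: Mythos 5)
The paper does not prove this proposition; it imports it verbatim from \cite{LQW} (Theorem 3.1 and Corollary 3.2 there) as a black box. There is therefore no internal argument to compare against, and the question becomes whether your sketch could plausibly replace the citation.

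Your reduction of the normalization $|h_w'(E)|\,|B_w^a(\lambda)|\sim 1$ to the distortion inequality via $\int_{B_w^a(\lambda)}|h_w'|\,dE=4$ is correct and is indeed the standard way these two halves are linked. The high-level inductive scheme — control the logarithmic derivative of $h_w'$ through the nesting of Proposition~\ref{basic-struc}, with multiplicative errors $1+O(\tau_1^{-1})$ supplied by the large-coupling expansion — is also consistent in spirit with how bounded distortion is obtained in this setting. And you correctly single out the $(n,\two)$-type bands, with their $a_{n+1}$-fold powers of $\mathbf M_n$, as the bottleneck, and you identify the compensating $\tau_1^{2-a_{n+1}}$ factor in Lemma~\ref{esti-band-length} as the thing that must save it.

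However, as written this is a plan, not a proof, and the plan skips exactly the steps that make the result hard. Phrases like ``differentiating the Sturmian trace recursion and inserting the inductive hypothesis together with a uniform bound on the twin trace polynomials'' and ``a geometric-series computation based on the expanding eigenvalue of $\mathbf M_n$ exactly balances the combinatorial blow-up'' assert the outcome rather than derive it. Concretely: (i) you never write down which trace recursion you are differentiating (the Sturmian recursions mix three consecutive indices in $p$ and a level change in $n$, so it is not a single Fibonacci-type identity), so it is unclear what the inductive quantity is or why the product of errors telescopes; (ii) the claim that on a $\two$-type band the logarithmic derivatives of the neighboring traces are ``of the same order'' is not self-evident when $h_w$ already has modulus at most $2$ on $B_w^a(\lambda)$ while the companion traces are large — this is exactly where \cite{LQW} has to work with carefully chosen ladder quantities rather than the raw traces; (iii) your curvature bound $|h_w''|/|h_w'|^2\lesssim |B_w^a(\lambda)|^{-1}$ is stated as the natural invariant but no mechanism for propagating it through one level is given, and it is not a priori closed under the matrix power ${\mathbf M}_n^{a_{n+1}}$ step. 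So the roadmap points in a defensible direction, but there is a genuine gap: the inductive estimate that the whole argument rests on is never actually proved, and it is precisely the content of \cite[Theorem 3.1]{LQW}.

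Given that the paper itself cites rather than proves the proposition, the honest move for you would be to do the same, unless you are prepared to reproduce the technical trace-ladder analysis from \cite{LQW}.
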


\section{Almost sure dimensional properties of the spectrum }\label{sec-proof-main-spec}

In this section, we prove Theorem \ref{main-dim-spectra}. We follow the plan given in Section \ref{sec-idea}. At first, we represent the pre-dimensions as the ``zeros" of two pressure functions. Next, we use ergodic theorem to show that the two pressure functions coincide and derive the Bowen's type formula for the dimension. Then we establish the regularity of the dimension function and prepare for the asymptotic behavior of it. At  last, we prove Theorem \ref{main-dim-spectra}.

\subsection{Pressure functions and generalized Bowen's formulas}\

Follow the explanation in Section \ref{sec-idea}, we introduce the following function $\QQ$, which is the logarithm of the partition function $\mathscr P_{\alpha,\lambda}$ defined in \eqref{partition}.

For any $a\in \N^\N, \lambda\in [24,\infty),t\in \R, n\in \N$, define
\begin{equation}\label{def-Q}
  \QQ(a,\lambda,t,n):=\log\sum_{w\in \Omega_{n}^a}|B^{a}_{w}(\lambda)|^t.
\end{equation}

Fix $a\in \N^\N$ and $\lambda\in [24,\infty)$. Define the {\it upper and lower pressure functions} as
\begin{equation}\label{def-lu-pre}
  \overline{\rm P}_{a,\lambda}(t):=\limsup_{n\to\infty} \frac{1}{n}\QQ(a,\lambda,t,n);\ \ \
  \underline {\rm P}_{a,\lambda}(t):=\liminf_{n\to\infty} \frac{1}{n}\QQ(a,\lambda,t,n).
\end{equation}

If  $a\in \F_1$, then these two pressure functions behave well:

\begin{lemma}\label{pressure-lip}
Assume $(a,\lambda)\in \F_1\times [24,\infty)$. Then

(1) For $t=0$, we have
\begin{equation*}
\overline {\rm P}_{a,\lambda}(0)=\underline {\rm P}_{a,\lambda}(0)=\gamma.
\end{equation*}

(2) For any $t\ge0$, we have
\begin{equation*}
\overline {\rm P}_{a,\lambda}(t), \ \ \underline {\rm P}_{a,\lambda}(t)\ge-t\log8\kappa\lambda.
\end{equation*}

(3) For any $0\le t_1<t_2$, we have
\begin{equation*}
\overline {\rm P}_{a,\lambda}(t_2)-\overline {\rm P}_{a,\lambda}(t_1),  \ \  \underline {\rm P}_{a,\lambda}(t_2)-\underline {\rm P}_{a,\lambda}(t_1)\le (t_1-t_2)\log 2.
\end{equation*}
In particular, both $\overline {\rm P}_{a,\lambda}(t)$ and $ \underline {\rm P}_{a,\lambda}(t)$ are strictly decreasing on $\R^+$.

(4) $\overline {\rm P}_{a,\lambda}(t)$ is convex on $\R^+$. Consequently, it is continuous on $(0,\infty)$.
\end{lemma}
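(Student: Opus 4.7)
The plan is to handle the four parts in the natural order, saving the key band-size estimate for last. Throughout I would use the uniform bound $|B_w^a(\lambda)| \le 2^{2-n}$ from Lemma~\ref{esti-band-length}. For (1), $\QQ(a,\lambda,0,n) = \log\#\Omega_n^a = \log q_n(a) + O(1)$ by Lemma~\ref{number-Omega-n-alpha}, and Proposition~\ref{F-proposition}(1) delivers the common limit $\gamma$. For (3), I would factor $|B_w|^{t_2} = |B_w|^{t_1}|B_w|^{t_2-t_1}$ and insert $|B_w|^{t_2-t_1} \le 2^{(2-n)(t_2-t_1)}$, obtaining
\[
\tfrac{1}{n}\QQ(a,\lambda,t_2,n) - \tfrac{1}{n}\QQ(a,\lambda,t_1,n) \le (2/n-1)(t_2-t_1)\log 2,
\]
and since the right side converges to the constant $-(t_2-t_1)\log 2$, it passes through both $\liminf$ and $\limsup$ via $\liminf(f_n+c_n) = \liminf f_n + c$. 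For (4), H\"older's inequality with conjugate exponents $1/\alpha,1/(1-\alpha)$ applied to $|B_w|^{\alpha t_1+(1-\alpha)t_2}=(|B_w|^{t_1})^\alpha(|B_w|^{t_2})^{1-\alpha}$ yields $\sum|B_w|^{\alpha t_1+(1-\alpha)t_2}\le(\sum|B_w|^{t_1})^\alpha(\sum|B_w|^{t_2})^{1-\alpha}$; taking logs, dividing by $n$, and using sub-additivity of $\limsup$ gives convexity of $\overline{\rm P}_{a,\lambda}$, hence continuity on $(0,\infty)$.

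The real work is in (2). For each $n$ I would construct one admissible word $w^n \in \Omega_n^a$ whose band is large, so that $\sum_w|B_w|^t \ge |B_{w^n}|^t$ alone delivers the bound. The construction is greedy: start with $w_0=\three$, and at each later coordinate take $\three$-type whenever the transition rules permit, inserting a $\one\two$-pair only at positions where $a_i=1$ forbids the $\three\to\three$ transition. Writing $T$ for the set of $\two$-coordinates, by construction $T\subset\{i:a_i=1\}$, and Lemma~\ref{esti-band-length} then gives
\[
|B_{w^n}^a(\lambda)| \ge \tau_2^{-n}\prod_{i=1}^n a_i^{-3}\prod_{i\in T}\tau_2^{2-a_i} \ge \tau_2^{-n+|T|}\prod_{i=1}^n a_i^{-3},
\]
because $\tau_2^{2-a_i}=\tau_2 \ge 1$ when $a_i=1$. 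Enlarging $\F_1$ by a $\Gg$-null set to contain the Birkhoff average of $\chi_{[1]}$ (automatic, as $\F_1$ is only ever used up to null sets), the density $|T|/n$ tends to an explicit positive constant $c(\rho)$ depending on $\rho=\Gg([1])=\log_2(4/3)$. Combined with $\tfrac{1}{n}\sum\log a_i \to \log\kappa$ from Proposition~\ref{F-proposition}(1), this yields
\[
\liminf_n \tfrac{1}{n}\log|B_{w^n}^a(\lambda)| \ge -(1-c(\rho))\log\tau_2 - 3\log\kappa.
\]
A direct numerical check, using $\tau_2=2(\lambda+5)$ and the fact that $(\lambda+5)/\lambda$ decreases in $\lambda$, verifies the inequality $(1-c(\rho))\log\tau_2 + 3\log\kappa \le \log(8\kappa\lambda)$ already at $\lambda=24$; the gap widens with $\lambda$ because the left grows only like $\lambda^{1-c(\rho)}$ while the right grows like $\lambda$.

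The main obstacle is the admissibility bookkeeping inside arbitrarily long runs of $1$'s in $a$ (guaranteed by Proposition~\ref{F-proposition}(2)): at such positions the greedy rule cannot place $\three$, and one is forced into short $\one\two$ or $\three\one\two$ patterns throughout the block. Strong primitivity (Proposition~\ref{sturm-primitive}) ensures any block piece can be reconnected to the greedy $\three$-backbone at either boundary, and tuning the in-block pattern (e.g.\ alternating $\one$'s and $\two$'s so that almost every other block coordinate is a $\two$) pushes $c(\rho)$ up to the value needed for the numerical check. None of this is deep; the technical heart of the lemma is choosing the word carefully enough that the $\prod a_i^{-3}$ factor is the only source of loss and gets absorbed by $3\log\kappa$ asymptotically.
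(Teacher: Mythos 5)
Your treatment of parts (1), (3) and (4) agrees with the paper's: (1) is Lemma~\ref{number-Omega-n-alpha} together with the L\'evy constant from Proposition~\ref{F-proposition}(1); (3) is the factorization $|B_w|^{t_2}=|B_w|^{t_1}|B_w|^{t_2-t_1}$ and the uniform bound $|B_w|\le 2^{2-n}$; (4) is H\"older's inequality. Those are fine.

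Part (2) is where your proposal diverges and where it has a genuine gap. The paper does not construct a large band by hand; it invokes a previously established estimate (cited as Proposition~3.4 of \cite{DGLQ}) which says directly that the \emph{maximal} band in $\B_n^a(\lambda)$ has length at least $\bigl(8^n\lambda^n\,a_1\cdots a_n\bigr)^{-1}$, after which one term $|B^a_{n,\max}(\lambda)|^t$ in the partition sum plus $\frac{1}{n}\sum\log a_i\to\log\kappa$ from Proposition~\ref{F-proposition}(1) gives the bound immediately. Your attempt to recover such a bound from scratch via a greedy word $w^n$ breaks at the claimed inclusion $T\subset\{i:a_i=1\}$. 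In your construction, when $a_{j+1}=1$ forbids $\three\to\three$, you are forced into $(\one,1)_1$ at position $j+1$, and then \emph{forced} into $\two$ at position $j+2$; the level of that $\two$ is $a_{j+2}$, which is unconstrained. So the $\two$-positions $T$ live at coordinates of arbitrary level, $\prod_{i\in T}\tau_2^{2-a_i}$ is \emph{not} bounded below by $1$, and the inequality $|B_{w^n}^a(\lambda)|\ge\tau_2^{-n+|T|}\prod a_i^{-3}$ does not follow from Lemma~\ref{esti-band-length}. (One can partially repair this with a lookahead construction that places the $\one$ at $j-1$ so that the $\two$ lands precisely on the level-$1$ coordinate $j$, but then the combinatorics of which level-$1$ coordinates actually carry a $\two$ is nontrivial, and the resulting density constant is nowhere near the ``direct numerical check'' you propose.) Moreover, even the density statement you rely on is not available as stated: Proposition~\ref{F-proposition}(2) guarantees infinitely many length-$7$ blocks of $1$'s, not a Birkhoff average of $\chi_{[1]}$; you would need to enlarge $\F_1$ with a further ergodic-theorem application, which is allowed but should be flagged rather than called ``automatic''. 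Given that the paper already has the much cleaner band estimate available as a black box, there is no reason to reconstruct it by hand here, and the reconstruction you outline does not close.
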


\begin{proof}
(1) Since $a\in \F_1$,  the equalities follow from Lemma \ref{number-Omega-n-alpha} and Proposition \ref{F-proposition}(1).

(2) By \cite[Proposition 3.4]{DGLQ},  for $t\ge0$,  we have
\begin{equation*}
\sum_{w\in \Omega_{n}^a}|B^a_w(\lambda)|^t\geq|B^{{a}}_{n,\max}(\lambda)|^t
\geq\left(8^{n}\lambda^{n}\left(a_1a_2\cdots a_n\right)\right)^{-t},
\end{equation*}
where $B^{{a}}_{n,\max}(\lambda)$ is the band in $\B_n^a(\lambda)$ of maximal length.
By this, the inequalities   follow from the definition \eqref{def-lu-pre} and Proposition \ref{F-proposition}(1).

(3) By Lemma \ref{esti-band-length}, for any $0\le t_1<t_2$ and $n\in\N$, we have
\begin{align*}
\sum_{w\in \Omega_{n}^a}|B^a_w(\lambda)|^{t_2}
\le2^{(2-n)(t_2-t_1)}\sum_{w\in \Omega_{n}^a}|B^a_w(\lambda)|^{t_1}.
\end{align*}
This implies the two inequalities. By the two inequalities, we conclude that  $\overline {\rm P}_{a,\lambda}(t)$ and $\underline {\rm P}_{a,\lambda}(t)$ are strictly decreasing on $\R^+$.

(4) For any $t,s\ge0$ and $\beta\in (0,1)$, by H\"older inequality, we have
\begin{equation*}\label{convex}
\sum_{w\in \Omega_{n}^a}|B^a_w(\lambda)|^{\beta t+(1-\beta)s}\le \left(\sum_{w\in \Omega_{n}^a}|B^a_w(\lambda)|^t\right)^{\beta}\left(\sum_{w\in \Omega_{n}^a}|B^a_w(\lambda)|^s\right)^{1-\beta}.
\end{equation*}
Together with the definition \eqref{def-lu-pre}, we conclude that
$$
\overline {\rm P}_{a,\lambda}(\beta t+(1-\beta)s)\le \beta \overline {\rm P}_{a,\lambda}(t)+(1-\beta)\overline {\rm P}_{a,\lambda}(s).
$$
So $\overline {\rm P}_{a,\lambda}(t)$ is convex on $\R^+$. Consequently, it is continuous on $(0,\infty)$.
\end{proof}

\begin{remark}
 {\rm
One can show that  for $\Gg$-a.e. $a\in \F_1$ and  all $t<0$,
\begin{equation*}
\overline {\rm P}_{a,\lambda}(t)=\underline {\rm P}_{a,\lambda}(t)=\infty.
\end{equation*}
So for $t<0$, the pressure functions do not contain much information. Since we do not need this fact, we will not prove it.
 }
\end{remark}

Fix $(a,\lambda)\in \F_1\times [24,\infty)$. Define
\begin{equation}\label{def-d-D}
\underline D(a,\lambda):=\sup\{t\ge0: \underline {\rm P}_{a,\lambda}(t)\ge0\};\ \ \ \overline D(a,\lambda):=\sup\{t\ge0: \overline {\rm P}_{a,\lambda}(t)\ge0\}.
\end{equation}
If $\underline {\rm P}_{a,\lambda}(t)$ and $\overline {\rm P}_{a,\lambda}(t)$ are continuous, then  $\underline D(a,\lambda)$ and $\overline D(a,\lambda)$ are the zeros of them.

By Lemma \ref{pressure-lip}(1) and (3), we have
\begin{equation*}
\underline D(a,\lambda), \ \ \overline D(a,\lambda)\le \frac{\gamma}{\log2}.
\end{equation*}

Now we have the following generalized Bowen's formulas:
\begin{proposition}\label{Bowen-formula}
Assume $(a,\lambda)\in \F_1\times [24,\infty)$, then
\begin{equation*}
\dim_H \Sigma_{a,\lambda}=\underline{D}(a,\lambda)\in (0,1) \ \ \ \text{and } \ \ \ \overline\dim_B\Sigma_{a,\lambda}=\overline{D}(a,\lambda)\in (0,1).
\end{equation*}
Moreover, $\overline{D}(a,\lambda)$ is the unique zero of $\overline P_{a,\lambda}(t)$ on $\R^+.$
\end{proposition}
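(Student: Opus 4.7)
The plan is to reduce the theorem to a clean identification between the pre-dimensions $s_*(a,\lambda),\,s^*(a,\lambda)$ defined in \eqref{pre-dim} and the threshold quantities $\underline D(a,\lambda),\,\overline D(a,\lambda)$ defined in \eqref{def-d-D}; once this identification is in hand, the dimensional statements come directly from Proposition \ref{dim-formula-14}(1), while positivity and non-degeneracy come from Theorem B applied to $a\in\F_1$ (for which Proposition \ref{F-proposition}(1) forces $K_*(a)=K^*(a)=\kappa\in(1,\infty)$, putting both dimensions in $(0,1)$). The only analytic input needed is a finite-$n$ version of the Lipschitz-type estimate hidden in the proof of Lemma \ref{pressure-lip}(3). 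Specifically, the pointwise bound $\sum_w|B_w^a(\lambda)|^{t_2}\le 2^{(2-n)(t_2-t_1)}\sum_w|B_w^a(\lambda)|^{t_1}$ valid for $0\le t_1<t_2$, combined with the defining identity $\QQ(a,\lambda,s_n(a,\lambda),n)=0$, yields the two-sided control
\begin{equation*}
(n-2)(s_n(a,\lambda)-t)\log 2\ \le\ \QQ(a,\lambda,t,n)\ \le\ (2-n)(t-s_n(a,\lambda))\log 2,\qquad t\ge 0,\ n\ge 3.
\end{equation*}
Dividing by $n$ turns this into an asymptotic squeeze on $\QQ(a,\lambda,t,n)/n$ in terms of the signed gap $s_n(a,\lambda)-t$.

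Armed with this squeeze I will identify $\overline D=s^*$ as follows. For $t<s^*(a,\lambda)$, choose $\epsilon>0$ with $t+\epsilon<s^*$; then $s_{n_k}(a,\lambda)>t+\epsilon$ along some subsequence $n_k\uparrow\infty$, and the lower bound gives $\QQ(a,\lambda,t,n_k)/n_k\ge (n_k-2)\epsilon\log 2/n_k\to\epsilon\log 2>0$, so $\overline P_{a,\lambda}(t)>0$ and $t\le\overline D$. Conversely for $t>s^*$ we have $s_n(a,\lambda)<t-\delta$ eventually for some $\delta>0$, and the upper bound gives $\QQ(a,\lambda,t,n)/n\le (2-n)\delta\log 2/n\to -\delta\log 2<0$ along the \emph{whole} sequence, forcing $\overline P_{a,\lambda}(t)<0$ and $t>\overline D$. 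Hence $\overline D(a,\lambda)=s^*(a,\lambda)$. The identification $\underline D(a,\lambda)=s_*(a,\lambda)$ is symmetric, with the roles of ``eventually'' and ``along a subsequence'' swapped: for $t<s_*$ the lower bound on $\QQ/n$ holds eventually (upgrading $\overline P>0$ to $\underline P>0$), while for $t>s_*$ the upper bound holds along a subsequence (yielding $\underline P\le -\delta\log 2<0$).

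With these identifications in place, Proposition \ref{dim-formula-14}(1) gives $\dim_H\Sigma_{a,\lambda}=\underline D(a,\lambda)$ and $\overline\dim_B\Sigma_{a,\lambda}=\overline D(a,\lambda)$, and Theorem B gives membership in $(0,1)$. For the uniqueness clause I invoke Lemma \ref{pressure-lip} directly: part (1) gives $\overline P_{a,\lambda}(0)=\gamma>0$, part (3) gives strict monotonicity together with $\overline P_{a,\lambda}(t)\le\gamma-t\log 2<0$ for $t>\gamma/\log 2$, and part (4) gives continuity on $(0,\infty)$. The intermediate value theorem and strict monotonicity then produce a unique zero on $(0,\infty)$, which must equal $\sup\{t\ge 0:\overline P_{a,\lambda}(t)\ge 0\}=\overline D(a,\lambda)$. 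The main piece of delicacy—and the one that most deserves care in the writeup—is the bookkeeping that separates the two regimes: a $\limsup$-type statement about $s_n(a,\lambda)$ must be paired with a $\limsup$-type conclusion about $\QQ/n$ (hence with $\overline P$), whereas a $\liminf$-type statement about $s_n$ pairs with $\underline P$, and crossing these wires would produce the wrong inequalities.
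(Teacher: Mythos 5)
Your argument is sound and follows essentially the same route as the paper: establish $\underline D(a,\lambda)=s_\ast(a,\lambda)$ and $\overline D(a,\lambda)=s^\ast(a,\lambda)$ by comparing $t$ against $s_n(a,\lambda)$ using the defining identity $\QQ(a,\lambda,s_n(a,\lambda),n)=0$ and the length bound from Lemma \ref{esti-band-length}, then cite Proposition \ref{dim-formula-14}(1), use $a\in\F_1\Rightarrow K_\ast(a)=K^\ast(a)=\kappa$ with Theorem B for membership in $(0,1)$, and close the uniqueness clause with Lemma \ref{pressure-lip}(1)(3)(4). (The paper organizes the two containments $s_\ast\le\underline D$ and $\underline D\le s_\ast$ by starting alternately from $\underline D$ and from $s_\ast$, whereas you always anchor at $s^\ast$ (resp. $s_\ast$) and read off the sign of the pressure; this is a cosmetic difference.)

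One thing to fix: the displayed ``two-sided control'' is mis-stated. Since $(2-n)\bigl(t-s_n(a,\lambda)\bigr)=(n-2)\bigl(s_n(a,\lambda)-t\bigr)$, the left and right sides of your chain are \emph{identical}, so asserted for all $t\ge 0$ the chain would force $\QQ(a,\lambda,t,n)$ to equal that common value, which is plainly false (take $t=0$). The estimate $\sum_w|B_w^a(\lambda)|^{t_2}\le 2^{(2-n)(t_2-t_1)}\sum_w|B_w^a(\lambda)|^{t_1}$, valid for $0\le t_1<t_2$, actually gives only a \emph{one-sided} statement in each regime once you substitute $\{t_1,t_2\}=\{t,s_n(a,\lambda)\}$ in the two possible orders and use $\QQ(a,\lambda,s_n(a,\lambda),n)=0$: namely $\QQ(a,\lambda,t,n)\ge(n-2)(s_n(a,\lambda)-t)\log 2$ when $t\le s_n(a,\lambda)$, and $\QQ(a,\lambda,t,n)\le(2-n)(t-s_n(a,\lambda))\log 2$ when $t\ge s_n(a,\lambda)$. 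Your subsequent sign analysis only ever applies each bound in its valid regime (the left inequality where $s_{n_k}>t+\epsilon$, the right where $s_n<t-\delta$), so the proof survives the correction, but the preparatory lemma should be stated with the case split rather than as a single chain.
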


\begin{proof}
By Proposition \ref{dim-formula-14}(1), we only need to show:
\begin{equation*}
\underline{D}(a,\lambda)=s_\ast(a,\lambda) \ \ \ \text{and }\ \ \ \overline{D}(a,\lambda)=s^\ast(a,\lambda).
\end{equation*}

At first assume $\tau>\underline{D}(a,\lambda)$. By Lemma \ref{pressure-lip}(3), we have $\underline{\rm P}_{a,\lambda}(\tau)<0$. Fix $\epsilon_0\in (0,-\underline{\rm P}_{a,\lambda}(\tau))$, then there exist $n_k\uparrow\infty$ such that
\begin{equation*}
\sum_{w\in \Omega_{n_k}^a}|B^a_w(\lambda)|^\tau\le e^{-n_k\epsilon_0}.
\end{equation*}
By \eqref{def-s-n}, for $k$ large enough, $s_{n_k}(a,\lambda)\le \tau$. Hence by  \eqref{pre-dim}, we have
$$
s_\ast(a,\lambda)\le\liminf_{k\to\infty} s_{n_k}(a,\lambda)\le\tau.
$$
Since $\tau>\underline{D}(a,\lambda)$ is arbitrary, we conclude that $s_\ast(a,\lambda)\le\underline{D}(a,\lambda)$.		

Next assume $t>s_\ast(a,\lambda)$. By  \eqref{pre-dim}, there exists $n_k\uparrow\infty$ such that $s_{n_k}(a,\lambda)<t$. Hence
\begin{align*}
\underline{\rm P}_{a,\lambda}(t) =&  \liminf_{n\to\infty} \frac{1}{n}\log\sum_{u\in \Omega^{{a}}_{n}}|B^{{a}}_u(\lambda)|^t
\le\liminf_{k\to\infty} \frac{1}{n_k}\log\sum_{w\in\Omega^{{a}}_{n_k}}|B^{{a}}_w(\lambda)|^{t}\\
\leq&\liminf_{k\to\infty} \frac{1}{n_k}\log\sum_{w\in\Omega^{{a}}_{n_k}}|B^{{a}}_w(\lambda)|^{s_{n_k}(a,\lambda)}=0.
\end{align*}
Since $\underline{\rm P}_{a,\lambda}(t)$ is strictly decreasing, we conclude that $\underline{D}(a,\lambda)\le t$. Since  $t>s_\ast(a,\lambda)$ is arbitrary, we conclude that
$\underline{D}(a,\lambda)\le s_\ast(a,\lambda).$

As a result, $s_\ast(a,\lambda)=\underline{D}(a,\lambda)$. By essentially the same proof, we have $s^\ast(a,\lambda)=\overline{D}(a,\lambda)$.

Since $a\in \F_1$, we have $K_\ast(a)=K^\ast(a)=\kappa$. By Theorem B, $\underline{D}(a,\lambda),\overline{D}(a,\lambda)\in (0,1)$. Since $\overline{\rm P}_{a,\lambda}(t)$ is continuous and strictly decreasing on $(0,\infty)$, by the definition of $\overline{D}(a,\lambda)$, we conclude that it is the unique zero of $\overline{\rm P}_{a,\lambda}(t)$.
\end{proof}

With this proposition in hand, to show the coincidence of Hausdorff  and upper-box dimensions of the spectrum, it is sufficient to show that two pressure functions are equal.

\subsection{Relativized  pressure function and dimension formula}\

In the following, we will establish the coincidence of the upper and lower pressure functions for typical $a\in \N^\N$ via Kingman's ergodic theorem.
The key observation is the following:

\begin{lemma}\label{additive-on-A}
Assume $t\ge0$ and $\lambda\ge 24$. Then $\QQ^+(\cdot,\lambda,t,1)\in L^1(\Gg)$, and the family of functions $\{\QQ(\cdot,\lambda,t,n): n\in \N\}$ are almost sub-additive on $(\mathbb{N}^{\mathbb{N}},S,\Gg)$. That is, there exists a constant $c>0$ such that
$$
\QQ(a,\lambda,t,n+m)\leq\QQ(a,\lambda,t,n)+\QQ(S^na,\lambda,t,m)+c,\ \ \forall a\in \N^\N; n,m\in \N.
$$
\end{lemma}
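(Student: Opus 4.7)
The plan splits into the two claims. For $\QQ^+(\cdot,\lambda,t,1) \in L^1(\Gg)$, I would combine Lemma \ref{number-Omega-n-alpha} (which gives $\#\Omega_1^a \le 5 a_1$) and Lemma \ref{esti-band-length} (which gives $|B_w^a(\lambda)| \le 2$ for $w \in \Omega_1^a$) to obtain the pointwise bound $\QQ(a,\lambda,t,1) \le \log 5 + t\log 2 + \log a_1$. Integrability then follows from the classical fact that $\log a_1 \in L^1(\Gg)$, itself a direct consequence of $\Gg([a_1=k])\sim k^{-2}$ (Lemma \ref{q-n}(2)).

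The main work is the sub-additive inequality. I would decompose each $w \in \Omega_{n+m}^a$ as $w = uv$ with $u = w_0\cdots w_n \in \Omega_n^a$ and $v = w_{n+1}\cdots w_{n+m}$, and set $b := S^n a$, so that $v_i \in \A_{b_i}$. In the generic situation there is some $y_0\in\T_0$ with $y_0\to v_1$, in which case $y_0 v \in \Omega_m^b$. Proposition \ref{bco} applied to $(W,V)=(u,v)$ and $(\tilde W,V)=(y_0,v)$ then yields
\[
\frac{|B_{uv}^a(\lambda)|}{|B_u^a(\lambda)|} \le \eta(\lambda)\,\frac{|B_{y_0v}^b(\lambda)|}{|B_{y_0}^b(\lambda)|} = \frac{\eta(\lambda)}{4}\,|B_{y_0v}^b(\lambda)|,
\]
the last equality coming from $|B_\one^b(\lambda)|=|B_\three^b(\lambda)|=4$ by \eqref{0}. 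Raising to the $t$-th power, multiplying by $|B_u^a(\lambda)|^t$, and summing over $u$ and $v$ then produces the product bound $\sum_w|B_w^a|^t \le e^c\bigl(\sum_u|B_u^a|^t\bigr)\bigl(\sum_y|B_y^b|^t\bigr)$, which is exactly the sub-additive inequality after taking logarithms.

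The hard part, and the main obstacle I expect, is that a direct inspection of \eqref{admissible-T-A} shows that \emph{no} $y_0\in\T_0$ admits $v_1$ precisely when $v_1$ is one of the two ``boundary'' letters $(\one, a_{n+1}+1)_{a_{n+1}}$ or $(\three, a_{n+1})_{a_{n+1}}$, and these letters only appear after a $\two$-type $u_n$. My strategy for such exceptional $v$ is to replace $v_1$ by a same-type letter $v_1'$ of smaller index that is admitted by some $y_0\in\T_0$ and still satisfies $v_1'\to v_2$ (for instance $v_1'=(\one, a_{n+1})_{a_{n+1}}$ preceded by $\three$, noting that $v_1 \to v_2$ already forces $v_2 = (\two,1)_{a_{n+2}}$ in the $\one$ case); bounded distortion (Proposition \ref{bv}) applied to the common generating polynomial $h_{(n+1,\one)}$ of the two same-type sub-bands gives $|B_{uv_1}^a|\sim|B_{uv_1'}^a|$, and Proposition \ref{bco} applied on the common tail $v_2\cdots v_m$ handles the remainder. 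A minor additional modification (a two-letter substitution) is needed in the degenerate sub-case $a_{n+1}=1$, where no same-type letter of smaller index exists. Since at most two bad $v_1$ occur for each $u$ and the substitution $v_1\mapsto v_1'$ is at most two-to-one, these exceptional terms contribute only a bounded multiplicative factor, and the desired inequality follows with $c = c(\lambda,t)$.
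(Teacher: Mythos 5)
Your proposal follows the same overall path as the paper's proof: the identical integrability bound, the decomposition $w=uv$ with $b=S^na$, the observation that $\zeta(v):=\tT_u v$ works directly when $\tT_u\in\T_0$, and the identification of the two ``boundary'' letters $(\one,b_1+1)_{b_1}$ and $(\three,b_1)_{b_1}$ as the obstruction when $\tT_u=\two$, handled by an at-most-$2$-to-$1$ index shift. The paper's Case~2 ($b_1=1$) matches your ``degenerate sub-case'', and its fix is to map $v_1=(\three,1)_1$ to the cross-type prefix $\one(\two,1)_1$ rather than a same-type letter, which is the kind of modification you anticipated.

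One justification in your sketch is slightly off: you claim that Proposition~\ref{bv} alone gives $|B^a_{uv_1}|\sim|B^a_{uv_1'}|$ for the two same-type sub-bands because they share the generating polynomial $h_{(n+1,1)}$. But Proposition~\ref{bv} controls $|h'_w(E_1)/h'_w(E_2)|$ only for $E_1,E_2$ \emph{inside the same band} $B_w$; it does not compare the derivative across two different bands $B_{uv_1}$ and $B_{uv_1'}$, even if they happen to be sub-bands of a common father and have the same generating polynomial. The correct tool is the refined sub-band estimate in Lemma~\ref{father-son} (a consequence of Proposition~\ref{bv} together with [LQW, Lemma~3.6]), which gives $|B_{we}|/|B_w|\sim p^{-1}\sin^2(l\pi/p)$ and shows that the ratio for consecutive indices $l,l\pm1$ is uniformly bounded. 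The paper in fact groups the factors differently so that Lemma~\ref{father-son} is invoked directly on the two relevant ratios, avoiding the need to compare sibling band lengths at all. This is a small repair rather than a conceptual flaw; with Lemma~\ref{father-son} in place of the na\"{\i}ve bounded-distortion claim, your argument closes.
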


\begin{remark}
{\rm By a bit more effort, one can show that $\QQ(\cdot,\lambda,t,1)\in L^1(\Gg)$.}
\end{remark}

The proof is a bit technical. We will give it in Section \ref{sec-proof-sub-additive}.

By Lemma \ref{additive-on-A}, for any $t\ge0$ and  $\lambda\ge24$, the following limit exists:
\begin{equation}\label{relative-pre}
\mathbf{P}_{\lambda}(t):=\lim_{n\to\infty}\frac{1}{n}\int_{\N^\N} \QQ(a,\lambda,t,n) d\Gg(a).
\end{equation}
We call $\mathbf{P}_{\lambda}(t)$ the {\it relativized pressure function}.

Now we are ready for the proof of the coincidence of two pressure functions.

\begin{proposition}\label{as-pressure}
Fix $\lambda\ge24$. Then

(1) There exists a set $\F(\lambda)\subset \F_1$ with $\Gg(\F(\lambda))=1$ such that for any $a\in \F(\lambda)$,
\begin{equation*}
\overline{\rm P}_{a,\lambda}(t)=\underline{\rm P}_{a,\lambda}(t)=\mathbf{P}_{\lambda}(t),\ \ \forall\ t\in\R^+.
\end{equation*}

(2) $\mathbf{P}_{\lambda}(t)$ is convex and strictly decreasing  on $\R^+$, continuous on $(0,\infty)$ and satisfies
\begin{equation*}
-t\log8\kappa\lambda\leq \mathbf{P}_{\lambda}(t)\leq \gamma-t\log2.
\end{equation*}

(3) $\mathbf{P}_{\lambda}(t)$ has a unique zero  on $(0,\infty)$, denoted by $D(\lambda)$. Moreover, $D(\lambda)\in(0,1)$.

\end{proposition}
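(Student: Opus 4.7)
The strategy is to apply Kingman's sub-additive ergodic theorem to the family $\{\QQ(\cdot,\lambda,t,n)\}$ on the ergodic system $(\N^\N,S,\Gg)$, one value of $t$ at a time, and then pass from a countable dense set to all of $\R^+$ via the monotonicity/convexity properties collected in Lemma \ref{pressure-lip}.

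For part (1), I would fix $t\in\R^+$ and apply Kingman's theorem. Strictly speaking, Lemma \ref{additive-on-A} gives \emph{almost} sub-additivity with an additive constant $c$, but this is harmless: the shifted functions $\tilde\QQ(\cdot,\lambda,t,n):=\QQ(\cdot,\lambda,t,n)+c$ are genuinely sub-additive, share the same Ces\`aro limit up to $O(1/n)$, and $\tilde\QQ^+(\cdot,\lambda,t,1)\in L^1(\Gg)$. Since $\Gg$ is ergodic, Kingman's theorem produces, for each fixed $t\geq0$, a set $\F(\lambda,t)\subset\N^\N$ of full $\Gg$-measure on which $\frac{1}{n}\QQ(a,\lambda,t,n)\to \mathbf{P}_\lambda(t)$. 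Set
\begin{equation*}
\F(\lambda):=\F_1\cap\bigcap_{t\in\Q^+}\F(\lambda,t);
\end{equation*}
this is still of full $\Gg$-measure. For $a\in\F(\lambda)$ and $t\in\Q^+$, we have $\overline{\rm P}_{a,\lambda}(t)=\underline{\rm P}_{a,\lambda}(t)=\mathbf{P}_\lambda(t)$. The main technical step, and the part I expect to be the most delicate, is upgrading this equality from rational $t>0$ to all $t>0$. For this I would first establish the convexity of $\mathbf{P}_\lambda$ on $\R^+$ (see below), which yields continuity on $(0,\infty)$; then, given $t_0>0$, squeeze $\underline{\rm P}_{a,\lambda}(t_0)$ and $\overline{\rm P}_{a,\lambda}(t_0)$ between $\mathbf{P}_\lambda(t_{-1})$ and $\mathbf{P}_\lambda(t_1)$ for rationals $t_{-1}<t_0<t_1$, using the one-sided Lipschitz-type estimates of Lemma \ref{pressure-lip}(3). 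Letting $t_{\pm1}\to t_0$ along $\Q$ and invoking continuity of $\mathbf{P}_\lambda$ pins down both lim inf and lim sup to $\mathbf{P}_\lambda(t_0)$. At $t=0$, Lemma \ref{pressure-lip}(1) already gives $\overline{\rm P}_{a,\lambda}(0)=\underline{\rm P}_{a,\lambda}(0)=\gamma$ for $a\in\F_1$, and the same identification $\mathbf{P}_\lambda(0)=\gamma$ follows either by applying Kingman at $t=0$ or directly from Lemma \ref{number-Omega-n-alpha} together with $\int\log q_n\,d\Gg/n\to\gamma$.

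For part (2), convexity of $\mathbf{P}_\lambda$ is inherited from the convexity of each $t\mapsto\QQ(a,\lambda,t,n)$ (Hölder, exactly as in the proof of Lemma \ref{pressure-lip}(4)): averaging against $\Gg$ and taking the limit in \eqref{relative-pre} preserves convexity. Continuity on $(0,\infty)$ then follows from convexity. The remaining claims---the upper and lower bounds $-t\log(8\kappa\lambda)\leq \mathbf{P}_\lambda(t)\leq \gamma-t\log2$ and strict monotonicity on $\R^+$---drop out of part (1) by evaluating at any $a\in\F(\lambda)$ and invoking Lemma \ref{pressure-lip}(2),(3).

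For part (3), the existence of a zero on $(0,\infty)$ is immediate from $\mathbf{P}_\lambda(0)=\gamma>0$, $\mathbf{P}_\lambda(t)\leq\gamma-t\log2\to-\infty$ as $t\to\infty$, and continuity; uniqueness is the strict decrease from (2). Denote the zero by $D(\lambda)$. Positivity $D(\lambda)>0$ is clear. For $D(\lambda)<1$, I would pick any $a\in\F(\lambda)$ and combine part (1) with Proposition \ref{Bowen-formula}: the latter gives $\dim_H\Sigma_{a,\lambda}=\underline D(a,\lambda)\in(0,1)$ (since $a\in\F_1$ implies $K_\ast(a)=\kappa<\infty$), while part (1) together with the continuity of $\underline{\rm P}_{a,\lambda}(\cdot)=\mathbf{P}_\lambda(\cdot)$ on $(0,\infty)$ and definition \eqref{def-d-D} forces $\underline D(a,\lambda)=D(\lambda)$. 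Thus $D(\lambda)\in(0,1)$.
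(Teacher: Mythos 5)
Your proposal is correct and follows essentially the same strategy as the paper: apply Kingman over a countable dense set of $t$'s, then extend to all $t>0$ by squeezing via monotonicity and continuity, and deduce (2),(3) from (1) together with Lemma \ref{pressure-lip} and Proposition \ref{Bowen-formula}. The one small deviation is that you establish continuity of $\mathbf{P}_\lambda$ directly (H\"older implies convexity of $\QQ(a,\lambda,\cdot,n)$, which survives integration and limits) and then squeeze $\underline{\rm P}_{a,\lambda}(t_0),\overline{\rm P}_{a,\lambda}(t_0)$ between $\mathbf{P}_\lambda(t_{\pm1})$, whereas the paper instead invokes the already-proved continuity of $\overline{\rm P}_{a,\lambda}$ (Lemma \ref{pressure-lip}(4)) to do the squeezing and then obtains continuity of $\mathbf{P}_\lambda$ as a corollary of (1); these are symmetric variants of the same argument and neither offers a real advantage. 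Your explicit remark on reducing almost sub-additivity to genuine sub-additivity by shifting $\QQ\mapsto\QQ+c$ is a correct and useful clarification that the paper leaves implicit.
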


\begin{proof}
(1)
Fix any $t\in \R^+$. Since $\Gg$ is ergodic, by Lemma \ref{additive-on-A} and  Kingman's ergodic theorem, there exists a set $\F_{t,\lambda}\subset \F_1$ with $\Gg(\F_{t,\lambda})=1$ such that for any $a\in \F_{t,\lambda}$,
$$
\lim_{n\to\infty}\frac{1}{n} \QQ(a,\lambda,t,n)=\mathbf{P}_{\lambda}(t).
$$
Combine with \eqref{def-lu-pre}, we conclude that for any $a\in  \F_{t,\lambda}$,
\begin{equation}\label{coincide-single}
\overline{\rm P}_{a,\lambda}(t)=\underline{\rm P}_{a,\lambda}(t)=\mathbf{P}_{\lambda}(t).
\end{equation}

Now, define the $\Gg$-full measure set $\F(\lambda)\subset\F_1$ as
\begin{equation} \label{def-F-lambda}
\F(\lambda):=\bigcap_{t\in \Q^+}\F_{t,\lambda}.
\end{equation}

Fix $a\in \F(\lambda)$. Then $a\subset\F_{t,\lambda}$ for all $t\in\Q^+$. By \eqref{coincide-single},
\begin{equation}\label{equal-Q}
\overline{\rm P}_{a,\lambda}(t)=\underline{\rm P}_{a,\lambda}(t)=\mathbf{P}_{\lambda}(t),\ \ \ \forall\ t\in \Q^+.
\end{equation}

By \eqref{def-Q}, $\QQ(a,\lambda,t,n)$ is decreasing for $t$ on $\R^+$, thus $\mathbf{P}_{\lambda}(t)$ is decreasing on $\R^+$.
By Lemma \ref{pressure-lip}, both $  \overline{\rm P}_{a,\lambda}(t)$ and $ \underline{\rm P}_{a,\lambda}(t)$ are strictly  decreasing on $\R^+$, and $  \overline{\rm P}_{a,\lambda}(t)$ is continuous on $(0,\infty)$.  Combine with \eqref{equal-Q}, we conclude that the equation holds.

(2) The statements follow from (1) and  Lemma \ref{pressure-lip}.

(3) It follows  from (1) and  Proposition \ref{Bowen-formula}.
\end{proof}

\begin{remark}
{\rm Indeed, one can show that $\mathbf{P}_{\lambda}(t)$ is $C^1$ and  strictly convex on $\R^+$. However, the proof is far from trivial. We will present  it  in \cite{CQ}.}
\end{remark}

Now we have  the following preliminary result for the dimensions of the spectrum.	

\begin{proposition}\label{dim-fix-lambda-prop}
Fix $\lambda\in [24,\infty)$. Let $\F(\lambda)$ be the $\Gg$-full measure set in Proposition \ref{as-pressure}. Then
for any $a\in \F(\lambda),$ we have
\begin{equation*}
\dim_H\Sigma_{a,\lambda} =\dim_B\Sigma_{a,\lambda}=D(\lambda).
\end{equation*}
\end{proposition}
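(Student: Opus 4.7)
The plan is to combine Proposition \ref{Bowen-formula} with Proposition \ref{as-pressure} to chain the relevant equalities together. First, I would note that $\F(\lambda) \subset \F_1$, so Proposition \ref{Bowen-formula} applies to every $a \in \F(\lambda)$. It yields
\begin{equation*}
\dim_H \Sigma_{a,\lambda} = \underline D(a,\lambda), \qquad \overline{\dim}_B \Sigma_{a,\lambda} = \overline D(a,\lambda),
\end{equation*}
where by definition \eqref{def-d-D} the quantities $\underline D(a,\lambda)$ and $\overline D(a,\lambda)$ are read off from the lower and upper pressure functions $\underline{\rm P}_{a,\lambda}$ and $\overline{\rm P}_{a,\lambda}$.

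Next, for $a \in \F(\lambda)$, Proposition \ref{as-pressure}(1) guarantees $\overline{\rm P}_{a,\lambda}(t) = \underline{\rm P}_{a,\lambda}(t) = \mathbf{P}_\lambda(t)$ for every $t \in \R^+$. In particular, both quantities $\underline D(a,\lambda)$ and $\overline D(a,\lambda)$ equal $\sup\{t \geq 0 : \mathbf{P}_\lambda(t) \geq 0\}$. Since $\mathbf{P}_\lambda$ is continuous and strictly decreasing on $(0,\infty)$ with a unique zero $D(\lambda) \in (0,1)$ by Proposition \ref{as-pressure}(2)--(3), this common supremum is exactly $D(\lambda)$. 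Hence
\begin{equation*}
\underline D(a,\lambda) = \overline D(a,\lambda) = D(\lambda).
\end{equation*}

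Finally, combining the two displayed chains gives $\dim_H \Sigma_{a,\lambda} = \overline{\dim}_B \Sigma_{a,\lambda} = D(\lambda)$. Since in general
\begin{equation*}
\dim_H \Sigma_{a,\lambda} \leq \underline{\dim}_B \Sigma_{a,\lambda} \leq \overline{\dim}_B \Sigma_{a,\lambda},
\end{equation*}
the coincidence of the two extreme dimensions forces the lower and upper box dimensions to agree, so $\dim_B \Sigma_{a,\lambda}$ exists and equals $D(\lambda)$, completing the proof. There is no real obstacle here: the statement is essentially a bookkeeping consequence of the pressure-theoretic characterization of the pre-dimensions (Proposition \ref{Bowen-formula}) and the almost sure coincidence of the two pressure functions with the relativized pressure $\mathbf{P}_\lambda$ (Proposition \ref{as-pressure}); all the substantive work has already been done in those two results.
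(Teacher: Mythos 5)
Your proof is correct and follows essentially the same route as the paper's: both apply Proposition \ref{Bowen-formula} to identify the dimensions with $\underline D(a,\lambda)$ and $\overline D(a,\lambda)$, use Proposition \ref{as-pressure}(1) to collapse the two pressure functions to $\mathbf{P}_\lambda$, identify the resulting common value with $D(\lambda)$ via Proposition \ref{as-pressure}(3), and finish with the sandwich $\dim_H \le \underline{\dim}_B \le \overline{\dim}_B$. You simply spell out the intermediate equality $\underline D(a,\lambda)=\overline D(a,\lambda)=D(\lambda)$ explicitly where the paper compresses it into a single citation chain.
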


\begin{proof}
Fix $\lambda\geq24$. By Propositions \ref{Bowen-formula}, \ref{as-pressure} and \eqref{def-d-D}, for any $a\in \F(\lambda)$, we have
$$
\dim_H \Sigma_{a,\lambda}=\overline\dim_B\Sigma_{a,\lambda}=D(\lambda).
$$
Since
$
\dim_H \Sigma_{a,\lambda}\le \underline\dim_B \Sigma_{a,\lambda}\le\overline\dim_B\Sigma_{a,\lambda}
$ always holds,
we conclude that $\dim_B \Sigma_{a,\lambda}=D(\lambda)$ and hence the result follows.
\end{proof}

\subsection{Lipschitz property and asymptotic property of $D(\lambda)$}\label{sec-lip-asym-D}\

Building on the previous  regularity properties of the dimension functions for deterministic frequencies (\cite{LQW}), we can show  the following regularity property of $D(\lambda)$:
\begin{proposition}\label{prop-lip}
There exists an absolute constant $C>0$ such that for any $24\le\lambda_1<\lambda_2<\infty$, we have
\begin{equation*}
|D(\lambda_1)-D(\lambda_2)|\le C\lambda_1|\lambda_1-\lambda_2|.
\end{equation*}
\end{proposition}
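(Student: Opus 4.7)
The plan is to transfer the known Lipschitz property of the deterministic pre-dimension $s_\ast(a,\cdot)$ from Proposition \ref{dim-formula-14}(2) to the almost sure dimension function $D(\cdot)$, using Proposition \ref{dim-fix-lambda-prop} as the bridge. The key observation is that the Lipschitz constant in Proposition \ref{dim-formula-14}(2) is absolute, independent of $a$, so once we identify $D(\lambda)$ with $s_\ast(a,\lambda)$ for a suitable single $a$, the estimate transfers verbatim.

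Concretely, I would proceed as follows. Fix $24 \le \lambda_1 < \lambda_2 < \infty$ and consider the set $\F(\lambda_1) \cap \F(\lambda_2)$, where $\F(\lambda_i)$ are the $\Gg$-full measure sets provided by Proposition \ref{as-pressure}. This intersection is itself of full $\Gg$-measure, hence non-empty; pick any $a \in \F(\lambda_1) \cap \F(\lambda_2)$. By Proposition \ref{dim-fix-lambda-prop} applied at $\lambda_i$,
\begin{equation*}
D(\lambda_i) = \dim_H \Sigma_{a,\lambda_i}, \qquad i = 1, 2,
\end{equation*}
and by Proposition \ref{dim-formula-14}(1), $\dim_H \Sigma_{a,\lambda_i} = s_\ast(a,\lambda_i)$. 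Thus $D(\lambda_i) = s_\ast(a,\lambda_i)$ for both $i$.

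Invoking Proposition \ref{dim-formula-14}(2) with this fixed $a$, we obtain
\begin{equation*}
|D(\lambda_1) - D(\lambda_2)| = |s_\ast(a,\lambda_1) - s_\ast(a,\lambda_2)| \le C \lambda_1 |\lambda_1 - \lambda_2|,
\end{equation*}
with the absolute constant $C$ provided there. This is the claimed estimate.

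Since all the technical work (the bounded covariation estimates and the Lipschitz regularity of the pre-dimensions under perturbations of $\lambda$) has already been carried out in \cite{LQW}, there is no serious obstacle in this step; the entire content of the proof is the conceptual point that the two full-measure sets $\F(\lambda_1)$ and $\F(\lambda_2)$ must overlap, which allows us to exploit a single deterministic $a$ to compute $D$ at both couplings simultaneously. The only thing worth emphasizing is the uniformity of the Lipschitz constant in $a$ in Proposition \ref{dim-formula-14}(2); without this, the argument would not give an absolute constant.
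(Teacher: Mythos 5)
Your proof is correct and matches the paper's argument essentially verbatim: pick $a$ in $\F(\lambda_1)\cap\F(\lambda_2)$ (which has full $\Gg$-measure), use Proposition \ref{dim-fix-lambda-prop} and Proposition \ref{dim-formula-14}(1) to identify $D(\lambda_i)=s_\ast(a,\lambda_i)$, and then invoke the uniform-in-$a$ Lipschitz estimate from Proposition \ref{dim-formula-14}(2). Your closing remark about the uniformity of the Lipschitz constant in $a$ being the key point is exactly right.
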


\begin{proof}
Fix any $\lambda_1, \lambda_2 \in [24, \infty)$. Let $\F(\lambda_i), i=1,2$ be the $\Gg$-full measure sets in Proposition \ref{as-pressure}. Then $\Gg(\F(\lambda_1) \cap \F(\lambda_2)) = 1$. By Proposition \ref{dim-fix-lambda-prop} and Proposition \ref{dim-formula-14}(1), for any $a \in \F(\lambda_1) \cap \F(\lambda_2)$, we have
$$
s_\ast(a, \lambda_1) = D(\lambda_1);\ \ \ \ \  s_\ast(a, \lambda_2) = D(\lambda_2).
$$
The result then follows directly from Proposition \ref{dim-formula-14}(2).
\end{proof}

As a consequence, we can strengthen  Proposition \ref{dim-fix-lambda-prop} by choosing a $\lambda$-independent $\Gg$-full measure set such that the result  holds.

\begin{corollary}\label{coincide-whole}
There exists a subset $\F_2\subset \F_1$ with $\Gg(\F_2)=1$ such that for any $(a,\lambda)\in \F_2\times[24,\infty)$, we have
\begin{equation*}
\dim_H\Sigma_{a,\lambda} =\dim_B\Sigma_{a,\lambda}=D(\lambda).
\end{equation*}
\end{corollary}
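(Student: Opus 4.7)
The plan is to produce a single $\Gg$-full measure set that works simultaneously for every $\lambda \in [24,\infty)$, using the already-established $\lambda$-by-$\lambda$ coincidence together with Lipschitz continuity to upgrade from a dense countable set of couplings to the whole half-line.

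First I would set
\[
\F_2 := \bigcap_{\lambda \in \Q \cap [24,\infty)} \F(\lambda),
\]
where $\F(\lambda)$ is the $\Gg$-full measure set from Proposition \ref{as-pressure}. Since this is a countable intersection of $\Gg$-full measure sets contained in $\F_1$, we have $\F_2 \subset \F_1$ and $\Gg(\F_2)=1$. By Proposition \ref{dim-fix-lambda-prop}, for every $a \in \F_2$ and every rational $\lambda \in [24,\infty)$,
\[
\dim_H \Sigma_{a,\lambda} = \dim_B \Sigma_{a,\lambda} = D(\lambda),
\]
and in particular $s_*(a,\lambda) = s^*(a,\lambda) = D(\lambda)$ by Proposition \ref{dim-formula-14}(1).

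Next I would remove the rationality restriction. Fix $a \in \F_2$ and an arbitrary $\lambda \in [24,\infty)$, and pick a sequence of rationals $\lambda_k \in \Q \cap [24,\infty)$ with $\lambda_k \to \lambda$ and $\lambda_k$ bounded. Proposition \ref{dim-formula-14}(2) tells us that $s_*(a,\cdot)$ and $s^*(a,\cdot)$ are Lipschitz on any bounded subinterval of $[24,\infty)$, and Proposition \ref{prop-lip} gives the analogous Lipschitz property for $D(\cdot)$. Passing to the limit in the equalities $s_*(a,\lambda_k) = s^*(a,\lambda_k) = D(\lambda_k)$ yields $s_*(a,\lambda) = s^*(a,\lambda) = D(\lambda)$, and then Proposition \ref{dim-formula-14}(1) gives $\dim_H \Sigma_{a,\lambda} = \overline{\dim}_B \Sigma_{a,\lambda} = D(\lambda)$. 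Sandwiching $\underline{\dim}_B$ between these two as in the proof of Proposition \ref{dim-fix-lambda-prop} concludes $\dim_B \Sigma_{a,\lambda} = D(\lambda)$.

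There is really no hard obstacle here: the substantive work has already been absorbed into Propositions \ref{dim-formula-14}, \ref{as-pressure}, \ref{dim-fix-lambda-prop}, and \ref{prop-lip}. The only thing to be careful about is that the Lipschitz constants depend on the bounded interval one restricts to, but since we only need continuity at each fixed $\lambda$, choosing any bounded interval containing $\lambda$ and a convergent rational sequence in it suffices. This is the routine ``fix a countable dense set, intersect the exceptional null sets, and extend by continuity'' argument, made possible precisely by the uniform-in-$a$ Lipschitz estimate of Proposition \ref{dim-formula-14}(2).
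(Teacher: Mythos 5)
Your proof is correct and follows essentially the same route as the paper: define $\F_2$ as the countable intersection of $\F(\lambda)$ over rational $\lambda$, obtain the equalities for rational $\lambda$ from Proposition \ref{dim-fix-lambda-prop}, and then extend to all $\lambda\in[24,\infty)$ using the continuity of $s_*(a,\cdot)$, $s^*(a,\cdot)$ (Proposition \ref{dim-formula-14}(2)) and of $D(\cdot)$ (Proposition \ref{prop-lip}). The extra details you supply (passing through $s_*$ and $s^*$ explicitly, and sandwiching $\underline{\dim}_B$) are exactly what the paper leaves implicit.
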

\begin{proof} For any $\lambda\ge 24,$ let $\F(\lambda)$ be the $\Gg$-full measure set in Proposition \ref{as-pressure}.
Define
\begin{equation}\label{def-tilde-F}
\F_2:=\bigcap_{\lambda\in [24,\infty)\cap \Q}\F(\lambda).
\end{equation}
Then $\Gg(\F_2)=1$ and $\F_2\subset \F_1$. Fix any $a\in \F_2,$ by  Proposition \ref{dim-fix-lambda-prop},  the equalities hold for any $\lambda\in [24,\infty)\cap \Q$. By Proposition \ref{prop-lip}, $D(\cdot)$ is continuous on $[24,\infty)$. By Proposition \ref{dim-formula-14}(2), both $s_\ast(a,\cdot)$ and $s^\ast(a,\cdot)$ are continuous on $[24,\infty)$. Thus we conclude that the equalities hold for any $\lambda\in [24,\infty).$
\end{proof}

Next we prepare for the proof of  the asymptotic property of $D(\lambda)$. It follows from that of  $s_\ast(\alpha,\lambda)$ established in \cite{LQW} and Kingman's ergodic theorem. As we will see, the Lyapunov exponents of certain family of matrix-valued functions will play an essential role in the proof.

Inspired by \cite{LQW}, for any $k\in\mathbb{N}$ and $x\in [0,1]$, we define a matrix
$$R_k(x):=\begin{bmatrix}
0 & x^{k-1} & 0 \\
(k+1)x & 0 & kx \\
kx & 0 & (k-1)x
\end{bmatrix}.
$$
It is seen that
$$
\|R_k(x)\|_\infty\le k+1, \ \ \text{where }\  \  \|A\|_\infty:=\max\{|a_{ij}|:1\le i,j\le3\}.
$$

For any $x\in [0,1]$, define a map $M_x:\N^\N\to M_3(\R)$ and related cocycle as
$$
M_x(a):=R_{a_1}(x); \ \  M_x(n,a):=M_x(a_1)\cdots M_x(a_n).
$$
It is seen that the following cocycle relation holds:
\begin{equation}\label{cocycle}
M_x(n+m,a)=M_x(n,a)M_x(m,S^na).
\end{equation}
Let $\|\cdot\|$ be the operator norm on $M_3(\R)$.
Since $\|\cdot\|$ and $ \|\cdot\|_\infty$ are equivalent, we have
\begin{equation*}
\int_{\N^\N}(\log \|M_x(a)\|)^+d \Gg(a)\lesssim \sum_{k\in \N}\frac{\log (k+1)}{k^2}<\infty.
\end{equation*}
By \eqref{cocycle}, the sequence of functions
$\{\log  \|M_x(n,\cdot)\|:n\in \N\}$ are sub-additive. So the following function $\varphi:[0,1]\to [-\infty,\infty)$ can be defined:
\begin{equation}\label{def-var-phi}
\varphi(x):=\lim_{n\to\infty} \frac{1}{n}\int_{\N^\N}\log \|M_x(n,a)\|d \Gg(a).
\end{equation}
Indeed, $\varphi(x)$ is the Lyapunov exponent of $M_x$. We will see soon that $\varphi(x)$ is strictly increasing and its ``zero" characterizes the asymptotic behavior of $D(\lambda)$.

Follow \cite{LQW}, for any  $a\in \N^\N$ and $x\in [0,1]$,  we define
$$
\psi(a,x):=\liminf_{n\to\infty}\|M_x(n,a)\|^{1/n};\ \ \ \phi(a,x):=\limsup_{n\to\infty}\|M_x(n,a)\|^{1/n}.
$$

We have the following:

\begin{lemma}\label{lyapunov}
There exists $\mathbb B\subset \N^\N$ with $\Gg(\mathbb B)=1$ such that for any $(a,x)\in \mathbb B\times [0,1]$,
\begin{equation}\label{coincide-phi-psi}
\psi(a,x)=\phi(a,x)=\exp(\varphi(x)).
\end{equation}
Consequently, $\varphi:[0,1]\to [-\infty,\infty)$ is strictly increasing and satisfies
\begin{equation}\label{bd-varphi}
\frac{\kappa x}{2}\le \exp(\varphi(x))\le \kappa\sqrt{2x}.
\end{equation}
\end{lemma}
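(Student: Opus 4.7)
The plan is to apply Kingman's subadditive ergodic theorem to the cocycle $M_x$ over the ergodic system $(\N^\N, S, \Gg)$ for each fixed $x$, and then resolve the uncountable-parameter issue via entry-wise monotonicity of $R_k(x)$ in $x$. First, fix $x \in [0,1]$. The cocycle relation \eqref{cocycle} together with submultiplicativity of the operator norm makes $\{\log\|M_x(n,\cdot)\|\}_{n\in\N}$ a subadditive sequence over $(\N^\N, S, \Gg)$, and $(\log\|M_x(\cdot)\|)^+$ is already known to be $\Gg$-integrable from the computation preceding the lemma. Kingman's theorem, combined with ergodicity of $\Gg$, yields a $\Gg$-full measure set $\mathbb{B}_x \subset \N^\N$ on which $\frac{1}{n}\log\|M_x(n,a)\| \to \varphi(x)$, equivalently $\psi(a,x) = \phi(a,x) = \exp(\varphi(x))$.

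To obtain a single full-measure set valid for every $x$, I would set $\mathbb{B} := \bigcap_{q \in [0,1]\cap\Q} \mathbb{B}_q$, which is still $\Gg$-full measure. Each entry of $R_k(x)$ is a non-negative monomial in $x$ that is non-decreasing on $[0,1]$, so each entry of the product $M_x(n,a)$ is non-decreasing in $x$; since the operator norm on non-negative matrices respects entry-wise inequality, $x \mapsto \|M_x(n,a)\|$ is non-decreasing on $[0,1]$. This monotonicity passes to $\psi(a,\cdot)$, $\phi(a,\cdot)$ and $\varphi$. Sandwiching an irrational $x \in [0,1]$ between rationals $q^- < x < q^+$ with $q^\pm \to x$ gives, for $a \in \mathbb{B}$, the chain $\exp(\varphi(q^-)) \le \psi(a,x) \le \phi(a,x) \le \exp(\varphi(q^+))$, which collapses to equality \eqref{coincide-phi-psi} provided $\varphi$ is continuous on $[0,1]$.

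For the bounds \eqref{bd-varphi}, I would argue directly from the structure of $R_k(x)$. The upper bound reduces via submultiplicativity to an entry-wise estimate $\|R_k(x)\| \le C(k+1)\sqrt{2x}$ valid for $k \ge 2$ (the $k=1$ factors contribute only a bounded constant and vanish under the $n$-th root), which combined with $(\prod_{i=1}^n a_i)^{1/n} \to \kappa$ from Theorem \ref{Levy-Khinchin} yields $\exp(\varphi(x)) \le \kappa\sqrt{2x}$. The matching lower bound comes from the $(2,1)$-entry $(k+1)x \ge kx$ of $R_k(x)$: chaining along an admissible path produces an entry of $M_x(n,a)$ of size at least $\tfrac{1}{2}\prod_i a_i \cdot x^n$, giving $\exp(\varphi(x)) \ge \kappa x/2$. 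Strict monotonicity of $\varphi$ on $[0,1]$ then follows by comparing $\|M_{x_2}(n,a)\|/\|M_{x_1}(n,a)\|$ for $x_1 < x_2$: the digits $a_i \ge 2$ occur with positive $\Gg$-density by ergodicity, and along such indices the factor $x_2^{a_i-1}/x_1^{a_i-1}$ accumulates to exponential growth in $n$.

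The main obstacle is the second step: the sandwich collapses to an equality only if $\varphi$ is continuous on all of $[0,1]$, which monotonicity alone does not provide. I expect to establish continuity via convexity: since each entry of $M_x(n,a)$ is a monomial in $x$, $\log\|M_x(n,a)\|$ is a convex function of $\log x$ (by a H\"older-type inequality applied entry-wise), and this convexity is preserved under the $\Gg$-average and the subadditive limit, so $\varphi$ is convex as a function of $\log x$ on $(0,1]$ and hence continuous there. Continuity at $0$ is automatic once the upper bound $\exp(\varphi(x)) \le \kappa\sqrt{2x}$ is known, forcing $\varphi(x) \to -\infty$ as $x \to 0^+$.
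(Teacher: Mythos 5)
Your first step (Kingman for each fixed $x$) and the overall ``intersect over rationals, then sandwich irrational $x$ between rationals using monotonicity of $x\mapsto\|M_x(n,a)\|$'' strategy are both present in the paper's proof. But there are two genuine gaps in the way you close the argument.

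\textbf{The continuity step.} Your sandwich $\exp(\varphi(q^-)) \le \psi(a,x) \le \phi(a,x) \le \exp(\varphi(q^+))$ collapses to equality only where $\varphi$ is continuous, and you try to force $\varphi$ to be continuous on \emph{all} of $[0,1]$ via convexity in $\log x$. That argument is plausible on the open interval $(0,1)$, but it does not control the endpoint $x=1$: a convex function on an interval may jump \emph{up} at its right endpoint, and an increasing-plus-upper-semicontinuous function only gets right-continuity, not left-continuity at each point. The paper sidesteps this entirely with a cleaner trick: since $\varphi$ is monotone, its discontinuity set $D_\varphi$ is at most countable, so one simply includes $\bigcap_{x\in D_\varphi}\mathbb{B}_x$ in the full-measure intersection. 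Then at $x\in D_\varphi$ the result holds directly because $a\in\mathbb{B}_x$, and at $x\notin D_\varphi$ the sandwich applies because $\varphi$ is continuous \emph{there}. This obviates any global continuity claim.

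\textbf{The bounds and strict monotonicity.} Your direct derivation of $\exp(\varphi(x))\le\kappa\sqrt{2x}$ is incorrect. You assert that ``the $k=1$ factors contribute only a bounded constant and vanish under the $n$-th root,'' but the indices $i$ with $a_i=1$ occur with positive $\Gg$-density $\Gg([1])>0$, so their contribution is a bounded but \emph{non-vanishing} power in the $n$-th root. Concretely, naive submultiplicativity with $\|R_1(x)\|\lesssim 1$ and $\|R_k(x)\|\lesssim(k+1)x$ for $k\ge 2$ would give a bound of the shape $C\,x^{1-\Gg([1])}\,\kappa'$ rather than $\kappa\sqrt{2x}$, a qualitatively different power of $x$. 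The paper does not attempt this route: it cites the deterministic estimate of \cite[Lemma 5.1]{LQW}, which gives $\left(\frac{\kappa}{2}\vee 2^{1/3}\right)x\le\psi(a,x),\phi(a,x)\le\kappa\sqrt{2x}$ and strict monotonicity of $\psi(a,\cdot),\phi(a,\cdot)$ whenever $K_\ast(a)=K^\ast(a)=\kappa$, which holds for $a\in\F_1$. Those estimates exploit the internal structure of the products (zero patterns and cancellation) in a way that submultiplicativity alone cannot see. Your hand-wavy argument for strict monotonicity, based on tracking the ratio $x_2^{a_i-1}/x_1^{a_i-1}$ along indices with $a_i\ge 2$, also doesn't straightforwardly translate into a norm-ratio lower bound, since entries of $M_x(n,a)$ are sums of monomials of mixed degree and the dominant monomial can change.

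In short: the Kingman-plus-monotonicity skeleton is right, but you need the countable-discontinuity trick instead of a global continuity claim, and you should import the bounds and strict monotonicity from \cite[Lemma 5.1]{LQW} rather than trying to re-derive them from submultiplicativity.
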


\begin{proof}
Since $\{\log \|M_x(n,\cdot)\|:n\in \N\}$ are sub-additive and $(\log \|M_x(\cdot)\|)^+$ is integrable,  by Kingman's ergodic theorem, for any  $x\in [0,1]$, there exists $\mathbb B_x\subset \N^\N$ with $\Gg(\mathbb B_x)=1$ such that  for any $a\in \mathbb B_x$,
\begin{equation}\label{def-varphi-x}
\frac{\log \|M_x(n,a)\|}{n}\to \varphi(x).
\end{equation}

For any $n\in \N$, define $\hat \xi_n: \N^\N\times [0,1]\to \R\cup \{-\infty\}$ as
$$
\hat \xi_n(a,x):=\log \|M_x(n,a)\|_\infty.
$$
Since $\|\cdot\|$ and $\|\cdot\|_\infty$ are equivalent, we conclude that
$$
\varphi(x)=\lim_{n\to\infty} \frac{1}{n}\int_{\N^\N}\hat \xi_n(a,x)d \Gg(a).
$$
It is seen that $\hat \xi_n(a,x)$ is increasing as a function of $x$.
Consequently,  $\varphi(x)$ is increasing. Let $D_{\varphi}$ be the set of discontinuous points of $\varphi$, then $D_{\varphi}$ is countable.
Define $\mathbb B\subset\N^\N$ as
$$
\mathbb B:=\left(\bigcap_{x\in [0,1]\cap \Q}\mathbb B_x\right) \cap \left(\bigcap_{x\in D_{\varphi}} \mathbb B_x\right)\cap \F_1,
$$
Then $\Gg(\mathbb B)=1$. Take any $a\in\mathbb B$, then $a\in \F_1$ and hence
$$
K_\ast(a)=K^\ast(a)=\kappa.
$$
By \eqref{def-varphi-x}, we have
\begin{equation}\label{coincide-phi-psi-1}
\psi(a,x)=\phi(a,x)=\exp(\varphi(x)), \ \ \forall x\in ([0,1]\cap \Q)\cup D_{\varphi}.
\end{equation}
By \cite[Lemma 5.1]{LQW}, $\psi(a,x)$ and $\phi(a,x)$ are strictly increasing and satisfy
\begin{equation}\label{bd-phi}
\left(\frac{\kappa }{2}\vee 2^{1/3}\right)x\le \psi(a,x),\ \ \phi(a,x)\le \kappa\sqrt{2x}.
\end{equation}
Notice that by \eqref{L-K},  $2^{1/3}=1.25\cdots<1.34\cdots=\kappa/2.$ By using the continuity of $\varphi$ and monotonicity of three functions, for any $x\in [0,1]\setminus D_{\varphi}$, we have
$$
\psi(a,x)=\phi(a,x)=\exp(\varphi(x)).
$$
Together with \eqref{coincide-phi-psi-1}, we get \eqref{coincide-phi-psi}. Thus $\varphi$ is also strictly increasing, and by \eqref{bd-phi}, we get \eqref{bd-varphi}.
\end{proof}

\subsection{Proof of Theorem \ref{main-dim-spectra}}\

\begin{proof}[Proof of Theorem \ref{main-dim-spectra}]
Define $D$ by Proposition \ref{as-pressure}(3).
Denote
\begin{equation}\label{def-tilde-I}
\tilde \II:=\Theta^{-1}(\F_2),
\end{equation}
where $\F_2$ is defined by \eqref{def-tilde-F}. Since $\F_2$ is of full $\Gg$-measure, we conclude that $\tilde \II$ is of full $G$-measure, hence full Lebesgue measure. We will show that Theorem \ref{main-dim-spectra} holds for any $\alpha\in\tilde \II$. Later in Section \ref{sec-geometric-lemma}, we will choose the final full Lebesgue measure subset $\hat \II\subset \tilde \II$ (see \eqref{def-hat-I}). Then obviously Theorem \ref{main-dim-spectra} holds for any $\alpha\in\hat \II$.

(1) By  Corollary \ref{coincide-whole} and $\alpha=\Theta^{-1}(a)$, the statement holds.

(2) It is the content of Proposition \ref{as-pressure}(3).

(3) By Proposition \ref{prop-lip}, $D$ is Lipschitz continuous on any bounded interval of $[24,\infty).$
Define  the constant $\rho$ as
\begin{equation}\label{def-rho}
\rho^{-1}:=\sup\{x\in [0,1]: \varphi(x)\le0\}.
\end{equation}
By \eqref{bd-varphi} and \eqref{L-K}, we have
\begin{equation*}
0<\frac{1}{2\kappa^2}\le \frac{1}{\rho}\le\frac{2}{\kappa}<1.
\end{equation*}

Take any $a\in \mathbb B\cap\F_2$. By Proposition \ref{F-proposition}(1), $K_\ast(a)=\kappa.$  Fix any $\lambda\in [24,\infty)$. By Corollary \ref{coincide-whole} and Proposition \ref{dim-formula-14}(1),  $
s_\ast(a,\lambda)=\dim_H\Sigma_{a,\lambda}=D(\lambda).
$ By Lemma \ref{lyapunov}, the equality \eqref{coincide-phi-psi} holds.
Now by \cite[Proposition 5.3]{LQW}, we have
\begin{equation*}
\frac{\log \rho}{6\log 4\kappa^2+\log 2(\lambda+5)}\le D(\lambda)\le \frac{\log \rho}{\log (\lambda-8)/3}.
\end{equation*}
This obviously implies \eqref{dim-spectra-asym}.
\end{proof}

\section{Almost sure  dimensional properties of the fiber measures}\label{sec-dim-fiber-symbolic}

In this section, we prove the symbolic version of Theorem \ref{main-dim-dos}. At first, we construct the global symbolic space $\Omega$ and study its basic properties. Next we introduce the entropic potential $\Phi$, and study its Gibbs measure $\n$ and the fiber $\n_a$ of $\n$. Note that $\n_a$ is the symbolic counterpart of the DOS $\NN_{a,\lambda}$. Then we introduce the geometric potential $\Psi_\lambda$ and show its relation with Lyapunov exponent. At last, we  derive the almost sure  dimensional properties of $\n_a$.

In this part we will extensively use the thermodynamical formalism  of topological Markov shift with countable alphabet. For details, see \cite{IY,Sa}.

\subsection{ The global symbolic space and its fibers}\

Follow the explanation in Section \ref{sec-idea}, we introduce the global symbolic space  as follows.

\subsubsection{ The global symbolic space $\Omega$ and its basic property}\

For each $n\in \N$, we have defined the alphabet $\A_n$ by \eqref{alphabet-n}. Now we define the full  alphabet $\mathscr A$ as
\begin{equation}\label{def-A}
  \A:=\bigsqcup_{n\in \N} \A_n.
\end{equation}

Given  $e,\hat e\in \A$.  Assume $e\in \A_n, {\hat e}\in \A_m$, define $e\to \hat e$ by \eqref{admissible-A-A}. Define the related {\it incidence matrix} $A=(a_{e\hat e})$  as
\begin{equation*}\label{}
  a_{e\hat e}:=
  \begin{cases}
    1, & \mbox{if } e\to\hat e , \\
    0, & \mbox{otherwise}.
  \end{cases}
\end{equation*}
The relation between $A$ and $A_{nm}$ (see \eqref{A_ij}) is
\begin{equation}\label{A-and-A-nm}
  A=
  \begin{bmatrix}
    A_{11}& A_{12}&A_{13}&\cdots \\
    A_{21}& A_{22}&A_{23}&\cdots \\
    A_{31}& A_{32}&A_{33}&\cdots \\
    \vdots& \vdots&\vdots&\ddots \\
  \end{bmatrix}.
\end{equation}

Define the {\it global symbolic space} $\Omega$ to be the {\it topological Markov shift (TMS) } with alphabet $\A$ and incidence matrix $A$ (see \cite[Section 1.3]{Sa} for related definitions). That is, $\Omega$ is a set defined by
\begin{equation}\label{def-Omega}
  \Omega:=\{x=x_1x_2\cdots\in \A^\N: a_{x_jx_{j+1}}=1, j\in \N\},
\end{equation}
equipped with a metric $\rho(x,y):=2^{-|x\wedge y|}$,
and endowed with the shift map $\sigma:\Omega\to \Omega$
\begin{equation}\label{def-shift}
  \sigma(x)=\sigma(x_1x_2\cdots):=x_2x_3\cdots.
\end{equation}

 Thus $(\Omega,\sigma)$ is a topological  dynamical system. It is good in the sense that:

 \begin{proposition}\label{mixing-BIP}
   $(\Omega,\sigma)$ is topologically mixing and satisfies BIP property.
 \end{proposition}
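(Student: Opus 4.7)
My plan is to verify both properties directly from the adjacency rules \eqref{admissible-T-A} and Proposition \ref{sturm-primitive} (strong primitivity); no further input is required.

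For topological mixing, it suffices to show that for every ordered pair $(e,\hat e)\in\A\times \A$ there exists $N_0=N_0(e,\hat e)$ such that for each $n\ge N_0$ some admissible word in $\Omega$ begins with $e$ and has $\hat e$ in position $n+1$. I would in fact obtain the uniform bound $N_0=5$: given $e\in\A_\alpha$ and $\hat e\in\A_\beta$, for any $n\ge 5$ consider the level sequence $(\alpha,1,1,\dots,1,\beta)$ of length $n+1\ge 6$, and apply Proposition \ref{sturm-primitive} to conclude that the product $A_{\alpha,1}A_{1,1}\cdots A_{1,\beta}$ is entrywise positive. Its $(e,\hat e)$-entry is the number of admissible paths of $n$ edges from $e$ to $\hat e$ through these alphabets, hence positive, so such a word exists. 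Phrased on cylinders this reads $\sigma^{-n}[v]\cap[u]\ne\emptyset$ for every pair $[u],[v]\subset\Omega$ and all $n$ sufficiently large, which is the mixing condition.

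For BIP, I would exhibit the two-symbol subset $\B:=\{b_1,b_2\}\subset\A_1$ given by
\[
b_1:=(\two,1)_1,\qquad b_2:=(\one,1)_1,
\]
and check, via \eqref{admissible-T-A}, that each $e\in\A$ is both admissibly preceded and admissibly followed by some element of $\B$. Since $e\to\hat e$ depends only on $\tT_e$ and $\hat e$, the verification splits according to $\tT_e\in\{\one,\two,\three\}$. If $\tT_e=\one$, the symbol $e$ is preceded by $b_1$ (as $\two\to(\one,k)_n$ whenever $k\le n+1$) and followed by $b_1$ (as $\one\to(\two,1)_n$ for every $n$). If $\tT_e=\two$, so $e=(\two,1)_n$, one uses $b_2$ on both sides, via $\one\to(\two,1)_n$ and $\two\to(\one,1)_1$. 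If $\tT_e=\three$, one uses $b_1$ as predecessor ($\two\to(\three,k)_n$ whenever $k\le n$) and $b_2$ as successor (since $\three\to(\one,1)_1$ because $k=1\le n=1$). Hence $\B$ witnesses BIP.

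Neither half of the argument is genuinely hard once Proposition \ref{sturm-primitive} and the explicit list \eqref{admissible-T-A} are in hand: mixing is an immediate translation of strong primitivity, and BIP is a finite symbol-chasing exercise. The only point requiring a bit of care is that the BIP representatives must live in $\A$ itself rather than in the type set $\T=\{\one,\two,\three\}$; picking the lowest-level symbols in $\A_1$ makes the incoming-index constraints $k\le n+1$ and $k\le n$ automatically satisfied at $k=n=1$, so the type-level calculation suffices.
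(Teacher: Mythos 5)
Your proof is correct and follows essentially the same route as the paper's: mixing via Proposition \ref{sturm-primitive} (the paper notes $A^k$ is positive for $k\ge5$; your choice of all-ones intermediate levels is one concrete instance of this), and BIP via the same two-element witness set $\{(\one,1)_1,(\two,1)_1\}$ with the identical case check on $\tT_e$. Nothing to add.
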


 See \cite[Sections 1.3 and  4.4]{Sa}  for the definitions of topologically mixing and BIP, respectively.

 \begin{proof}
  By Proposition \ref{sturm-primitive}, for any $k\ge 5$,  $A^k$ is a positive (infinite) matrix. So for any $e,\hat e\in \A$ and $k\ge5$, there exists an admissible path
  $e\to e_2\to \cdots \to e_{k}\to \hat e.$
  This implies that $(\Omega,\sigma)$ is topologically mixing (see for example \cite[Proposition 1.1]{Sa}).

  Define
  $
  \mathcal S:=\{(\one,1)_1, (\two,1)_1\}.
  $
 For any $n\in \N$, by \eqref{admissible-T-A} and \eqref{admissible-A-A}, we have
 \begin{eqnarray*}
   &&(\two,1)_1\to (\one,j)_n\to (\two,1)_1,\ \ \ j=1,\cdots,n+1\\
   &&(\one,1)_1\to (\two,1)_n\to (\one,1)_1, \\
   &&(\two,1)_1\to (\three,j)_n\to (\one,1)_1,\ \ \ j=1,\cdots,n
 \end{eqnarray*}
 So $(\Omega,\sigma)$ satisfies the BIP property.
 \end{proof}

Denote  the set of admissible words of length $n$ by $\Omega_n$, then
\begin{equation*}\label{Omega-n}
  \Omega_n=\{w\in \A^n: a_{w_iw_{i+1}}=1, 1\le i<n\}.
\end{equation*}
We also write $\Omega_\ast:=\bigcup_{n\in \N} \Omega_n$.
For any $w\in \Omega_n$,  define the cylinder $[w]$ as
\begin{equation*}\label{def-cylinder}
  [w]:=\{x\in \Omega: x|_n=w\}.
\end{equation*}

\subsubsection{The fibers of $\Omega$}\label{sec-fiber-Omega}\

We define a natural projection $\Pi:\Omega\to \N^\N$ as
\begin{equation}\label{def-Pi}
  \Pi(x):=(\ell_{x_n})_{n\in \N},
\end{equation}
where $\ell_e$ is the level of $e$, see \eqref{type-index-level}.
It is ready  to check that $\Pi$ is surjective and
\begin{equation}\label{factor}
  \Pi\circ\sigma=S\circ\Pi.
\end{equation}
Thus $(\N^\N,S)$ is a factor of $(\Omega,\sigma).$

For any $a\in \N^\N$, define the {\it  fiber of $\Omega$ indexed by $a$} as
\begin{equation*}
  \Omega_a:=\Pi^{-1}(\{a\})=\{x\in \Omega:\Pi(x)=a\}.
\end{equation*}
Assume $a=(a_n)_{n\in \N}$, then $\Omega_a$ can be expressed as the following equivalent form:
\begin{equation*}
  \Omega_a=\{x\in \prod_{n\in \N}\A_{a_n}:x_n\to x_{n+1}, n\in \N\}.
\end{equation*}

For any $\vec a=a_1\cdots a_n\in \N^n$, define
\begin{equation*}
  \Omega_{\vec a,n}:=\{w\in \prod_{j=1}^n\A_{a_j}:w_j\to w_{j+1},1\le j<n\}.
\end{equation*}

For any $a\in \N^\N$, define
\begin{equation*}
  \Omega_{a,n}:=\Omega_{a|_n, n} \ \ \text{ and }\ \  \Omega_{a,\ast}:=\bigcup_{n\in \N} \Omega_{a,n}.
\end{equation*}

For each $w\in \Omega_{a,n}$, define the cylinder   $[w]_a$ as
\begin{equation*}
  [w]_a:=\{x\in \Omega_a: x|_n=w\}.
\end{equation*}

\subsubsection{A bijection between $\Omega_a$ and $\Omega^{1a}$}\

In this part, we study the relation between $\Omega_a$ and $\Omega^a$.
A simple but crucial observation is that, there exists a natural bijection between $\Omega_a$ and $\Omega^{1a}$. This observation allows us to introduce a metric on $\Omega_a$, which captures the geometric information of $\Sigma_{1a,\lambda}$.
We can even patch  all the bijection together to obtain a bijection between $\Omega$ and $\widehat \Omega$, where
\begin{equation}\label{hat-Omega}
  \widehat \Omega:=\bigsqcup_{a\in \N^\N}\Omega^{\check a},\ \ \text{ where }\ \ \check a:=1a.
\end{equation}

Now we construct such a bijection. Define a map $\iota:\Omega\to \{\one,\three\}\times \A_1\times\A^\N$ as
\begin{equation}\label{Def-iota}
    \iota(x):=
    \begin{cases}
      \one(\two,1)_1 x, & \mbox{if } \mathbf t_{x_1}=\one  \text{ or } \three,\\
      \three(\one,1)_1 x, & \mbox{if } \mathbf t_{x_1}=\two.
    \end{cases}
  \end{equation}

 By \eqref{admissible-T-A} and \eqref{admissible-A-A},  $\iota$ is well-defined and obviously  injective. Moreover

\begin{lemma}\label{iota-a}
 $\iota:\Omega\to \widehat \Omega$ is a bijection such that $\iota(\Omega_a)=\Omega^{\check a}$ for any $a\in \N^\N$.
\end{lemma}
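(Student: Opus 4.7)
The plan is to verify that $\iota$ lands in $\widehat\Omega$ in the fiber-respecting way claimed, and then exhibit an explicit two-sided inverse on each fiber $\Omega^{\check a}$. The two cases in the definition \eqref{Def-iota} are mutually exclusive (they are indexed by whether $\tT_{x_1}\in\{\one,\three\}$ or $\tT_{x_1}=\two$), so $\iota$ is unambiguous; what is really being checked is admissibility of the two prepended letters.

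First I would verify that $\iota(\Omega_a)\subset \Omega^{\check a}$. Fix $x\in\Omega_a$, so $x=x_1x_2\cdots$ with $x_n\in\A_{a_n}$ and $x_n\to x_{n+1}$. In the first case, $\iota(x)=\one\,(\two,1)_1\,x_1x_2\cdots$; by \eqref{admissible-T-A} we have $\one\to(\two,1)_1$ (since $(\two,1)_1\in\A_1$), and $(\two,1)_1\to x_1$ amounts to $\two\to x_1$, which by \eqref{admissible-T-A} is equivalent to $\tT_{x_1}\in\{\one,\three\}$—exactly our hypothesis. The remaining admissibility $x_n\to x_{n+1}$ holds by assumption. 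Moreover the level sequence of $\iota(x)$ is $(1,a_1,a_2,\ldots)=\check a$, so $\iota(x)\in\Omega^{\check a}$ by \eqref{Omega^alpha}. The second case is analogous: $\three\to(\one,1)_1$ is admissible, and $(\one,1)_1\to x_1$ forces $\tT_{x_1}=\two$.

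Next I would construct the inverse. Given any $y=y_0y_1y_2\cdots\in\Omega^{\check a}$, by \eqref{Omega^alpha} we have $y_0\in\T_0$ and $y_1\in\A_1$, and the admissibility $y_0\to y_1$ together with the structure of \eqref{admissible-T-A} leaves only two possibilities for the pair $(y_0,y_1)$: if $y_0=\one$ the only admissible successor in $\A_1$ is $(\two,1)_1$; if $y_0=\three$ the admissible successors in $\A_1$ are the elements $(\one,k)_1$ with $1\le k\le 1$ and $(\three,k)_1$ with $1\le k\le 0$, i.e.\ only $(\one,1)_1$. In the former case the admissibility $(\two,1)_1\to y_2$ forces $\tT_{y_2}\in\{\one,\three\}$, and in the latter case $(\one,1)_1\to y_2$ forces $\tT_{y_2}=\two$. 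Setting $x:=y_2y_3\cdots$, the sequence $x$ lies in $\Omega_a$ (admissibility $x_n\to x_{n+1}$ is inherited from $y_{n+1}\to y_{n+2}$), and by construction $\iota(x)=y$ with the correct branch of \eqref{Def-iota} selected. This proves $\iota:\Omega_a\to\Omega^{\check a}$ is surjective.

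Finally, injectivity is immediate from the inverse construction: the map $y\mapsto y_2y_3\cdots$ is a left inverse of $\iota$ on each fiber. Taking the disjoint union over $a\in\N^\N$ and using \eqref{hat-Omega} yields the global bijection $\iota:\Omega\to\widehat\Omega$ with $\iota(\Omega_a)=\Omega^{\check a}$. The only real content is the case analysis above; there is no serious obstacle, since the only thing to rule out is the existence of an unexpected admissible pair $(y_0,y_1)\in\T_0\times\A_1$, and the cardinality count $\#\A_1=4$ together with the explicit list \eqref{admissible-T-A} pins down exactly two such pairs.
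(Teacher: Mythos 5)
Your proof is correct and follows essentially the same route as the paper: verify $\iota(\Omega_a)\subset\Omega^{\check a}$ by checking admissibility of the prepended letters, observe that $\Omega^{\check a}_1=\{\one(\two,1)_1,\three(\one,1)_1\}$ so that truncation $y\mapsto y_2y_3\cdots$ gives a two-sided inverse on each fiber, and then patch over $a\in\N^\N$. Your version is somewhat more explicit in the case analysis of $(y_0,y_1)$ and in verifying which branch of $\iota$ is recovered, but the content and structure are the same as the paper's argument.
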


\begin{proof}
Fix any $a\in \N^\N$. By \eqref{Omega^alpha} and  \eqref{admissible-T-A},
we have $\iota(\Omega_a)\subset \Omega^{\check a}$. By \eqref{def-Omega^alpha_n} and \eqref{admissible-T-A},
  $$
  \Omega^{\check a}_1=\Omega^{1a}_1=\{\one(\two,1)_1, \three(\one,1)_1\}.
  $$
So for any $y=y_0y_1\cdots\in \Omega^{\check a}$, we have $y^*:=y_2y_3\cdots\in \Omega_a$ and $\iota(y^*)=y.$ Hence $\iota(\Omega_a)=\Omega^{\check a}$. Since $\iota$ is injective, we conclude that $\iota: \Omega_a\to \Omega^{\check a}$ is bijective.

Since $\Omega=\bigsqcup_{a\in \N^\N}\Omega_a$, we conclude that $\iota:\Omega\to\widehat \Omega$ is bijective.
  \end{proof}

  For later use, we extend the definition of $\iota$ to $\Omega_\ast$ as follows. For any $w\in \Omega_\ast$, define
\begin{equation}\label{Def-iota-w}
    \iota(w):=
    \begin{cases}
      \one(\two,1)_1 w, & \mbox{if } \mathbf t_{w_1}=\one  \text{ or } \three,\\
      \three(\one,1)_1 w, & \mbox{if } \mathbf t_{w_1}=\two.
    \end{cases}
  \end{equation}

  As an application of Lemma \ref{iota-a}, we prove the following analog of Lemma \ref{number-Omega-n-alpha}:
\begin{lemma}\label{number-Omega-a-n}
For any $a\in\N^\N$ and $n\in\N$, we have
\begin{equation*}
	q_{n}(a)\leq\#\Omega_{a,n}\leq10q_{n}(a).
\end{equation*}
\end{lemma}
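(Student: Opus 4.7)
The plan is to deduce this lemma by combining Lemma~\ref{number-Omega-n-alpha} (which gives $q_n(a)\le \#\Omega_n^a\le 5q_n(a)$) with the bijection from Lemma~\ref{iota-a}. The key observation is that although $\iota$ is defined on infinite sequences, the definition \eqref{Def-iota-w} is purely local and extends to a bijection between finite admissible words. More precisely, for any $w\in\Omega_{a,n}$, the word $\iota(w)$ has length $n+2$, starts with a letter of $\T_0$ followed by a letter of $\A_1=\A_{\check a_1}$, and then $w_1\cdots w_n\in\prod_{j=1}^n\A_{a_j}=\prod_{j=2}^{n+1}\A_{\check a_j}$, so $\iota(w)\in\Omega^{\check a}_{n+1}$.

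First I would verify that $\iota:\Omega_{a,n}\to\Omega^{\check a}_{n+1}$ is a bijection. Injectivity is trivial since the last $n$ letters recover $w$. For surjectivity, given any $y=y_0y_1\cdots y_{n+1}\in\Omega^{\check a}_{n+1}$, I check using \eqref{admissible-T-A} that the pair $(y_0,y_1)$ is forced by the type of $y_2$: if $\tT_{y_2}\in\{\one,\three\}$ then admissibility $y_1\to y_2$ with $y_1\in\A_1$ forces $y_1=(\two,1)_1$, and then $y_0=\one$; if $\tT_{y_2}=\two$, then $y_1$ must be of type $\one$, and since $y_1\in\A_1$ and $y_0\in\T_0\to y_1$, one finds $y_1=(\one,1)_1$ and $y_0=\three$. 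In either case the prefix of $y$ coincides with the one prepended by $\iota$ applied to $y_2\cdots y_{n+1}\in\Omega_{a,n}$. Hence $\#\Omega_{a,n}=\#\Omega^{\check a}_{n+1}$.

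Next I would relate $q_{n+1}(\check a)$ to $q_n(a)$. Using the recursion \eqref{def-p-q-n} applied to $\check a=1,a_1,a_2,\ldots$, a straightforward induction yields
\begin{equation*}
q_{n+1}(\check a)=q_n(a)+q_{n-1}(a),\qquad n\ge 0,
\end{equation*}
with the base cases $q_1(\check a)=1=q_0(a)$ and $q_2(\check a)=a_1+1=q_1(a)+q_0(a)$, and the inductive step
$q_{n+2}(\check a)=a_{n+1}q_{n+1}(\check a)+q_n(\check a)=a_{n+1}(q_n(a)+q_{n-1}(a))+(q_{n-1}(a)+q_{n-2}(a))=q_{n+1}(a)+q_n(a)$. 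In particular $q_n(a)\le q_{n+1}(\check a)\le 2q_n(a)$.

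Finally, combining Lemma~\ref{number-Omega-n-alpha} applied to $\check a$ at level $n+1$ with the bijection and the above comparison gives
\begin{equation*}
q_n(a)\le q_{n+1}(\check a)\le \#\Omega^{\check a}_{n+1}=\#\Omega_{a,n}\le 5q_{n+1}(\check a)\le 10\,q_n(a),
\end{equation*}
which is the claim. No step is really an obstacle; the only mildly subtle point is the verification of surjectivity of $\iota$ on finite words, where one must rule out inconsistent prefixes using the admissibility rules \eqref{admissible-T-A}.
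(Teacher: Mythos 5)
Your proof is correct and follows essentially the same route as the paper: both identify $\Omega_{a,n}$ with $\Omega^{\check a}_{n+1}$ via $\iota$, apply Lemma~\ref{number-Omega-n-alpha} to $\check a$ at level $n+1$, and then compare $q_{n+1}(\check a)$ to $q_n(a)$. The paper cites Lemma~\ref{iota-a} and \eqref{Def-iota-w} to get the bijection, and invokes Lemma~\ref{q-n}(1) to get $q_n(a)\le q_{n+1}(\check a)\le 2q_n(a)$; you instead verify the bijectivity of $\iota$ on finite words directly and derive the sharper identity $q_{n+1}(\check a)=q_n(a)+q_{n-1}(a)$ from the recursion \eqref{def-p-q-n}. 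Both of those substitutions are correct and if anything slightly more self-contained than the paper's citations, but they do not constitute a genuinely different approach.
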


\begin{proof}
  By  \eqref{Def-iota-w}, $\Omega^{\check a}_{n+1}=\iota(\Omega_{a,n})$. By Lemma \ref{iota-a}, Lemma \ref{number-Omega-n-alpha} and Lemma \ref{q-n}(1),
  $$
  q_n(a)\le q_{n+1}(\check a)\le\#\Omega_{a,n}=\#\Omega_{n+1}^{\check a}\le 5q_{n+1}(\check a)\le 10q_n(a).
  $$
  So the result follows.
\end{proof}

\subsubsection{An adapted metric on $\Omega_a$}\

  The map $\iota$ enables us to introduce a metric on $\Omega_a$ as follows.

  At first, define a map
$\pi_{a,\lambda}: \Omega_a\to \Sigma_{\check a,\lambda}$ as
\begin{equation}\label{pi_a-lambda}
  \pi_{a,\lambda}=\pi^{\check a}_\lambda\circ \iota.
\end{equation}
Since both $\iota$ and $\pi^{\check a}_\lambda$ are bijective, so is  $\pi_{a,\lambda}$.  Thus, $\pi_{a,\lambda}$ affords  another  coding of $\Sigma_{\check a,\lambda}$.

  Next define $\rho_{a,\lambda}: \Omega_a\times \Omega_a\to \R^+$ as
\begin{equation}\label{rho-a-lambda}
  \rho_{a,\lambda}(x,y):=|B_{\iota(x)\wedge \iota(y)}^{\check a}(\lambda)|.
\end{equation}
Geometrically, $\rho_{a,\lambda}(x,y)$ is the length of the smallest spectral band which contains both $\pi_{a,\lambda}(x)$ and $\pi_{a,\lambda}(y)$. We have

\begin{proposition}\label{metric-O-a}
  Assume $(a,\lambda)\in \N^\N\times[24,\infty).$ Then $\rho_{a,\lambda}$ is a ultra-metric on $\Omega_a$ and
  $\pi_{a,\lambda}: (\Omega_a,\rho_{a,\lambda})\to (\Sigma_{\check a,\lambda}, |\cdot|)$ is 1-Lipschitz.
   \end{proposition}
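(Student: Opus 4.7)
\medskip

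\noindent\textbf{Proof plan for Proposition \ref{metric-O-a}.}

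The plan is to verify the three ultra-metric axioms directly from the definition \eqref{rho-a-lambda}, exploiting the nested covering structure of Proposition \ref{basic-struc}, and then to read off the Lipschitz bound from the definition \eqref{pi^alpha} of $\pi^{\check a}_\lambda$.

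First, I will establish positivity and non-degeneracy. For any distinct $x,y\in \Omega_a$, the common prefix $\iota(x)\wedge\iota(y)\in\Omega^{\check a}_\ast$ has finite length, so $\rho_{a,\lambda}(x,y)=|B^{\check a}_{\iota(x)\wedge\iota(y)}(\lambda)|>0$ because spectral generating bands are nondegenerate intervals. When $x=y$, the common prefix is infinite, so one reads $\rho_{a,\lambda}(x,y)$ as $\lim_{n\to\infty}|B^{\check a}_{\iota(x)|_n}(\lambda)|$; this limit is $0$ by Proposition \ref{basic-struc}(1). Symmetry is obvious.

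Next comes the ultra-metric inequality, which is the main (if easy) point. Given $x,y,z\in\Omega_a$, set $n_{xy}:=|\iota(x)\wedge\iota(y)|$ and similarly for $n_{yz},n_{xz}$. An elementary combinatorial fact for prefixes in any sequence space gives
\begin{equation*}
n_{xz}\ge \min\{n_{xy},n_{yz}\}.
\end{equation*}
By Proposition \ref{basic-struc}(2), the family $\{\B^{\check a}_n(\lambda):n\ge 0\}$ is nested, so if $w\in\Omega^{\check a}_\ast$ is a prefix of $w'\in\Omega^{\check a}_\ast$, then $B^{\check a}_{w'}(\lambda)\subset B^{\check a}_w(\lambda)$ and hence $|B^{\check a}_{w'}(\lambda)|\le |B^{\check a}_w(\lambda)|$. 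Applying this to $\iota(x)\wedge\iota(z)$, which has as a prefix whichever of $\iota(x)\wedge\iota(y)$ and $\iota(y)\wedge\iota(z)$ is shorter, yields
\begin{equation*}
\rho_{a,\lambda}(x,z)\le \max\{\rho_{a,\lambda}(x,y),\rho_{a,\lambda}(y,z)\}.
\end{equation*}
This proves that $\rho_{a,\lambda}$ is an ultra-metric on $\Omega_a$.

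Finally, for the Lipschitz claim, let $x,y\in\Omega_a$ and write $w:=\iota(x)\wedge\iota(y)$, say $|w|=n$. By the defining formula \eqref{pi^alpha}, both $\pi^{\check a}_\lambda(\iota(x))$ and $\pi^{\check a}_\lambda(\iota(y))$ lie in $B^{\check a}_w(\lambda)$, so
\begin{equation*}
|\pi_{a,\lambda}(x)-\pi_{a,\lambda}(y)|=|\pi^{\check a}_\lambda(\iota(x))-\pi^{\check a}_\lambda(\iota(y))|\le |B^{\check a}_w(\lambda)|=\rho_{a,\lambda}(x,y),
\end{equation*}
which is the $1$-Lipschitz property. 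I do not expect any serious obstacle: the only subtlety is keeping the bookkeeping between $\Omega_a$ and $\Omega^{\check a}$ clean via the bijection $\iota$ of Lemma \ref{iota-a}.
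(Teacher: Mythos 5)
Your argument is correct and follows essentially the same route as the paper: both verify the ultra-metric inequality via the standard prefix fact together with nestedness of the bands, and both read off the $1$-Lipschitz bound from the containment of $\pi_{a,\lambda}(x),\pi_{a,\lambda}(y)$ in $B^{\check a}_{\iota(x)\wedge\iota(y)}(\lambda)$. The only cosmetic difference is that the paper derives positivity and the Lipschitz bound from a single chain, $\rho_{a,\lambda}(x,y)=|B^{\check a}_{\iota(x)\wedge\iota(y)}(\lambda)|\ge|\pi_{a,\lambda}(x)-\pi_{a,\lambda}(y)|>0$ (using injectivity of $\pi^{\check a}_\lambda$), while you treat them separately; both are fine.
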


\begin{proof}
By the definition, $\rho_{a,\lambda}$ is nonnegative and symmetric.
If $x\ne y$, then $\iota(x)\ne \iota(y)$. Since $\pi^{\check a}_\lambda$ is bijective, we have
$\pi_{a,\lambda}(x)\ne \pi_{a,\lambda}(y)$. Notice that both points are in $B_{\iota(x)\wedge \iota(y)}^{\check a}(\lambda)$, so we have
\begin{equation}\label{1-lip}
  \rho_{a,\lambda}(x,y)=|B_{\iota(x)\wedge \iota(y)}^{\check a}(\lambda)|\ge |\pi_{a,\lambda}(x)- \pi_{a,\lambda}(y)|>0.
\end{equation}

Now assume $z\in \Omega_a.$ By \eqref{Def-iota}, either $\iota(x)\wedge \iota(z) $, or $\iota(y)\wedge \iota(z)$  is a prefix of $ \iota(x)\wedge \iota(y)$. Then by \eqref{rho-a-lambda}, we have
\begin{equation*}
  \rho_{a,\lambda}(x,y)\le \max\{\rho_{a,\lambda}(x,z),\rho_{a,\lambda}(y,z)\}.
\end{equation*}
So $\rho_{a,\lambda}$ is a ultra-metric. By \eqref{1-lip}, $\pi_{a,\lambda}$ is $1$-Lipschitz.
\end{proof}

\begin{remark}
{\rm
  Later we will show that for typical $a,$ $\pi_{a,\lambda}^{-1}$  is weakly Lipschitz (see Corollary \ref{weak-bi-lip-d_alpha}). As a consequence, the dimensional properties of $\Omega_a $ and $\Sigma_{\check a,\lambda}$ is the same.
  }
\end{remark}

From now on, we always endow $\Omega_a$ with the metric $\rho_{a,\lambda}$ without further mention.

\subsubsection{Various subsets of admissible words and their cardinalities }\

Later we will encounter various subsets of admissible words. The cardinalities of these sets are very important for us. Now we introduce them.

Given $\tT,\tT'\in \T$ and $\vec{a}=a_1\cdots a_n\in\N^n$, define
\begin{equation}\label{def-Xi-N}
\begin{cases}
 \ \ \ \ \Xi(\tT,\vec a):=\{w_1\cdots w_n\in \prod_{j=1}^n\A_{a_j}: \tT\to w_1; w_j\to w_{j+1},1\le j<n\}, \\
 \Xi(\tT,\vec a,\tT'):=\{w\in \Xi(\tT,\vec a): \tT_{w}=\tT'\}.
\end{cases}
  \end{equation}

  For any $w\in \Omega_{a,n}, \tT\in \T$, define two sets of descendants of $w$ of $m$-th generation as
\begin{equation}\label{des-w}
\begin{cases}
	\Xi_{a,m}(w):=\{u\in \Omega_{a,n+m}: u|_m=w\}=\{wv: v\in \Xi(\tT_w, S^na|_m)\},\ \ \\
	\Xi_{a,m,\tT}(w):=\{u\in \Xi_{a,m}(w): \tT_u=\tT\}=\{wv: v\in \Xi(\tT_w, S^na|_m,\tT)\}.
\end{cases}
\end{equation}

  We have the following  estimates on the cardinalities of $\Xi(\tT,\vec a)$ and $\Xi(\tT,\vec a,\tT')$.
 These estimates are essential for the study of DOS, see Lemma \ref{imp} and Proposition \ref{imp-dos}.

\begin{lemma}\label{number-est-basic}
For any $n\ge 3$ and $\vec a=a_1\cdots a_n\in \N^n$, the following estimates hold:

(1) \ \ \ \  $\#\Xi(\one, \vec a)\sim q_n(\vec a)/a_{1}.$

(2)  \ \ \ \ $\#\Xi(\two, \vec a)\sim  q_n(\vec a).$

(3)  \ \ \ \ $\#\Xi(\three, \vec a)\sim  q_n(\vec a)$ if $  a_{1}\ge2.$

(4)  \ \ \ \ $\#\Xi(\three, \vec a)\sim q_n(\vec a)/a_{2}$ if $ a_{1}=1.$

(5)  \ \ \ \ $\#\Xi(\tT,\vec a,\two)+\#\Xi(\tT,\vec a,\three)\sim \#\Xi(\tT,\vec a)$ for any $ \tT\in\T.$

 (6)  \ \ \ \ $\#\Xi(\two, \vec a,\one)\sim q_n(\vec a).	$

\end{lemma}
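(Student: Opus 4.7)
The plan is to extract all six estimates from two ingredients: the first-letter recursion coming from the admissibility table \eqref{admissible-T-A}, and the backward recursion $q_n(\vec a)=a_1 q_{n-1}(S\vec a)+q_{n-2}(S^2\vec a)$ for convergent denominators. Splitting an element of $\Xi(\tT,\vec a)$ by its first letter $w_1\in\A_{a_1}$ and reading off \eqref{admissible-T-A} immediately gives
\begin{align*}
\#\Xi(\one,\vec a)&=\#\Xi(\two,S\vec a),\\
\#\Xi(\two,\vec a)&=(a_1+1)\,\#\Xi(\one,S\vec a)+a_1\,\#\Xi(\three,S\vec a),\\
\#\Xi(\three,\vec a)&=a_1\,\#\Xi(\one,S\vec a)+(a_1-1)\,\#\Xi(\three,S\vec a),
\end{align*}
while the backward recursion yields $q_n(\vec a)\sim a_1 q_{n-1}(S\vec a)$ when $a_1\ge 2$ and $q_n(\vec a)\sim q_{n-1}(S\vec a)$ when $a_1=1$.

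I would prove claims (1)--(4) simultaneously by induction on $n$, the cases $n=1,2$ being routine direct verifications. At the inductive step, (1) follows from $\#\Xi(\one,\vec a)=\#\Xi(\two,S\vec a)$ and (2) at $S\vec a$. Plugging (1) and (3)/(4) at $S\vec a$ into the recursion for $\#\Xi(\two,\vec a)$, the term $a_1\,\#\Xi(\three,S\vec a)\sim a_1 q_{n-1}(S\vec a)\sim q_n(\vec a)$ dominates in both subcases $a_2\ge 2$ and $a_2=1$, which gives (2). For (3) with $a_1\ge 2$, the coefficient $a_1-1\ge a_1/2$ preserves this dominance; for (4) with $a_1=1$, the recursion collapses to $\#\Xi(\three,\vec a)=\#\Xi(\one,S\vec a)$, and (1) applied to $S\vec a$ combined with $q_n(\vec a)\sim q_{n-1}(S\vec a)$ produces the stated $1/a_2$ factor.

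Claim (5) I would prove directly via the dual last-letter decomposition. Writing $X=\#\Xi(\tT,a_1\cdots a_{n-1},\one)$, $Y=\#\Xi(\tT,a_1\cdots a_{n-1},\two)$, $Z=\#\Xi(\tT,a_1\cdots a_{n-1},\three)$, the admissibility rules give
\[
\#\Xi(\tT,\vec a,\one)=(a_n+1)Y+a_nZ,\quad \#\Xi(\tT,\vec a,\two)=X,\quad \#\Xi(\tT,\vec a,\three)=a_nY+(a_n-1)Z.
\]
For $a_n\ge 2$, the inequality $(a_n+1)Y+a_nZ\le 3[a_nY+(a_n-1)Z]$ yields (5) at once. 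The more delicate case is $a_n=1$, which I expect to be the main technical point: here (5) reduces to the purely algebraic inequality $3X+Y\ge Z$. Unwinding $X$ and $Z$ one further step through the same last-letter decomposition (now with coefficients driven by $a_{n-1}$) produces the identity $2X-Z=(a_{n-1}+2)Y'+(a_{n-1}+1)Z'$, where $Y',Z'$ are the non-negative counts $\#\Xi(\tT,a_1\cdots a_{n-2},\two)$ and $\#\Xi(\tT,a_1\cdots a_{n-2},\three)$. This pointwise non-negativity gives $2X\ge Z$ unconditionally, hence $3X+Y\ge Z$, closing (5) with a universal constant.

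Finally, (6) falls out by combining (5) and (2). Indeed $\#\Xi(\two,\vec a,\one)=(a_n+1)Y+a_nZ\sim a_n(Y+Z)$, where $Y,Z$ now denote $\#\Xi(\two,a_1\cdots a_{n-1},\two)$ and $\#\Xi(\two,a_1\cdots a_{n-1},\three)$. Applying (5) and (2) to $a_1\cdots a_{n-1}$ gives $Y+Z\sim\#\Xi(\two,a_1\cdots a_{n-1})\sim q_{n-1}(a_1\cdots a_{n-1})$, and the forward recursion $q_n(\vec a)\sim a_n q_{n-1}(a_1\cdots a_{n-1})$ closes the estimate.
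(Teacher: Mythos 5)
Your plan for parts (1)--(4) is a genuinely different route from the paper's. The paper anchors the chain on Lemma~\ref{number-Omega-n-alpha}: from $\Omega^{\vec a}_{n}=\{\one v:v\in\Xi(\one,\vec a)\}\cup\{\three v:v\in\Xi(\three,\vec a)\}$ it reads off $\#\Xi(\one,\vec a)+\#\Xi(\three,\vec a)\sim q_n(\vec a)$, which sandwiches the recursion $\#\Xi(\two,\vec a)=(a_1+1)\#\Xi(\one,\vec b)+a_1\#\Xi(\three,\vec b)$ between $a_1$ and $(a_1+1)$ times a quantity of order $q_{n-1}(\vec b)$, with no case analysis; (1), (3), (4) then follow. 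You instead run a self-contained simultaneous induction on $n$, using the backward continuant recursion $q_n(\vec a)=a_1 q_{n-1}(S\vec a)+q_{n-2}(S^2\vec a)$ (a valid strengthening of the paper's Lemma~\ref{q-n}(1)). This is more elementary and does not import the outside counting lemma, at the price of more bookkeeping. However, the bookkeeping as stated contains an error: you assert that $a_1\,\#\Xi(\three,S\vec a)\sim a_1 q_{n-1}(S\vec a)$ dominates in \emph{both} subcases $a_2\ge 2$ and $a_2=1$; but when $a_2=1$, the inductive hypothesis (4) applied to $S\vec a$ gives only $\#\Xi(\three,S\vec a)\sim q_{n-1}(S\vec a)/a_3$, which is far smaller when $a_3$ is large. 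In that subcase it is the other term, $(a_1+1)\#\Xi(\one,S\vec a)\sim(a_1+1)\,q_{n-1}(S\vec a)/a_2=(a_1+1)\,q_{n-1}(S\vec a)$, that carries the estimate. The same slip reappears in your sketch of (3). The conclusion is still correct in every subcase, so this is a local misstep rather than a failure of the method, but as written it does not close; the clean repair (and, in effect, what the paper does) is to note that $\#\Xi(\one,S\vec a)+\#\Xi(\three,S\vec a)\sim q_{n-1}(S\vec a)$ regardless of whether $a_2=1$ or $a_2\ge 2$, and sandwich the recursion against this sum.

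For (5) and (6) you are essentially reproducing the paper's argument in scalar form. The paper verifies the two-step matrix inequality $\hat A_{a_{n-1}}\hat A_{a_n}(v_\two^T+v_\three^T)\ge\tfrac12\hat A_{a_{n-1}}\hat A_{a_n}v_\one^T$ once and for all; you do a one-step bound for $a_n\ge 2$ and a two-step unwinding for $a_n=1$, closing the delicate case with the identity $2X-Z=(a_{n-1}+2)Y'+(a_{n-1}+1)Z'\ge 0$, which is the same computation packaged differently. Your (6) matches the paper's line of reasoning (last-letter expansion of $\#\Xi(\two,\vec a,\one)$, then (5) and (2) at length $n-1$, then the forward recursion for $q_n$).
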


We will prove Lemma \ref{number-est-basic} in Section \ref{sec-cardinality}.

\subsection{The entropic potential $\Phi$ and related Gibbs measure $\n$}\

In this part, we  introduce a special potential $\Phi$ on $\Omega$,  which we call the {\it entropic potential}. We will show that $\Phi$ admits a Gibbs measure $\n$, and obtain exact estimates for its fiber measure $\n_a$.  Later  we will see that the image of  $\n_a$ under $\pi_{a,\lambda}$ is equivalent to $\NN_{\check a,\lambda}$ (see Corollary \ref{equiv-fiber-dos-coro}).

\subsubsection{Recall on almost-additive  thermodynamical formalism for TMS}\

For this part, we essentially follow \cite{IY}, with some change of notations.

Assume $\Upsilon=\{\upsilon_n:n\in\N\}\subset C(\Omega,\R)$. We  call $\Upsilon$ a {\it potential} on $\Omega.$ We say that $\Upsilon$ is {\it almost-additive}, if there exists a constant $C=C_{aa}(\Upsilon)>0$ such that
\begin{equation*}\label{C-aa-Phi}
|\upsilon_{n+m}(x)-\upsilon_n(x)-\upsilon_m(\sigma^n(x))|\leq C,\ \ \forall x\in\Omega,\ \forall n,m\in \N.
\end{equation*}
We say that $\Upsilon$ has {\it bounded variation}, if there exists a constant $C=C_{bv}(\Upsilon)>0$ such that
\begin{equation*}\label{def-bd-variation}
\sup\{|\upsilon_n(x)-\upsilon_n(y)|: x|_n=y|_n\}\le C,\ \ \forall n\in \N.
\end{equation*}

Denote by $\mathcal{F}(\Omega)$ the set of  almost-additive potentials with  bounded variation. Let $\mathcal M(\Omega)$ be the set of $\sigma$-invariant Borel probability measures on $\Omega$.

\begin{definition}\label{Gibbs}
Assume   $\Upsilon\in\mathcal{F}(\Omega)$. A measure $\mu\in\mathcal M(\Omega)$ is called a Gibbs measure for $\Upsilon$ if there exist two  constants $C\ge1, P\in \R$ such that for any $n\in \N$ and $x\in \Omega$,
\begin{equation*}\label{def-Gibbs}
\frac{1}{C}\le \frac{\mu([x|_n])}{\exp(-nP+\upsilon_n(x))}\le C.
\end{equation*}
\end{definition}

The criterion for  existence and uniqueness of Gibbs measure is  established in \cite{IY}:

\begin{theorem}[{\cite[Theorem 4.1]{IY}}]\label{IY-exist-Gibbs}
Assume $\Upsilon\in \mathcal F(\Omega)$  satisfies
\begin{equation}\label{condi-gibbs}
\sum_{e\in \mathscr A} \sup\left\{\exp(\upsilon_1(x)):x\in [e]\right\}<\infty,
\end{equation}
then there is a unique Gibbs measure $\mu$ for $\Upsilon$ and it is ergodic with $P=P(\Upsilon)$, where
\begin{equation}\label{def-pressure}
P(\Upsilon):=\lim\limits_{n\to\infty}\frac{1}{n}\log \sum_{x:\sigma^n (x)=x}\exp(\upsilon_n(x))\chi_{[e]}(x),\ \ \ e\in\mathscr{A},
\end{equation}
is the {\it Gurevich pressure} of $\Upsilon$, which exists and does not depend on $e$.

\end{theorem}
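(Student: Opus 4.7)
The plan is to establish existence and uniqueness of the Gibbs measure through a Ruelle–Perron–Frobenius (RPF) transfer operator, adapting Sarig's framework for countable topological Markov shifts to the almost-additive setting (following Barreira/Mummert). Two structural ingredients are at hand: Proposition \ref{mixing-BIP} provides topological mixing plus the BIP property, while $\Upsilon\in\mathcal F(\Omega)$ supplies almost-additivity and bounded variation, and the summability condition \eqref{condi-gibbs} will play the role of Sarig's summable-variation hypothesis ensuring that the operator has finite norm.

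First I would show the Gurevich pressure limit in \eqref{def-pressure} exists and is independent of $e\in\A$. Setting $Z_n(\Upsilon,e):=\sum_{\sigma^n x=x}\exp(\upsilon_n(x))\chi_{[e]}(x)$, the almost-additivity gives $Z_{n+m}(\Upsilon,e)\asymp Z_n(\Upsilon,e)\cdot Z_m(\Upsilon,e)$ (up to a multiplicative constant coming from $C_{aa}(\Upsilon)$ and a BIP-connecting word inserted between the two orbit pieces), so $\log Z_n(\Upsilon,e)$ is almost super/sub-additive and the limit exists by Fekete-type arguments. Independence of $e$ then follows because BIP produces, for any pair $e,e'$, uniformly bounded admissible words joining them, and bounded variation controls the resulting ratio of partition functions uniformly in $n$.

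The next step is to reduce $\Upsilon$ to an effective single potential. Almost-additivity plus bounded variation give the Barreira trick: the limit $\phi(x):=\lim_n(\upsilon_n(x)-\upsilon_{n-1}(\sigma x))$ exists uniformly, the Birkhoff sums $S_n\phi$ approximate $\upsilon_n$ up to a uniform error, and $\phi$ inherits summable-variation-type regularity from $\Upsilon$. On a Sarig-type Banach space of locally Lipschitz functions, define $L f(x):=\sum_{\sigma y=x}\exp(\phi(y)-P(\Upsilon))f(y)$; condition \eqref{condi-gibbs} forces $\|L\|<\infty$. BIP plus topological mixing then delivers a Lasota–Yorke/Doeblin–Fortet inequality, hence a spectral gap: a simple leading eigenvalue $1$ with a strictly positive Lipschitz eigenfunction $h$ and a dual conformal probability eigenmeasure $\nu$. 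Setting $\mu:=h\,\nu$ (normalized), one reads off $\sigma$-invariance from the eigenfunction equations and the Gibbs estimate
\begin{equation*}
\mu([x|_n])\asymp h(x)\exp\bigl(-nP(\Upsilon)+S_n\phi(x)\bigr)\asymp \exp\bigl(-nP(\Upsilon)+\upsilon_n(x)\bigr)
\end{equation*}
using the uniform approximation $S_n\phi\approx\upsilon_n$ and uniform bounds $0<\inf h\le\sup h<\infty$. Ergodicity comes from the spectral gap (exponential decay of correlations on Lipschitz observables implies mixing, hence ergodicity), and uniqueness follows because any other Gibbs measure $\mu'$ satisfies $\mu'\asymp\mu$ on every cylinder, and two comparable $\sigma$-invariant probability measures with one ergodic must coincide.

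The main obstacle I expect is the Lasota–Yorke step establishing the spectral gap in the countable-alphabet almost-additive setting. Sarig's original argument exploits BIP to control tails of $L^n\mathbf 1$ in sup-norm; transferring this to almost-additive $\Upsilon$ requires that the cohomological error $S_n\phi-\upsilon_n$ be uniformly bounded \emph{and} of bounded variation at every scale simultaneously, which is exactly where $C_{aa}(\Upsilon)$ and $C_{bv}(\Upsilon)$ must be combined carefully. Everything else (existence of pressure, conformal measure, invariance, Gibbs inequality, uniqueness, ergodicity) is essentially a formal consequence of the spectral gap together with BIP, so the entire theorem hinges on making this operator-theoretic estimate work uniformly on the infinite alphabet $\A$.
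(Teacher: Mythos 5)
The paper does not prove Theorem~\ref{IY-exist-Gibbs} at all: it is quoted verbatim as Iommi and Yayama's Theorem~4.1 from \cite{IY}, and the only thing the paper adds is the remark immediately after the statement, namely that the standing hypotheses of that theorem (the topological Markov shift must be topologically mixing and satisfy BIP) hold for $(\Omega,\sigma)$, which is precisely Proposition~\ref{mixing-BIP}. So there is no ``paper's proof'' to compare against; you have set out to reprove a cited black-box result, which is well beyond the role this statement plays in the paper.

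Taking your sketch on its own terms, the main gap is in the reduction to a single potential. You assert that $\phi(x):=\lim_n(\upsilon_n(x)-\upsilon_{n-1}(\sigma x))$ exists uniformly and ``inherits summable-variation-type regularity'' from $\Upsilon$, and then feed $\phi$ into Sarig's transfer-operator machinery. Neither claim follows from $\Upsilon\in\mathcal F(\Omega)$. Almost-additivity only gives $|\upsilon_{n+1}(x)-\upsilon_n(\sigma x)-\upsilon_1(x)|\le C_{aa}$, so the sequence $\upsilon_n(x)-\upsilon_{n-1}(\sigma x)$ is bounded but there is no reason for it to converge; and even if a suitable $\phi$ is extracted so that $|S_n\phi-\upsilon_n|\le C$, bounded variation of the sequence (a single uniform bound $\sup\{|\upsilon_n(x)-\upsilon_n(y)|:x|_n=y|_n\}\le C_{bv}$) does \emph{not} translate into summable, or even decaying, $n$-th variations for the single function $\phi$, which is what Sarig's spectral-gap argument needs. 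This is exactly the obstruction the almost-additive theory was designed to circumvent: Iommi--Yayama do not pass to a cohomologous Birkhoff cocycle but run the RPF/conformal-measure argument directly on the sequence $\{\upsilon_n\}$. Your route would need an additional argument establishing regularity for $\phi$, and as it stands that step is missing.
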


We note that, in the original theorem of Iommi and Yayama, the TMS is asked to be topologically mixing and satisfy BIP. It is the case for $(\Omega,\sigma)$ by Proposition \ref{mixing-BIP}.

\subsubsection{The entropic potential $\Phi$ and its Gibbs measure $\n$}\

Let us give some hints for the definitions of $\Phi$ and $\n$. We have explained how to construct $\n$ formally in Section \ref{sec-idea}. For any $\vec a\in \N^n$, we know that $\Gg([\vec a])\sim q_n^{-2}(\vec a)$.  For any $a\in [\vec a]$, the $\n_a$-measure of any $n$-th cylinder in $\Omega_a$ is  of the order $q_n^{-1}(a)$. Thus for any such cylinder, the $\n$-measure of it is roughly $q_n^{-3}(a)$. Since we expect $\n$ is the Gibbs measure of $\Phi$, it is natural to introduce the following definition.

Define $\phi_n:\Omega\to\mathbb{R}$ as
	\begin{equation}\label{phi-poten}
		\phi_n(x):=-3\log q_n(\Pi(x)).
	\end{equation}
It is seen that $\phi_n$ is continuous.
Write $\Phi:=\{\phi_n:n\in \N\}$, then $\Phi$ is a potential on $\Omega$.

	\begin{proposition}\label{Gauss-measure-Omega}
		$\Phi\in\mathcal F(\Omega)$. Moreover, there exists a unique Gibbs measure $\n$ for $\Phi$ with   $P(\Phi)=0$ and
$\Pi_*(\n)=\Gg.$
	\end{proposition}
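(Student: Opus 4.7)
The plan is to verify the hypotheses of Theorem \ref{IY-exist-Gibbs} for $\Phi$, read off the Gibbs measure $\n$, compute its Gurevich pressure directly from the Gibbs bound, and finally identify the projection $\Pi_*\n$ with $\Gg$ using ergodicity.

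First I would show $\Phi\in\mathcal F(\Omega)$. Bounded variation is immediate: if $x|_n=y|_n$ then $\Pi(x)|_n=\Pi(y)|_n$, and by Remark \ref{q-n-vec-a} the value $q_n(\Pi(x))=q_n(\Pi(y))$, so $\phi_n(x)=\phi_n(y)$ and $C_{bv}(\Phi)=0$. For almost-additivity, combine the factor relation $\Pi\circ\sigma=S\circ\Pi$ with Lemma \ref{q-n}(1) to get
\[
q_n(\Pi(x))\,q_m(\Pi(\sigma^n x))\le q_{n+m}(\Pi(x))\le 2\,q_n(\Pi(x))\,q_m(\Pi(\sigma^n x)),
\]
and taking $-3\log$ yields $|\phi_{n+m}(x)-\phi_n(x)-\phi_m(\sigma^n x)|\le 3\log 2$. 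Next I would check the summability condition \eqref{condi-gibbs} alphabet-by-alphabet: on $[e]$ with $e\in\A_n$ we have $q_1(\Pi(x))=\ell_{x_1}=n$, so $\sup_{[e]}\exp(\phi_1)=n^{-3}$; since $\#\A_n=2n+2$, the total is $\sum_{n\ge 1}(2n+2)n^{-3}<\infty$. Proposition \ref{mixing-BIP} supplies the topological mixing and BIP hypotheses of Theorem \ref{IY-exist-Gibbs}, so one obtains a unique ergodic Gibbs measure $\n$ with constant $P=P(\Phi)$.

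To compute $P(\Phi)$, I would apply the Gibbs bound of Definition \ref{Gibbs} on all cylinders of length $n$ and sum:
\[
1=\sum_{w\in\Omega_n}\n([w])\asymp e^{-nP}\sum_{w\in\Omega_n}e^{\phi_n(w)}=e^{-nP}\sum_{\vec a\in\N^n}\#\Omega_{\vec a,n}\cdot q_n(\vec a)^{-3}.
\]
By Lemma \ref{number-Omega-a-n} the inner count satisfies $\#\Omega_{\vec a,n}\sim q_n(\vec a)$, and Lemma \ref{q-n}(2) gives $q_n(\vec a)^{-2}\sim\Gg([\vec a])$, so the double sum collapses to $\sum_{\vec a\in\N^n}\Gg([\vec a])=1$. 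The resulting bound $1\asymp e^{-nP}$, uniform in $n$, forces $P(\Phi)=0$.

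Finally, to show $\Pi_*\n=\Gg$, observe that the factor relation together with $\sigma$-invariance of $\n$ makes $\Pi_*\n$ an $S$-invariant Borel probability measure on $\N^\N$. Repeating the computation above for a fixed $\vec a\in\N^n$ gives
\[
\Pi_*\n([\vec a])=\sum_{w\in\Omega_{\vec a,n}}\n([w])\sim\#\Omega_{\vec a,n}\cdot q_n(\vec a)^{-3}\sim q_n(\vec a)^{-2}\sim \Gg([\vec a]),
\]
so $\Pi_*\n\asymp\Gg$ on cylinders and hence the two measures are mutually absolutely continuous. Since $\Gg$ is $S$-ergodic, the Radon-Nikodym derivative $d\Pi_*\n/d\Gg$ must be $\Gg$-a.e.\ constant, and the normalization $\Pi_*\n(\N^\N)=\Gg(\N^\N)=1$ pins this constant to $1$. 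The only delicate point is precisely this last step: the Gibbs bound only delivers the equivalence $\Pi_*\n\asymp\Gg$, and upgrading it to exact equality crucially relies on the ergodicity of $\Gg$; everything else is bookkeeping built on the prefix-dependence of $q_n$ together with the two standard estimates of Lemmas \ref{q-n} and \ref{number-Omega-a-n}.
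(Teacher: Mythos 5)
Your proof is correct and follows the paper's structure for most steps, with one genuine deviation worth noting: the computation of $P(\Phi)$. The paper verifies $P(\Phi)=0$ by going back to the definition \eqref{def-pressure} of Gurevich pressure—it fixes $e=(\two,1)_1$, identifies the periodic points of period $n+1$ starting at $e$ with the words in $\Xi(\two,\vec a,\one)$, and then invokes Lemma \ref{number-est-basic}(6) together with $\Gg([\vec a])\sim q_n(\vec a)^{-2}$ to see the sum is bounded above and below. You instead sum the Gibbs bound $\n([w])\asymp e^{-nP}e^{\phi_n(w)}$ over \emph{all} $n$-cylinders, use $\sum_{w\in\Omega_n}\n([w])=1$, and collapse the sum via $\#\Omega_{\vec a,n}\sim q_n(\vec a)$ and $\sum_{\vec a}\Gg([\vec a])=1$. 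This is a legitimate shortcut: it bypasses the periodic-orbit combinatorics entirely, at the cost of not exercising the Gurevich pressure formula (which the paper's version implicitly verifies). Either works because the Gibbs constant in Definition \ref{Gibbs} equals $P(\Phi)$ by Theorem \ref{IY-exist-Gibbs}. One small caveat on your final step: the statement ``the Radon--Nikodym derivative $d\Pi_*\n/d\Gg$ is $\Gg$-a.e.\ constant by ergodicity'' is true but needs care for the non-invertible shift $S$; the cleanest justification is the one the paper uses, namely that $\Pi_*\n$ is itself ergodic (as a factor of the ergodic $\n$), and two ergodic $S$-invariant probability measures that are mutually absolutely continuous must coincide. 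Your route via normalization of the invariant density works too, but it implicitly invokes the fact that invariant probability measures absolutely continuous with respect to an ergodic invariant probability measure are equal to it—worth stating explicitly rather than attributing it to invariance of the derivative alone.
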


	\begin{proof}
By \eqref{factor} and Lemma \ref{q-n}(1), it is seen that $\Phi$ is almost-additive. By \eqref{def-Pi} and Remark \ref{q-n-vec-a}, $\Phi$ has bounded variation with $C_{bv}(\Phi)=0$. So, $\Phi\in \mathcal F(\Omega)$.

By the definition of $\A$ (see \eqref{def-A} and \eqref{alphabet-n}), we have
\begin{eqnarray*}
\sum_{e\in\A}\sup_{x\in[e]} e^{\phi_1(x)}  =\sum_{e\in\A}\frac{1}{q_1(\Pi(x))^3}=\sum_{n=1}^{\infty}\sum_{e\in\A_n}\frac{1}{n^3}= \sum_{n=1}^{\infty}\frac{2n+2}{n^3}<\infty.
\end{eqnarray*}
So \eqref{condi-gibbs} holds. By Theorem \ref{IY-exist-Gibbs}, there is a unique Gibbs measure for $\Phi$, which is ergodic. We denote this measure by $\n.$

Next we compute $P(\Phi)$.
Take $e=(\two,1)_1$. Assume $x\in \Omega$ is such that $x_1=e$ and $\sigma^{n+1}(x)=x$. Then by \eqref{admissible-T-A}, we must have $\sigma(x)|_n\in \Xi(\two,\vec a(x),\one)$, where $\vec a(x)=\Pi(\sigma (x))|_n$. Conversely, assume $w\in \Xi(\two,\vec a,\one)$ for some $\vec a\in \N^n$, if we define $x^w:=(ew)^\infty$, then $x^w\in \Omega$, $x^w_1=e$ and $\sigma^{n+1}(x^w)=x^w$.
Hence, by Lemma \ref{number-est-basic}(6) and Lemma \ref{q-n}, we have
\begin{align*}
\sum_{x: \sigma^{n+1}x=x}e^{\phi_{n+1}(x)}\chi_{[e]}(x)=&\sum_{\vec{a}\in \N^n}\sum_{w\in \Xi(\two,\vec{a},\one)}e^{\phi_{n+1}(x^w)}
=\sum_{\vec{a}\in \N^n}\sum_{w\in \Xi(\two,\vec{a},\one)}\frac{1}{q_{n+1}(1\vec{a})^3}\\
\sim&\sum_{\vec{a}\in \N^n}\frac{1}{q_{n}(\vec{a})^2}\sim\sum_{\vec{a}\in \N^n}\Gg([\vec{a}])=1.
\end{align*}
By \eqref{def-pressure}, we have $P(\Phi)=0$.

 Since $\n$ is ergodic and $\Pi$ is a factor map,  $\mu:=\Pi_\ast(\n)$ is also ergodic. Fix  $a\in\N^\N$ and $n\in\N$. For any $w\in \Omega_{a,n}$ and $x_w\in[w]$, by the Gibbs property we have
\begin{equation}\label{meas-Gibbs}
\n([w])\sim \frac{e^{\phi_n(x_w)}}{e^{nP(\Phi)}}=\frac{1}{q_n(a)^3}.
\end{equation}
Combine with Lemma \ref{number-Omega-a-n} and Lemma \ref{q-n}(2), we get
\begin{eqnarray*}
\mu([a|_n]) = \n(\Pi^{-1}([a|_n]))=\n(\bigcup_{w\in \Omega_{a,n}}[w])=\sum_{w\in \Omega_{a,n}}\n([w])\sim \frac{1}{q_n(a)^2}\sim \Gg([a|_n]).
\end{eqnarray*}
This means that $\mu\asymp \Gg$. Since both measures are ergodic, we conclude that $\mu=\Gg.$
\end{proof}

We call $\Phi$ the {\it entropic potential} on $\Omega$, since we will see soon that it is related to the entropy of the fiber measures of $\n$.

\subsubsection{The fiber measures of $\n$}\

In this part, we will show that there exists a disintegration of $\n$ w.r.t. $\Gg$ and obtain exact estimates for the fiber measures.

Notice that for any $a\in \N^\N$, the fiber $\Omega_a=\Pi^{-1}(\{a\})$ is a closed set of $\Omega$. Since $\Pi$ is a factor map, the family $
\{\Omega_a: a\in \N^\N\}$
forms a measurable  partition of $\Omega$.

Given $\mu\in\mathcal{M}(\Omega)$ and let $\nu=\Pi_*(\mu)$. A family of measures $\{\mu_{a}:a\in \N^\N\}$ on $\Omega$ is called a {\it disintegration of $\mu$ w.r.t. $\nu$} if for each Borel measurable set $A\subset\Omega$, the map $a\to\mu_{a}(A)$ is measurable; for $\nu$-a.e. $ a\in \N^\N$, the measure $\mu_{a}$ is a Borel probability measure supported on $\Omega_{a}$. If such a family exists, then  $\mu_{a}$ is called the {\it fiber} of $\mu$ indexed by $a$. In this case, we write
\begin{equation*}\label{form-disinteg}
\mu=\int_{\N^\N}\mu_{a}d\nu(a).
\end{equation*}

\vspace{1ex}
By the classical Rokhlin's theory \cite{Ro}, we have

\begin{lemma}[\cite{Ro}]\label{basic-disinteg}
Fix any $\mu\in\mathcal{M}(\Omega)$ and let $\nu=\Pi_*(\mu)$. Then

(1) The disintegration of $\mu$  w.r.t. $\nu$ always exists. It is unique in the sense that if $\{\tilde{\mu}_{a}:a\in \N^\N\}$ is another disintegration, then $\mu_{a}=\tilde{\mu}_{a}$ for $\nu$-a.e. $a$.

(2) For $\nu$-a.e. $a$ and any $w\in\Omega_{a,n}$, $\mu_{a}([w]_{a})$ can be computed as
\begin{equation*}\label{compute-fiber-meas}
\mu_{a}([w]_{a})=\lim_{m\to\infty}
\frac{\sum_{u\in\Xi_{a,m}(w)}\mu([u])}{\sum_{u\in\Omega_{a,n+m}}\mu([u])}.
\end{equation*}
\end{lemma}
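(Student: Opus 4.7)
The plan is to deduce (1) from Rokhlin's classical disintegration theorem and to identify the ratio in (2) with a conditional expectation along a decreasing sequence of cylinders in $\N^\N$, which converges $\nu$-a.e. by Doob's martingale convergence theorem.

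For (1), observe that $(\Omega,\rho)$ is a complete separable metric space, $\mu$ is Borel, and the factor map $\Pi$ defined in \eqref{def-Pi} is continuous. The family of fibers $\{\Omega_a\}_{a\in\N^\N}$ therefore constitutes a measurable partition of $\Omega$ into closed sets, and the classical Rokhlin disintegration theorem \cite{Ro} yields the existence and $\nu$-a.e. uniqueness of the disintegration $\{\mu_a\}$.

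For (2), fix any $w\in\Omega_\ast$ and define the bounded Borel function $f_w:\N^\N\to[0,1]$ by $f_w(b):=\mu_b([w]\cap\Omega_b)$, which agrees with $\mu_b([w]_b)$ precisely when $w\in\Omega_{b,n}$ and vanishes otherwise. A direct check from \eqref{def-Pi} and \eqref{des-w} yields the disjoint decompositions
\[
\bigsqcup_{u\in\Xi_{a,m}(w)}[u]=[w]\cap\Pi^{-1}([a|_{n+m}]),\qquad \bigsqcup_{u\in\Omega_{a,n+m}}[u]=\Pi^{-1}([a|_{n+m}]),
\]
valid whenever $w\in\Omega_{a,n}$. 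Taking $\mu$-measures and invoking the defining identity of the disintegration (together with $\nu=\Pi_\ast(\mu)$) converts the ratio in (2) into
\[
\frac{\sum_{u\in\Xi_{a,m}(w)}\mu([u])}{\sum_{u\in\Omega_{a,n+m}}\mu([u])}=\frac{1}{\nu([a|_{n+m}])}\int_{[a|_{n+m}]}f_w\,d\nu.
\]

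The concluding step is to apply Doob's martingale theorem to $f_w\in L^\infty(\nu)$ along the filtration on $\N^\N$ generated by cylinders: for each fixed $w$, the averages above converge to $f_w(a)$ for $a$ outside a $\nu$-null set $N_w$. The only subtle point is that $N_w$ a priori depends on $w$; however, since $\Omega_\ast$ is countable, the set $N:=\bigcup_{w\in\Omega_\ast}N_w$ is still $\nu$-null, and outside $N$ the desired formula holds simultaneously for every $w\in\Omega_{a,\ast}$, with $f_w(a)=\mu_a([w]_a)$. No serious obstacle is anticipated; the main care lies in verifying the two disjoint decompositions above and in trading the $w$-dependent null sets for a single one via countability.
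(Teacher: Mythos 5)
Your proof is correct and takes essentially the same approach as the paper: verify that $\{\Omega_a\}$ is a measurable partition in Rokhlin's sense and then invoke the classical disintegration theory, computing the conditional measure on cylinders via conditional-expectation convergence along a refining filtration on $\N^\N$. The paper exhibits the countable generating family $\Omega_{\vec a}=\Pi^{-1}([\vec a])$ and delegates part (2) to \cite[Lemma 4.1]{FS}; the self-contained martingale argument you supply (the two disjoint decompositions, the resulting conditional-expectation identity, and the trade of $w$-dependent null sets for a single one via countability of $\Omega_\ast$) is precisely the mechanism behind that cited lemma.
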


\begin{proof}
To prove this lemma, we only need to check that $\{\Omega_{a}:a\in \N^\N\}$ is a measurable partition in the sense of Rokhlin. For each $n\in \N$ and any $\vec a\in \N^n$, define
\begin{equation*}
\Omega_{\vec a}:=\bigcup_{w\in \Omega_{\vec{a},n}}[w].
\end{equation*}
Then  the measurable family
$
\{\Omega_{\vec a}: n\in \N, \vec a\in \N^n\}
$
is countable and
for each $a\in \N^\N$ we have
$
\Omega_a=\bigcap_{n\in \N}\Omega_{a|_n}.
$
So $\{\Omega_{a}:a\in \mathbb{N}^{\mathbb{N}}\}$ is a measurable partition in the sense of Rokhlin. Now (1) and (2) follow from the classical Rokhlin's theory, see
\cite{Pa,Ro}. For part (2), see also \cite[Lemma 4.1]{FS}.
\end{proof}

Now we apply Lemma \ref{basic-disinteg} to compute the fiber of $\n$. Recall that $\F_2$ is a $\Gg$-full measure set defined in Corollary \ref{coincide-whole}.

\begin{lemma}\label{imp}
Let $\n$ be the Gibbs measure defined by Proposition \ref{Gauss-measure-Omega}. Then the disintegration $\{\n_a: a\in \N^\N\}$ of $\n$ w.r.t. to $\Gg$ exists and is unique.  Moreover, there exists a subset  $\F_3\subset \F_2$  with $\Gg(\F_3)=1$ such that for any  $a\in \F_3$ and any $w\in \Omega_{a,n}$,
\begin{equation}\label{fiber-measure}
\n_a([w]_a)\sim
\frac{1}{\eta_{\tT_w,a,n}\ q_{n}(a)}, \ \ \text{ where }\ \
\eta_{\tT,a,n}=\begin{cases}
		a_{n+1},& \text{ if }\ \tT=\one,\\
		1,& \text{ if }\ \tT=\two,\\
		1,& \text{ if }\ \tT=\three;\ a_{n+1}\ge2,\\
		a_{n+2} ,& \text{ if }\ \tT=\three;\ a_{n+1}=1.
	\end{cases}
\end{equation}

\end{lemma}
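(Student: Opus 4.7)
The plan is to combine the general disintegration result Lemma \ref{basic-disinteg} with the Gibbs estimate \eqref{meas-Gibbs} and the cardinality counts of Lemma \ref{number-est-basic} to evaluate the limit formula. Existence and essential uniqueness of $\{\n_a: a\in\N^\N\}$ follow directly from Lemma \ref{basic-disinteg}(1) applied to $\mu=\n$, since $\Pi_*(\n)=\Gg$ by Proposition \ref{Gauss-measure-Omega}. Define $\F_3$ to be the intersection of $\F_2$ with the $\Gg$-full measure set of $a$ on which the limit formula in Lemma \ref{basic-disinteg}(2) is valid.

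Next I fix $a\in\F_3$, $w\in\Omega_{a,n}$, and analyze the ratio
\begin{equation*}
\n_a([w]_a)=\lim_{m\to\infty}\frac{N_{\rm num}(m)}{N_{\rm den}(m)},
\end{equation*}
where $N_{\rm num}(m):=\sum_{u\in\Xi_{a,m}(w)}\n([u])$ and $N_{\rm den}(m):=\sum_{u\in\Omega_{a,n+m}}\n([u])$. By the Gibbs estimate \eqref{meas-Gibbs}, every term $\n([u])$ with $u\in\Omega_{a,n+m}$ satisfies $\n([u])\sim q_{n+m}(a)^{-3}$ with an absolute constant. Consequently
\begin{equation*}
N_{\rm num}(m)\sim \frac{\#\Xi_{a,m}(w)}{q_{n+m}(a)^3},\qquad
N_{\rm den}(m)\sim \frac{\#\Omega_{a,n+m}}{q_{n+m}(a)^3}\sim \frac{1}{q_{n+m}(a)^2},
\end{equation*}
where the last equivalence uses Lemma \ref{number-Omega-a-n}.

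For the numerator, \eqref{des-w} gives $\#\Xi_{a,m}(w)=\#\Xi(\tT_w,S^n a|_m)$. Applying Lemma \ref{number-est-basic} with vector $S^n a|_m=(a_{n+1},\dots,a_{n+m})$ and type $\tT_w$, I obtain $\#\Xi_{a,m}(w)\sim q_m(S^n a)/\eta_{\tT_w,a,n}$, with the four case split matching exactly the four cases in \eqref{fiber-measure}: the factor $a_{n+1}$ appears when $\tT_w=\one$, the factor $a_{n+2}$ appears in the subtle case $\tT_w=\three$ with $a_{n+1}=1$, and the remaining two cases contribute no extra factor. Combining everything and using the two-sided submultiplicativity $q_{n+m}(a)\sim q_n(a)\, q_m(S^n a)$ from Lemma \ref{q-n}(1), the ratio simplifies as
\begin{equation*}
\frac{N_{\rm num}(m)}{N_{\rm den}(m)}\sim \frac{q_m(S^n a)}{\eta_{\tT_w,a,n}\, q_{n+m}(a)}\sim \frac{1}{\eta_{\tT_w,a,n}\, q_n(a)},
\end{equation*}
and the right-hand side is already $m$-independent up to absolute constants, so passing to the limit yields \eqref{fiber-measure}.

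The main obstacle, and the only point requiring genuine care, is the \emph{uniformity} of all the $\sim$ constants. The Gibbs constant is uniform by Theorem \ref{IY-exist-Gibbs}, and the absolute constants in Lemmas \ref{number-Omega-a-n} and \ref{number-est-basic} do not depend on $a$ or $n$; the latter is where the delicate dichotomy between $a_{n+1}\ge 2$ and $a_{n+1}=1$ for $\tT_w=\three$ enters, and this is precisely the combinatorial subtlety that forces the $\eta_{\tT,a,n}$ to depend on $a_{n+2}$ in that case. A minor technical point is that Lemma \ref{number-est-basic} requires the length of the word to be at least $3$, which is harmless because we only need the estimate for large $m$ before passing to the limit. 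Once uniformity is established, the proof consists of assembling the pieces as above.
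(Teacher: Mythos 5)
Your proof is correct and follows essentially the same route as the paper: apply Lemma \ref{basic-disinteg} for existence/uniqueness and the limit formula, substitute the Gibbs estimate \eqref{meas-Gibbs} into both numerator and denominator, invoke Lemma \ref{number-Omega-a-n} for $\#\Omega_{a,n+m}$, identify $\#\Xi_{a,m}(w)$ with $\#\Xi(\tT_w,S^na|_m)$ via \eqref{des-w}, apply Lemma \ref{number-est-basic}(1)--(4), and finish with the quasi-multiplicativity of $q_n$ from Lemma \ref{q-n}(1). Your explicit remarks on the uniformity of the $\sim$ constants and the harmlessness of the length restriction in Lemma \ref{number-est-basic} are correct clarifications of points the paper leaves implicit, but they do not change the argument.
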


\begin{proof}
By Proposition \ref{Gauss-measure-Omega}, $\Gg=\Pi_\ast(\n)$. By Lemma \ref{basic-disinteg}(1),  the disintegration exists and is unique. By Lemma \ref{basic-disinteg}(2), there exists a set $\F_3 \subset \F_2$ with $\Gg(\F_3)=1$, such that for any $a \in \F_3$ and any $w \in \Omega_{a,n}$,
\begin{align*}
\n_{a}([w]_{a})=&\lim_{m\to\infty}
\frac{\sum_{u\in\Xi_{a,m}(w)}\n([u])}{\sum_{u\in\Omega_{a,n+m}}\n([u])}.
\end{align*}

By \eqref{meas-Gibbs}, Lemma \ref{number-Omega-a-n}, \eqref{des-w} and Lemma \ref{q-n}(1),
we have
\begin{eqnarray*}
&&\frac{\sum_{u\in\Xi_{a,m}(w)}\n([u])}{\sum_{u\in\Omega_{a,n+m}}\n([u])}
\sim\frac{\#\Xi_{a,m}(w)}{\#\Omega_{a,n+m}}\sim \frac{\#\Xi(\tT_w,S^na|_m)}{q_{n+m}(a)}\sim\frac{\#\Xi(\tT_w,S^na|_m)}{q_{n}(a)q_{m}(S^na)}.
\end{eqnarray*}
Now combine with Lemma \ref{number-est-basic}(1)-(4), we obtain  \eqref{fiber-measure}.
\end{proof}

\begin{remark}\label{fiber-entropy}
  {\rm By Lemma \ref{number-Omega-a-n},  the number of $n$-th cylinders of $\Omega_a$ is of the order $q_n(a)$. By Lemma \ref{imp}, for  $a\in \F_3$, the $\n_a$ measure of each cylinder is roughly of the order $1/q_n(a)$. This means that $\n_a$ is evenly distributed on the $n$-th cylinders. In this sense, we can say that $\n_a$ is kind of ``measure of maximal entropy". Since $\F_3\subset \F_2\subset \F_1$, we also know that $\log q_n(a)/n\to \gamma$. Thus
  $$
  -\frac{\log(\n_a([w]_a))}{n}\to \gamma.
  $$
  By an analog with Shannon-McMillan-Breiman theorem,  $\gamma$ plays the role of the entropy of $\n_a.$ In this sense, the potential $\Phi$ is related to entropy.
  This explains our terminology of ``entropic potential".
  }
\end{remark}

\subsection{The geometric potential $\Psi_\lambda$}\

Motivated by the Young's dimension formula, we  introduce another potential which contains the  geometric information of the spectra and will be related to Lyapunov exponent in the end.

Fix $\lambda\in [24,\infty)$. Define the {\it geometric potential} $\Psi_\lambda$ on $\Omega$ as
\begin{equation}\label{def-Psi-lambda}
  \psi_{\lambda,n}(x):=\log|B^{\check a}_{\iota(x)|_{n+1}}(\lambda)|,\ \text{ where }\ a=\Pi(x);\ \  \Psi_\lambda:=\{\psi_{\lambda,n}:n\in \N\}.
\end{equation}

 Here, $\exp(\psi_{\lambda,n}(x))$  is  the length of the band in $\B_{n+1}^{\check a}(\lambda)$ which contains $\pi_{a,\lambda}(x)$. Thus $\psi_{\lambda,n}$ records the geometric information of $\B_{n+1}^{\check a}(\lambda)$. That is why we call $\Psi_\lambda$ the geometric potential.

We have the following analog of \cite[Lemma 7]{Q1}:

\begin{lemma}\label{Psi-aa}
Assume $\lambda\in[24,\infty)$. Then   $\Psi_{\lambda}\in \mathcal F(\Omega)$ and for any $n\in \N,$
\begin{equation}\label{negative-psi}
  \psi_{\lambda,n}(x)\le (1-n)\log2, \ \ \forall n\in \N, x\in \Omega.
\end{equation}
\end{lemma}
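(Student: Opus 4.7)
The plan is to verify in turn the pointwise upper bound \eqref{negative-psi}, continuity, bounded variation, and finally almost-additivity. The first three are essentially immediate from the definition of $\iota$ and earlier structural results; only almost-additivity requires real work, and there the bounded covariation estimate of Proposition \ref{bco} is the key input.

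For the pointwise bound I would observe that if $a=\Pi(x)$ then Lemma \ref{iota-a} gives $\iota(x) \in \Omega^{\check a}$, so $\iota(x)|_{n+1}$ is an admissible word of length $n+2$, i.e.\ an element of $\Omega^{\check a}_{n+1}$. Plugging this into the length bound of Lemma \ref{esti-band-length} (with $n$ there replaced by $n+1$) yields $|B^{\check a}_{\iota(x)|_{n+1}}(\lambda)| \leq 2^{1-n}$, which is precisely \eqref{negative-psi}. For bounded variation and continuity, note that $\psi_{\lambda,n}(x)$ depends on $x$ only through $\iota(x)|_{n+1}$ together with $\check a|_{n+1}$; both are determined by the first $n$ coordinates of $x$, since the two extra initial letters produced by $\iota$ depend only on $\tT_{x_1}$ and $\check a|_{n+1} = 1\,\Pi(x)|_n$. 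Combined with Remark \ref{B-n}(ii) this shows $\psi_{\lambda,n}$ is constant on each $n$-cylinder of $\Omega$, so $C_{bv}(\Psi_\lambda)=0$ and continuity is automatic.

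The main task is almost-additivity. The idea is to arrange matters so that Proposition \ref{bco} applies with a common suffix. Setting $a=\Pi(x)$, $b=S^n a=\Pi(\sigma^n x)$, and $u:=x_{n+1}\cdots x_{n+m}$, direct inspection of \eqref{Def-iota} gives
$$\iota(x)|_{n+m+1} \;=\; \iota(x)|_{n+1}\cdot u \in \Omega^{\check a}_\ast, \qquad \iota(\sigma^n x)|_{m+1} \;=\; \iota(\sigma^n x)|_1 \cdot u \in \Omega^{\check b}_\ast,$$
where admissibility in the second identity uses that the two symbols prepended by $\iota$ are engineered precisely to hook onto the type of $x_{n+1}$ (one case each for $\tT_{x_{n+1}}=\two$ and $\tT_{x_{n+1}}\in\{\one,\three\}$, read off from \eqref{admissible-T-A}). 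Proposition \ref{bco}, applied with $w=\iota(x)|_{n+1}$, $\tilde w=\iota(\sigma^n x)|_1$ and common extension $u$, then yields
$$\frac{|B^{\check a}_{\iota(x)|_{n+m+1}}(\lambda)|}{|B^{\check a}_{\iota(x)|_{n+1}}(\lambda)|} \;\asymp\; \frac{|B^{\check b}_{\iota(\sigma^n x)|_{m+1}}(\lambda)|}{|B^{\check b}_{\iota(\sigma^n x)|_1}(\lambda)|}$$
uniformly in $x,n,m$, with constants depending only on $\lambda$. The denominator on the right is the length of a first-order band; since $\B_1^{\check b}(\lambda)$ depends only on $\check b|_1=1$ by Remark \ref{B-n}(ii), there are at most two possible such band lengths, all bounded above and below by positive constants depending only on $\lambda$ (quantitatively this also follows from Lemma \ref{esti-band-length} with $n=1$). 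Taking logarithms turns the displayed relation into $|\psi_{\lambda,n+m}(x) - \psi_{\lambda,n}(x) - \psi_{\lambda,m}(\sigma^n x)| \leq C(\lambda)$, which is the almost-additivity we need.

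The only delicate step, and the one I expect to be the main obstacle, is the admissibility bookkeeping around $\iota$: checking that $\iota(\sigma^n x)|_1 \to u_1$ in both type cases, and that both $w$, $\tilde w$ and their $u$-extensions are legitimate elements of $\Omega^{\check a}_\ast$ and $\Omega^{\check b}_\ast$ respectively so that Proposition \ref{bco} genuinely applies. This is routine once one unfolds \eqref{admissible-T-A} and \eqref{admissible-A-A}, but it is precisely where a careless argument could go wrong.
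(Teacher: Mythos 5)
Your proposal is correct and follows essentially the same route as the paper: bounded variation and the pointwise bound \eqref{negative-psi} come from Remark \ref{B-n}(ii) and Lemma \ref{esti-band-length} applied to $\iota(x)|_{n+1}\in\Omega^{\check a}_{n+1}$, and almost-additivity is obtained by applying Proposition \ref{bco} with the common suffix $u=x_{n+1}\cdots x_{n+m}$ and then bounding the length of the order-one band $B^{\check b}_{\iota(\sigma^n x)|_1}(\lambda)$ above and below using Lemma \ref{esti-band-length}. The admissibility bookkeeping you flagged is exactly what the paper relies on implicitly (the two prefixes produced by $\iota$ were designed so that $(\two,1)_1$ hooks onto a type-$\one$/$\three$ first letter and $(\one,1)_1$ onto a type-$\two$ first letter, per \eqref{admissible-T-A}), so your verification of it is sound.
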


\begin{proof}
 By \eqref{def-Psi-lambda}, \eqref{Def-iota} and Remark \ref{B-n}(ii),  $\Psi_\lambda$ has bounded variation property with $C_{bv}(\Psi_\lambda)=0$.

Next  we show that $\Psi_\lambda$ is almost-additive.
Fix $x\in\Omega$, assume $\Pi(x)=a$. Write $b=S^na$ and $y=\sigma^n(x)$.  Then we
			have
\begin{align*}
  \psi_{\lambda,n}(x)=&\log|B_{\iota(x)|_{n+1}}^{\check{a}}(\lambda)|,\ \ \
\psi_{\lambda,n+k}(x)=\log|B_{\iota(x)|_{n+k+1}}^{\check{a}}(\lambda)|,\\
\psi_{\lambda,k}(\sigma^n(x))=&\psi_{\lambda,k}(y)=\log|B_{\iota(y)|_{k+1}}^{\check{b}}(\lambda)|.
\end{align*}

By Proposition \ref{bco},  we have
		$$ \frac{|B_{\iota(x)|_{n+k+1}}^{\check{a}}(\lambda)|}
{|B_{\iota(x)|_{n+1}}^{\check{a}}(\lambda)|}\sim
\frac{|B^{\check{b}}_{\iota(y)|_{k+1}}(\lambda)|}{|B^{\check{b}}_{\iota(y)|_1}(\lambda)|}.$$
By \eqref{Def-iota}, we see that $\iota(y)|_1=\one(\two,1)_1$ or $\three(\one,1)_1$. By lemma \ref{esti-band-length}, we have
\begin{equation*}\label{B}
\frac{1}{\tau_2}\le|B_{\iota(y)|_1}^{\check b}(\lambda)|\le 2.
\end{equation*}
Consequently, we have
 $|\psi_{\lambda,n+k}(x)-\psi_{\lambda,n}(x)-\psi_{\lambda,k}(\sigma^nx)|\lesssim 1.$
Thus $\Psi_\lambda$ is almost-additive. So $\Psi_\lambda\in \mathcal F(\Omega).$

By Lemma \ref{esti-band-length}, for any $n\in \N$ and $x\in \Omega$, we have $|B_{\iota(x)|_{n+1}}^{\check a}(\lambda)|\le 2^{1-n}.$
So \eqref{negative-psi} holds.
	\end{proof}

Now for typical $a$, we can express the local dimension of $\n_a$ via the geometric potential.

We introduce some notations. For any $x\in \Omega$, define
\begin{equation}\label{lyapunov-new}
  \overline\eL_\lambda(x):=\limsup_{n\to\infty}\frac{-\psi_{\lambda,n}(x)}{n};\ \ \underline\eL_\lambda(x):=\liminf_{n\to\infty}\frac{-\psi_{\lambda,n}(x)}{n}.
\end{equation}

 Notice that the metric $\rho_{a,\lambda}$ on $\Omega_a$ depends on $\lambda.$ So
for  $x\in \Omega_a$, we  write the lower and upper local dimensions of $\n_a$ at $x$ as
\begin{equation*}
  \underline{d}_{\n_a}(x,\lambda) \ \ \ \text{ and }\ \ \  \overline{d}_{\n_a}(x,\lambda)
\end{equation*}
to emphasize  the dependence on $\lambda$.

\begin{lemma}\label{loc-dim-cylinder}
For any $(a,\lambda)\in \F_3\times [24,\infty)$ and any $x\in \Omega_a$, we have
\begin{equation}\label{dim-cylinder}
\underline{d}_{\n_a}(x,\lambda)=\frac{\gamma}{\overline\eL_\lambda(x) }\ \ \ \text{ and }\ \ \
 \overline{d}_{\n_a}(x,\lambda)=\frac{\gamma}{\underline\eL_\lambda(x) }.
\end{equation}
\end{lemma}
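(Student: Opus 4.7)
The plan is to exploit the ultra-metric structure of $\rho_{a,\lambda}$: balls in $(\Omega_a,\rho_{a,\lambda})$ coincide with cylinders, so the local dimension of $\n_a$ reduces to a quotient of the Gibbs estimate from Lemma \ref{imp} and the geometric potential $\Psi_\lambda$.

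First I would verify that for $x\in\Omega_a$ and $n\ge 0$, the diameter of the cylinder $[x|_n]_a$ equals $\exp(\psi_{\lambda,n}(x))$: any $y,z\in[x|_n]_a$ have $\iota(y)$ and $\iota(z)$ agreeing in their first $n+2$ letters, while a pair differing at position $n+1$ realizes this diameter. Because $\rho_{a,\lambda}$ is an ultra-metric and the cylinders are nested with diameters shrinking to $0$ (by \eqref{negative-psi}), the ball $B(x,r)$ equals the smallest cylinder $[x|_n]_a$ with $\exp(\psi_{\lambda,n}(x))\le r$; equivalently, for $r\in[\exp(\psi_{\lambda,n}(x)),\exp(\psi_{\lambda,n-1}(x)))$ we have $B(x,r)=[x|_n]_a$. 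Let $N(r)$ denote this index.

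Next I would combine this with the Gibbs estimate. For $a\in\F_3$, Lemma \ref{imp} gives $\log\n_a([x|_n]_a)=-\log q_n(a)-\log\eta_{\tT_{x|_n},a,n}+O(1)$; since $\F_3\subset\F_1$, Proposition \ref{F-proposition}(1) supplies $\log q_n(a)/n\to\gamma$ and $\log a_n/n\to 0$, the latter forcing $\log\eta_{\tT_{x|_n},a,n}/n\to 0$ from the definition of $\eta_{\tT,a,n}$. Hence $\log\n_a([x|_n]_a)=-\gamma n+o(n)$. Using $\log r\in[\psi_{\lambda,N(r)}(x),\psi_{\lambda,N(r)-1}(x))$, dividing, and observing that $|\psi_{\lambda,n}(x)|\ge (n-1)\log 2$ (again by \eqref{negative-psi}), the $o(n)$ errors become $o(1)$ in the ratio, yielding
$$
\frac{\log\n_a(B(x,r))}{\log r}=\frac{\gamma}{-\psi_{\lambda,N(r)}(x)/N(r)}+o(1).
$$
Taking $\liminf_{r\to 0}$ and $\limsup_{r\to 0}$ (so $N(r)\to\infty$) then gives exactly \eqref{dim-cylinder}.

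The main technical obstacle will be verifying that the choice of $r$ inside $[\exp(\psi_{\lambda,N(r)}(x)),\exp(\psi_{\lambda,N(r)-1}(x)))$ does not disturb the asymptotics, i.e., that $(\psi_{\lambda,N(r)-1}(x)-\psi_{\lambda,N(r)}(x))/N(r)\to 0$. Via the almost-additivity of $\Psi_\lambda$ (Lemma \ref{Psi-aa}) this reduces to bounding $|\psi_{\lambda,1}(\sigma^{n-1}x)|$, which Lemma \ref{esti-band-length} controls by $O(\log a_n+1)$; the growth condition $\log a_n/n\to 0$ built into $\F_1$ then closes the argument.
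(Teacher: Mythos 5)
Your strategy matches the paper's in its essentials: exploit the ultra-metric so that balls in $(\Omega_a,\rho_{a,\lambda})$ are cylinders, combine the Gibbs estimate of Lemma \ref{imp} with Proposition \ref{F-proposition}(1) to get $\log\n_a([x|_n]_a)=-\gamma n+o(n)$, then divide. However, your handling of the ``technical obstacle'' contains a genuine error.

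You assert that Lemma \ref{esti-band-length} controls $|\psi_{\lambda,1}(\sigma^{n-1}x)|$ by $O(\log a_n+1)$. This fails precisely when $\tT_{x_n}=\two$: then $\iota(\sigma^{n-1}x)|_2=\three(\one,1)_1(\two,1)_{a_n}$, and the type-$\two$ letter at level $a_n$ contributes the factors $\tau_1^{2-a_n}$ and $\tau_2^{2-a_n}$ in \eqref{band-length-esti}, so $|\psi_{\lambda,1}(\sigma^{n-1}x)|$ is of order $a_n\log\lambda$, \emph{linear} in $a_n$, not logarithmic. Since $\F_1$ only provides $\log a_n/n\to 0$ (and says nothing about $a_n/n$), you cannot conclude $(\psi_{\lambda,N(r)-1}(x)-\psi_{\lambda,N(r)}(x))/N(r)\to 0$, and the displayed identity $\frac{\log\n_a(B(x,r))}{\log r}=\frac{\gamma}{-\psi_{\lambda,N(r)}(x)/N(r)}+o(1)$ is unjustified; it can genuinely fail along subsequences where $a_n\asymp n$.

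Fortunately that identity is not needed. Over each window $r\in\bigl(\exp(\psi_{\lambda,n}(x)),\exp(\psi_{\lambda,n-1}(x))\bigr]$, the ratio $\frac{\log\n_a(B(x,r))}{\log r}$ has infimum $\frac{\log\n_a([x|_n]_a)}{\psi_{\lambda,n}(x)}$ and supremum $\frac{\log\n_a([x|_n]_a)}{\psi_{\lambda,n-1}(x)}$; take $\liminf_n$ of the former and $\limsup_n$ of the latter (reindexing $n-1\mapsto m$), absorbing the $o(n)$ in the numerator via $|\psi_{\lambda,m}(x)|\ge(m-1)\log 2$. This yields \eqref{dim-cylinder} directly, without ever comparing $\psi_{\lambda,n}$ with $\psi_{\lambda,n-1}$. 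The paper's own proof handles the same transition differently, by controlling the \emph{measure} jump $\log\n_a([x|_{n-1}]_a)-\log\n_a([x|_n]_a)$, which via \eqref{mass-n-a} and Lemma \ref{q-n}(1) is $O(\log a_n+\log a_{n+1}+\log a_{n+2})$ --- genuinely logarithmic, unlike the geometric jump --- over $|\log{\rm diam}([x|_n]_a)|\ge(n-1)\log 2$.
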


\begin{proof}
We only prove the first equality of \eqref{dim-cylinder}, since the proof the second one is the same.
Fix any $(a,\lambda)\in \F_3\times [24,\infty)$ and  $x\in \Omega_a$.  By \eqref{rho-a-lambda}, \eqref{def-Psi-lambda} and Lemma \ref{Psi-aa},
\begin{equation}\label{r-n}
  r_n:={\rm diam}([x|_n]_a)=\exp(\psi_{\lambda,n}(x))\le 2^{1-n}.
\end{equation}
Since $a\in \F_3\subset \F_1$, by Lemma \ref{imp} and Proposition \ref{F-proposition}(1),
\begin{equation}\label{mass-n-a}
\frac{1}{a_{n+1}a_{n+2}q_n(a)}\lesssim \n_a([x|_n]_a)\lesssim \frac{1}{q_{n}(a)};\ \
\frac{\log q_n(a)}{n}\to \gamma;\ \ \  \frac{\log a_{n}}{n}\to 0.
\end{equation}
 Consequently,
\begin{equation*}
   \lim_{n\to\infty}\frac{-\log \n_a([x|_n]_a) }{n}=\gamma;\ \  \limsup_{n\to\infty}\frac{-\log {\rm diam}([x|_n]_a)}{n}=\limsup_{n\to\infty}\frac{-\psi_{\lambda,n}(x)}{n}=\overline\eL_\lambda(x).
\end{equation*}
So  we have
\begin{equation}\label{bridge}
  \liminf_{n\to \infty}\frac{\log \n_a([x|_n]_a)}{\log {\rm diam}([x|_n]_a)}=\frac{\gamma}{\overline\eL_\lambda(x) }.
\end{equation}

Since $\rho_{a,\lambda}$ is a ultra-metric on $\Omega_a$, we have
$
[x|_n]_a=\overline{B(x,r_n)}.
$
Since $\overline{B(x,r_n)}=\bigcap_{r>r_n}B(x,r),$
by the continuity of $\n_a$, we have
$$
\frac{\log \n_a([x|_n]_a)}{\log {\rm diam}([x|_n]_a)}=\lim_{r\downarrow r_n}\frac{\log \n_a(B(x,r))}{\log r}.
$$
Take into account the definition \eqref{def-loc-dim}, we conclude that
\begin{equation}\label{first-half-0}
\underline{d}_{\n_a}(x,\lambda)=\liminf_{r\to0}\frac{\log \n_a(B(x,r))}{\log r}\le\liminf_{n\to \infty}\frac{\log \n_a([x|_n]_a)}{\log {\rm diam}([x|_n]_a)}.
\end{equation}

By Proposition \ref{basic-struc}(1) and the fact that $\rho_{a,\lambda}$ is a ultra-metric, for each small $r>0$, there exists $n(r)\in \N$ with $n(r)\to\infty$ as $r\to 0$, such that
\begin{equation*}\label{r-compare}
r_{n(r)}<r\le r_{n(r)-1};\ \ \ 	[x|_{n(r)}]_a=  B(x,r)\subset [x|_{n(r)-1}]_a.
\end{equation*}
So we have
\begin{eqnarray*}\label{first-half}
\frac{\log \n_a([x|_{n(r)-1}]_a) }{\log {\rm diam}([x|_{n(r)}]_a)}\le\frac{\log \n_a(B(x,r))}{\log r} .
\end{eqnarray*}
Let $r\to0$ we have
\begin{eqnarray}\label{first-half}
\liminf_{n\to \infty}\frac{\log \n_a([x|_{n-1}]_a) }{\log {\rm diam}([x|_{n}]_a)}\le\liminf_{r\to0}\frac{\log \n_a(B(x,r))}{\log r}=\underline{d}_{\n_a}(x,\lambda).
\end{eqnarray}
Compare \eqref{bridge}, \eqref{first-half-0} and \eqref{first-half}, to show the first equality, it is enough to show that
\begin{equation*}
  \liminf_{n\to \infty}\frac{\log \n_a([x|_{n-1}]_a)-\log\n_a([x|_{n}]_a) }{\log {\rm diam}([x|_{n}]_a)}=0.
\end{equation*}
But this follows easily from \eqref{mass-n-a}, Lemma \ref{q-n}(1) and \eqref{r-n}.
\end{proof}

\begin{remark}
  {\rm
  We have explained in Remark \ref{fiber-entropy} that $\gamma$ can be viewed as the entropy of $\n_a$. On the other hand, by the equality of \eqref{r-n}, $\overline\eL_\lambda(x)$ and $\underline\eL_\lambda(x)$ are  the   exponents of decay  for the  diameters of the cylinders $[x|_n]_a$. So they can be viewed as local upper and lower Lyapunov exponents at $x$. Thus \eqref{dim-cylinder} are  kinds of local Young's formulas. Later, by using ergodic theorem, we can show that for typical $a$ and typical $x\in \Omega_a$, the two Lyapunov exponents coincide, and consequently establish the exact-dimensionality of $\n_a.$
  }
\end{remark}
\subsection{Dimensional properties of the fiber measures}\

In this part, we follow the similar strategy  we  have used in studying the spectra. Namely, at first, we prove two properties of $\n_a$ for ``deterministic" frequency $a$, which are the  exact lower- and upper- dimensional properties of $\n_a$ and  the continuity of the dimensions of $\n_a$. Then, by using ergodicity  of $\n$, we improve the exact lower- and upper- dimensional properties to exact-dimensionality and then obtain the complete result.

\subsubsection{Dimensional properties of fiber measures---``deterministic"  case}\label{subsubsec-deterministic}\

At first we have

\begin{proposition}\label{exact-H-P-dim}
    For any $(a,\lambda)\in\F_3\times[24,\infty)$,  $\n_{a}$ is both exact lower- and upper-dimensional.
\end{proposition}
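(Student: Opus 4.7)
The plan is to exploit ergodicity of the Gibbs measure $\n$ on the global system $(\Omega,\sigma)$ and pass to fibers via the Rokhlin disintegration. In view of Lemma \ref{loc-dim-cylinder}, which identifies $\underline{d}_{\n_a}(x,\lambda)$ and $\overline{d}_{\n_a}(x,\lambda)$ with $\gamma/\overline{\eL}_\lambda(x)$ and $\gamma/\underline{\eL}_\lambda(x)$ respectively, the task reduces to showing that both $\overline{\eL}_\lambda$ and $\underline{\eL}_\lambda$ are $\n_a$-a.e.\ constant for $\Gg$-a.e.\ $a$.

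From Lemma \ref{Psi-aa}, $\Psi_\lambda$ is almost-additive, so $|\psi_{\lambda,n+1}(x) - \psi_{\lambda,1}(x) - \psi_{\lambda,n}(\sigma x)|$ is uniformly bounded; dividing by $n$ and passing to $\limsup/\liminf$ gives $\overline{\eL}_\lambda\circ\sigma = \overline{\eL}_\lambda$ and $\underline{\eL}_\lambda\circ\sigma = \underline{\eL}_\lambda$, so both are $\sigma$-invariant. Since $\n$ is ergodic by Proposition \ref{Gauss-measure-Omega}, each is $\n$-a.e.\ equal to a finite positive constant (finiteness comes from the upper bound in Lemma \ref{Psi-aa} together with the lower bound in Lemma \ref{esti-band-length} and the $\Gg$-integrability of $\log a_1$). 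The Rokhlin disintegration $\n = \int \n_a\,d\Gg(a)$ from Lemma \ref{imp} transfers these identities to $\n_a$-a.e.\ equalities for $\Gg$-a.e.\ $a$; combined with Lemma \ref{loc-dim-cylinder} this produces constants $d^\pm(\lambda)$ with $\underline{d}_{\n_a}(x,\lambda) \equiv d^-(\lambda)$ and $\overline{d}_{\n_a}(x,\lambda) \equiv d^+(\lambda)$ $\n_a$-a.e., which is exactly exact lower- and upper-dimensionality.

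The main obstacle is $\lambda$-uniformity: the argument above produces a $\Gg$-full-measure set that depends on $\lambda$, whereas the statement asks for a single $\F_3$ handling all $\lambda \in [24,\infty)$. I plan to run the above simultaneously at every rational $\lambda \in [24,\infty)\cap\Q$ and intersect the resulting $\Gg$-full-measure sets (still $\Gg$-full-measure), absorbing the intersection into a redefinition of $\F_3$. For arbitrary $\lambda$, approximate by a rational $\lambda'$ and control the fluctuation via Proposition \ref{bco-cor}:
$$|\psi_{\lambda,n}(x) - \psi_{\lambda',n}(x)| \lesssim 1 + m(\iota(x)|_{n+1})\,\lambda\,|\lambda-\lambda'|,$$
with $m$ given by \eqref{def-m-u}. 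It then suffices to verify that $m(\iota(x)|_{n+1})/n$ stays bounded for $\n_a$-typical $x$; this follows from Birkhoff's theorem on $(\Omega,\sigma,\n)$ applied to $x \mapsto \ell_{x_1}\chi_{\{\tT_{x_1}=\two\}}$, whose $\n$-expectation equals $\sum_{n\ge 1} n\cdot\n(\{\tT_{x_1}=\two,\ \ell_{x_1}=n\}) \sim \sum_n 1/n^2 < \infty$ because \eqref{meas-Gibbs} assigns $\n$-mass $\sim 1/n^3$ to each $\two$-type level-$n$ one-letter cylinder. Extending across $\lambda$ by density then closes the argument.
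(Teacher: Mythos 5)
Your argument is mathematically sound, but it takes a genuinely different route from the paper and, as you yourself flag, proves a slightly weaker conclusion than what is stated. The paper's proof is deterministic: it establishes exact lower- and upper-dimensionality of $\n_a$ for \emph{every} $a\in\F_3$ and \emph{every} $\lambda\ge 24$, via a Besicovitch density argument on $\Omega_a$. It assumes non-constancy of, say, $\underline d_{\n_a}(\cdot,\lambda)$, picks density points $x\in X$, $y\in Y$ at a scale $n_k$ where Proposition \ref{F-proposition}(2) guarantees a run of seven consecutive $1$'s in $a$, uses the primitivity of Proposition \ref{sturm-primitive} to align types at level $m+5$, and then runs a Gibbs-type mass-redistribution estimate (using Lemma \ref{imp}) to force the existence of $\tilde x\in X$, $\tilde y\in Y$ with identical tails, which contradicts almost-additivity of $\Psi_\lambda$. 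Your approach — $\sigma$-invariance of $\overline\eL_\lambda,\underline\eL_\lambda$ plus ergodicity of $\n$ and Rokhlin disintegration — is cleaner and, in fact, is precisely the mechanism the paper deploys later in Proposition \ref{exact-dim-fix-lambda} and Lemma \ref{iota-Psi-a}; your $\lambda$-uniformity step via Proposition \ref{bco-cor} together with a Birkhoff bound on $m(\iota(x)|_{n+1})/n$ is exactly what the paper packages as $\vartheta$ and $\theta$ (your integrand $\ell_{x_1}\chi_{\{\tT_{x_1}=\two\}}$ differs from $\vartheta$ only by a bounded function, so the conclusion is the same).

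The price of your route is that it only yields the conclusion for $\Gg$-a.e.\ $a$, so it does not literally prove the statement as written — it proves it for a smaller full-measure subset of $\F_3$. Your proposal to ``redefine $\F_3$'' is legitimate, since $\F_3$ only appears downstream through further full-measure subsets ($\F_4$, $\widehat\F(\lambda)$, $\widehat\F$), but one must confirm that Proposition \ref{lip-fiber} still goes through: there, for a fixed $a$, Proposition \ref{exact-H-P-dim} is invoked at \emph{arbitrary} pairs $\lambda_1,\lambda_2$, so the exceptional set in $a$ must be $\lambda$-independent. You address this by approximating along rational $\lambda$ and controlling the modulus of continuity in $\lambda$, which closes the gap. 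Net comparison: the paper's density argument buys a fully deterministic statement on the original $\F_3$ that feeds cleanly into the Lipschitz machinery without any further bookkeeping, whereas your ergodic-theoretic route is shorter, avoids the combinatorial ``same tail'' construction entirely, and as a bonus shows the a.e.\ value of $\underline d_{\n_a}(\cdot,\lambda)$ (resp.\ $\overline d_{\n_a}(\cdot,\lambda)$) is the same constant across $a$, at the cost of a measure-zero exceptional set in $a$ and a careful reorganisation of the $\F_3\supset\F_4\supset\cdots$ chain.
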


This proposition is an analog of  \cite[Theorem 3.3]{Q2}. The proof is also an adaption of the argument given in \cite{Q2}. However, the proof here is considerably more involved since we are dealing with symbolic space  with an infinite alphabet. We will prove Proposition \ref{exact-H-P-dim} in Section \ref{sec-proof-exact-dim}.

Define two functions  $\underline{d},\  \overline{d}:\F_3\times [24,\infty)\to [0,1]$ as
 \begin{equation}\label{def-d-a-lambda}
   \underline{d}(a,\lambda):=\dim_H (\n_a,\lambda);\ \ \ \overline{d}(a,\lambda):=\dim_P (\n_a,\lambda).
 \end{equation}
 Here again, we write $\dim_{\ast}(\n_a,\lambda)$ for $\dim_\ast \n_a$ to emphasize the dependence on $\lambda.$

 \begin{proposition}\label{lip-fiber}
  There exist a  set $\F_4\subset \F_3$ with $\Gg(\F_4)=1$ and an  absolute constant $C>0$  such that for any $a\in \F_4$ and $24\le\lambda_1<\lambda_2$  we have
  \begin{equation}\label{Lip-of-d-a-lambda}
    |\underline{d}(a,\lambda_1)-\underline{d}(a,\lambda_2)|,\ \  |\overline{d}(a,\lambda_1)-\overline{d}(a,\lambda_2)|\le C\lambda_1\frac{|\lambda_1-\lambda_2|}{(\log \lambda_1)^2}.
  \end{equation}
  \end{proposition}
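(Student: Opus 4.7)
I would reduce the statement to a Lipschitz estimate on the local exponents $\overline\eL_\lambda(x),\underline\eL_\lambda(x)$ from \eqref{lyapunov-new}. By Proposition \ref{exact-H-P-dim} and Lemma \ref{loc-dim-cylinder}, $\underline d(a,\lambda)=\gamma/\overline\eL_\lambda(x)$ and $\overline d(a,\lambda)=\gamma/\underline\eL_\lambda(x)$ at $\n_a$-a.e.\ $x\in\Omega_a$, so for a fixed pair $\lambda_1<\lambda_2$ it suffices to pick a single $x\in\Omega_a$ realising all four identities. The $\lambda$-variation of $\psi_{\lambda,n}(x)$ is controlled by Proposition \ref{bco-cor}: applied with $a=b=\check a$, $w=\tilde w=\iota(x)|_1$ and $u=\iota(x)|_{[2,n+1]}$, and noting that $|\psi_{\lambda,0}(x)|\lesssim \log\lambda$ by Lemma \ref{esti-band-length}, it yields
\[
|\psi_{\lambda_1,n}(x)-\psi_{\lambda_2,n}(x)| \le C(\lambda_1+\lambda_2) + C_3\, m(u)\,\lambda_1|\lambda_1-\lambda_2|.
\]
Dividing by $n$ and taking $\limsup/\liminf$ reduces everything to proving $\limsup_n m(\iota(x)|_{n+1})/n<\infty$ for typical $x$.

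To secure this I would introduce the per-letter weight $f:\A\to\R^+$ defined by $f(e)=\ell_e-1$ if $\tT_e=\two$ and $f(e)=1$ otherwise, so that $m$ is the associated Birkhoff sum of $f$ along $\sigma$. Combining the Gibbs estimate $\n([e])\sim \ell_e^{-3}$ from Proposition \ref{Gauss-measure-Omega} with the cardinality $\#\A_n=2n+2$ (split by type), I get
\[
\int_\Omega f\,d\n \sim \sum_{n\ge1}\frac{(n+1)\cdot 1+1\cdot(n-1)+n\cdot 1}{n^3} = \sum_{n\ge1}\frac{3}{n^2}<\infty,
\]
so $f\in L^1(\n)$. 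Since $\n$ is $\sigma$-ergodic, Birkhoff's theorem gives $n^{-1}m(\iota(x)|_{n+1})\to M_0:=\int f\,d\n$ for $\n$-a.e.\ $x\in\Omega$, and disintegrating $\n=\int\n_a\,d\Gg$ (Lemma \ref{imp}) then defines $\F_4\subset\F_3$ with $\Gg(\F_4)=1$ on which the convergence holds $\n_a$-a.e. Feeding this back into the previous display yields
\[
|\overline\eL_{\lambda_1}(x)-\overline\eL_{\lambda_2}(x)|,\ |\underline\eL_{\lambda_1}(x)-\underline\eL_{\lambda_2}(x)| \le C_3 M_0\,\lambda_1|\lambda_1-\lambda_2|
\]
at $\n_a$-a.e.\ $x$, for every $a\in\F_4$.

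The last ingredient is a lower bound on $\overline\eL_\lambda,\underline\eL_\lambda$. From $|B_w^a(\lambda)|\le 4\tau_1^{-n}$ (Lemma \ref{esti-band-length}) applied to $w=\iota(x)|_{n+1}$, one obtains $\underline\eL_\lambda(x),\overline\eL_\lambda(x)\ge \log\tau_1\ge c_0\log\lambda$ uniformly for $\lambda\in[24,\infty)$, where $c_0>0$ since $\tau_1=(\lambda-8)/3\ge\lambda/9$. Combining this with the identity $|A^{-1}-B^{-1}|=|A-B|/(AB)$ converts the Lipschitz estimate on $\overline\eL,\underline\eL$ into
\[
|\underline d(a,\lambda_1)-\underline d(a,\lambda_2)| \le \frac{\gamma C_3 M_0\,\lambda_1|\lambda_1-\lambda_2|}{\overline\eL_{\lambda_1}(x)\overline\eL_{\lambda_2}(x)} \le \frac{C\lambda_1|\lambda_1-\lambda_2|}{(\log\lambda_1)^2},
\]
where the last step uses $\log\lambda_2>\log\lambda_1$; the bound for $\overline d$ is identical with $\underline\eL$ in place of $\overline\eL$. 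This is exactly \eqref{Lip-of-d-a-lambda}.

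\textbf{Main obstacle.} The principal subtlety is decoupling $\F_4$ from the parameter $\lambda$ while still being able to choose a single $x$ at which both the exact-dimensionality identities (for both $\lambda_1$ and $\lambda_2$) and the Birkhoff convergence hold. The exact-dimensionality exceptional sets from Proposition \ref{exact-H-P-dim} depend on $\lambda$, but they are $\n_a$-null and one takes the $\n_a$-full intersection with the $\lambda$-independent Birkhoff-good set; this keeps $\F_4$ independent of $(\lambda_1,\lambda_2)$. The remaining bookkeeping---absorbing the additive defect between $m(u)$ and $m(\iota(x)|_{n+1})$, and the $\log\lambda$ contribution from $\psi_{\lambda,0}(x)$ into $C(\lambda_1+\lambda_2)$---is routine.
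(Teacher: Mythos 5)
Your plan coincides with the paper's proof, which factors the argument through Lemma \ref{iota-Psi-a}: reduce to the local Lyapunov exponents $\overline\eL_\lambda,\underline\eL_\lambda$ via Proposition \ref{exact-H-P-dim} and Lemma \ref{loc-dim-cylinder}, control the $\lambda$-variation of $\psi_{\lambda,n}$ with Proposition \ref{bco-cor}, and use Birkhoff's theorem on the per-letter weight $f$ (which is exactly the paper's $\vartheta$, so $M_0=\theta$) to control $m(\iota(x)|_{n+1})/n$; your $L^1(\n)$ computation is correct. The disintegration step defining a single $\lambda$-independent set $\F_4$ on which the Birkhoff convergence holds $\n_a$-a.e.\ is also what the paper does.

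There is, however, a genuine error in your lower bound for the Lyapunov exponents. You claim that Lemma \ref{esti-band-length} gives $|B_w^{\check a}(\lambda)|\le 4\tau_1^{-n}$, hence $\underline\eL_\lambda(x),\overline\eL_\lambda(x)\ge\log\tau_1$. But that lemma's upper bound carries the factor $\prod_{1\le i\le n;\,\tT_{w_i}=\two}\tau_1^{2-\check a_i}$, and each index $i$ with $\check a_i=1$ and $\tT_{w_i}=\two$ contributes a factor $\tau_1>1$; these occur whenever $a_j=1$ and $\tT_{x_j}=\two$ (and can also occur at position $1$ of $\iota(x)$). The product can therefore be arbitrarily large, and the only unconditional consequence of Lemma \ref{esti-band-length} is $|B_w^{\check a}(\lambda)|\le 2^{2-n}$, which gives only $\overline\eL_\lambda(x)\ge\log 2$ --- independent of $\lambda$ and far too weak to produce the $(\log\lambda_1)^2$ denominator in \eqref{Lip-of-d-a-lambda}. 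The correct route, which is what Lemma \ref{iota-Psi-a} carries out, is to keep the full product and rewrite it as
$|B^{\check a}_{\iota(x)|_{n+1}}(\lambda)|\le 4\,\tau_1^{-\sum_{k=1}^{n}\vartheta(\sigma^{k-1}x)}$,
then invoke the same Birkhoff limit $\tfrac1n\sum_{k\le n}\vartheta(\sigma^{k-1}x)\to\theta$ that you already established one paragraph earlier, and use $\theta>0$ to conclude $\underline\eL_\lambda(x)\ge\theta\log\tau_1\ge c_0\log\lambda$. So the final estimate is right and the overall structure matches the paper, but as written the lower-bound step fails and must be repaired by re-using the Birkhoff input there too.
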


  The proof is quite delicate and will be given in Section \ref{sec-proof-conti-d}.

\subsubsection{Dimensional properties of fiber measures---almost sure case}\

By Lemma \ref{Psi-aa}, for any $\lambda\in [24,\infty)$,    the following limit exists and satisfies
\begin{equation}\label{Psi-ast-n}
(\Psi_\lambda)_\ast(\n):=\lim_{n\to \infty} \frac{1}{n}\int_{\Omega}\psi_{\lambda,n}(x)d\n(x)\le  -\log 2.
\end{equation}

We have the following asymptotic behavior for $(\Psi_\lambda)_\ast(\n)$.

\begin{lemma}\label{asym-psi-lambda-n}
  There exists a constant $\theta>0$ such that
  \begin{equation*}\label{}
    \lim_{\lambda\to\infty}\frac{(\Psi_\lambda)_\ast(\n)}{\log \lambda}=-\theta.
  \end{equation*}
\end{lemma}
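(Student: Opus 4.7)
The plan is to combine the band-length estimates of Lemma~\ref{esti-band-length} with the $\sigma$-invariance of $\n$ and the Gibbs estimate of Proposition~\ref{Gauss-measure-Omega} to write $(\Psi_\lambda)_\ast(\n) = (M-1)\log\lambda + O(1)$ for an explicit constant $M<1$ depending only on $\n$. For the setup, fix $x\in \Omega$ with $a=\Pi(x)$ and $y=\iota(x)\in \Omega^{\check a}$; by \eqref{Def-iota} one has $y_0\in\T_0$, $y_1\in\A_1$, $y_i=x_{i-1}$ for $i\ge2$, and correspondingly $\check a_1=1$, $\check a_i=a_{i-1}$ for $i\ge2$. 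Note that $\psi_{\lambda,n}(x)=\log|B^{\check a}_{y|_{n+1}}(\lambda)|$ records the length of a band of order $n+1$ for $\check a$.

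First I would apply Lemma~\ref{esti-band-length} (with its $n$ replaced by $n+1$) to $B^{\check a}_{y|_{n+1}}(\lambda)$, for the two choices $\tau_1=(\lambda-8)/3$ and $\tau_2=2(\lambda+5)$. Splitting the product $\prod_{i:\tT_{y_i}=\two}\tau^{2-\check a_i}$ at $i=1$ (which depends only on $\tT_{x_1}$, since $y_1=(\two,1)_1$ precisely when $\tT_{x_1}\in\{\one,\three\}$) from the tail $i\ge 2$ (which equals $\sum_{j=1}^n(2-a_j)\chi_{\tT_{x_j}=\two}$), then integrating against $\n$ and collapsing the tail by $\sigma$-invariance yields, after dividing by $n$ and letting $n\to\infty$,
$$
(M-1)\log\tau_2 - 3\int\log a_1\,d\n \;\le\; (\Psi_\lambda)_\ast(\n) \;\le\; (M-1)\log\tau_1,
$$
where
$$
M:=\int_\Omega(2-a_1)\chi_{\tT_{x_1}=\two}(x)\,d\n(x)=\sum_{k=1}^\infty (2-k)\,\n([(\two,1)_k]).
$$
Absolute convergence of this series, and finiteness of $\int\log a_1\,d\n$ (needed to absorb the $\prod \check a_i^{-3}$ factor from the lower band-length bound), both follow from the Gibbs bound $\n([e])\sim \ell_e^{-3}$ of Proposition~\ref{Gauss-measure-Omega}, together with $\Pi_\ast(\n)=\Gg$ and Khinchin's integrability of $\log a_1$ under $\Gg$.

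Dividing the displayed two-sided estimate by $\log\lambda$ and using $\log\tau_i/\log\lambda\to 1$ for $i=1,2$ as $\lambda\to\infty$ gives $\lim_{\lambda\to\infty}(\Psi_\lambda)_\ast(\n)/\log\lambda = M-1$, so the statement will hold with $\theta:=1-M$. Positivity of $\theta$ is then immediate from the series form of $M$: only the $k=1$ term contributes positively, the $k=2$ term vanishes, and all $k\ge 3$ terms are nonpositive, so $M\le \n([(\two,1)_1])<1$, the strict inequality holding because a single cylinder cannot carry the full $\n$-mass.

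The only mildly subtle point is the index book-keeping when applying Lemma~\ref{esti-band-length}: one must correctly account for the two letters prepended by $\iota$ (so that $\check a_1=1$ and the $\two$-type correction at the first site is $\tau^{2-1}=\tau$ rather than the generic $\tau^{2-a_i}$) and for the fact that $y|_{n+1}$ names a band of order $n+1$, not $n$. Once this is set up correctly, the whole argument reduces to one-line ergodic averages, since Lemma~\ref{Psi-aa} already places $\Psi_\lambda$ in $\mathcal F(\Omega)$ and \eqref{Psi-ast-n} gives $(\Psi_\lambda)_\ast(\n) = \lim_n \frac{1}{n}\int\psi_{\lambda,n}\,d\n$.
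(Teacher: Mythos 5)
Your proof is correct, and it reaches the same constant $\theta = 1 - M = 1 + \sum_{k}(k-2)\,\n([(\two,1)_k])$ as the paper's equation \eqref{def-theta}. The route you take, however, is genuinely more elementary than the paper's. The paper proves Lemma~\ref{asym-psi-lambda-n} as a corollary of Lemma~\ref{iota-Psi-a}: it first applies Birkhoff's theorem to $\vartheta$ to get $\n$-a.e.\ pointwise convergence of $\frac{1}{n}\sum_{j<n}\vartheta(\sigma^j x)\to\theta$, combines this with Lemma~\ref{esti-band-length} to bound $\underline\eL_\lambda(x),\overline\eL_\lambda(x)$ pointwise for a.e.\ $x$, then invokes Kingman's theorem to identify $-(\Psi_\lambda)_\ast(\n)$ with the common pointwise value $\eL_\lambda(x)$, and finally reads off the asymptotics. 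You instead integrate the band-length bounds of Lemma~\ref{esti-band-length} directly against $\n$ and collapse the sums $\frac{1}{n}\sum_{j=1}^n\int(2-a_j)\chi_{\tT_{x_j}=\two}\,d\n$ term-by-term using only $\sigma$-invariance of $\n$; no pointwise ergodic theorem is needed. This is cleaner for this single lemma. The paper's detour through the pointwise statement (Lemma~\ref{iota-Psi-a}) earns its keep because the a.e.\ convergence of $\frac{1}{n}\sum\vartheta(\sigma^j x)$ is also needed in the proof of Proposition~\ref{lip-fiber}, where one must control $m(\iota(x)|_{n+1})/n$ pointwise along a $\n_a$-full set; your approach would not directly give that. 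As a stand-alone proof of the stated lemma, your argument is complete: the absolute summability of $\sum_k|2-k|\,\n([(\two,1)_k])$ and the finiteness of $\int\log a_1\,d\Gg$ justify the interchange of sum and integral, and the strict inequality $\n([(\two,1)_1])<1$ (valid because $\n$ is Gibbs, hence every cylinder $[e]$, $e\in\A$, has positive mass and $\A$ is infinite) gives $\theta>0$.
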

Lemma \ref{asym-psi-lambda-n} follows easily from a technical lemma (Lemma \ref{iota-Psi-a}) and will be proven in Section \ref{sec-proof-conti-d}.

Define a function $d:[24,\infty)\to \R^+$ as
\begin{equation}\label{def-dim-dos}
  d(\lambda):=\frac{\gamma}{-(\Psi_\lambda)_\ast(\n)}.
\end{equation}

\begin{proposition}\label{exact-dim-fix-lambda}
  Fix $\lambda\in [24,\infty)$. Then there exists a $\Gg$-full measure set $
  \widehat \F(\lambda)\subset \F_4$ such that for any $a\in \widehat\F(\lambda)$, $\n_a$ is exact-dimensional and
  \begin{equation}\label{fix-lambda}
    \underline{d}(a,\lambda)=\overline{d}(a,\lambda)=d(\lambda).
  \end{equation}
\end{proposition}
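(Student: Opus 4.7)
The plan is to couple the almost-additive ergodic theorem applied to $\Psi_\lambda$ on the ergodic system $(\Omega,\sigma,\n)$ with the fiber disintegration $\n=\int_{\N^\N}\n_a\,d\Gg(a)$, and then convert the resulting pointwise Lyapunov information into exact-dimensionality on fibers via Lemma~\ref{loc-dim-cylinder} and Proposition~\ref{exact-H-P-dim}.

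First, by Lemma~\ref{Psi-aa}, $\Psi_\lambda\in\mathcal F(\Omega)$ is almost-additive and $\psi_{\lambda,1}\le 0$, so in particular $\psi_{\lambda,1}^+\in L^1(\n)$. Since $\n$ is ergodic by Proposition~\ref{Gauss-measure-Omega}, Kingman's subadditive ergodic theorem in its almost-additive form furnishes a $\n$-conull set $\Omega_0\subset\Omega$ on which $\psi_{\lambda,n}(x)/n\to(\Psi_\lambda)_\ast(\n)$ as $n\to\infty$. Consequently $\underline\eL_\lambda(x)=\overline\eL_\lambda(x)=-(\Psi_\lambda)_\ast(\n)$ for every $x\in\Omega_0$, and this common value is a finite positive number (bounded below by $\log 2$ via \eqref{Psi-ast-n}).

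Second, by Lemma~\ref{imp} we have $\n=\int_{\N^\N}\n_a\,d\Gg(a)$. Apply Fubini to the indicator $\chi_{\Omega_0}$: the set
\[
\widehat\F(\lambda):=\bigl\{a\in\F_4:\n_a(\Omega_0)=1\bigr\}
\]
is $\Gg$-conull. Fix any $a\in\widehat\F(\lambda)$. Since $a\in\F_4\subset\F_3$, Lemma~\ref{loc-dim-cylinder} applies, and for $\n_a$-a.e.\ $x\in\Omega_a\cap\Omega_0$,
\[
\underline d_{\n_a}(x,\lambda)=\frac{\gamma}{\overline\eL_\lambda(x)}=\frac{\gamma}{-(\Psi_\lambda)_\ast(\n)}=\frac{\gamma}{\underline\eL_\lambda(x)}=\overline d_{\n_a}(x,\lambda)=d(\lambda).
\]
Hence the local dimension $d_{\n_a}(x,\lambda)$ exists and equals $d(\lambda)$ at $\n_a$-a.e.\ point, so $\n_a$ is exact-dimensional with dimension $d(\lambda)$.

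Third, combining this $\n_a$-a.e.\ constant local dimension with the exact lower- and upper-dimensional properties of $\n_a$ furnished by Proposition~\ref{exact-H-P-dim} forces
\[
\underline d(a,\lambda)=\dim_H\n_a=d(\lambda)=\dim_P\n_a=\overline d(a,\lambda),
\]
which is \eqref{fix-lambda}. The only genuinely technical point is the applicability of the almost-additive Kingman theorem, which is standard once $C_{aa}(\Psi_\lambda)<\infty$ and $\psi_{\lambda,1}^+\in L^1(\n)$ are in hand (both supplied by Lemma~\ref{Psi-aa}); all remaining difficulty has been front-loaded into Propositions~\ref{exact-H-P-dim} and~\ref{Gauss-measure-Omega}, and Lemmas~\ref{imp} and~\ref{loc-dim-cylinder}.
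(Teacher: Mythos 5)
Your proposal is correct and takes essentially the same route as the paper: apply Kingman's ergodic theorem to $\Psi_\lambda$ on the ergodic system $(\Omega,\sigma,\n)$ to obtain a $\n$-conull set of typical points, push this through the disintegration $\n=\int\n_a\,d\Gg(a)$ to extract a $\Gg$-conull set of fibers, and invoke Lemma~\ref{loc-dim-cylinder} to convert the constant Lyapunov exponent into a constant local dimension. The only divergence is cosmetic: your final appeal to Proposition~\ref{exact-H-P-dim} is redundant, since once $d_{\n_a}(x,\lambda)=d(\lambda)$ for $\n_a$-a.e.\ $x$ the exact-dimensionality and the equalities $\dim_H\n_a=\dim_P\n_a=d(\lambda)$ follow directly from the definitions in the introduction, which is exactly how the paper concludes.
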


\begin{proof}
Define
\begin{equation*}\label{Y-lambda}
  \Omega(\lambda):=\{x\in \Omega: \frac{\psi_{\lambda,n}(x)}{n}\to (\Psi_\lambda)_\ast(\n)\}.
\end{equation*}
Since $\n$ is ergodic, by Kingman's ergodic theorem, $\n(\Omega(\lambda))=1$.    Since
$$
1=\n(\Omega(\lambda))=\int_{ \N^\N}\n_a(\Omega(\lambda))d\Gg(a)=\int_{\N^\N} \n_a(\Omega(\lambda)\cap \Omega_a)d\Gg(a),
$$
 there exists a $\Gg$-full measure set $\widehat\F(\lambda)\subset \F_4$ such that
\begin{equation*}\label{full-meas-fiber}
  \n_a(\Omega(\lambda)\cap \Omega_a)=1,\ \ \forall a\in \widehat \F(\lambda).
\end{equation*}

Now fix any $a\in \widehat \F(\lambda).$
By the definition of $\Omega(\lambda)$, for any $x\in \Omega(\lambda)\cap\Omega_a$, we have
$$
\lim_{n\to\infty} \frac{\psi_{\lambda,n}(x)}{n}=(\Psi_\lambda)_\ast(\n).
$$
Since $\widehat\F(\lambda)\subset \F_4\subset  \F_3$, by Lemma \ref{loc-dim-cylinder},
for any $x\in \Omega(\lambda)\cap\Omega_a$,
\begin{equation*}\label{}
  {d}_{\n_a}(x,\lambda)=
  \frac{\gamma}{-(\Psi_\lambda)_\ast(\n)}=d(\lambda).
\end{equation*}
This means that $\n_a$ is exact-dimensional, and consequently,
 \eqref{fix-lambda} holds.
\end{proof}

As a consequence, we get the regularity and the asymptotic behavior  of $d$:

\begin{corollary}\label{lip-d}
There exists an absolute constant $C>0$  such that for any  $24\le\lambda_1<\lambda_2$,
  \begin{equation}\label{Lip-of-d-lambda}
   |d(\lambda_1)-{d}(\lambda_2)|\le C\lambda_1\frac{|\lambda_1-\lambda_2|}{(\log \lambda_1)^2}.
  \end{equation}
  Moreover, there exists a constant $\varrho\in (1,\infty)$ such that
\begin{equation}\label{asymp-d}
  \lim_{\lambda\to\infty}d(\lambda)\log\lambda=\log \varrho.
\end{equation}
\end{corollary}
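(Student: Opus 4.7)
The plan is to deduce both assertions as essentially immediate consequences of the deeper statements already established, namely Proposition~\ref{lip-fiber}, Proposition~\ref{exact-dim-fix-lambda}, and Lemma~\ref{asym-psi-lambda-n}. The main subtlety is a measure-theoretic one: $d(\lambda)$ is defined globally via \eqref{def-dim-dos}, while the Lipschitz control in Proposition~\ref{lip-fiber} is phrased for the fiber-dimension functions $\underline d(a,\cdot),\overline d(a,\cdot)$, which agree with $d(\cdot)$ only on an $a$-dependent full measure set of $\lambda$. One must therefore be careful to produce a single $a$ that simultaneously works at both endpoints $\lambda_1$ and $\lambda_2$.

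For the Lipschitz estimate \eqref{Lip-of-d-lambda}, I would fix $24\le\lambda_1<\lambda_2$ and consider
\[
\F_4\cap \widehat\F(\lambda_1)\cap \widehat\F(\lambda_2).
\]
By Proposition~\ref{lip-fiber} and Proposition~\ref{exact-dim-fix-lambda}, each of the three sets has full $\Gg$-measure, so the intersection is nonempty; pick any $a$ in it. Proposition~\ref{exact-dim-fix-lambda} then gives $d(\lambda_i)=\underline d(a,\lambda_i)$ for $i=1,2$, and applying \eqref{Lip-of-d-a-lambda} at this $a$ yields
\[
|d(\lambda_1)-d(\lambda_2)|=|\underline d(a,\lambda_1)-\underline d(a,\lambda_2)|\le C\lambda_1\frac{|\lambda_1-\lambda_2|}{(\log\lambda_1)^2},
\]
which is exactly \eqref{Lip-of-d-lambda}.

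For the asymptotic \eqref{asymp-d}, I would proceed directly from the definition \eqref{def-dim-dos} by writing
\[
d(\lambda)\log\lambda=\frac{\gamma}{-(\Psi_\lambda)_*(\n)/\log\lambda}.
\]
Lemma~\ref{asym-psi-lambda-n} says the denominator tends to the positive constant $\theta$ as $\lambda\to\infty$, so the limit equals $\gamma/\theta$. Setting $\varrho:=\exp(\gamma/\theta)$ establishes \eqref{asymp-d}; since Lemma~\ref{asym-psi-lambda-n} provides $\theta\in(0,\infty)$, we have $\varrho\in(1,\infty)$ as required.

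The only step requiring real care is the first one, and even there the obstacle is only apparent: because the exceptional set in Proposition~\ref{exact-dim-fix-lambda} depends on $\lambda$, one cannot naively fix a single $a$ at the start of the Lipschitz argument. The resolution is the countable intersection trick above; this is the reason Proposition~\ref{exact-dim-fix-lambda} is stated for each $\lambda$ separately rather than for a common set. The genuine analytic content—the Lipschitz bound on the fiber dimensions and the asymptotic of $(\Psi_\lambda)_*(\n)$—has already been absorbed into Proposition~\ref{lip-fiber} and Lemma~\ref{asym-psi-lambda-n}, whose proofs are deferred to Section~\ref{sec-proof-conti-d}; so the corollary itself reduces to the brief combinatorial bookkeeping sketched above.
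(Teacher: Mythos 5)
Your argument is correct and matches the paper's proof exactly: both pick a frequency $a\in\widehat\F(\lambda_1)\cap\widehat\F(\lambda_2)$ (which already lies in $\F_4$), use Proposition~\ref{exact-dim-fix-lambda} to identify $d(\lambda_i)$ with $\underline d(a,\lambda_i)$, apply Proposition~\ref{lip-fiber}, and then get the asymptotic directly from \eqref{def-dim-dos} and Lemma~\ref{asym-psi-lambda-n} with $\varrho=\exp(\gamma/\theta)$. One tiny slip in your commentary: the intersection you actually need here is a finite (two-term) one, not the ``countable intersection trick''—the countable version appears later in the proof of Corollary~\ref{h=p}.
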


\begin{proof}  Fix any $24\le \lambda_1<\lambda_2$, let $\widehat \F(\lambda_i),  i=1,2$ be the $\Gg$-full measure sets in Proposition \ref{exact-dim-fix-lambda}.
Fix $a\in \widehat\F(\lambda_1)\cap \widehat\F(\lambda_2)$, then $a\in \F_4.$ By Proposition \ref{exact-dim-fix-lambda},
\begin{eqnarray*}
  |d(\lambda_1)-d(\lambda_2)| = |\underline{d}(a,\lambda_1)-\underline{d}(a,\lambda_2)|.
\end{eqnarray*}
Now \eqref{Lip-of-d-lambda} follows from Proposition \ref{lip-fiber}.
Define
\begin{equation}\label{def-var-rho}
  \varrho:=\exp(\gamma/\theta).
\end{equation}
By \eqref{def-dim-dos} and Lemma \ref{asym-psi-lambda-n}, we obtain \eqref{asymp-d}.
\end{proof}

Now we can improve Proposition \ref{exact-dim-fix-lambda} as follows:

\begin{corollary}\label{h=p}
There exists a  $\Gg$-full measure set $\widehat{\F}\subset \F_4$ such that for any $(a,\lambda)\in\widehat\F\times[24,\infty)$, $\n_a$ is exact-dimensional and
  \begin{equation}\label{fix-lambda-strong}
    \underline{d}(a,\lambda)=\overline{d}(a,\lambda)=d(\lambda).
  \end{equation}
\end{corollary}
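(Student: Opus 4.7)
The plan is to mimic the strategy used in Corollary \ref{coincide-whole}: take a countable intersection of the $\Gg$-full measure sets coming from Proposition \ref{exact-dim-fix-lambda} over rational couplings, and then use the local Lipschitz regularity already established to transfer the equality from a dense subset of $[24,\infty)$ to the whole interval.

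First I define
\begin{equation*}
\widehat{\F} := \F_4 \cap \bigcap_{\lambda \in [24,\infty) \cap \Q} \widehat{\F}(\lambda),
\end{equation*}
where $\widehat{\F}(\lambda)$ is the $\Gg$-full measure set produced by Proposition \ref{exact-dim-fix-lambda} and $\F_4$ is the $\Gg$-full measure set given by Proposition \ref{lip-fiber}. Since the intersection is countable and each set in it has full $\Gg$-measure, we have $\Gg(\widehat{\F}) = 1$ and $\widehat{\F} \subset \F_4 \subset \F_3$. For any $a \in \widehat{\F}$ and any rational $\lambda \in [24,\infty) \cap \Q$, Proposition \ref{exact-dim-fix-lambda} immediately yields that $\n_a$ is exact-dimensional for the metric $\rho_{a,\lambda}$ and that $\underline{d}(a,\lambda) = \overline{d}(a,\lambda) = d(\lambda)$.

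Next I extend this to every $\lambda \in [24,\infty)$. Fix $a \in \widehat{\F}$ and an arbitrary $\lambda \in [24,\infty)$. Choose a sequence of rationals $\lambda_k \in [24,\infty) \cap \Q$ with $\lambda_k \to \lambda$ and $\lambda_k$ eventually bounded. By Proposition \ref{lip-fiber}, both $\underline{d}(a,\cdot)$ and $\overline{d}(a,\cdot)$ are Lipschitz on any bounded subinterval of $[24,\infty)$, so
\begin{equation*}
\underline{d}(a,\lambda) = \lim_{k\to\infty} \underline{d}(a,\lambda_k), \qquad \overline{d}(a,\lambda) = \lim_{k\to\infty} \overline{d}(a,\lambda_k).
\end{equation*}
Similarly, by Corollary \ref{lip-d}, $d(\cdot)$ is continuous, hence $d(\lambda) = \lim_{k\to\infty} d(\lambda_k)$. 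Since the three quantities coincide along the rational sequence, passing to the limit gives
\begin{equation*}
\underline{d}(a,\lambda) = \overline{d}(a,\lambda) = d(\lambda).
\end{equation*}

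The exact-dimensionality of $\n_a$ for this arbitrary $\lambda$ then follows from Proposition \ref{exact-H-P-dim}, which guarantees that $\n_a$ is simultaneously exact lower- and upper-dimensional for every $(a,\lambda) \in \F_3 \times [24,\infty)$; once its Hausdorff and packing dimensions agree (as we just established), the $\mu$-a.e.\ values of $\underline{d}_{\n_a}(x,\lambda)$ and $\overline{d}_{\n_a}(x,\lambda)$ must both equal $d(\lambda)$, proving exact-dimensionality. No new obstacle arises here; the only mild subtlety is verifying that the Lipschitz estimate in Proposition \ref{lip-fiber} indeed lets us take limits pointwise in $a$ (which it does, because the Lipschitz constant depends only on $\lambda_1$ and not on $a \in \F_4$). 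This completes the proof.
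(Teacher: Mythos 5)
Your proof is correct and follows essentially the same strategy as the paper: take the countable intersection $\widehat{\F}(\lambda)$ over rational $\lambda$, invoke Proposition \ref{exact-dim-fix-lambda} on the rationals, and then use the Lipschitz continuity of $\underline{d}(a,\cdot)$, $\overline{d}(a,\cdot)$, and $d(\cdot)$ from Proposition \ref{lip-fiber} and Corollary \ref{lip-d} together with Proposition \ref{exact-H-P-dim} to extend to all $\lambda\in[24,\infty)$. (The explicit intersection with $\F_4$ is redundant, since $\widehat{\F}(\lambda)\subset\F_4$ already, but harmless.)
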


\begin{proof}		
Define the $\Gg$-full measure set $\widehat{\F}$ as
\begin{equation}\label{def-hat-F}
\widehat\F:=\bigcap_{\lambda\in [24,\infty)\cap \Q} \widehat\F(\lambda).
\end{equation}

Fix $a\in\widehat{\F}$. Since $a\in\widehat{\F}(\lambda)$ for $\lambda\in[24,\infty)\cap \Q$, by Proposition \ref{exact-dim-fix-lambda}, we have
\begin{equation*}\label{l-u-eq}
\underline{d}(a,\lambda)=\overline{d}(a,\lambda)=d(\lambda),\ \ \ \forall\ \lambda\in [24,\infty)\cap \Q.
\end{equation*}
Since $a \in \widehat{\F} \subset \F_4$, by Proposition \ref{lip-fiber}, both $\underline{d}(a,\cdot)$ and $\overline{d}(a,\cdot)$ are continuous on $[24, \infty)$. By Corollary \ref{lip-d}, $d$ is also continuous on $[24, \infty)$. Therefore,
\begin{equation*}\label{l-u-eq-all}
\underline{d}(a,\lambda)=\overline{d}(a,\lambda)=d(\lambda),\ \ \ \forall\ \lambda\in [24,\infty).
\end{equation*}
Since $a\in\widehat{\F} \subset \F_3$, by Proposition \ref{exact-H-P-dim} and \eqref{fix-lambda-strong}, $\n_a$ is exact-dimensional.
\end{proof}

\section{Two geometric lemmas and their consequences}\label{sec-geometric-lemma}

In this section, we state two geometric lemmas and use them to transfer the dimensional property of $\n_a$ to that of $\NN_{a,\lambda}$, and finish the proof of Theorem \ref{main-dim-dos}.

The first lemma gives a lower bound on the ratio of the lengths between a gap and its father band. By this lemma, we can show that for typical $a$, the coding map $\pi_{a,\lambda}: \Omega_a\to \Sigma_{\check a,\lambda}$ is close to bi-Lipschitz. Hence $\n_a$ and $\NN_{\check a,\lambda}$ have the same dimensional properties.

The second lemma says that, if $a,b\in \N^\N$ have the same tail, then $\Sigma_{a,\lambda}$ and $\Sigma_{b,\lambda}$ are locally bi-Lipschitz equivalent. By this lemma, we can show that  $\NN_{\check a,\lambda}$ and $\NN_{a,\lambda}$ have the same dimensional properties.

Combine the properties of $\n_a$ established in Section \ref{sec-dim-fiber-symbolic}, we conclude Theorem \ref{main-dim-dos}.
The proofs of two lemmas are quite technical,  and  will be given in Section \ref{sec-proof-geo-lem}.

\subsection{The statements of two geometric lemmas } \

 At first, we  introduce some terminologies.

\begin{definition}
Assume $X,Y$ are two metric spaces and  $f:X\to Y$ is a map.

(1) $f$ is called weak-Lipschitz if  $f$ is $s$-H\"older continuous for any $s\in (0,1)$.

(2) $f$ is called bi-Lipschitz   if $f$ is bijective and both $f$ and $f^{-1}$ are Lipschitz.

(3) If there exists a bi-Lipschitz map $f:X\to Y$, then we say that $X$ and $Y$ are bi-Lipschitz equivalent.
\end{definition}

  Recall that $\mathcal{B}_n^a(\lambda)$ and $\mathcal{G}_n^a(\lambda)$ are sets of  spectral generating bands and gaps of order $n$, respectively(see \eqref{def-B-n} and \eqref{def-G-n}). The first geometric lemma generalizes \cite[Lemma 6]{Q1}:

\begin{lemma}[gap lemma]\label{geo-lem-gap}
Fix  $\lambda\ge 24$. Then there exists a constant $C(\lambda)>0$ such that for any $a=a_1a_2\cdots\in \N^\N$ and any gap $G(\lambda)\in \mathcal{G}_n^a(\lambda)$, we have
\begin{equation*}\label{gap-ratio}
	\frac{|G(\lambda)|}{|B_G(\lambda)|}\geq \frac{C(\lambda)}{a_{n+1}^3},
\end{equation*}
where $B_G(\lambda)$ is the unique band in $\mathcal{B}_n^a(\lambda)$ which contains $G(\lambda)$.
\end{lemma}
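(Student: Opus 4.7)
The plan is to reduce the estimate, via the generating polynomial $h_{B_G}(\cdot;\lambda)$ of the parent band, to a statement about Chebyshev-type gaps on $[-2,2]$, and then to track how these gaps pull back through the trace-map recursion. By Proposition \ref{bv}, $h_{B_G}:B_G(\lambda)\to[-2,2]$ is a bi-Lipschitz diffeomorphism with $|h_{B_G}'(E)|\,|B_G(\lambda)|\sim 1$ uniformly on $B_G(\lambda)$. Hence
\begin{equation*}
\frac{|G(\lambda)|}{|B_G(\lambda)|}\sim \bigl|h_{B_G}(G(\lambda))\bigr|,
\end{equation*}
and it suffices to bound this image length below by $C(\lambda)/a_{n+1}^3$.

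Next I would use the matrix identity $\mathbf{M}_{n+1}=\mathbf{M}_n^{a_{n+1}}\mathbf{M}_{n-1}$, which via the Cayley--Hamilton expansion $\mathbf{M}_n^k=U_{k-1}(\tfrac12\tr\mathbf{M}_n)\mathbf{M}_n-U_{k-2}(\tfrac12\tr\mathbf{M}_n)I$ yields a Chebyshev-type formula for $h_{(n+1,0)}(E;\lambda)$ as a polynomial in $h_{(n,0)}(E;\lambda)$ with coefficients $h_{(n,1)}$ and $h_{(n-1,0)}$, where $U_k$ is the Chebyshev polynomial of the second kind. Inside $B_G(\lambda)$ the sub-bands of order $n+1$ are precisely $\{|h_{(n+1,0)}|\le 2\}$ (together with the single $(n+1,\one)$-type sub-band coming from $\sigma_{(n,1)}(\lambda)$ when $B_G$ is of type $\two$ or $\three$, by Proposition \ref{basic-struc}), so the gaps of order $n$ inside $B_G(\lambda)$ are the connected components of $\{|h_{(n+1,0)}|>2\}\cap B_G(\lambda)$ minus the extra $(n+1,\one)$-type sub-bands.

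I would then split by the type of $B_G$. If $B_G$ has type $(n,\one)$, Proposition \ref{basic-struc}(3) shows there is a single sub-band inside and two gaps, each comparable to $B_G$ itself with constants independent of $a_{n+1}$. If $B_G$ has type $(n,\two)$ or $(n,\three)$, the polynomial $h_{(n,0)}$ sweeps $B_G$ monotonically across $[-2,2]$ (or across an interval transverse to the Chebyshev regime) with $|h_{(n,0)}'|\,|B_G|\sim 1$, and along this sweep $U_{a_{n+1}-1}(h_{(n,0)}/2)$ performs $\sim a_{n+1}$ oscillations, producing the interlaced sub-band/gap structure with $\sim a_{n+1}$ gaps. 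The smallest relative gap occurs near the two endpoints, where the $y=h_{(n,0)}(E)/2$ coordinate approaches $\pm 1$: the spacing of the zeros of $U_{a_{n+1}-1}$ in $y$ there is $\sim 1/a_{n+1}^2$, and the square-root slowdown of the map $E\mapsto y$ near $y=\pm 1$ contributes an extra factor of $1/a_{n+1}$. Combining, the worst gap has relative length $\gtrsim 1/a_{n+1}^3$, with constants absorbed by Propositions \ref{bco} and \ref{bv}.

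The main obstacle will be verifying the cubic, rather than quadratic, dependence on $a_{n+1}$: this is where one must combine the Chebyshev-zero spacing with the transverse-crossing rate of $h_{(n,0)}$ at its extrema on $B_G(\lambda)$, while simultaneously showing that the perturbations from the factors $h_{(n,1)}$ and $h_{(n-1,0)}$ do not displace the extremal Chebyshev zero by more than a small fraction of its natural spacing. The large coupling assumption $\lambda\ge 24$ will be used to force $|h_{(n,0)}|\ge 2$ away from $B_G$ and to give uniform $\lambda$-dependent bounds on $|h_{(n,1)}/h_{(n-1,0)}|$, so that the Chebyshev picture is only mildly perturbed and the bound $|G(\lambda)|/|B_G(\lambda)|\ge C(\lambda)/a_{n+1}^3$ survives with a uniform $C(\lambda)>0$.
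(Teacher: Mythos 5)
Your high-level strategy — reduce via bounded distortion of the parent band's generating polynomial, then analyze the Chebyshev oscillation picture and locate the worst gap near the edges — is the same outline the paper follows. However, the quantitative step that actually produces the cubic $a_{n+1}^3$ is mis-attributed, and the gap is genuine.

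You claim the extra factor of $1/a_{n+1}$ comes from a ``square-root slowdown of the map $E\mapsto y$ near $y=\pm 1$.'' This cannot be the mechanism: Proposition \ref{bv} (bounded variation/distortion) says precisely that $E\mapsto h_{B_G}(E)$ is a diffeomorphism of $B_G(\lambda)$ onto $[-2,2]$ with $|h_{B_G}'(E)|\,|B_G(\lambda)|\sim 1$ uniformly, i.e.\ with \emph{no} slowdown near the endpoints. The cubic comes from a different source. In the angular coordinate $y=2\cos\theta$, the sub-bands of order $n+1$ inside $B_G$ are sent by $h_{B_G}$ into the interlaced families $\mathcal{I}_p$ associated to $S_p$ and $S_{p+1}$ with $p\sim a_{n+1}$ (this is \cite[Proposition 3.1]{FLW}, which the paper cites rather than re-derives). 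Near the edges the $\theta$-separation between a zero of $S_{p+1}$ and the adjacent zero of $S_p$ is already $\sim 1/p^2$ (not $1/p$, because it is the \emph{interlacing} that compresses the angular spacing); converting to the $y$-coordinate then multiplies by the Jacobian $\sin\theta\sim 1/p$, yielding $|\Delta y|\sim 1/p^3$. Your account ignores the interlacing compression and invents a non-existent slowdown of $E\mapsto y$ to make up the missing power. The paper implements this carefully by introducing the fattened family $\mathcal{J}_p$ (which contains $\mathcal{I}_p$ interval-by-interval), doing the elementary trigonometric computation to show $d(\mathcal{J}_p)\ge 1/(20p^3)$ and $r(\mathcal{J}_p)\le 40$, and then pulling back through $h_{B_G}$ using Proposition \ref{bv}.

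A second, smaller issue: your sketch mixes up the indexing of the trace polynomials. For $B_G$ of type $(n,\two)$ or $(n,\three)$, the parent band's generating polynomial is $h_{(n+1,0)}$, not $h_{(n,0)}$, and $B_G$ is contained in a band of $\sigma_{(n,-1)}$ or $\sigma_{(n,0)}$, so ``$h_{(n,0)}$ sweeps $B_G$ monotonically across $[-2,2]$'' is not what Proposition \ref{bv} guarantees. The bounded-distortion statement you need is for $h_{B_G}$ on $B_G$. Finally, you acknowledge that your plan requires re-deriving the Chebyshev localization and controlling the $h_{(n,1)}$ / $h_{(n-1,0)}$ perturbations; the paper avoids this entirely by quoting \cite[Proposition 3.1]{FLW}, which is exactly the structural input needed, and concentrating the new work in the trigonometric estimate of $d(\mathcal{J}_p)$.
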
	

Recall that $X_w^a(\lambda)$ is the basic set of $\Sigma_{a,\lambda}$ coded by $w$ ( see \eqref{def-basic-set}). The second geometric lemma tells us that if $a,b$ have the same tail, then the related basic sets look similar.

\begin{lemma}[tail lemma]\label{geo-lem-tail}
Assume $\lambda\ge 24$ and $a,b\in \N^\N$ are such that $S^na=S^mb$ for some $n,m\in \Z^+$.   Assume $ k\in \Z^+$ and  $u\in \Omega^a_{n+k}, v\in \Omega^{b}_{m+k}$ are  such that $\tT_u=\tT_v$, then there exists a natural bi-Lipschitz map
$\tau_{uv}:X_u^a(\lambda)\to X_v^b(\lambda).$
\end{lemma}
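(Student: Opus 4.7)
My plan is to define $\tau_{uv}$ via the natural symbolic bijection coming from the shared tail and common type, and then to establish the bi-Lipschitz property by passing to trace-polynomial coordinates and invoking Propositions \ref{bv} and \ref{bco}.

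First, I will set up the bijection. Since $S^n a = S^m b$, one has $a_{n+j}=b_{m+j}$ for every $j\ge 1$; hence the alphabets $\A_{a_{n+k+j}}=\A_{b_{m+k+j}}$ and the admissibility constraints $A_{a_{n+k+j},a_{n+k+j+1}}=A_{b_{m+k+j},b_{m+k+j+1}}$ coincide. Together with $\tT_u=\tT_v$, this yields for every continuation $x'$ the equivalence $u x'\in\Omega^a\Leftrightarrow v x'\in\Omega^b$. Setting $\iota_{uv}(u x'):=v x'$ and $\tau_{uv}:=\pi^b_\lambda\circ\iota_{uv}\circ(\pi^a_\lambda|_{[u]^a})^{-1}$ gives a bijection $X_u^a(\lambda)\to X_v^b(\lambda)$ that sends $X_{u w'}^a(\lambda)$ onto $X_{v w'}^b(\lambda)$ for every admissible extension $w'$ (admissibility on the two sides is the same by the argument above).

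Second, to obtain the bi-Lipschitz estimate I will compare distances in trace-polynomial coordinates. Given $E_1,E_2\in X_u^a(\lambda)$, let $u w_0$ be the longest common prefix of their codings in $\Omega^a$, so $E_1,E_2\in B_{u w_0}^a(\lambda)$ but they lie in distinct children. By Proposition \ref{bv} (bounded variation), $|E_1-E_2|\sim |B_{u w_0}^a(\lambda)|\cdot|h_{u w_0}^a(E_1;\lambda)-h_{u w_0}^a(E_2;\lambda)|$, and analogously $|\tau_{uv}(E_1)-\tau_{uv}(E_2)|\sim |B_{v w_0}^b(\lambda)|\cdot|h_{v w_0}^b(\tau_{uv}(E_1);\lambda)-h_{v w_0}^b(\tau_{uv}(E_2);\lambda)|$. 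Proposition \ref{bco} (bounded covariation) gives $|B_{u w_0}^a(\lambda)|/|B_u^a(\lambda)|\sim |B_{v w_0}^b(\lambda)|/|B_v^b(\lambda)|$, so the bi-Lipschitz inequality (with an overall constant comparable to $|B_v^b(\lambda)|/|B_u^a(\lambda)|$) reduces to the uniform comparability of the two polynomial differences above.

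Third, this remaining comparability is the main technical point. It asserts that the normalized layout of the children $B_{u w_0 c}^a(\lambda)$ inside $B_{u w_0}^a(\lambda)$ (viewed in the coordinate $h_{u w_0}^a$) matches, up to bounded distortion, the layout of $B_{v w_0 c}^b(\lambda)$ inside $B_{v w_0}^b(\lambda)$ (viewed in $h_{v w_0}^b$). The child sizes are already handled by Proposition \ref{bco}, but the positions need a finer analysis. Proposition \ref{basic-struc}(3)--(5) says that the number, types, and interlacing pattern of children depend only on $\tT_{u w_0}=\tT_{v w_0}$ and $a_{n+k+|w_0|+1}=b_{m+k+|w_0|+1}$, which are identical on the two sides. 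The main obstacle will be upgrading this combinatorial identity to the geometric statement that the rescaled children occupy the same subintervals of $[-2,2]$ up to bounded distortion, uniformly in $w_0$. I plan to handle this by a case analysis on the type of $u w_0$ following Proposition \ref{basic-struc}(3)--(5), and by tracking how the higher-order trace polynomials $h_{(n+k+|w_0|+1,p)}$ transform under the rescaling by $h_{u w_0}^a$, so that at each step only the shared tail data enters. Once this is in hand, Step 2 immediately yields the bi-Lipschitz bound.
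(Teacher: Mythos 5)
Your Steps 1 and 2 reproduce the paper's argument: the bijection $\tau_{uv}=\pi^b_\lambda\circ\Upsilon_{uv}\circ(\pi^a_\lambda)^{-1}$ is exactly what the paper defines (there called $\Upsilon_{uv}$), and the reduction via the mean value theorem together with Proposition \ref{bv} and Proposition \ref{bco} to bounding the ratio of trace-polynomial differences is also the paper's route, including the observation that the overall Lipschitz constant absorbs $|B_u^a(\lambda)|/|B_v^b(\lambda)|$.

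The genuine gap is in Step 3. You correctly identify what must be shown --- that the rescaled children occupy the same subintervals of $[-2,2]$ with uniformly comparable gaps on the $a$- and $b$-sides --- but you only describe a \emph{plan} to establish it (``tracking how the higher-order trace polynomials transform under the rescaling''), and that plan is not a proof. The paper does not carry out such a recursion. Instead it cites the precise result \cite[Proposition 3.1]{FLW}: after applying the generating polynomial $h_{uw}$, each child band $B^a_{uwe}(\lambda)$ lands inside a \emph{fixed} Chebyshev interval $I\in\mathcal I_q$ of $[-2,2]$, where $q$ and the particular interval $I$ depend only on $\tT_{uw}$, $a_{n+|w|+1}$ and the index of $e$ --- data that coincide on the $a$- and $b$-sides under the tail hypothesis. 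That places $h_{uw}(E),h_{uw}(\hat E)$ and $h_{vw}(E_\ast),h_{vw}(\hat E_\ast)$ in the \emph{same} pair of intervals $I,\hat I\in\mathcal I_q$, so the ratio of trace-polynomial differences is at most $D(I,\hat I)/d(I,\hat I)\le r(\mathcal I_q)$. One then still needs a uniform (in $q$) bound on $r(\mathcal I_q)$; the paper gets this by fattening $\mathcal I_q$ to $\mathcal J_q$, applying Lemma \ref{ratio}(4), and proving $r(\mathcal J_q)\le 40$ in Proposition \ref{r-I-p}(3). Neither the FLW positioning result nor the uniform $r(\mathcal I_q)$ bound appears in your proposal, and ``the shared tail data enters'' does not by itself give either one. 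Without these ingredients the bi-Lipschitz constant could a priori degenerate as $|w_0|\to\infty$ or as $a_{n+|w_0|+1}\to\infty$, which is precisely what the uniform $40$-bound rules out.
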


\subsection{The consequence of gap lemma }\

The gap lemma enables us to improve the regularity of $\pi_{a,\lambda}$ define by \eqref{pi_a-lambda}.

\begin{corollary}\label{weak-bi-lip-d_alpha}
  Assume $(a,\lambda)\in \F_1\times [24,\infty)$. Then  $\pi_{a,\lambda}: (\Omega_a,\rho_{a,\lambda})\to (\Sigma_{\check a,\lambda},|\cdot|)$ is $1$-Lipschitz and  $\pi_{a,\lambda}^{-1}$ is weak-Lipschitz.
\end{corollary}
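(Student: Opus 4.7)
The first assertion---that $\pi_{a,\lambda}$ is $1$-Lipschitz---has already been established in Proposition \ref{metric-O-a}, so the task reduces to showing that $\pi_{a,\lambda}^{-1}$ is $s$-H\"older for every $s\in(0,1)$. Fix such an $s$ and two distinct points $E_1,E_2\in\Sigma_{\check a,\lambda}$, set $x_i:=\pi_{a,\lambda}^{-1}(E_i)$, and consider the common prefix $w:=\iota(x_1)\wedge\iota(x_2)$. From \eqref{pi_a-lambda} and \eqref{rho-a-lambda} one reads off that $\rho_{a,\lambda}(x_1,x_2)=|B^{\check a}_w(\lambda)|$. Because \eqref{Def-iota} couples the first two letters of $\iota(x_i)$, the length $|w|$ is either $0$ or at least $2$; the degenerate case $|w|=0$ forces $E_1$ and $E_2$ into the disjoint level-$0$ bands $[-2,2]$ and $[\lambda-2,\lambda+2]$, separated by at least $\lambda-4\ge 20$, in which case the ratio $\rho_{a,\lambda}(x_1,x_2)/|E_1-E_2|$ is already bounded by an absolute constant. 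So the only interesting case is $w\in\Omega^{\check a}_n$ with $n\ge 1$.

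The geometric key is the observation that $E_1$ and $E_2$ lie in two distinct sub-bands of $B^{\check a}_w(\lambda)$ of order $n+1$, so the open interval between them must contain at least one gap of order $n$ inside $B^{\check a}_w(\lambda)$. Applying the gap lemma (Lemma \ref{geo-lem-gap}) with frequency $\check a$, and using the shift identity $\check a_{n+1}=a_n$, will then yield
\begin{equation*}
|E_1-E_2|\ge \frac{C(\lambda)}{a_n^3}\,|B^{\check a}_w(\lambda)|=\frac{C(\lambda)}{a_n^3}\,\rho_{a,\lambda}(x_1,x_2).
\end{equation*}

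To upgrade this lower bound to an $s$-H\"older estimate I would combine it with the uniform exponential decay $|B^{\check a}_w(\lambda)|\le 2^{2-n}$ from Lemma \ref{esti-band-length}. Splitting $\rho_{a,\lambda}(x_1,x_2)=\rho_{a,\lambda}(x_1,x_2)^{s}\cdot \rho_{a,\lambda}(x_1,x_2)^{1-s}$ and substituting both estimates leads to an inequality of the shape
\begin{equation*}
\rho_{a,\lambda}(x_1,x_2)\le C(\lambda,s)\,\frac{a_n^{3s}}{2^{n(1-s)}}\,|E_1-E_2|^{s}.
\end{equation*}
Since $a\in\F_1$ forces $\log a_n/n\to 0$ by Proposition \ref{F-proposition}(1), the prefactor $a_n^{3s}/2^{n(1-s)}$ stays bounded uniformly in $n$ for every fixed $s<1$, delivering the required H\"older bound.

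The only non-trivial analytic input is Lemma \ref{geo-lem-gap} itself, whose proof is deferred to Section \ref{sec-proof-geo-lem}; granted it, the argument sketched above is essentially book-keeping. The subtlety to watch for most carefully is the index shift produced by $\check a=1a$ in the definition \eqref{Def-iota} of $\iota$: it is precisely this shift that turns the coefficient $\check a_{n+1}^3$ coming from the gap lemma into $a_n^3$, and a misalignment here would destroy the compatibility between the exponential decay of band lengths and the subexponential control $\log a_n/n\to 0$ coming from membership in $\F_1$.
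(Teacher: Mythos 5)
Your proof is correct and follows essentially the same route as the paper's: reduce to the gap lemma via the coding, split the trivial and non-trivial cases of the wedge in $\Omega^{\check a}$, then combine the gap lower bound $|E_1-E_2|\gtrsim\rho_{a,\lambda}(x_1,x_2)/\check a_{n+1}^3$ with the uniform band decay $|B^{\check a}_w(\lambda)|\le 2^{2-n}$ and the subexponential growth $\log a_n/n\to 0$ from $\F_1$. The only cosmetic difference is that you phrase the final step as a Hölder splitting of $\rho$ whereas the paper picks a threshold $N$ so that $a_{n+1}^3\le 2^{n(s^{-1}-1)}$ for $n\ge N$ and then bounds $|E_1-E_2|\gtrsim\rho^{1/s}$; the two are arithmetically identical.
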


\begin{proof}
 By Proposition \ref{metric-O-a}, $\pi_{a,\lambda}$ is 1-Lipschitz for any $(a,\lambda)\in \N^\N\times [24,\infty)$.

    Now assume $(a,\lambda)\in \F_1\times [24,\infty)$, we show that $\pi_{a,\lambda}^{-1}$ is weak-Lipschitz. Fix any $s\in (0,1)$.
     By Proposition \ref{F-proposition}(1), we have $\log a_n/n\to 0$ as $n\to\infty.$ Hence there exists $N\in \N$ such that for any $n\ge N$, we have
     \begin{equation}\label{bound-a-n+1}
         a_{n+1}^3\le 2^{n(s^{-1}-1)}.
     \end{equation}

     Fix $x,y\in \Omega_a$ with $x\ne y$. Write $\tilde x=\iota(x) $ and $\tilde y=\iota(y)$. By \eqref{Def-iota}, either $x_1\ne y_1$ and $\{\tilde x_0,\tilde y_0\}=\{\one,\three\}$, or
    $\tilde x=wx, \tilde y=wy$, where $w\in \{\one(\two,1)_1, \three(\one,1)_1\}$.
    For the first case, two points $\pi_{a,\lambda}(x), \pi_{a,\lambda}(y)$ are in $[-2,2]$ and $[\lambda-2,\lambda+2]$, respectively. Hence
    $|\pi_{a,\lambda}(x)-\pi_{a,\lambda}(y)|\ge \lambda-4.$

    Now assume the second case holds. Assume $u:=x\wedge y\in \Omega_{a,n}$. By \eqref{rho-a-lambda} and Lemma \ref{esti-band-length}, we have
    \begin{equation}\label{upbd-rho-a-lambda}
      \rho_{a,\lambda}(x,y)=|B_{wu}^{\check{a}}(\lambda)|\le  2^{1-n}.
    \end{equation}

    Since $x_{n+1}\ne y_{n+1}$, by Proposition \ref{basic-struc}, there is a gap $G(\lambda)\subset B_{wu}^{\check{a}}(\lambda)$ of order $n+1$ separating $\pi_{a,\lambda}(x)$ and $\pi_{a,\lambda}(y)$, see Figure \ref{fig-weak-lip} for an illustration. Then by Lemma \ref{geo-lem-gap},
\begin{align*}
	|\pi_{a,\lambda}(x)-\pi_{a,\lambda}(y)|&\ge |G(\lambda)|\gtrsim \frac{|B_{wu}^{\check{a}}(\lambda)|}{ \check a_{n+2}^3}=\frac{\rho_{a,\lambda}(x,y) }{ a_{n+1}^3}.
\end{align*}
If $n\le N$, then
$$
|\pi_{a,\lambda}(x)-\pi_{a,\lambda}(y)|\gtrsim\rho_{a,\lambda}(x,y).
$$
If $n\ge N$, then by \eqref{bound-a-n+1} and \eqref{upbd-rho-a-lambda}, we have
 $$
|\pi_{a,\lambda}(x)-\pi_{a,\lambda}(y)|\gtrsim\frac{\rho_{a,\lambda}(x,y) }{ 2^{n(s^{-1}-1)}}\gtrsim \frac{\rho_{a,\lambda}(x,y) }{ \rho_{a,\lambda}(x,y)^{-(s^{-1}-1)}}=\rho_{a,\lambda}(x,y)^{1/s}.
$$
Thus, $\pi_{a,\lambda}^{-1}$ is $s$-H\"older. Since  $s\in (0,1)$ is arbitrary,  $\pi_{a,\lambda}^{-1}$ is weak-Lipschitz.
 \end{proof}

\begin{figure}
		\includegraphics[scale=0.55]{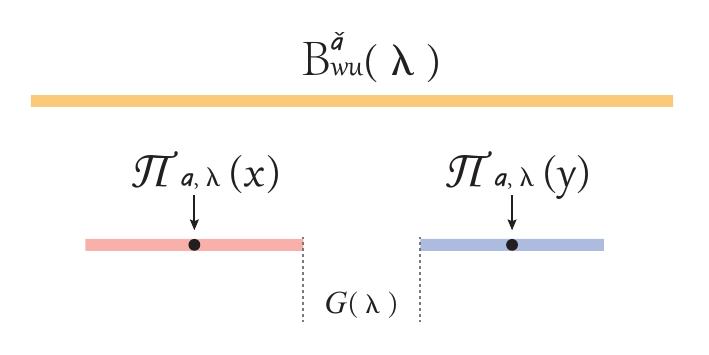}
		
		\caption{weak-Lipschitz of $\pi_{a,\lambda}^{-1}$}\label{fig-weak-lip}
 \end{figure}

The important consequence of Corollary \ref{weak-bi-lip-d_alpha} is that for typical $a,$ $\n_a$ and $\NN_{\check a,\lambda}$ have the same dimensional property.
To see this, we first show that  the image of $\n_a$ under $\pi_{a,\lambda}$ is equivalent to $\NN_{\check a,\lambda}$.
We need a characterization of the DOS similar to Lemma \ref{imp}.

\begin{proposition}\label{imp-dos}
For any $(a,\lambda)\in  \N^\N\times [24,\infty)$ and $w\in \Omega_{n}^a$, we have
\begin{equation*}\label{fiber-of-n}
	\NN_{a,\lambda}(X^a_w(\lambda))\sim
	\frac{1}{\eta_{\tT_w,a,n}\ q_{n}(a)},
\end{equation*}
where $\eta_{\tT,a,n}$ is defined by \eqref{fiber-measure}.
\end{proposition}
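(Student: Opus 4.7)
The plan is to evaluate $\NN_{a,\lambda}(X_w^a(\lambda))$ via periodic approximations combined with the combinatorial counting supplied by Lemma \ref{number-est-basic}. Since $\NN_{a,\lambda}$ is supported on $\Sigma_{a,\lambda}$, one has $\NN_{a,\lambda}(X_w^a(\lambda)) = \NN_{a,\lambda}(B_w^a(\lambda))$, so the task reduces to estimating the mass of the closed interval $B_w^a(\lambda)$.

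First I would invoke the periodic approximants $H^{(m)} := H_{p_m/q_m,\lambda,\theta}$, which are $q_m$-periodic with spectrum $\sigma_{(m+1,0)}(\lambda)$ (whose one-period transfer matrix is precisely $\mathbf{M}_m$); this spectrum consists of $q_m$ disjoint bands, and by Floquet theory the associated DOS $\NN_{p_m/q_m,\lambda}$ assigns mass $1/q_m$ to each of them. Strong resolvent convergence $H^{(m)} \to H_{a,\lambda,\theta}$ (uniform in $\theta$) yields weak convergence $\NN_{p_m/q_m,\lambda} \to \NN_{a,\lambda}$; combined with the non-atomicity of the Sturmian DOS (which follows from the absence of eigenvalues), this transfers to convergence on $B_w^a(\lambda)$:
\begin{equation*}
\NN_{a,\lambda}(B_w^a(\lambda)) = \lim_{m\to\infty} \NN_{p_m/q_m,\lambda}(B_w^a(\lambda)).
\end{equation*}
Next, for $m > n$, Lemma \ref{sigma-n+1-0} together with the refinement property in Proposition \ref{basic-struc}(2) identifies the bands of $\sigma_{(m+1,0)}(\lambda)$ lying inside $B_w^a(\lambda)$ with the descendants $u \in \Xi_{a,m-n}(w)$ satisfying $\tT_u \in \{\two,\three\}$; because $\B_m^a(\lambda)$ is a disjoint refinement of $\B_n^a(\lambda)$, each such band is either fully inside $B_w^a(\lambda)$ or disjoint from it, and no straddling occurs. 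Therefore
\begin{equation*}
\NN_{p_m/q_m,\lambda}(B_w^a(\lambda)) = \frac{\#\Xi_{a,m-n,\two}(w) + \#\Xi_{a,m-n,\three}(w)}{q_m}.
\end{equation*}

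By \eqref{des-w} the numerator equals $\#\Xi(\tT_w, S^n a|_{m-n}, \two) + \#\Xi(\tT_w, S^n a|_{m-n}, \three)$, which by Lemma \ref{number-est-basic}(5) is comparable to $\#\Xi(\tT_w, S^n a|_{m-n})$. Applying parts (1)--(4) of Lemma \ref{number-est-basic} with the identifications $a_1 \leftrightarrow a_{n+1}$ and $a_2 \leftrightarrow a_{n+2}$ gives
\begin{equation*}
\#\Xi(\tT_w, S^n a|_{m-n}) \;\sim\; \frac{q_{m-n}(S^n a)}{\eta_{\tT_w,a,n}},
\end{equation*}
where the four cases of $\eta_{\tT_w,a,n}$ in \eqref{fiber-measure} match exactly the case split in the lemma. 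Finally Lemma \ref{q-n}(1) yields $q_m \sim q_n(a)\, q_{m-n}(S^n a)$, so the ratio is $\sim 1/(\eta_{\tT_w,a,n}\, q_n(a))$ with constants independent of $m$; letting $m\to\infty$ and invoking Step 1 completes the proof.

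The main obstacle is justifying the first step rigorously: one must verify that $\NN_{a,\lambda}$ assigns zero mass to the two endpoints of $B_w^a(\lambda)$, so that weak convergence transfers to pointwise convergence of the measure on the closed interval. This is precisely the non-atomicity of the Sturmian DOS, a standard consequence of the purely singular-continuous spectral type established in the Sturmian literature, so it poses no substantive difficulty.
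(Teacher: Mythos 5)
Your proposal is correct and follows essentially the same route as the paper: periodic approximation of the DOS, identification of the bands of $\sigma_{(m+1,0)}(\lambda)$ inside $B_w^a(\lambda)$ via Lemma~\ref{sigma-n+1-0}, counting them with Lemma~\ref{number-est-basic}, and concluding via $q_{n+m}\sim q_n\, q_m$. The only cosmetic difference is that the paper sets up the approximating measures $\nu_m$ as eigenvalue-counting measures of the box-truncated operator $H_m$ (with periodic boundary conditions) rather than the DOS of the rational-frequency operator $H_{p_m/q_m,\lambda,\theta}$ -- by Lemma~\ref{basic-sturm} these amount to the same thing -- and you are somewhat more explicit about the boundary-mass issue needed to pass from weak convergence to convergence on the closed band.
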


\begin{proof}
Let $H_n$ be the restriction of $H_{a,\lambda,0}$ to the box $[1,q_n(a)]$ with periodic boundary condition. Let $\chi_n=\{E_{n,1},\cdots,E_{n,q_n(a)}\}$ be the eigenvalues of $H_n$. It is well-known that each band in $\sigma_{n+1,0}(\lambda)$ contains exactly one point in $\chi_n$ and each point in $\chi_n$ is contained in some band of $\sigma_{n+1,0}(\lambda)$(see for example \cite{R,T}). Define
$$
\nu_n:=\frac{1}{q_n(a)}\sum_{i=1}^{q_n(a)}\delta_{E_{n,i}},
$$
where $\delta_E$ denotes the Dirac measure supported on $\{E\}$. Then $\nu_n\to\mathcal{N}_{a,\lambda}$ weakly(see for example \cite{CL}).

Now fix $w\in \Omega^a_n$.
By Lemma \ref{sigma-n+1-0}, \eqref{def-Xi-N}, Lemma \ref{number-est-basic}(5) and Lemma \ref{q-n}(1), we have
\begin{align*}
	\nu_{n+m}(B_w^a(\lambda))=& \frac{\# \chi_{n+m}\cap B_w^a(\lambda)}{q_{n+m}(a)}=\frac{\#\{u\in\Omega^a_{n+m}:u|_n=w,\ \tT_u=\two \text{ or } \three\}}{q_{n+m}(a)}\\
=&\frac{\#\{wv\in\Omega_{n+m}^a: \tT_v=\two \text{ or } \three\}}{q_{n+m}(a)}=\frac{\#\Xi(\tT_{{w}},S^na|_m,\two)+
\#\Xi(\tT_{{w}},S^na|_m,\three)}{q_{n+m}(a)}\\
\sim &\frac{\#\Xi(\tT_{{w}},S^na|_m)}{q_{n}(a)q_m(S^na)}.
\end{align*}
Let $m\to \infty$ and using Lemma \ref{number-est-basic}(1)-(4), the result follows.
\end{proof}

Now we can show what we propose:

\begin{corollary}\label{equiv-fiber-dos-coro}
Assume $(a,\lambda)\in \F_3\times [24,\infty)$.   Then
\begin{equation}\label{equiv-fiber-dos-new}
(\pi_{a,\lambda})_\ast(\n_a)\asymp \NN_{\check{a},\lambda}.
\end{equation}
Consequently,
  \begin{equation*}
    \dim_H(\n_a,\lambda)=\dim_H\NN_{\check a,\lambda};\ \ \dim_P(\n_a,\lambda)=\dim_P\NN_{\check a,\lambda},
  \end{equation*}
 and  $\n_a$ is exact lower(upper)-dimensional if and only if $\NN_{\check a,\lambda}$ is exact lower(upper)-dimensional.

\end{corollary}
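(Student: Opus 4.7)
The plan is to establish the measure equivalence \eqref{equiv-fiber-dos-new} first, and then transfer dimensional properties via the regularity of $\pi_{a,\lambda}$ furnished by Corollary \ref{weak-bi-lip-d_alpha}.

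For the equivalence, I would compare the two measures on the basic sets $X^{\check a}_u(\lambda)$, since these partition $\Sigma_{\check a,\lambda}$ at each level and their diameters shrink to $0$ by Proposition \ref{basic-struc}(1). For any $w\in\Omega_{a,n}$, the definitions \eqref{pi_a-lambda} and \eqref{Def-iota-w} give $\pi_{a,\lambda}([w]_a)=X^{\check a}_{\iota(w)}(\lambda)$, so $(\pi_{a,\lambda})_\ast(\n_a)$ evaluates on this set to $\n_a([w]_a)$, which by Lemma \ref{imp} is comparable to $1/(\eta_{\tT_w,a,n}\,q_n(a))$. On the other hand, Proposition \ref{imp-dos} gives $\NN_{\check a,\lambda}(X^{\check a}_{\iota(w)}(\lambda))\sim 1/(\eta_{\tT_{\iota(w)},\check a,n+1}\,q_{n+1}(\check a))$. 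Since $\check a=1a$, we have $q_{n+1}(\check a)\sim q_n(a)$ by Lemma \ref{q-n}(1), and $\iota$ preserves the type of the last letter, so $\tT_{\iota(w)}=\tT_w$; a case-by-case check of the four branches in \eqref{fiber-measure} then confirms $\eta_{\tT_w,a,n}=\eta_{\tT_{\iota(w)},\check a,n+1}$, using the index shift $\check a_{n+2}=a_{n+1}$ and $\check a_{n+3}=a_{n+2}$. Because every word in $\Omega^{\check a}_{n+1}$ is of the form $\iota(w)$ (the first letter of $\check a$ being $1$), the basic sets of order $n+1$ partition $\Sigma_{\check a,\lambda}$ with vanishing mesh, and this uniform comparison bootstraps to an equivalence on all Borel subsets, yielding \eqref{equiv-fiber-dos-new}.

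Once the equivalence is in hand, the consequences follow from two general observations. First, $\mu\asymp\nu$ forces $\underline{d}_\mu(x)=\underline{d}_\nu(x)$ and $\overline{d}_\mu(x)=\overline{d}_\nu(x)$ at every $x$, since the multiplicative constant contributes a term of order $\log C/|\log r|$ which vanishes as $r\to 0$; hence $(\pi_{a,\lambda})_\ast(\n_a)$ and $\NN_{\check a,\lambda}$ share the Hausdorff and packing dimensions as well as the exact lower- and upper-dimensional properties. Second, by Corollary \ref{weak-bi-lip-d_alpha}, $\pi_{a,\lambda}$ is $1$-Lipschitz and $\pi_{a,\lambda}^{-1}$ is $s$-H\"older for every $s\in(0,1)$; a short computation using $B(f(x),r)\supset f(B(x,(r/C)^{1/s}))$ for an $s$-H\"older map $f$ shows that under such a pair of weak-Lipschitz maps, local upper and lower dimensions are preserved up to a factor $1/s$, and letting $s\uparrow 1$ gives equality. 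This transports the dimensional invariants from $\n_a$ on $(\Omega_a,\rho_{a,\lambda})$ to $(\pi_{a,\lambda})_\ast(\n_a)$ on $\Sigma_{\check a,\lambda}$, and chaining with the equivalence finishes the argument.

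The main obstacle I expect is the bookkeeping in the identification $\eta_{\tT_w,a,n}=\eta_{\tT_{\iota(w)},\check a,n+1}$: the definition in \eqref{fiber-measure} has four branches whose correct correspondence depends simultaneously on $\tT_w$ and on whether $a_{n+1}=1$ or $a_{n+1}\ge 2$, and getting this pairing right is what makes the passage from $a$ to $\check a=1a$ work. Once this identification is verified and the local-dimension preservation under weak-Lipschitz maps is made precise, the remainder is essentially symbol-pushing.
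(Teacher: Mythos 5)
Your proposal is correct and follows essentially the same route as the paper's proof: compare $\n_a$ and $\NN_{\check a,\lambda}$ on the coding basic sets using Lemma \ref{imp} and Proposition \ref{imp-dos}, then transfer local dimensions via the weak bi-Lipschitz property from Corollary \ref{weak-bi-lip-d_alpha}. The only cosmetic difference is that you factor the dimension transfer into two separate observations (equivalent measures on a common space share local dimensions, and a $1$-Lipschitz/$s$-H\"older pair distorts local dimensions by at most a factor $1/s$), whereas the paper chains both effects inside a single ball-inclusion inequality; the content is identical, and your explicit verification of $\eta_{\tT_w,a,n}=\eta_{\tT_{\iota(w)},\check a,n+1}$ via the index shift $\check a_{k+1}=a_k$ is exactly the bookkeeping the paper leaves implicit in its "$\sim$" chain.
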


\begin{proof}
Assume $w\in \Omega_{a,\ast}$.
By the definition of $\pi_{a,\lambda}$, we have $\pi_{a,\lambda}([w]_a)=X^{\check a}_{\iota(w)}(\lambda)$, where $\iota(w)$ is defined by \eqref{Def-iota-w}.
Since $a \in \F_3$, by  Proposition \ref{imp-dos}, \eqref{fiber-measure}, Lemma \ref{q-n}(1) and Lemma \ref{imp},
\begin{eqnarray*}
  \NN_{\check a,\lambda}(X^{\check a}_{\iota(w)}(\lambda))\sim\frac{1}{\eta_{\tT_{\iota(w)},\check a,n+1}\ q_{n+1}(\check a)}\sim \frac{1}{\eta_{\tT_{w}, a,n}\ q_{n}(a)}\sim \n_a([w]_a)=(\pi_{a,\lambda})_\ast(\n_a)(X^{\check a}_{\iota(w)}(\lambda)).
\end{eqnarray*}
Since
  $\{X^{\check a}_{\iota(w)}(\lambda): w\in \Omega_{a,\ast}\}
$
generates the Borel $\sigma$-algebra of  $\Sigma_{\check a,\lambda}$, \eqref{equiv-fiber-dos-new} holds.

As a consequence, $X\subset \Omega_a$ is of null-$\n_a$ measure if and only if $\pi_{a,\lambda}(X)$ is of null-$\NN_{\check a,\lambda}$ measure.
By \eqref{dim-meas} and the definitions of exact lower- and upper- dimensionality, to show the rest results, we only need to show the following claim.

\noindent {\bf Claim:} For any $x\in \Omega_a$, we have
\begin{equation}\label{equal-loc-dim-2}
\underline{d}_{\n_{a}}(x,\lambda)=\underline{d}_{\NN_{\check a,\lambda}}(\pi_{a,\lambda}(x));\ \ \
\overline{d}_{\n_{a}}(x,\lambda)=\overline{d}_{\NN_{\check a,\lambda}}(\pi_{a,\lambda}(x)).
\end{equation}

\noindent $\lhd $ Fix any $t>1$. Since $a \in \F_3$, by Corollary \ref{weak-bi-lip-d_alpha},   there exists a constant $c_1>0$ such that for any $x \in \Omega_a$ and sufficiently small $r > 0$, we have:
\begin{equation*}
B(\pi_{a,\lambda}(x),c_1r^t)\subset \pi_{a,\lambda}(B(x,r))\subset B(\pi_{a,\lambda}(x),r).
\end{equation*}
Then by \eqref{equiv-fiber-dos-new},
\begin{equation*}
\NN_{\check a, \lambda}(B(\pi_{a,\lambda}(x),c_1r^t)) \lesssim \n_a(B(x,r))\lesssim \NN_{\check a, \lambda}(B(\pi_{a,\lambda}(x),r)).
\end{equation*}
From this, we conclude:
\begin{equation*}
\underline{d}_{\NN_{\check a,\lambda}}(\pi_{a,\lambda}(x))\le \underline{d}_{\n_{a}}(x,\lambda)\le t\underline{d}_{\NN_{\check a,\lambda}}(\pi_{a,\lambda}(x)).
\end{equation*}
Since $t>1$ is arbitrary, the first equality of \eqref{equal-loc-dim-2} holds. The same reasoning shows that the second equality also holds.
\hfill $\rhd$

So the rest results hold.
\end{proof}

\subsection{The consequence of tail lemma}\

The tail lemma  has two consequences. The first one is an analog of Corollary \ref{equiv-fiber-dos-coro}.

\begin{proposition}\label{equiv-different-dos}
Assume $\lambda\in [24,\infty)$ and $a,b\in \N^\N$ with $S^na=S^mb$ for some $n,m\in \Z^+$. Assume $k\in \Z^+$ and  $u\in \Omega^a_{n+k}, v\in \Omega^{b}_{m+k}$ with $\tT_u=\tT_v$. Then
\begin{equation*}\label{equiv-dos}
	(\tau_{uv})_*(\NN_{a,\lambda}|_{X_u^a(\lambda)})\asymp \NN_{b,\lambda}|_{X_v^b(\lambda)}.
\end{equation*}
In particular, for any $x\in X_u^a(\lambda)$, we have
\begin{equation*}\label{equal-loc-dim}
	\underline{d}_{\NN_{a,\lambda}}(x)=\underline{d}_{\NN_{b,\lambda}}(\tau_{uv}(x));\ \ \
	\overline{d}_{\NN_{a,\lambda}}(x)=\overline{d}_{\NN_{b,\lambda}}(\tau_{uv}(x)).
\end{equation*}
\end{proposition}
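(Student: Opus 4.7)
The plan is to combine the bi-Lipschitz map $\tau_{uv}$ furnished by Lemma \ref{geo-lem-tail} with the explicit DOS estimates of Proposition \ref{imp-dos}: first I would establish equivalence of the two measures on a generating family of basic sets, and then transfer the local dimensions via the bi-Lipschitz property together with the separation of basic sets inside $\Sigma_{a,\lambda}$ and $\Sigma_{b,\lambda}$. To begin, since $S^n a = S^m b$, the alphabets $\A_{a_{n+k+j}}$ and $\A_{b_{m+k+j}}$ agree for every $j \ge 1$; combined with $\tT_u = \tT_v$, this gives a canonical bijection $w \leftrightarrow w$ between admissible extensions of $u$ in $\Omega^a_\ast$ and of $v$ in $\Omega^b_\ast$. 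The naturality of $\tau_{uv}$ is to be interpreted as $\tau_{uv}(X_{uw}^a(\lambda)) = X_{vw}^b(\lambda)$ for every such extension $w$.

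Next I would apply Proposition \ref{imp-dos} to both sides. Writing $\ell := |w|$, $N := n+k+\ell$, $M := m+k+\ell$ and noting $\tT_{uw} = \tT_{vw} = \tT_{w_\ell}$,
\begin{align*}
\NN_{a,\lambda}(X_{uw}^a(\lambda)) \sim \frac{1}{\eta_{\tT_{uw},a,N}\, q_N(a)}, \qquad \NN_{b,\lambda}(X_{vw}^b(\lambda)) \sim \frac{1}{\eta_{\tT_{vw},b,M}\, q_M(b)}.
\end{align*}
By \eqref{fiber-measure}, $\eta_{\tT,a,N}$ depends only on $\tT$, $a_{N+1}$, and $a_{N+2}$, which by the tail condition equal $b_{M+1}, b_{M+2}$; hence $\eta_{\tT_{uw},a,N} = \eta_{\tT_{vw},b,M}$. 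Applying Lemma \ref{q-n}(1) twice,
\begin{align*}
\frac{q_N(a)}{q_M(b)} \sim \frac{q_n(a)\,q_{k+\ell}(S^n a)}{q_m(b)\,q_{k+\ell}(S^m b)} = \frac{q_n(a)}{q_m(b)},
\end{align*}
which is a constant depending only on $a,b,n,m$. Combining these, $\NN_{a,\lambda}(X_{uw}^a(\lambda)) \sim \NN_{b,\lambda}(X_{vw}^b(\lambda))$ with multiplicative constants uniform in $w$, and since the family $\{X_{vw}^b(\lambda)\}_w$ generates the Borel $\sigma$-algebra on $X_v^b(\lambda)$, the claimed equivalence of measures follows.

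For the local dimension identities I would proceed as at the end of Corollary \ref{equiv-fiber-dos-coro}. Because the bands in $\B_{n+k}^a(\lambda)$ are pairwise disjoint closed intervals, $X_u^a(\lambda)$ is separated from the remainder of $\Sigma_{a,\lambda}$ by a positive distance; hence for sufficiently small $r$, $B(x,r) \cap \Sigma_{a,\lambda} \subset X_u^a(\lambda)$, so the local dimensions of $\NN_{a,\lambda}$ at $x$ coincide with those of $\NN_{a,\lambda}|_{X_u^a(\lambda)}$, and the analogous statement holds at $\tau_{uv}(x)$ on the $b$-side. The bi-Lipschitz property of $\tau_{uv}$ combined with the measure equivalence then transfers the upper and lower local dimensions verbatim. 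The main subtlety I anticipate is simply confirming that the natural map produced by Lemma \ref{geo-lem-tail} actually identifies cylinders $X_{uw}^a(\lambda) \leftrightarrow X_{vw}^b(\lambda)$ in the way described (so that the generating-algebra argument applies); once this is verified, the remainder is bookkeeping with Proposition \ref{imp-dos} and Lemma \ref{q-n}(1).
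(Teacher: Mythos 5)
Your proposal is correct and takes essentially the same approach as the paper, which simply states that ``By Proposition \ref{imp-dos} and Lemma \ref{geo-lem-tail}, the proof is the same as that of Corollary \ref{equiv-fiber-dos-coro}.'' You have carried out the details that the paper leaves implicit: the identification $\tau_{uv}(X_{uw}^a(\lambda)) = X_{vw}^b(\lambda)$ follows directly from the construction $\tau_{uv}=\pi^b_\lambda\circ\Upsilon_{uv}\circ(\pi^a_\lambda)^{-1}$ with $\Upsilon_{uv}(ux)=vx$; the equality $\eta_{\tT_{uw},a,N}=\eta_{\tT_{vw},b,M}$ and the ratio $q_N(a)/q_M(b)\sim q_n(a)/q_m(b)$ both follow from the shared tail; and the local-dimension transfer is the same argument as the ``Claim'' in Corollary \ref{equiv-fiber-dos-coro}, made even simpler here because $\tau_{uv}$ is genuinely bi-Lipschitz rather than only weak-bi-Lipschitz, so no $t\downarrow 1$ limit is required. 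Your additional observation that $B(x,r)\cap\Sigma_{a,\lambda}\subset X_u^a(\lambda)$ for small $r$ (by disjointness of the bands in $\B_{n+k}^a(\lambda)$) correctly justifies replacing $\NN_{a,\lambda}$ by its restriction when computing local dimensions at $x$.
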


\begin{proof}
  By Proposition \ref{imp-dos} and Lemma \ref{geo-lem-tail}, the proof is the same as that of Corollary \ref{equiv-fiber-dos-coro}.
\end{proof}

The second consequence is

\begin{proposition}\label{loc-bi-lip}
For any $(a,\lambda)\in \N^\N\times[24,\infty)$, one can write $\Sigma_{a,\lambda}$ and $\Sigma_{Sa,\lambda}$ as disjoint unions of basic sets:
\begin{equation*}
	\Sigma_{a,\lambda}=\bigsqcup_{i=1}^k X_i;\ \  \Sigma_{Sa,\lambda}=\bigsqcup_{j=1}^l Y_j,
\end{equation*}
and find two maps $\phi:\{1,\cdots,k\}\to \{1,\cdots,l\}$, $\psi:\{1,\cdots,l\}\to \{1,\cdots,k\}$ such that $X_i$ and $Y_{\phi(i)}$ are bi-Lipschitz equivalent for any $i=1,\cdots,k$ and
$X_{\psi(j)}$ and $Y_{j}$ are bi-Lipschitz equivalent for any $j=1,\cdots,l.$
\end{proposition}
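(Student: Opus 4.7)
The plan is to apply the tail lemma (Lemma \ref{geo-lem-tail}) with $b=Sa$, $n=1$, $m=0$, so that $S^na=S^mb$ holds trivially. For a fixed integer $N\ge 7$, I will use the two natural decompositions provided by \eqref{spectrum-basic-set},
\begin{equation*}
\Sigma_{a,\lambda}=\bigsqcup_{u\in \Omega^a_{1+N}} X_u^a(\lambda),\qquad \Sigma_{Sa,\lambda}=\bigsqcup_{v\in \Omega^{Sa}_{N}} X_v^{Sa}(\lambda),
\end{equation*}
and enumerate them as $\{X_i\}_{i=1}^{k}$ and $\{Y_j\}_{j=1}^{l}$, where $k=\#\Omega^a_{1+N}$ and $l=\#\Omega^{Sa}_N$. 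Setting $u(i)$ and $v(j)$ for the admissible words coding $X_i$ and $Y_j$ respectively, the tail lemma applied with $k:=N$ (in the notation of Lemma \ref{geo-lem-tail}) says: whenever $\tT_{u(i)}=\tT_{v(j)}$, the map $\tau_{u(i)v(j)}\colon X_i\to Y_j$ is bi-Lipschitz. So the problem reduces to producing, for every $i$, an index $j=\phi(i)$ with $\tT_{v(\phi(i))}=\tT_{u(i)}$, and vice versa.

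The key surjectivity-of-types statement I need is the following: for $N\ge 7$, every type $\tT\in\T=\{\one,\two,\three\}$ occurs as $\tT_{u}$ for some $u\in\Omega^a_{1+N}$, and likewise for $\Omega^{Sa}_{N}$. To see this, start any word in $\Omega^a_{1+N}$ with $u_0=\one$, forcing $u_1=(\two,1)_{a_1}$ by \eqref{admissible-T-A}. Then by the strong primitivity of Proposition \ref{sturm-primitive}, for $N\ge 6$ the matrix product $A_{a_1a_2}A_{a_2a_3}\cdots A_{a_{N}a_{1+N}}$ is positive, so there is an admissible path $u_1\to u_2\to\cdots\to u_{1+N}=e$ for every prescribed $e\in\A_{a_{1+N}}$. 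Since $\A_{a_{1+N}}$ contains letters of all three types (by \eqref{alphabet-n}), every type is realized as $\tT_u$. The same argument applied to $Sa$ handles $\Omega^{Sa}_N$.

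With surjectivity of types in hand, define $\phi\colon\{1,\ldots,k\}\to\{1,\ldots,l\}$ by letting $\phi(i)$ be any index $j$ with $\tT_{v(j)}=\tT_{u(i)}$, and define $\psi\colon\{1,\ldots,l\}\to\{1,\ldots,k\}$ symmetrically. Lemma \ref{geo-lem-tail} applied to the pairs $(u(i),v(\phi(i)))$ and $(u(\psi(j)),v(j))$ immediately yields the two families of bi-Lipschitz equivalences required by the proposition.

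The argument above is genuinely short once the tail lemma is available; the main conceptual obstacle is hidden in Lemma \ref{geo-lem-tail} itself, whose proof (to be carried out in Section \ref{sec-proof-geo-lem}) must exploit the covering structure of Proposition \ref{basic-struc} together with the bounded variation/covariation tools of Propositions \ref{bco} and \ref{bv} in order to manufacture a bi-Lipschitz identification between two basic sets that live in different spectra but have matching combinatorial codes. Given that lemma, the present proposition is essentially a bookkeeping statement combined with the strong primitivity of the shifting incidence matrices.
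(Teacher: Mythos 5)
Your proposal is correct and follows essentially the same route as the paper: decompose both spectra into basic sets of suitable order, invoke strong primitivity (Proposition \ref{sturm-primitive}) to guarantee all three types occur at the final letter on each side, and then apply the tail lemma (Lemma \ref{geo-lem-tail}) with $b=Sa$, $n=1$, $m=0$ to pair off basic sets with matching types. The only difference is cosmetic: you take $N\ge 7$, while the paper uses the minimal choice $\Omega^a_7$ and $\Omega^{Sa}_6$ (i.e.\ $N=6$), both of which satisfy the primitivity threshold.
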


\begin{proof}
Write $b:=Sa$, then $S^1a=S^0b$. By \eqref{spectrum-basic-set}, we have
$$
\Sigma_{a,\lambda}=\bigsqcup_{u\in \Omega^a_7}X^a_u(\lambda); \ \ \ \ \ \ \ \Sigma_{Sa,\lambda}=\bigsqcup_{v\in \Omega^b_6}X^b_v(\lambda).
$$
By Proposition \ref{sturm-primitive}, we have
$$
\{\tT_u: u\in \Omega^a_7\}=\{\tT_v: v\in \Omega^b_6\}=\{\one,\two,\three\}.
$$

Fix any $u\in \Omega^a_7$, find some $v\in \Omega^b_6$ such that $\tT_v=\tT_u$. Since $Sa=b$, by Lemma \ref{geo-lem-tail}, we conclude that $\tau_{uv}: X^a_u(\lambda)\to X^{b}_v(\lambda)$ is  bi-Lipschitz.		

Conversely, fix any $v\in \Omega^b_6$, we can find some $u\in \Omega^a_7$ such that $\tT_u=\tT_v$. Then still by Lemma \ref{geo-lem-tail}, $\tau_{vu}:X_{v}^b(\lambda)\to X_{u}^{a}(\lambda)$ is  bi-Lipschitz.
\end{proof}

\begin{remark}
{\rm
Damanik and Gorodetski stated the following conjecture in \cite{DG15}:

 \noindent {\bf  Conjecture:} For any $a\in \N^\N$ and $\lambda>0$, there exist two  neighborhoods $U\supset \Sigma_{a,\lambda}$ and $V\supset \Sigma_{Sa,\lambda}$ and a $C^1$-diffeomorphism $f:U\to V$ such that $f(\Sigma_{a,\lambda})= \Sigma_{Sa,\lambda}$.

Proposition \ref{loc-bi-lip} provides a weak answer for this Conjecture: for $\lambda\ge 24,$ $\Sigma_{a,\lambda}$ and $\Sigma_{Sa,\lambda}$ are locally bi-Lipschitz equivalent.
}
\end{remark}

Combining Propositions \ref{equiv-different-dos} and  \ref{loc-bi-lip}, we get

\begin{corollary}\label{tail-eqal-dim}
  Assume $(a,\lambda)\in \N^\N\times [24,\lambda)$. Then
  \begin{equation*}
    \dim_H \NN_{a,\lambda}=\dim_H\NN_{\check a,\lambda};\ \ \dim_P \NN_{a,\lambda}=\dim_P\NN_{\check a,\lambda},
  \end{equation*}
 and  $\NN_{a,\lambda}$ is exact lower(upper)-dimensional if and only if $\NN_{\check a,\lambda}$ is exact lower(upper)-dimensional.
\end{corollary}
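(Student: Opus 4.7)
The plan is to exploit the fact that $\check a = 1a$ satisfies $S\check a = a$, so the pair $(a, \check a)$ has the strongest possible shared tail. Consequently, I expect to derive the corollary as a direct consequence of the two geometric lemmas packaged in Propositions \ref{loc-bi-lip} and \ref{equiv-different-dos}.

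First I would apply Proposition \ref{loc-bi-lip} with $a$ there replaced by $\check a$. Since $S\check a = a$, this furnishes decompositions $\Sigma_{\check a,\lambda} = \bigsqcup_{i=1}^k X_i$ and $\Sigma_{a,\lambda} = \bigsqcup_{j=1}^l Y_j$ into basic sets of the form $X^{\check a}_u(\lambda)$ and $X^a_v(\lambda)$, together with maps $\phi: \{1,\dots,k\} \to \{1,\dots,l\}$ and $\psi: \{1,\dots,l\} \to \{1,\dots,k\}$ giving bi-Lipschitz identifications $\tau^{(i)}: X_i \to Y_{\phi(i)}$ and $\tilde\tau^{(j)}: X_{\psi(j)} \to Y_j$, each of the form $\tau_{uv}$ supplied by Lemma \ref{geo-lem-tail}.

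Second, I would invoke Proposition \ref{equiv-different-dos} on each pair: each $\tau_{uv}$ transports $\NN_{\check a,\lambda}$ restricted to its source basic set to a measure equivalent to $\NN_{a,\lambda}$ restricted to the target basic set, and preserves both the lower and the upper local dimensions pointwise. Using the family $\tilde\tau^{(j)}$ indexed by $\psi$, every piece $Y_j$ of $\Sigma_{a,\lambda}$ is bi-Lipschitz identified with some piece $X_{\psi(j)}$ of $\Sigma_{\check a,\lambda}$; so if $\underline d_{\NN_{\check a,\lambda}}(x) \ge s$ for $\NN_{\check a,\lambda}$-a.e. $x$, that inequality transfers to $\NN_{a,\lambda}$-a.e. $y \in Y_j$ for every $j$, and hence to $\NN_{a,\lambda}$-a.e. $y \in \Sigma_{a,\lambda}$. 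The symmetric argument using $\phi$ gives the reverse implication.

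Combining these observations with the definition \eqref{dim-meas} yields $\dim_H \NN_{a,\lambda} = \dim_H \NN_{\check a,\lambda}$, and the analogous argument with upper local dimensions gives $\dim_P \NN_{a,\lambda} = \dim_P \NN_{\check a,\lambda}$. Exact lower- and upper-dimensionality, which assert that the respective local dimension functions are almost everywhere constant, are preserved by the same pointwise correspondence. The only mild subtlety in carrying this out is making sure to use both of the maps $\phi$ and $\psi$, so that every basic-set piece on either side is fully accounted for; there is no substantive obstacle, since Propositions \ref{loc-bi-lip} and \ref{equiv-different-dos} do all the heavy lifting.
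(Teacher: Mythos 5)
Your proposal is correct and follows essentially the same route as the paper's own proof: apply Proposition \ref{loc-bi-lip} to decompose both spectra into matching basic sets, transfer local-dimension and null-set information across each bi-Lipschitz piece via Proposition \ref{equiv-different-dos}, and use $\psi$ and $\phi$ symmetrically to obtain the two inequalities (and the two directions of the equivalence). The only stylistic difference is that you phrase the dimension comparison via the $\sup$-over-$s$ definition while the paper writes it as an essential infimum over $X$ and $Y$, but these are the same quantity from \eqref{dim-meas}.
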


\begin{proof}
  By Proposition \ref{loc-bi-lip},  we can write
\begin{equation*}
	X:=\Sigma_{\check a,\lambda}=\bigsqcup_{i=1}^k X_i;\ \  Y:=\Sigma_{a,\lambda}=\bigsqcup_{j=1}^l Y_j,
\end{equation*}
and find two maps $\phi:\{1,\cdots,k\}\to \{1,\cdots,l\}$, $\psi:\{1,\cdots,l\}\to \{1,\cdots,k\}$ such that $X_i$ and $Y_{\phi(i)}$ are bi-Lipschitz equivalent   and
$X_{\psi(j)}$ and $Y_{j}$ are bi-Lipschitz equivalent. Write $\mu:=\NN_{\check a,\lambda}$ and $\nu:=\NN_{a,\lambda}$. By \eqref{dim-meas} and  Proposition \ref{equiv-different-dos},
\begin{align*}
  \dim_H \mu =& {\rm essinf}_{x\in X} \ \underline{d}_\mu(x)=\min_{1\le i\le k}{\rm essinf}_{x\in X_i} \ \underline{d}_\mu(x)=\min_{1\le i\le k}{\rm essinf}_{y\in Y_{\phi(i)}} \ \underline{d}_\nu(y)\\
  \ge&\min_{1\le j\le l}{\rm essinf}_{y\in Y_{j}} \ \underline{d}_\nu(y)={\rm essinf}_{y\in Y} \ \underline{d}_\nu(y)=\dim_H\nu.
\end{align*}
By using $\psi$, we get $\dim_H\nu\ge \dim_H\mu.$ So $\dim_H\nu= \dim_H\mu.$

Assume $\mu$ is exact lower-dimensional. Then there exists $s>0$ such that
$$
\mu(Z)=0, \text{ where } \ Z:=\{x\in X: \underline{d}_\mu(x)\ne s\}.
$$
Write $\hat Z:=\{y\in Y: \underline{d}_\nu(y)\ne s\}$ and
$$
Z_i:=\{x\in X_i: \underline{d}_\mu(x)\ne s\};\ \ \  \hat Z_j=\{y\in Y_j: \underline{d}_\nu(y)\ne s\}.
$$
Since $Z=\bigcup_{1\le i\le k}Z_i$, we have $\mu(Z_i)=0$. For any $j=1,\cdots,l$, by Proposition \ref{equiv-different-dos}, we have $\nu(\hat Z_j)=\mu(Z_{\psi(j)})=0$. So
$$\nu(\hat Z)=\nu(\bigcup_{1\le j\le l}\hat Z_j)=0.$$
That is, $\nu$ is exact lower-dimensional. By using $\phi$ we can show that $\nu$ is exact lower-dimensional implies  $\mu$ is exact lower-dimensional.

The proofs of the other two statements are the same.
\end{proof}

\subsection{Proof of Theorem \ref{main-dim-dos}}\

Now we are ready for the proof of the second main result.

\begin{proof}[Proof of Theorem \ref{main-dim-dos}]
Define $d$ by \eqref{def-dim-dos}. Let $\widehat{\F}$ be the set in Corollary \ref{h=p}.
Define
\begin{equation}\label{def-hat-I}
\hat \II:=\Theta^{-1}(\widehat \F).
\end{equation}
Since $\widehat \F$ is of  full $\Gg$-measure,  $\hat \II$ is of full $G$-measure, hence full Lebesgue measure.

(1) Fix any $(\alpha,\lambda)\in \hat \II\times [24,\infty)$. Write $a=\Theta(\alpha)$,
by Corollary \ref{h=p}, $\n_a$ is exact-dimensional and \eqref{fix-lambda-strong} holds.
By Corollaries \ref{equiv-fiber-dos-coro} and \ref{tail-eqal-dim},
$\NN_{\alpha,\lambda}$ is exact-dimensional
and the equalities hold.

(2) It follows from the definition \eqref{def-dim-dos}.

(3)  This is the content of Corollary \ref{lip-d}.
\end{proof}

\section{Proofs of technical results}\label{sec-proof-tech}

In this section, we prove several technical results from Sections \ref{sec-proof-main-spec} and \ref{sec-dim-fiber-symbolic}.

\subsection{Proof of Lemma \ref{additive-on-A}}\label{sec-proof-sub-additive}\

The difficult part of Lemma \ref{additive-on-A} is the almost sub-additivity of $\{\QQ(\cdot,\lambda,t,n):n\in \N\}$. The rough idea of proof is as follows. At first, we write  the exponential of $\QQ(a,\lambda,t,n+m)$ in an equivalent form  as \eqref{sub-additive}. Then we use the bounded covariation property (see Proposition \ref{bco}) to prove the claim below and then finish the proof. To prove the claim, we need to compare the terms in both sides of the inequality. It is enough to find a correspondence $\zeta$ between $\Xi(\tT_u,b|_m)$ and $ \Omega^b_m$ and prove an estimate like \eqref{cov-pre}. We need to discuss several cases.   In each case, the construction of $\zeta$ is more or less natural, the proofs of \eqref{cov-pre} is  also not hard, except one case.
To prove that special case, we need the following lemma, which is a direct consequence of Proposition \ref{bv} and \cite[Lemma 3.6]{LQW}:

\begin{lemma}\label{father-son}
  Assume  $a\in \N^\N$ and $\lambda\ge 24$. Then there exist constants $\eta_2(\lambda)\ge\eta_1(\lambda)>0$ such that if $w\in \Omega_n^{a}$ and $e=(\mathbf t,l)_{a_{n+1}}$ with $w\to e$ and $\mathbf t\ne \mathbf 2$,
then
$$
\frac{\eta_1}{p} \sin^2\frac{l\pi}{p}\le\frac{|{B}_{we}^a(\lambda)|}{|{B}_{w}^a(\lambda)|}\le \frac{\eta_2}{p}\sin^2\frac{l\pi}{p},\ \ \ \text{ where }\ \ \
p=\begin{cases}
    a_{n+1}+2, & \mbox{if } \mathbf t_w\mathbf t=\mathbf 2\mathbf 1, \\
    a_{n+1}+1, & \mbox{if } \mathbf t_w\mathbf t\in \{\mathbf 2\mathbf 3,\mathbf 3\mathbf 1\},  \\
    a_{n+1}, & \mbox{if } \mathbf t_w\mathbf t=\mathbf 3\mathbf 3.
  \end{cases}$$
\end{lemma}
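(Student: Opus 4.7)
The plan is to combine the bounded distortion statement in Proposition \ref{bv} with the Chebyshev-type estimate \cite[Lemma 3.6]{LQW}, which should provide the $\sin^2(l\pi/p)$ factor directly. The underlying picture is that the sub-bands $\{B_{we}^a(\lambda): w\to e\}$ of a given type inside the parent band $B_w^a(\lambda)$ are the preimages of $[-2,2]$ (or of a closely related interval) under a degree-$p$ Chebyshev-like polynomial obtained by combining the parent generating polynomial $h_w(\cdot;\lambda)$ with a second-kind Chebyshev polynomial $U_{p-1}$ evaluated at $h_{(n,0)}(\cdot;\lambda)/2$; consequently the image-side lengths follow the standard near-extremum $\sin^2$ law.

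First I would rewrite the ratio on the left via bounded distortion. By Proposition \ref{bv}, for any $E\in B_{we}^a(\lambda)$ one has $|B_w^a(\lambda)|\sim 1/|h_w'(E;\lambda)|$ and $|B_{we}^a(\lambda)|\sim 1/|h_{we}'(E;\lambda)|$, with constants depending only on $\lambda$. Hence, after choosing a convenient $E\in B_{we}^a(\lambda)$,
\begin{equation*}
\frac{|B_{we}^a(\lambda)|}{|B_w^a(\lambda)|}\sim \frac{|h_w'(E;\lambda)|}{|h_{we}'(E;\lambda)|}.
\end{equation*}
Thus it suffices to estimate this derivative ratio uniformly in $n$, $w$, $e$.

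Next I would express $h_{we}$ in terms of $h_w$ and $h_{(n,0)}, h_{(n-1,0)}$ using the trace-map identity available for Sturmian transfer matrices (the one that powers the cocycle structure and was already used in \cite{LQW}). Inside the parent band $B_w^a(\lambda)$, this identity rewrites $h_{we}(E;\lambda)$ as a Chebyshev-composite polynomial of degree $p$ in $h_{(n,0)}(E;\lambda)/2$, where the integer $p$ equals $a_{n+1}+2$, $a_{n+1}+1$, or $a_{n+1}$ according to the type pair $\tT_w\mathbf t$ (this matches the counts in Proposition \ref{basic-struc}(4)(5): a parent of type $\mathbf 2$ splits into $2a_{n+1}+1$ children, a parent of type $\mathbf 3$ into $2a_{n+1}-1$, and the $\mathbf 2\mathbf 1$ case accounts for one extra). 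The index $l$ of $e=(\mathbf t,l)_{a_{n+1}}$ then labels the $l$-th preimage in this Chebyshev configuration, the one containing the extremum at argument $\cos(l\pi/p)$.

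Then I would apply \cite[Lemma 3.6]{LQW}, which I expect to assert precisely the two-sided bound $c_1 p^{-1}\sin^2(l\pi/p)\le |I_l|\le c_2 p^{-1}\sin^2(l\pi/p)$ for the $l$-th interval in the preimage of a standardised image interval under such a Chebyshev-like polynomial (this is the classical computation: near the $l$-th critical point of $U_{p-1}$ the derivative is of order $p\cdot|\sin(l\pi/p)|^{-1}$, so the preimage of an interval of length $\sim 1$ has length $\sim p^{-1}\sin^2(l\pi/p)$). Plugging this back into the derivative ratio above, with the uniformity of bounded distortion absorbing all remaining multiplicative losses, yields constants $\eta_1(\lambda)\le \eta_2(\lambda)$ depending only on $\lambda$ and the announced two-sided estimate.

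The main obstacle I anticipate is ensuring the identification of the correct degree $p$ in the three type regimes and verifying that the ``image interval'' that the Chebyshev composite sends the sub-band collection to is indeed (up to absolute constants) of fixed length, so that the $\sin^2$-law of \cite[Lemma 3.6]{LQW} is directly applicable; this is a bookkeeping exercise using the standard Sturmian trace identities but requires careful case-splitting on $\tT_w\mathbf t\in\{\mathbf 2\mathbf 1,\mathbf 2\mathbf 3,\mathbf 3\mathbf 1,\mathbf 3\mathbf 3\}$, which is exactly why the statement distinguishes the four values of $p$.
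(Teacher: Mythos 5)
Your proposal is correct and follows essentially the paper's own route: the paper proves this lemma precisely by combining the bounded distortion estimate of Proposition \ref{bv} with the Chebyshev-type estimate of \cite[Lemma 3.6]{LQW}, which is exactly the pair of ingredients you invoke, with the degree $p$ bookkeeping in the four type cases coming from that cited lemma. The only cosmetic difference is that the cleaner application goes through $|h_w(B_{we}^a(\lambda))|\sim |B_{we}^a(\lambda)|/|B_w^a(\lambda)|$ (mean value theorem plus bounded distortion for $h_w$ alone) rather than comparing $|h_w'|$ with $|h_{we}'|$ via trace identities, so your middle paragraph partly re-derives structure that the cited lemma already packages; also note the derivative of the Chebyshev factor in the trace variable is of order $p/\sin^2(l\pi/p)$, not $p/\sin(l\pi/p)$, though this does not affect your conclusion.
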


\begin{proof}[Proof of Lemma \ref{additive-on-A}]
At first  we show that $\QQ^+(\cdot,\lambda,t,1)\in L^1(\Gg)$.
Assume $a\in \N^\N$ with $a_1=k$, then $\#\Omega_{1}^a=2k$. For any $t\ge0$, by Lemma \ref{esti-band-length}, we have
\begin{equation*}
\QQ(a,\lambda,t,1)=\log\sum_{w\in \Omega_{1}^a}|B^a_w(\lambda)|^t\leq\log(\#\Omega_1^a\cdot 2^t)=\log2k+t\log2.
\end{equation*}
By Lemma \ref{q-n}(2), we have $\Gg([k]) \lesssim k^{-2}.$ Therefore,
\begin{eqnarray*}
\int_{\N^\N}\QQ^+(a,\lambda,t,1)d\Gg(a) \le  \sum_{k=1}^{\infty}\Gg([k])\left(\log2k+t\log2\right)<\infty.
\end{eqnarray*}		

Next we show that $\{\QQ(\cdot,\lambda,t,n):n\in \N\}$ are almost sub-additive.
Fix $a\in\N^\N$ and write $b=S^na$. We have
\begin{align}\label{sub-additive}
	\sum_{w\in\Omega_{n+m}^a}|B^a_w(\lambda)|^t=\sum_{u\in\Omega_{n}^a}\sum_{v\in \Xi(\tT_u,b|_m)}|B^a_{uv}(\lambda)|^t
	=\sum_{u\in\Omega_{n}^a}|B^a_u(\lambda)|^t\sum_{v\in \Xi(\tT_u,b|_m)}\frac{|B^{{a}}_{uv}(\lambda)|^t}{|B^a_u(\lambda)|^t} .
\end{align}

{\bf Claim}: There exists constant $ C>0$ such that  for all $u\in\Omega_n^a$ and $m\geq1$,
$$\sum_{v\in \Xi(\tT_u,b|_m)}\frac{|B^{{a}}_{uv}(\lambda)|^t}{|B^a_u(\lambda)|^t}\leq C\sum_{v\in\Omega^b_m}|B_v^b(\lambda)|^t.$$

\noindent $\lhd$ Since $t\ge0$,
it suffices to show that there exist a constant $C_1>0$ and  a map $\zeta:\Xi(\tT_u,b|_m)\to \Omega^b_m$  such that $\zeta$ is at most $2$-to-$1$ and for any $v\in \Xi(\tT_u,b|_m)$, we have
\begin{equation}\label{cov-pre}
  \frac{|B^{{a}}_{uv}(\lambda)|}{|B^a_u(\lambda)|}\le  C_1 |B_{\zeta(v)}^b(\lambda)|.
\end{equation}

We discuss three cases:

{\it Case 1:} $\mathbf{t}_u=\mathbf{1}$ or $\mathbf{3}$. Define $\zeta$ as $\zeta(v):=\tT_u v.$ By \eqref{def-Omega^alpha_n} and \eqref{admissible-T-A}, $\zeta$ is 1-to-1 and $\zeta(v)\in \Omega^b_m$.  By \eqref{0}, we see that $|B_{\mathbf t_u}^b(\lambda)|=4$. Combine with Proposition \ref{bco},
\begin{align*}
	 \frac{|B^{{a}}_{uv}(\lambda)|}{|B^a_u(\lambda)|}\leq \eta\frac{|B_{\tT_uv}^b(\lambda)|}{|B_{\tT_u}^b(\lambda)|}\leq \eta |B_{\zeta(v)}^b(\lambda)|.
\end{align*}

{\it Case 2:} $\mathbf{t}_u=\mathbf{2}$ and $b_1=1$. By \eqref{admissible-T-A}, for any $v=v_1\cdots v_m\in \Xi(\tT_u,b|_m)$, we have
$$
v_1\in \{(\mathbf 1,1)_{1},(\mathbf 1,2)_{1},(\mathbf 3,1)_{1}\}.
$$
Define $\zeta$ as follows:
\begin{equation*}
  \zeta(v):=
  \begin{cases}
     \mathbf{3}(\mathbf 1,1)_{1}v_2\cdots v_m,& \mbox{if } v_1= (\mathbf 1,1)_{1} \text{ or } (\mathbf 1,2)_{1},\\
    \mathbf{1}(\mathbf 2,1)_{1}v_2\cdots v_m,& \mbox{if } v_1= (\mathbf 3,1)_{1}.
  \end{cases}
\end{equation*}
It is clear that $\zeta$ is at most 2-to-1, and by \eqref{admissible-T-A}, $\zeta(v)\in \Omega^b_m$.

If $v_1=(\mathbf 1,1)_{1}$ or $(\mathbf 1,2)_{1}$, write $v=v_1w$, then by Proposition \ref{bco} and \eqref{band-length-esti},
\begin{eqnarray*}
  \frac{|B^{{a}}_{uv}(\lambda)|}{|B^a_u(\lambda)|} = \frac{|B^{{a}}_{uv_1w}(\lambda)|}{|B^a_u(\lambda)|}
  \le\frac{|B^{{a}}_{uv_1w}(\lambda)|}{|B^a_{uv_1}(\lambda)|}\le\eta
  \frac{|B^{{b}}_{\mathbf 3(\mathbf 1,1)_{1}w}(\lambda)|}{|B^b_{\mathbf 3(\mathbf 1,1)_{1}}(\lambda)|}\le \tau_2\eta|B^b_{\zeta(v)}(\lambda)|.
\end{eqnarray*}

If $v_1=(\mathbf 3,1)_{1}$,   then by Proposition \ref{bco} and \eqref{band-length-esti} and the same argument as above, we conclude that \eqref{cov-pre} holds.

{\it Case 3:} $\mathbf{t}_u=\mathbf{2}$ and $b_1\geq2$. By \eqref{admissible-T-A}, for any $v=v_1\cdots v_m\in \Xi(\tT_u,b|_m)$, we have
$$
v_1\in \{(\mathbf 1,k)_{b_1}:k=1,\cdots,b_1+1\}\cup\{(\mathbf 3,k)_{b_1}:k=1,\cdots,b_1\}.
$$

Define $\zeta$ as follows:
\begin{equation*}
  \zeta(v):=
  \begin{cases}
    \mathbf{3}v, & \mbox{if } v_1 \notin \{(\mathbf 1,b_1+1)_{b_1},(\mathbf 3,b_1)_{b_1} \},\\
     \mathbf{3}(\mathbf 1,b_1)_{b_1}v_2\cdots v_m,& \mbox{if } v_1= (\mathbf 1,b_1+1)_{b_1},\\
    \mathbf{3}(\mathbf 3,b_1-1)_{b_1}v_2\cdots v_m,& \mbox{if } v_1= (\mathbf 3,b_1)_{b_1}.
  \end{cases}
\end{equation*}
It is clear that $\zeta$ is at most 2-to-1, and by \eqref{admissible-T-A}, $\zeta(v)\in \Omega^b_m$.

If $v_1 \notin \{(\mathbf 1,b_1+1)_{b_1},(\mathbf 3,b_1)_{b_1} \}$, the same argument as Case 1 shows that \eqref{cov-pre} holds.

If $v_1=(\mathbf 1,b_1+1)_{b_1}$, write $v=v_1w$, then by Proposition \ref{bco} and Lemma \ref{father-son},
\begin{align*}
  \frac{|B^{{a}}_{uv}(\lambda)|}{|B^a_u(\lambda)|} =& \frac{|B^{{a}}_{uv_1w}(\lambda)|}{|B^a_u(\lambda)|}
  =\frac{|B^{{a}}_{uv_1w}(\lambda)|}{|B^a_{uv_1}(\lambda)|}
  \frac{|B^{{a}}_{uv_1}(\lambda)|}{|B^a_u(\lambda)|}\le\eta
  \frac{|B^{{b}}_{\mathbf 3(\mathbf 1,b_1)_{b_1}w}(\lambda)|}{|B^b_{\mathbf 3(\mathbf 1,b_1)_{b_1}}(\lambda)|}\frac{|B^{{a}}_{uv_1}(\lambda)|}{|B^a_u(\lambda)|}\\
  =&\eta
  \frac{|B^{{b}}_{\zeta(v)}(\lambda)|}{|B^b_{\mathbf 3 }(\lambda)|}\frac{|B^{{b}}_{\mathbf 3 }(\lambda)|}{|B^b_{\mathbf 3(\mathbf 1,b_1)_{b_1}}(\lambda)|}\frac{|B^{{a}}_{uv_1}(\lambda)|}{|B^a_u(\lambda)|}\le \eta |B^{{b}}_{\zeta(v)}(\lambda)|\frac{\frac{\eta_2}{b_1+2}\sin \frac{(b_1+1)\pi}{b_1+2}}{\frac{\eta_1}{b_1+1}\sin \frac{b_1\pi}{b_1+1}}\\
  \le& \frac{\eta\eta_2}{\eta_1} |B^{{b}}_{\zeta(v)}(\lambda)|.
\end{align*}

If $v_1=(\mathbf 3,b_1)_{b_1}$, the same argument shows that \eqref{cov-pre} holds.
\hfill $\rhd$

By \eqref{sub-additive} and the  claim above, we have
\begin{align*}
	\sum_{w\in\Omega_{n+m}^a}|B^a_w(\lambda)|^t
	 \leq C\sum_{u\in\Omega_{n}^a}|B^a_u(\lambda)|^t
	\sum_{v\in \Omega_m^b}|B^{b}_{v}(\lambda)|^t.
\end{align*}
This implies that $\{\QQ(\cdot,\lambda,t,n):n\in \N\}$ are almost sub-additive.		
\end{proof}

\subsection{Proof of Lemma \ref{number-est-basic}}\label{sec-cardinality}\

To prove the lemma, we need the following result which is implicitly proven in \cite{R}:

\begin{lemma}[\cite{R}]\label{Raymond}
For any $\tT,\tT'\in \T$ and  $\vec a=a_1\cdots a_n\in\N^n$, we have
\begin{equation*}\label{N-T-a}
	\#\Xi(\tT,\vec a)=v_{\tT}\hat A_{a_{1}}\cdots \hat A_{a_{n}}v_\ast^T;\ \ \ \#\Xi(\tT,\vec a,\tT')=v_{\tT}\hat A_{a_{1}}\cdots \hat A_{a_{n}}v_{\tT'}^T,
\end{equation*}
where the vectors are defined as
$$
v_\one:=(1,0,0);\ \  v_\two:=(0,1,0);\ \  v_\three:=(0,0,1);\ \ v_\ast:=(1,1,1),
$$
and the matrix $\hat A_k$ is defined as
\begin{equation}\label{hat-A-n}
	\hat A_k=
	\left[\begin{array}{ccc}
		0&1&0\\
		k+1&0&k\\
		k&0&k-1
	\end{array}\right].
\end{equation}
\end{lemma}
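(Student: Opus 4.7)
The plan is to prove both formulas simultaneously by induction on $n$, after first identifying the entries of $\hat A_k$ with a natural counting quantity. Set
\[
N_{\tT\tT'}(k) := \#\{\hat e \in \A_k : \tT \to \hat e,\ \tT_{\hat e} = \tT'\} = \#\Xi(\tT, k, \tT').
\]
By reading off the admissibility rule \eqref{admissible-T-A}: $N_{\one\two}(k)=1$ with the other $(\one, \cdot)$ entries zero; $N_{\two\one}(k)=k+1$, $N_{\two\three}(k)=k$, $N_{\two\two}(k)=0$; $N_{\three\one}(k)=k$, $N_{\three\three}(k)=k-1$, $N_{\three\two}(k)=0$. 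Arranging these values as the $(\tT,\tT')$-entry of a $3\times 3$ matrix with the ordering $(\one,\two,\three)$ yields exactly $\hat A_k$. In particular, $v_\tT \hat A_k v_{\tT'}^T = \#\Xi(\tT, k, \tT')$ and $v_\tT \hat A_k v_\ast^T = \sum_{\tT'} \#\Xi(\tT, k, \tT') = \#\Xi(\tT, k)$, which settles the base case $n=1$.

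For the inductive step, I would exploit the fact that an admissible path $w_1 \cdots w_n \in \Xi(\tT, \vec a, \tT')$ is determined by its first letter $w_1 \in \A_{a_1}$ (with $\tT \to w_1$) together with an admissible continuation $w_2 \cdots w_n \in \Xi(\tT_{w_1}, a_2 \cdots a_n, \tT')$. Since the admissibility relation $w_1 \to w_2$ depends only on the type $\tT_{w_1}$ and not on its index, grouping $w_1$ according to its type gives the recursion
\begin{equation*}
\#\Xi(\tT, a_1 a_2 \cdots a_n, \tT') = \sum_{\tT'' \in \T} \#\Xi(\tT, a_1, \tT'') \cdot \#\Xi(\tT'', a_2 \cdots a_n, \tT').
\end{equation*}
This is exactly the matrix product identity $(\hat A_{a_1} M)_{\tT\tT'} = \sum_{\tT''} (\hat A_{a_1})_{\tT\tT''} M_{\tT''\tT'}$ applied with $M$ the matrix whose $(\tT'',\tT')$-entry is $\#\Xi(\tT'', a_2 \cdots a_n, \tT')$. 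By the inductive hypothesis, $M = \hat A_{a_2}\cdots \hat A_{a_n}$, so
\begin{equation*}
\#\Xi(\tT, \vec a, \tT') = v_\tT \hat A_{a_1} \hat A_{a_2} \cdots \hat A_{a_n} v_{\tT'}^T.
\end{equation*}
Summing over $\tT' \in \T$ and using $\sum_{\tT'} v_{\tT'}^T = v_\ast^T$ produces the first identity $\#\Xi(\tT, \vec a) = v_\tT \hat A_{a_1}\cdots \hat A_{a_n} v_\ast^T$.

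There is essentially no deep obstacle here; the proof is a bookkeeping exercise. The only point that requires a small amount of care is the verification that the counting matrix $N_{\tT\tT'}(k)$ matches $\hat A_k$ exactly under the chosen ordering of types: one must transcribe the six cases in \eqref{admissible-T-A} without error, in particular noticing that the ranges of the index $k$ differ by one between the $(\two,\cdot)$ and $(\three,\cdot)$ rows, which is what produces the $k$'s and $k\pm1$'s in $\hat A_k$. Once this identification is fixed, the inductive decomposition by first letter is forced by the definition of $\Xi$ and the proof collapses to iterated matrix multiplication.
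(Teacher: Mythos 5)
Your proof is correct. Note that the paper itself offers no proof of this lemma: it is quoted as ``implicitly proven'' in Raymond's preprint \cite{R}, so there is no argument in the text to compare yours against; your induction supplies exactly the missing bookkeeping. The two points on which the argument hinges are both handled properly: (i) the identification of the $(\tT,\tT')$-entry of $\hat A_k$ with the number of letters $\hat e\in\A_k$ of type $\tT'$ with $\tT\to\hat e$, which is a faithful transcription of \eqref{alphabet-n} and \eqref{admissible-T-A} (row $\one$ gives $(0,1,0)$, row $\two$ gives $(k+1,0,k)$, row $\three$ gives $(k,0,k-1)$); and (ii) the first-letter decomposition, which is legitimate precisely because, by \eqref{admissible-A-A}, the relation $w_1\to w_2$ depends only on $\tT_{w_1}$ and not on its index or level, so the count of admissible continuations is the same for all first letters of a given type. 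Summing over the final type to pass from $v_{\tT'}^T$ to $v_\ast^T$ uses the disjoint decomposition $\Xi(\tT,\vec a)=\bigsqcup_{\tT'\in\T}\Xi(\tT,\vec a,\tT')$, which is immediate. Your argument is also consistent with how the lemma is used later in the paper (the recurrences in the proof of Lemma \ref{number-est-basic} are exactly the one-step case of your matrix recursion, and $\hat A_k$ is $R_k(x)$ at $x=1$), so nothing further is needed.
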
	

We remark that $\hat A_k$ is the incidence matrix of the directed  graph in Figure \ref{type-evo}.

\begin{proof}[Proof of Lemma \ref{number-est-basic}]

Write $\vec b:=a_2\cdots a_n$ and $\vec c:=a_3\cdots a_n$. By Lemma \ref{q-n}(1), we have
\begin{equation}\label{imp-q-n}
	a_nq_{n-1}(\vec a)\sim q_n(\vec a)\sim a_1q_{n-1}(\vec b)\sim a_1a_2q_{n-2}(\vec c).
\end{equation}
By the first equation of Lemma \ref{Raymond},  we have the following recurrence relations:
\begin{align}
	\#\Xi(\one,\vec a)=&(0,1,0)\hat A_{a_{2}}\cdots \hat A_{a_n}v_\ast^T=\#\Xi(\two,\vec b),\label{1}\\
	\#\Xi(\two,\vec a)
	=&(a_{1}+1)\#\Xi(\one,\vec b)+a_{1}\#\Xi(\three,\vec b),\label{2}\\
	\#\Xi(\three,\vec a)
	=&a_{1}\#\Xi(\one,\vec b)+(a_{1}-1)\#\Xi(\three,\vec b).\label{3}
\end{align}
We start with the proof of (2).

(2) Notice that $
\Omega^{\vec a}_{n}=\{\one v:v\in\Xi(\one,\vec a)\}\cup\{\three v:v\in\Xi(\three,\vec a)\}.
$ By Lemma \ref{number-Omega-n-alpha}, we have
\begin{align*}
	\#\Xi(\one,\vec a)+\#\Xi(\three,\vec a)=\#\Omega^{\vec a}_n\sim	q_n(\vec a).
\end{align*}
Combine with  \eqref{2} and \eqref{imp-q-n}, we have
\begin{equation}\label{imp-1}
	\#\Xi(\two,\vec a) \sim a_1q_{n-1}(\vec b)\sim q_n(\vec a).
\end{equation}

(1)	By \eqref{1}, \eqref{imp-1} and \eqref{imp-q-n}, we get
\begin{equation*}
	\#\Xi(\one,\vec a)=\#\Xi(\two,\vec b)\sim q_{n-1}(\vec b)\sim\frac{q_n(\vec a)}{a_1}.
\end{equation*}

(3)	If $a_1\geq2$, by \eqref{2}, \eqref{3} and \eqref{imp-1}, then
\begin{equation*}
	\#\Xi(\three,\vec a)\sim \#\Xi(\two,\vec a)\sim q_n(\vec a).
\end{equation*}

(4)	If $a_1=1$, by \eqref{3}, \eqref{1}, \eqref{imp-1} and \eqref{imp-q-n}, then
\begin{equation*}
	\#\Xi(\three,\vec a)=\#\Xi(\one,\vec b)=\#\Xi(\two,\vec c)\sim q_{n-2}(\vec c)\sim\frac{q_n(\vec a)}{a_2}.
\end{equation*}

(5) One can check directly that
$$
\hat A_{a_{n-1}}\hat A_{a_n}(v_\two^T+v_\three^T)\ge \frac{1}{2}\hat A_{a_{n-1}}\hat A_{a_n}v_\one^T.
$$
Using the second equation of Lemma \ref{Raymond}, for any $\tT\in\T$, we have
$$
\#\Xi(\tT,\vec a,\two)+\#\Xi(\tT,\vec a,\three) \ge \frac{\#\Xi(\tT,\vec a,\one)}{2}.
$$
 Combine with  the fact:
$
\#\Xi(\tT,\vec a,\one)+\#\Xi(\tT,\vec a,\two)+\#\Xi(\tT,\vec a,\three)=\#\Xi(\tT,\vec a),$
 (5) holds.

(6) Write $\vec a_\ast:=a_1\cdots a_{n-1}.$	By    Lemma \ref{Raymond}, the statement (5), \eqref{imp-1} and \eqref{imp-q-n}, we have
\begin{align*}
	\#\Xi(\two,\vec a,\one)&=(a_{n}+1)\#\Xi(\two,\vec a_\ast,\two)+a_n\#\Xi(\two,\vec a_\ast,\three)\\
	&
	\sim a_n\cdot\#\Xi(\two,\vec a_\ast)\sim a_nq_{n-1}(\vec a)\sim q_{n}(\vec a).
\end{align*}
  Hence  (6) holds.
\end{proof}

\subsection{ Proof of Proposition \ref{exact-H-P-dim}}\label{sec-proof-exact-dim}\

Although the proof is a bit tricky, the idea behind is not that hard. Let us  give a sketch of the proof. We take the exact lower-dimensionality as example. Assume $\n_a$ is not exact lower-dimensional, then we can find two subsets $X,Y\subset \Omega_a$ of positive measures such that for any $x\in X, y\in Y$ we have  $\underline{d}_{\n_a}(y,\lambda)>\underline{d}_{\n_a}(x,\lambda).$   On the other hand, $\Omega_a$ has the Besicovitch's covering property. So we can find $x\in X$ and $y\in Y$ such that they satisfy the density properties with respect to $\n_a$. Then we use the special structure of $\Omega_a$ and the fact that $\rho_{a,\lambda}$ is a ultra-metric to conclude that there exist some $\tilde x\in X$ quite close to $x$ and some $\tilde y\in Y$ quite close to $y$ such that $\tilde x$ and $\tilde y$ have the same tail. Now by using Lemma \ref{loc-dim-cylinder} and the almost-additivity of $\Psi_\lambda$, we conclude that $\underline{d}_{\n_a}(\tilde x,\lambda)=\underline{d}_{\n_a}(\tilde y,\lambda)$, a contradiction.

\begin{proof}[Proof of Proposition \ref{exact-H-P-dim}]

Fix $(a,\lambda)\in \F_3\times[24,\infty)$. We show that $\n_a$ is exact lower-dimensional.

We prove it by contradiction.
Assume $\n_a$ is not exact lower-dimensional. Then there exist two disjoint Borel subsets $X, Y\subset\Omega_{a}$ with $\n_a(X), \n_a(Y )>0$ and two numbers $0\le d_1<d_2$ such that
\begin{equation*}
\underline{d}_{\n_a}(x,\lambda)\leq d_1\ \ (\forall x\in X)\ \ \ {\rm and}\ \ \ \underline{d}_{\n_a}(y,\lambda)\geq d_2\ \ (\forall y\in Y).
\end{equation*}
The space $\Omega_{a}$ has Besicovitch's covering property (see for example \cite[Lemma 2.2]{Q2}). By the density property for Radon measure (see for example \cite[Chapter 2]{Mat}), there exist $x\in X$ and $y\in Y$ such that
\begin{equation}\label{limit-1}
\lim_{n\to\infty}\frac{\n_a(X\cap [x|_n]_{a})}{\n_a([x|_n]_{a})}=1\ \ \  {\rm and}\ \ \ \lim_{n\to\infty}\frac{\n_a(Y\cap [y|_n]_{a})}{\n_a([y|_n]_{a})}=1.
\end{equation}

Since $a\in\F_3$, by Lemma \ref{imp}, there exists a constant $C>1$ such that for any $w\in \Omega_{a,n}$,
\begin{equation}\label{measure-of-cylinder}
C^{-1}\le \n_a([w]_a)\eta_{\tT_w,a,n}\ q_{n}(a)\le C.
\end{equation}

Since $a\in\F_3\subset\F_1$, by Proposition \ref{F-proposition}(2), there exists a sequence $n_k\uparrow \infty$, such that for any $k\in \N$,
\begin{equation}\label{se}
a_{n_k+i}=1,\ \ \ \ i=1,2,\cdots,7.
\end{equation}
Write $\delta_1:=(2^9 C^6)^{-1}$. By \eqref{limit-1}, assume $k$ is large enough such that for $m:=n_k$,
\begin{equation}\label{Dos-1}
\begin{cases}
	\n_a(X\cap[x|_m]_{a})\geq(1-\delta_1)\n_a([x|_m]_{a}),\\
	\n_a(Y\cap[y|_m]_{a})\geq(1-\delta_1)\n_a([y|_m]_{a}).
\end{cases}
\end{equation}

\vspace{1ex}
Recall  that $\Xi_{a,m}(w)$ is defined by \eqref{des-w}. Write $\delta_2:=(2^{3}C^{4})^{-1}<1/8$.
\vspace{1ex}

{\noindent \bf Claim 1}:  For any $v\in\Xi_{a,5}(x|_m)$ and $w\in\Xi_{a,5}(y|_m)$, we have
\begin{equation}\label{Dos-2}
\begin{cases}
	\n_a(X\cap[v]_{a})\geq(1-\delta_2)\n_a([v]_{a}),\\
	\n_a(Y\cap[w]_{a})\geq(1-\delta_2)\n_a([w]_{a}).
\end{cases}
\end{equation}

\noindent $\lhd$
We only show the first inequality of \eqref{Dos-2}, the proof of the second one is the same. Notice that  $[x|_m]_{a}=\bigcup_{v\in\Xi_{a,5}(x|_m)}[v]_{a}$. Define
$$
\xi:=\max\left\{\frac{\n_a([v]_{a}\setminus X)}{\n_a([v]_{a})}:\ v\in\Xi_{a,5}(x|_m)\right\}.
$$
If $\xi=0$, the result holds trivially. So we assume $\xi> 0$ and $\hat{v}\in\Xi_{a,5}(x|_m)$  attains the maximum. Consequently by \eqref{Dos-1},
$$\xi\n_a([\hat{v}]_{a})=\n_a([\hat{v}]_{a}\backslash X)\leq\n_a([x|_m]_{a}\backslash X)\leq\delta_1\n_a([x|_m]_{a}).$$	
Since $m=n_k$ and $\delta_1=(2^9 C^6)^{-1}$, by \eqref{measure-of-cylinder}, \eqref{se} and Lemma \ref{q-n}(1),
$$\xi\leq\delta_1\frac{\n_a([x|_m]_{a})}{\n_a([\hat{v}]_{a})}\leq C^2\delta_1\frac{q_{m+5}(a)}{q_{m}(a)}\leq2C^2\delta_1 q_5(S^ma)\le 2^6C^2\delta_1=\delta_2.$$
Then the result follows.
\hfill $\rhd$

By Proposition \ref{sturm-primitive}, we can choose $v\in\Xi_{a,5}(x|_m)$ and $w\in\Xi_{a,5}(y|_m)$ such that $v_{m+5}=w_{m+5}$. In particular,  $\tT_v=\tT_w$. We fix such a pair $(v, w)$.

{\noindent \bf Claim 2}: There exist $\tilde{x}\in[v]_{a}\cap X$ and $\tilde{y}\in[w]_{a}\cap Y$ such that $\tilde{x}$ and $\tilde{y}$ have the same tail, i.e., there exist $l\geq m+5$ and $z\in\prod_{j=l+1}^{\infty}\mathcal{A}_{a_j}$ such that $\tilde{x}=\tilde{x}|_l\cdot z$
and $\tilde{y}=\tilde{y}|_l\cdot z$.

\noindent $\lhd$
We show it by contradiction. Assume for any $\tilde{x}\in[v]_{a}\cap X$ and $\tilde{y}\in[w]_{a}\cap Y$, $\tilde{x}$ and $\tilde{y}$ have no the same
tail. Since $\n_a$ is a Borel probability measure, by the first inequality of Claim 1, there exists a compact set $\hat{X}\subset X\cap [v]_{a}$ such that
$$\n_a(\hat{X})>(1-2\delta_2)\n_a([v]_{a}).$$
Notice that $[v]_{a}\backslash\hat{X}$ is an open set, thus it is a
countable disjoint union of cylinders: $$[v]_{a}\setminus\hat{X}=\bigcup_{j\geq1}[vw_j]_{a},$$
where different $w_j$ are non compatible.  Thus we conclude that
\begin{equation}\label{Dos-3}
	\sum_{j\geq1}\frac{\n_a([vw_j]_{a})}{\n_a([v]_{a})}=\frac{\n_a([v]_{a}\backslash\hat{X})}{\n_a([v]_{a})}\leq2\delta_2=2^{-2}C^{-4}.
\end{equation}

By the assumption, any $\tilde{x}\in[v]_{a}\cap X$ and $\tilde{y}\in[w]_{a}\cap Y$ have no the same
tail, and $v_{m+5}=w_{m+5}$, we must have
\begin{equation}\label{Dos-4}
	[w]_{a}\cap Y\subset\bigcup_{j\geq1}[ww_j]_{a}.
\end{equation}
Since $\tT_v=\tT_w$ and $\tT_{vw_j}=\tT_{ww_j}$, by	\eqref{measure-of-cylinder}, we get
\begin{equation}\label{Dos-5}
	C^{-4}\leq\frac{\n_a[ww_j]_{a}}{\n_a([w]_{a})}/\frac{\n_a[vw_j]_{a}}{\n_a([v]_{a})}\leq C^4.
\end{equation}
Thus by \eqref{Dos-4}, \eqref{Dos-5} and \eqref{Dos-3}, we have
$$\frac{\n_a([w]_{a}\cap Y)}{\n_a([w]_{a})}\leq\sum_{j\geq1}\frac{\n_a([ww_j]_{a})}{\n_a([w]_{a})}\le C^4\sum_{j\geq1}\frac{\n_a([vw_j]_{a})}{\n_a([v]_{a})}
\leq\frac{1}{4}<\frac{1-\delta_2}{2},$$
which contradicts with the second inequality of \eqref{Dos-2}.
\hfill $\rhd$

Claim 2 leads to a contradiction. Indeed by Lemma \ref{loc-dim-cylinder}, we have
$$\underline{d}_{\n_a}(\tilde{x},\lambda)=
\frac{\gamma}{\limsup\limits_{n\to\infty}-\psi_{\lambda,n}(\tilde{x})/n }\ \ \ \ {\rm and}\ \ \ \ \underline{d}_{\n_a}(\tilde{y},\lambda)=
\frac{\gamma}{\limsup\limits_{n\to\infty}-\psi_{\lambda,n}(\tilde{y})/n }.$$
By the almost-additivity of $\Psi_\lambda$ (see Lemma \ref{Psi-aa}), we have
\begin{align*}
|\psi_{\lambda,l+n}(\tilde x)-\psi_{\lambda,l}(\tilde x)-\psi_{\lambda,n}(\sigma^l\tilde x)|,\ \ \
|\psi_{\lambda,l+n}(\tilde y)-\psi_{\lambda,l}(\tilde y)-\psi_{\lambda,n}(\sigma^l\tilde y)|\lesssim1.
\end{align*}
By Claim 2, $\sigma^l\tilde x=z=\sigma^l\tilde y$, and hence $\underline{d}_{\n_a}(\tilde{x},\lambda)=\underline{d}_{\n_a}(\tilde{y},\lambda)$. However, since $\tilde{x}\in X$ and $\tilde{y}\in Y$, we also have $\underline{d}_{\n_a}(\tilde{x},\lambda)\leq d_1<d_2\leq \underline{d}_{\n_a}(\tilde{y},\lambda)$, which is a contradiction. Thus we conclude that $\n_a$ is  exact lower-dimensional.

The same proof shows that $\n_a$ is also  exact upper-dimensional.
\end{proof}

 \subsection{Proof of Proposition \ref{lip-fiber} and Lemma \ref{asym-psi-lambda-n} }\label{sec-proof-conti-d}\

 To prove these properties, we need certain control on the size of $m(u)$ (see \eqref{def-m-u}) which appears in Proposition \ref{bco-cor} and  bounds for the local upper and lower Lyapunov exponents. This motivates the following definitions.

 Define  a function $\vartheta: \Omega\to [0,\infty) $ as
\begin{equation}\label{def-var-theta}
  \vartheta(x):=1+(\Pi(x)_1-2)\chi_{\{\two\}}(\tT_{x_1})=
  1+\sum_{j=1}^{\infty}(j-2)\chi_{[(\two,1)_j]}(x).
\end{equation}
Define the  constant $\theta$ as the integration of $\vartheta$ w.r.t. $\n$. That is,
\begin{equation}\label{def-theta}
  \theta:=\int_{\Omega}\vartheta d\n=1+\sum_{j=1}^{\infty}(j-2)\n([(\two,1)_j]).
\end{equation}
Since  $\n([(\two,1)_1])<1$ and
$
\n([(\two,1)_j])\sim j^{-3}
$
by \eqref{meas-Gibbs}, we conclude that $0<\theta<\infty$.

\begin{lemma}\label{iota-Psi-a}
   There exists a   set $\F_4\subset \F_3$ with $\Gg(\F_4)=1$ such that for any $a\in \F_4$ there exists a $\n_a$-full measure set $X_a\subset \Omega_a$ such that for any $x\in X_a$, we have
   \begin{equation}\label{lim-iota}
   \lim_{n\to\infty}\frac{\sum_{j=1}^{n}\vartheta(\sigma^{j-1}x)}{n}= \theta.
  \end{equation}
   In particular, for any $(a,\lambda)\in \F_4\times[24,\infty)$ and  $x\in X_a$, we have
  \begin{equation}\label{up-bdd-Psi-a}
  \theta\log \frac{\lambda-8}{3}\le \underline\eL_\lambda(x)\le\overline\eL_\lambda(x)\le\theta\log [2(\lambda+5)]+3\log\kappa.
  \end{equation}
\end{lemma}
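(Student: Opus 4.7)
The plan is to apply Birkhoff's ergodic theorem to $\vartheta$ on the system $(\Omega, \sigma, \n)$, and then to convert the resulting asymptotic for the ergodic averages of $\vartheta$ into bounds on $\psi_{\lambda,n}$ via Lemma \ref{esti-band-length}.

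First I would verify $\vartheta \in L^1(\n)$. Since $\n$ is the Gibbs measure for $\Phi$ (Proposition \ref{Gauss-measure-Omega}), the estimate \eqref{meas-Gibbs} gives $\n([(\two,1)_j]) \sim j^{-3}$, so
\[
\int_\Omega \vartheta \, d\n \;=\; 1 + \sum_{j=1}^\infty (j-2)\,\n([(\two,1)_j]) \;\lesssim\; 1 + \sum_{j=1}^\infty j^{-2} \;<\;\infty,
\]
which also confirms that $\theta$ in \eqref{def-theta} is finite. Since $\n$ is $\sigma$-ergodic (Theorem \ref{IY-exist-Gibbs}), Birkhoff's theorem yields a $\n$-full measure set $\mathcal Y \subset \Omega$ on which $n^{-1}\sum_{j=1}^{n}\vartheta(\sigma^{j-1}x)\to\theta$. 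Disintegrating $\n = \int \n_a\,d\Gg(a)$ via Lemma \ref{basic-disinteg}, there is a $\Gg$-full measure set $\F_4 \subset \F_3$ such that $\n_a(\mathcal Y \cap \Omega_a) = 1$ for every $a \in \F_4$. Setting $X_a := \mathcal Y \cap \Omega_a$ proves \eqref{lim-iota}.

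For the Lyapunov bounds \eqref{up-bdd-Psi-a}, fix $(a,\lambda)\in\F_4\times[24,\infty)$ and $x\in X_a$, and let $w := \iota(x)|_{n+1}\in\Omega^{\check a}_{n+1}$. By \eqref{Def-iota}, $w_1\in\{(\one,1)_1,(\two,1)_1\}$ with $\tT_{w_1}=\two$ precisely when $\tT_{x_1}\in\{\one,\three\}$; for $i\ge 2$ we have $w_i = x_{i-1}$ and $\check a_i = a_{i-1}$. Applying Lemma \ref{esti-band-length} to the word $w$ and the sequence $\check a$ gives
\[
\psi_{\lambda,n}(x) \;\le\; \log 4 - (n+1)\log\tau_1 + \log\tau_1\sum_{j=1}^{n}(2-a_j)\chi_{\{\tT_{x_j}=\two\}} + O(1),
\]
\[
\psi_{\lambda,n}(x) \;\ge\; -(n+1)\log\tau_2 - 3\sum_{j=1}^n\log a_j + \log\tau_2\sum_{j=1}^{n}(2-a_j)\chi_{\{\tT_{x_j}=\two\}} - O(1),
\]
where $\tau_1 = (\lambda-8)/3$ and $\tau_2 = 2(\lambda+5)$. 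By the definition \eqref{def-var-theta} of $\vartheta$,
\[
\sum_{j=1}^{n}(2-a_j)\chi_{\{\tT_{x_j}=\two\}} \;=\; n - \sum_{j=1}^{n}\vartheta(\sigma^{j-1}x),
\]
so the Birkhoff limit from the previous paragraph shows this average converges to $1-\theta$. Since $\F_4 \subset \F_1$, Proposition \ref{F-proposition}(1) gives $n^{-1}\sum_{j=1}^{n}\log a_j \to \log\kappa$. Dividing by $-n$ in the two displays and passing to $\limsup$ and $\liminf$ produces $\underline{\eL}_\lambda(x)\ge -\log\tau_1 - (1-\theta)\log\tau_1 = \theta\log\tau_1$ and $\overline{\eL}_\lambda(x)\le \theta\log\tau_2 + 3\log\kappa$, which are exactly the desired bounds.

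The only subtle step is the bookkeeping in matching indices after the $\iota$-shift (the extra prefix letter of $\iota(x)$ causes an index offset between $w_i$ and $x_{i-1}$, and the root $\check a_1 = 1$ produces a harmless additive constant); this is routine but must be done carefully to ensure the $\vartheta$-identity above is exact and not merely asymptotic. The genuinely substantive inputs—the integrability of $\vartheta$, the ergodicity of $\n$, and Proposition \ref{F-proposition}(1) together with Lemma \ref{esti-band-length}—are all in hand, so no further obstacle is anticipated.
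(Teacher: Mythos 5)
Your proposal is correct and takes essentially the same route as the paper's proof: Birkhoff's ergodic theorem applied to $\vartheta$ on $(\Omega,\sigma,\n)$, disintegration of $\n$ over $\Gg$ to get the fiber-full-measure sets $X_a$, and then Lemma~\ref{esti-band-length} (applied to $\iota(x)|_{n+1}$ along $\check a$) together with Proposition~\ref{F-proposition}(1) for the Khinchin-constant term $3\log\kappa$. One trivial slip: in your final displayed calculation the expression $-\log\tau_1 - (1-\theta)\log\tau_1$ should read $\log\tau_1 - (1-\theta)\log\tau_1$ (as your own intermediate steps make clear), which indeed equals $\theta\log\tau_1$.
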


\begin{proof} Since $\n$ is ergodic and
$
\theta=\int_{\Omega}\vartheta d\n,
$
by ergodic theorem, there exists a $\n$-full measure set $\tilde \Omega\subset \Omega$ such that for any $x\in \tilde \Omega$, \eqref{lim-iota} holds. Since
$$
1=\n(\tilde \Omega)=\int_{\N^\N}\n_a(\tilde \Omega)d\Gg(a),
$$
there exists a $\Gg$-full measure set $\F_4\subset \F_3$ such that for any $a\in \F_4$, we have
$\n_a(\tilde \Omega)=1$. Since  $\n_a$ is  supported on $\Omega_a$,  if we define $X_a:=\tilde\Omega\cap\Omega_a$, then  the first statement holds.

Now fix  $(a,\lambda)\in \F_4\times [24,\infty)$ and  $x\in X_a$, then $\Pi(x)=a$. By Lemma $\ref{esti-band-length}$ and \eqref{Def-iota},
		\begin{align*}
|B^{\check a}_{\iota(x)|_{n+1}}(\lambda)|\leq&   4\tau_1^{-(n+1)}\prod_{1\le k\le n+1;\tT_{\iota(x)_k}=\mathbf{2}}\tau_1^{2-\check a_k}\le 4\tau_1^{-n}\prod_{1\le k\le n;\tT_{x_k}=\mathbf{2}}\tau_1^{2- a_k}\\
=&4\tau_1^{-n-\sum_{k=1}^n(a_k-2)\chi_{\{\two\}}(\tT_{x_k})}
=4\tau_1^{-\sum_{k=1}^{n}\vartheta(\sigma^{k-1}x)},
		\end{align*}
where $\tau_1=(\lambda-8)/3.$
Now by \eqref{lim-iota}, we have
$$
\underline\eL_\lambda(x)=\liminf_{n\to\infty}\frac{-\psi_{\lambda,n}(x)}{n}=
\liminf_{n\to\infty}\frac{-\log|B^{\check a}_{\iota(x)|_{n+1}}(\lambda)|}{n}\ge \theta\log \tau_1.
$$
Similarly, by Lemma $\ref{esti-band-length}$ and \eqref{Def-iota},
\begin{align*}
|B^{\check a}_{\iota(x)|_{n+1}}(\lambda)|\geq&   \tau_2^{-(n+1)}\prod_{k=1}^{n+1}\check a_k^{-3}\prod_{1\le k\le n+1;\tT_{\iota(x)_k}=\mathbf{2}}\tau_2^{2-\check a_k}\\
\geq&   \tau_2^{-(n+1)}\left(\prod_{k=1}^{n}a_k\right)^{-3}\prod_{1\le k\le n;\tT_{x_k}=\mathbf{2}}\tau_2^{2-a_k}\\
 =&\tau_2^{-1}\left(\prod_{k=1}^{n}a_k\right)^{-3}\tau_2^{-\sum_{k=1}^{n}\vartheta(\sigma^{k-1}x)},
		\end{align*}
where $\tau_2=2(\lambda+5).$
Now by \eqref{lim-iota} and \eqref{L-K}, we have
$$
\overline\eL_\lambda(x)=\limsup_{n\to\infty}\frac{-\psi_{\lambda,n}(x)}{n}=
\limsup_{n\to\infty}\frac{-\log|B^{\check a}_{\iota(x)|_{n+1}}(\lambda)|}{n}\le \theta\log \tau_2+3\log\kappa.
$$
		Then the result follows.
\end{proof}

\begin{proof}[Proof of Proposition \ref{lip-fiber}]
Let $\F_4$ be the $\Gg$-full measure set in Lemma \ref{iota-Psi-a}. Fix $a\in \F_4$ and $24\le \lambda_1<\lambda_2$. Since $a\in \F_3$, by Proposition \ref{exact-H-P-dim}, there exist $Y_{a,i}\subset \Omega_a, \ i=1,2$ with $\n_a(Y_{a,i})=1$ and
\begin{equation*}
  \underline{d}_{\n_a}({x},\lambda_i)=\underline{d}(a,\lambda_i),\ \ x\in Y_{a,i},\ \  i=1,2.
\end{equation*}
Fix $x\in X_a\cap Y_{a,1}\cap Y_{a,2}$, where $X_a$ is as in Lemma \ref{iota-Psi-a}. By \eqref{dim-cylinder}, \eqref{up-bdd-Psi-a} and  \eqref{lyapunov-new}, we have
\begin{align*}
  |\underline{d}(a,\lambda_1)- \underline{d}(a,\lambda_2)|=& |\underline{d}_{\n_a}({x},\lambda_1)-\underline{d}_{\n_a}({x},\lambda_2)|=
  \gamma\frac{|\overline\eL_{\lambda_1}(x)-\overline\eL_{\lambda_2}(x)|}
  {\overline\eL_{\lambda_1}(x)\overline\eL_{\lambda_2}(x)}\\
  \le&\frac{\gamma}{(\theta\log (\lambda_1-8)/3)^2}\limsup_n\frac{|\psi_{\lambda_1,n}(x)-
  \psi_{\lambda_2,n}(x)|}{n}.
\end{align*}
By \eqref{def-Psi-lambda}, Proposition \ref{bco-cor} and \eqref{0}, we have
\begin{eqnarray*}
  {|\psi_{\lambda_1,n}(x)-
  \psi_{\lambda_2,n}(x)|}=\left|\log\frac{|B_{\iota(x)|_{n+1}}^{\check{a}}(\lambda_1)|}
  {|B_{\iota(x)|_{n+1}}^{\check{a}}(\lambda_2)|}\right|\le
  \log \eta+\left|\log\frac{|B_{\iota(x)|_0}^{\check a}(\lambda_1)|}{|B_{\iota(x)|_0}^{\check a}(\lambda_2)|}\right|=\log \eta,
\end{eqnarray*}
where by \eqref{def-eta},
$$
\eta=C_1\exp(C_2(\lambda_1+\lambda_2)+C_3m(\iota(x)|_{n+1})\lambda_1|\lambda_1- \lambda_2|).
$$
By \eqref{def-m-u}, \eqref{Def-iota} and \eqref{def-var-theta},
$$
m(\iota(x)|_{n+1})=m(x|_n)=\sum_{j=1}^{n}\vartheta(\sigma^{j-1}x).
$$
Now by \eqref{lim-iota}, we have
$$
\limsup_n\frac{|\psi_{\lambda_1,n}(x)-
  \psi_{\lambda_2,n}(x)|}{n}\le C_3\theta\lambda_1 |\lambda_1-\lambda_2|.
$$
It is seen that there exists an absolute constant $C_4>0$ such that for any $\lambda\in [24,\infty)$,
$$
\frac{\gamma C_3\theta}{(\theta\log (\lambda-8)/3)^2}\le \frac{C_4}{(\log\lambda)^2}.
$$
So the first inequality of \eqref{Lip-of-d-a-lambda} holds. The proof of the second inequality is the same.
\end{proof}

 As another consequence of Lemma \ref{iota-Psi-a}, we get Lemma \ref{asym-psi-lambda-n}.

\begin{proof}[Proof of Lemma \ref{asym-psi-lambda-n}] Fix $\lambda\ge24$. Define
\begin{equation*}\label{Y-lambda}
  \Omega(\lambda):=\{x\in \Omega: \frac{\psi_{\lambda,n}(x)}{n}\to (\Psi_\lambda)_\ast(\n)\}.
\end{equation*}
Since $\n$ is ergodic, by Kingman's ergodic theorem, $\n(\Omega(\lambda))=1$. Let $\tilde \Omega\subset \Omega$ be the $\n$-full measure set in the proof of Lemma \ref{iota-Psi-a}. Take $x\in \Omega(\lambda)\cap \tilde\Omega$. At first,  we have
$-(\Psi_\lambda)_\ast(\n)=\overline \eL_\lambda(x)=\underline \eL_\lambda(x)$. Then by \eqref{up-bdd-Psi-a},
 \begin{equation*}
-\theta\log[2(\lambda+5)]-3\log\kappa\le(\Psi_\lambda)_\ast(\n)\le -\theta\log \frac{\lambda-8}{3}.
\end{equation*}
Consequently, Lemma \ref{asym-psi-lambda-n} follows.
\end{proof}

\section{Proof of  two geometric  lemmas}\label{sec-proof-geo-lem}

In this section, we prove gap lemma  and tail lemma. Since the proof is quite involved, we explain the basic idea of the proof first.

The basic tool is
a structure property about a spectral band $B_w^a(\lambda)$  and its sub-bands $\{B_{we}^a(\lambda)\}.$ It is known that the generating polynomial $h_w$ of $B_w^a(\lambda)$ is a diffeomorphism from $B_w^a(\lambda)$ to $[-2,2]$. It is shown in \cite[Proposition 3.1]{FLW} that the family $\{B_{we}^a(\lambda)\}$ is sent by $h_w$ into a  family $\mathcal I_p$ of disjoint sub-intervals of $[-2,2]$, where $p$ is certain integer depending on $a$ and $w$ (See Figure \ref{figure-gap} for some geometric intuitions). Since $h_w$ has bounded variation (see Proposition \ref{bv}), the length ratio between a gap in $B_w^a(\lambda)$ and $B_w^a(\lambda)$ is the same order with the length of the related gap of $\mathcal I_p.$ Hence,  to prove the gap lemma, we only need to bound the lengths of gaps of $\mathcal I_p$ from below.

Now assume $a,b, u,v$ satisfy the assumption of the tail lemma. At first, it is not hard to construct a natural bijection  $\tau_{uv}:X_u^a(\lambda)\to X_v^b(\lambda)$.
Fix two different points $E, \hat E\in X_u^a(\lambda)$, then they have different codings. Assume $E\in B^a_{uwe}(\lambda), \hat E\in B^a_{uw\hat e}(\lambda)$. By \cite[Proposition 3.1]{FLW}, the generating polynomial $h_{uw}$  of $B_{uw}^a(\lambda)$ sends $E, \hat E$ to different intervals $I,\hat I$ of $\mathcal I_{\tilde p}$ for some $\tilde p$. Write $E_\ast=\tau_{uv}(E), \hat E_\ast=\tau_{uv}(\hat E)$. Then the generating polynomial $h_{vw}$  of $B_{vw}^b(\lambda)$ will send $E_\ast, \hat E_\ast$ to the same intervals $I,\hat I$. Now by using the bounded variation and covariation of trace polynomials (see Propositions \ref{bv} and \ref{bco}), to get the Lipschitz property of $\tau_{uv}$, we only need to bound the ratio
$$
\frac{\max\{|x-\hat x|:x\in I,\hat x\in \hat I\}}{\min\{|x-\hat x|:x\in I,\hat x\in \hat I\}}
$$
from above for any pair $I,\hat I\in \mathcal I_p.$

We will define this special family $\mathcal I_p$ along with an auxiliary family $\mathcal J_p$,  and  obtain these two bounds related to $\mathcal I_p$ and $\mathcal J_p$ in Section \ref{family-I}. Then we apply them to prove two geometric lemmas.

Let us start with some definitions and  basic  properties for a general family.

\subsection{Two quantities related to a family of disjoint intervals}\

Let $\mathscr C$  be the family of all  non-degenerate compact intervals in  $\R$. Assume $I,J\in \mathscr C$ are disjoint. If $I$ is on the left of $J$,  we write $I\prec J$.

Assume $I,J\in \mathscr C$ are disjoint. Define
\begin{equation*}
\begin{cases}
\ d(I,J):=\min\{|x-y|: x\in I, y\in J\};\\
D(I,J):=\max\{|x-y|: x\in I, y\in J\};
\end{cases}  \ \ \ \text{ and } \ \ \ r(I,J):=\frac{D(I,J)}{d(I,J)}.
\end{equation*}
Assume $\mathcal I=\{I_1,\cdots, I_n\}\subset \mathscr C$ is a disjoint family. Define
\begin{equation*}
d(\mathcal I):=\min\{d(I_i,I_j): i\ne j\}\ \ \text{ and }\ \ r(\mathcal I):=\max\{r(I_i,I_j): i\ne j\}.
\end{equation*}

The following lemma is simple but useful, we will omit the easy proof.

\begin{lemma}\label{ratio}

(1) Assume $J_1\prec J_2$ and $I_i\subset J_i, i=1,2$. Then $r(I_1,I_2)\le r(J_1,J_2).$

(2) Assume $I\prec J$. Then
\begin{equation*}
d(-I,-J)=d(I,J);\ \ \ D(-I,-J)=D(I,J)\ \ \text{ and }\ \ r(-I,-J)=r(I,J).
\end{equation*}

(3) Assume $\mathcal I=\{I_1,\cdots, I_k\}$  and  $I_1\prec I_2\prec \cdots\prec  I_k$. Then
\begin{equation*}
d(\mathcal I)=\min\{d(I_l, I_{l+1}): 1\le l< k\}\ \ \text{ and }\ \ r(\mathcal I)=\max\{r(I_l, I_{l+1}): 1\le l< k\}.
\end{equation*}

(4) Assume $\mathcal I=\{I_1,\cdots, I_k\}$ and $\mathcal J=\{J_1,\cdots, J_k\}$ are families of disjoint compact intervals and $I_i\subset J_i$ for $i=1,\cdots, k$. Then $r(\mathcal I)\le r(\mathcal J)$.
\end{lemma}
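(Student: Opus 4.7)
The plan is to prove the four parts in the order (2), (1), (3), (4), since (2) is immediate from symmetry, (1) from coordinatewise monotonicity, (3) from a mediant-inequality induction, and (4) follows by combining (3) and (1).

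First I would dispatch (2) in one line: the map $x\mapsto -x$ is an isometry that reverses the order, so if $I\prec J$ then $-J\prec -I$ and every pairwise distance between points of $I$ and $J$ equals the corresponding distance between points of $-I$ and $-J$. Hence $d$ and $D$ are invariant, and so is $r$. For (1), I would write $J_1=[\alpha_1,\beta_1]$, $J_2=[\alpha_2,\beta_2]$ with $\beta_1<\alpha_2$, and $I_i=[\gamma_i,\delta_i]\subset J_i$. Then directly
\begin{equation*}
d(I_1,I_2)=\gamma_2-\delta_1\ge \alpha_2-\beta_1=d(J_1,J_2),\qquad D(I_1,I_2)=\delta_2-\gamma_1\le \beta_2-\alpha_1=D(J_1,J_2),
\end{equation*}
so the ratio $r$ decreases.

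The first equality in (3) is clear: for any $i<j$ and any $l$ with $i\le l<j$, one has $d(I_i,I_j)\ge d(I_l,I_{l+1})$ because the points realizing $d(I_i,I_j)$ are separated by at least the gap between $I_l$ and $I_{l+1}$. Taking minima over consecutive pairs gives equality since each consecutive pair contributes. The main obstacle is the second equality: it is not obvious that passing to the maximum over consecutive pairs is enough. My plan is to prove by induction on $j-i$ that
\begin{equation*}
r(I_i,I_j)\le \max\bigl(r(I_i,I_{j-1}),\,r(I_{j-1},I_j)\bigr),
\end{equation*}
from which the claim follows. Writing $I_l=[\alpha_l,\beta_l]$, $P=\beta_j-\alpha_{j-1}$, $Q=\alpha_j-\beta_{j-1}$, $R=\beta_{j-1}-\alpha_i$, $S=\alpha_{j-1}-\beta_i$, and $\delta=\beta_{j-1}-\alpha_{j-1}\ge 0$, a short calculation gives
\begin{equation*}
r(I_i,I_j)=\frac{P+R-\delta}{Q+S+\delta}.
\end{equation*}
Assuming (without loss of generality) $P/Q\ge R/S$, the inequality $r(I_i,I_j)\le P/Q$ reduces after clearing denominators to $RQ\le PS+\delta(P+Q)$, which holds since $RQ\le PS$ and $\delta(P+Q)\ge 0$. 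This is a mediant-type argument, and I expect it to be the main (though still elementary) obstacle.

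Finally, (4) follows by combining (3) and (1). Applying (3) to both families reduces the comparison $r(\mathcal{I})\le r(\mathcal{J})$ to showing $r(I_l,I_{l+1})\le r(J_l,J_{l+1})$ for each consecutive pair, which is exactly (1). Putting the four pieces together finishes the lemma.
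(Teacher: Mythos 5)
Your proof is correct. Note, however, that the paper explicitly omits the proof of this lemma (``The following lemma is simple but useful, we will omit the easy proof''), so there is no author argument to compare against. Your breakdown is sound: (2) follows from the fact that $x\mapsto -x$ is an order-reversing isometry; (1) from the monotonicity of $d$ and $D$ under shrinking the intervals; and the nontrivial step, as you correctly identify, is the second equality in (3). Your mediant-type inequality
\begin{equation*}
\frac{P+R-\delta}{Q+S+\delta}\le \max\Bigl(\frac{P}{Q},\frac{R}{S}\Bigr),
\end{equation*}
proved by cross-multiplication under the WLOG assumption $PS\ge QR$, is exactly what is needed and the algebra $QR\le PS+(P+Q)\delta$ checks out, since both summands on the right dominate the needed quantities. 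Part (4) then reduces via (3) to consecutive pairs, where (1) applies (after observing that the containments $I_i\subset J_i$ force the two families to have the same linear order, so (3) can be applied to both). The argument is a complete and valid proof of the lemma.
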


Geometrically, $d(\mathcal I)$ is the length of the smallest gap of $\mathcal I$. The meaning of  $r(\mathcal I)$ is less obvious. Combine with Lemma \ref{ratio}(3),  $r(\mathcal I)$ gives a uniform upper bound on the length ratios between bands and gaps with common endpoint.

\subsection{$\mathcal I_p$ and $\mathcal J_p$}\label{family-I}\

At first we define the family $\mathcal I_p$ mentioned above.

Define the Chebyshev polynomials $\{S_p(x): p\ge0\}$ recursively as follows:
\begin{equation*}
S_0(x):=0, \ \ \ S_1(x):=1, \ \ \  S_{p+1}(x):=xS_{p}(x)-S_{p-1}(x)\ \ (p\geq1).
\end{equation*}
It is well known that
\begin{equation}\label{chebyshev}
S_p(2\cos\theta)=\frac{\sin p\theta}{\sin\theta} , \ \ (\theta\in[0,\pi]).
\end{equation}
Essentially follow \cite{FLW}, for each $p\ge2$ and $1\leq l< p$, we define
\begin{equation}\label{I-p,l}
I_{p,l}:=\left\{2\cos\theta: \left|\theta-\frac{l\pi}{p}\right|\le\frac{0.1\pi}{p}\ \ \text{and }\ \ |S_{p}(2\cos\theta)|\le \frac{1}{4}\right\}\subset [-2,2].
\end{equation}
(see \cite[Definition 2]{FLW}. However, we warn that our definition is different from \cite{FLW}. The relation is that $I_{p,l}$ in our definition is  $I_{p-1,l}$ in their definition.)

For each $p\ge2$, define
\begin{equation}\label{I-q}
\mathcal I_p:=\{I_{p,l}:1\le l<p\}\cup \{I_{p+1,s}: 1\le s<p+1\}.
\end{equation}
It is proven in \cite{Q1} that the bands in $\mathcal I_p$
are disjoint and ordered as follows:
\begin{equation*}
I_{p+1,p}\prec I_{p,p-1}\prec I_{p+1,p-1}\prec \cdots\prec I_{p+1,2}\prec I_{p,1}\prec I_{p+1,1}.
\end{equation*}

We intend to obtain a lower bound for $d(\mathcal I_p)$ and a upper bound for $r(\mathcal I_p)$. However it is not easy to compute them. Instead,
we  will consider another family $\mathcal J_p$ of intervals obtained by fattening each interval of $\mathcal I_p$ slightly.
Now we give the precise definition.

For $p\ge2$ and $1\le l<p$, define
  \begin{equation}\label{epsilon-p-l}
    \epsilon_{p,l}:=
    \begin{cases}
      0.1, & \mbox{ if }\ \  2\le p\le4,\\
      \min \left\{0.1,\frac{l+0.1}{3.92p},\frac{p-l+0.1}{3.92p}\right\},& \mbox{ if }\ \   p\ge5.
    \end{cases}
  \end{equation}
  Define the interval $J_{p,l}$ as
\begin{equation}\label{def-varepsilon}
J_{p,l}:= \left\{2\cos\theta: \left|\theta-\frac{l\pi}{p}\right|\le\frac{\epsilon_{p,l}\pi}{p}\right\}\subset [-2,2].
\end{equation}
For each $p\ge2$, define
\begin{equation}\label{J-p}
\mathcal J_p:=\{ J_{p,l}:1\le l<p\}\cup \{ J_{p+1,l} :1\le l<p+1\}.
\end{equation}

We have

\begin{proposition}\label{r-I-p}
For each $p\ge2$, the following hold:

(1) For any $1\le l<p$, we have
\begin{equation*}\label{symmetric}
J_{p,l}=-J_{p,p-l}\ \ \ \text{and}\ \ \ I_{p,l}\subset J_{p,l}.
\end{equation*}

(2) The intervals in $\mathcal J_p$
are disjoint and ordered as follows:
\begin{equation*}\label{J}
J_{p+1,p}\prec J_{p,p-1}\prec J_{p+1,p-1}\prec \cdots\prec J_{p+1,2}\prec J_{p,1}\prec J_{p+1,1}.
\end{equation*}

(3)  $r(\mathcal{J}_p)\le 40$ and $d(\mathcal J_p)\ge \frac{1}{20p^3}.$

\end{proposition}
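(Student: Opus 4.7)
For \textbf{part (1)}, the identity $J_{p,l}=-J_{p,p-l}$ follows from the symmetry $\epsilon_{p,l}=\epsilon_{p,p-l}$ (immediate from \eqref{epsilon-p-l}) together with $\cos(\pi-\theta)=-\cos\theta$. The containment $I_{p,l}\subset J_{p,l}$ is trivial when $p\le 4$ since $\epsilon_{p,l}=0.1$. For $p\ge 5$, by symmetry I may assume $l<0.392p-0.1$, so that $\epsilon_{p,l}=(l+0.1)/(3.92p)<0.1$; I will then show that for $|\delta|\in(\epsilon_{p,l}\pi/p,\,0.1\pi/p]$ the point $\theta=l\pi/p+\delta$ violates $|S_p(2\cos\theta)|\le 1/4$. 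Writing $S_p(2\cos\theta)=\sin(p\theta)/\sin\theta$ and $\sin(l\pi+p\delta)=(-1)^l\sin(p\delta)$, and using monotonicity of $\sin x/x$ on $(0,\pi)$ together with the numerical value $\sin(0.1\pi)/(0.1\pi)>0.984$, one obtains
\[
\sin(p|\delta|)>0.984\,\epsilon_{p,l}\pi=\frac{0.984(l+0.1)\pi}{3.92p}>\frac{(l+0.1)\pi}{4p}\ge\frac{\sin\theta}{4},
\]
so $|S_p(2\cos\theta)|>1/4$, as required.

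\textbf{Part (2)} reduces to verifying disjointness of consecutive angular windows in $\theta$-space. Since $l/(p+1)<l/p<(l+1)/(p+1)$ for $1\le l\le p-1$, the centres are ordered as $\pi/(p+1)<\pi/p<2\pi/(p+1)<\cdots<p\pi/(p+1)$, which yields the stated $\cos$-ordering. It then suffices to check positivity of the two types of consecutive gaps in $\theta$-space,
\[
\frac{l\pi}{p(p+1)}-\frac{\epsilon_{p,l}\pi}{p}-\frac{\epsilon_{p+1,l}\pi}{p+1}\quad\text{and}\quad\frac{(p-l)\pi}{p(p+1)}-\frac{\epsilon_{p+1,l+1}\pi}{p+1}-\frac{\epsilon_{p,l}\pi}{p}.
\]
The bound $\epsilon\le 0.1$ handles the middle range of $l$, while in the endpoint range the linear factor $(l+0.1)/(3.92p)$ (resp.\ $(p-l+0.1)/(3.92p)$) shrinks $\epsilon$ rapidly enough to preserve positivity for all admissible $p$ and $l$.

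\textbf{Part (3)} is the quantitative heart of the proposition. By Lemma \ref{ratio}(3) and the symmetry from part (1), $d(\mathcal J_p)$ and $r(\mathcal J_p)$ are realised on consecutive pairs with $l\le p/2$. I translate each angular gap into a $\cos$-gap via the exact identity $2\cos\theta_1-2\cos\theta_2=4\sin(\tfrac{\theta_1+\theta_2}{2})\sin(\tfrac{\theta_2-\theta_1}{2})$. The worst case for $d$ is the extremal pair $(J_{p+1,1},J_{p,1})$, for which $\sin\bar\theta\sim\pi/p$ and the angular gap is of order $\pi/p^2$, yielding $d\approx(2-4\cdot 1.1/3.92)\pi^2/p^3\approx 8.6/p^3$, safely above $1/(20p^3)$; every other consecutive pair has either $\sin\bar\theta$ bounded below by an absolute positive constant or a strictly larger angular gap, and a simple case-split across the three ranges of $l$ in \eqref{epsilon-p-l} shows $d$ is of order at least $1/p^3$ with a much larger constant. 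For the ratio, I will write $r=1+(|I|+|J|)/d$: near the endpoints all three quantities scale as $1/p^3$ with a ratio bounded by an explicit small constant (roughly $3.6$ in the extremal case), and in the middle regime (where $\epsilon=0.1$) both numerator and denominator scale as $1/p$ with a similarly bounded ratio, so $r\le 40$ holds with considerable slack. The \emph{main obstacle} is obtaining uniform bounds across the transition between the three ranges of $l$ defined in \eqref{epsilon-p-l}, which will be handled by performing the elementary trigonometric estimate separately in each range.
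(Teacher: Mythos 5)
Parts (1) and (2) of your plan are sound and essentially follow the paper's argument. For (1) you run the contrapositive of what the paper does (the paper starts from $|S_p|\le\tfrac14$ and bounds $|\delta|$; you start from $|\delta|>\epsilon_{p,l}\pi/p$ and show $|S_p|>\tfrac14$), but the inequality chain $\sin(p|\delta|)>0.984\,\epsilon_{p,l}\pi>(l+0.1)\pi/(4p)\ge\sin\theta/4$ is the same one seen through the other end. For (2) your reduction to positivity of the two angular gaps, plus the observation that $\epsilon\le 0.1$ covers the middle range while the linear factor handles the tails, matches the paper's inequalities \eqref{condi-disj-2}.

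Part (3) is where the proposal falls short of a proof. You replace the paper's central device with the identity $r=1+(|I|+|J|)/d$, and then assert, with only heuristic order-of-magnitude computations ($\sin\bar\theta\sim\pi/p$, $d\approx 8.6/p^3$, ``roughly $3.6$ in the extremal case''), that a ``simple case-split'' over the ranges of \eqref{epsilon-p-l} gives the uniform bound $r(\mathcal J_p)\le 40$. None of these bounds is actually derived, and the case analysis is more delicate than you suggest: $\epsilon_{p,l}$, $\epsilon_{p+1,l}$ and $\epsilon_{p+1,l+1}$ switch from the linear regime to the constant $0.1$ at different values of $l$ (roughly $0.392p$ versus $0.392(p+1)$), so one cannot simply split $\{1,\dots,p-1\}$ into three clean ranges and estimate $|I|,|J|,d$ in each; there are transition regions where the three $\epsilon$'s are governed by different formulas. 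The paper bypasses all of this by writing $r(J_{p,l},J_{p+1,l})$ exactly as a product of two ratios $\sin(x\mp y)/\sin(x\pm y)$ (equation \eqref{ratio-J-p}) and then invoking the elementary observation that $k|y|\le x\le \pi-k|y|$ forces $\bigl|\tfrac{\sin(x+y)}{\sin(x-y)}\bigr|\le\tfrac{k+1}{k-1}$; applying this with $k=3$ and $k=10/9$ gives $r\le 2\cdot 19=38$ uniformly in $l$ and $p$, with no case-split at all. The hypotheses $k|y|\le x\le\pi-k|y|$ are verified once (\eqref{lemma-use}) directly from \eqref{condi-disj-2}. Similarly, the paper deduces the lower bound on $d(\mathcal J_p)$ from $d\ge D/r\ge D/40$ and then bounds the centre-to-centre distance $D$, rather than trying to identify and estimate the single worst pair as you propose (which would also require an argument that the pair $(J_{p,1},J_{p+1,1})$ is in fact extremal). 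So the key quantitative idea — the $\tan$-inequality that makes the bound uniform — is missing from your proposal, and what you outline in its place is a plan whose hardest step you have not carried out.
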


\begin{proof}
(1)	Note that $\epsilon_{p,l}=\epsilon_{p,p-l}$, then $J_{p,l}=-J_{p,p-l}$  follows from \eqref{def-varepsilon}.

If $2\leq p\le 4$, then $\varepsilon_{p,l}=0.1$. Then $I_{p,l}\subset J_{p,l}$  follows from \eqref{I-p,l} and \eqref{def-varepsilon}.	

Now assume $p\ge5$. For any $1\le l<p$, assume $\delta\in [-0.1,0.1]$ is such that $2\cos\frac{l+\delta}{p}\pi\in I_{p,l}$. Then by \eqref{I-p,l} and  \eqref{chebyshev},
we have
\begin{eqnarray*}
\frac{1}{4} \ge \left|S_p(2\cos\frac{l+\delta}{p}\pi)\right|
=\left|\frac{\sin\delta \pi}{\sin \frac{l+\delta}{p}\pi}\right|
\ge\left|\frac{(\frac{\sin 0.1\pi}{0.1\pi})|\delta \pi| }{\sin \frac{l+\delta}{p}\pi}\right|\ge\left|\frac{0.98|\delta \pi| }{\sin \frac{l+\delta}{p}\pi}\right|.
\end{eqnarray*}
Since  $1\le l<p$ and $\delta\in [-0.1,0.1]$,  we have
$$
|\delta|\le \frac{\sin \frac{l+\delta}{p}\pi}{4\times 0.98\pi}\le
\min\left\{
\frac{l+0.1}{3.92p},
\frac{p-l+0.1}{3.92p}\right\}.
$$
Combine with the definition of $\epsilon_{p,l}$, we conclude that $I_{p,l}\subset J_{p,l}$.

(2) It is equivalent to show that  $J_{p+1,l+1}\prec J_{p,l}\prec J_{p+1,l}$ for all $1\leq l<p$. Since $2\cos(\cdot)$ is strictly  decreasing on $[0,\pi]$, it is sufficient  to show
\begin{equation*}\label{condi-disj}
\frac{l+\epsilon_{p+1,l}}{p+1}<\frac{l-\epsilon_{p,l}}{p}\ \  \text{and }\ \ \frac{l+\epsilon_{p,l}}{p}<\frac{l+1-\epsilon_{p+1,l+1}}{p+1}.
\end{equation*}
They are equivalent to
\begin{equation*}\label{condi-disj-1}
	p\epsilon_{p+1,l}+(p+1)\epsilon_{p,l}<l\ \ \text{ and }\ \
p\epsilon_{p+1,l+1}+(p+1)\epsilon_{p,l}<p-l.
\end{equation*}
By \eqref{epsilon-p-l}, if $2\le p\le 4,$ we have
$$
p\epsilon_{p+1,l}+(p+1)\epsilon_{p,l}\le (2p+1)\times 0.1\le 0.9\le 0.9l.
$$
If $p\ge5$, then we have
\begin{equation*}
  p\epsilon_{p+1,l}+(p+1)\epsilon_{p,l}\le \frac{l+0.1}{3.92}\left(\frac{p+1}{p}+\frac{p}{p+1}\right)\le 0.6l.
\end{equation*}
We can do similar estimation for the second inequality. As a result, we get
\begin{equation}\label{condi-disj-2}
	p\epsilon_{p+1,l}+(p+1)\epsilon_{p,l}\le0.9l\ \ \text{ and }\ \
p\epsilon_{p+1,l+1}+(p+1)\epsilon_{p,l}\le0.9(p-l).
\end{equation}
So statement  (2) holds.

(3)	At first, we show $r(\mathcal{J}_p)\le 40$. By the statement  (2) and Lemma \ref{ratio}(3), we only need to show that for $1\le l< p$,
\begin{equation*}\label{bd-ratio}
r(J_{p+1,l+1},J_{p,l}),\ \ r(J_{p,l}, J_{p+1,l})\le40.
\end{equation*}
By the statement (1) and Lemma \ref{ratio}(2),
\begin{align*}
r(J_{p+1,l+1},J_{p,l})=r( J_{p,l}, J_{p+1,l+1})=r(- J_{p,p-l},- J_{p+1,p-l})=r( J_{p,p-l},J_{p+1,p-l}).
\end{align*}
Thus we only need to show that for $1\le l<p,$
\begin{equation}\label{bd-ratio}
 r(J_{p,l}, J_{p+1,l})\le40.
\end{equation}
By \eqref{def-varepsilon}, we have
\begin{align*}
d( J_{p,l}, J_{p+1,l})
=&2\cos \frac{(l+\epsilon_{p+1,l})\pi}{p+1}-2\cos \frac{(l-\epsilon_{p,l})\pi}{p}=4\sin( x_{p,l}+ y_{p,l})\sin(\hat{ x}_{p,l}-\hat{ y}_{p,l})\\
D( J_{p,l}, J_{p+1,l})
=&2\cos \frac{(l-\epsilon_{p+1,l})\pi}{p+1}-2\cos \frac{(l+\epsilon_{p,l})\pi}{p}=4\sin( x_{p,l}- y_{p,l})\sin(\hat{ x}_{p,l}+\hat{ y}_{p,l}),
\end{align*}
where
\begin{align*}
x_{p,l}:=\frac{l(2p+1)}{p(p+1)}\frac{\pi}{2};\ \ \ \ & y_{p,l}:=\frac{p\epsilon_{p+1,l}-(p+1)\epsilon_{p,l}}{p(p+1)}\frac{\pi}{2}\\
\hat x_{p,l}:=\frac{l}{p(p+1)}\frac{\pi}{2};\ \ \ \ & \hat  y_{p,l}:=\frac{p\epsilon_{p+1,l}+(p+1)\epsilon_{p,l}}{p(p+1)}\frac{\pi}{2}.
\end{align*}
So we have
\begin{equation}\label{ratio-J-p}
r( J_{p,l}, J_{p+1,l})=\frac{D( J_{p,l}, J_{p+1,l})}{d( J_{p,l}, J_{p+1,l})}=\frac{\sin( x_{p,l}- y_{p,l})}
{\sin( x_{p,l}+ y_{p,l})}\cdot\frac{\sin(\hat x_{p,l}+\hat y_{p,l})}{\sin(\hat x_{p,l}-\hat y_{p,l})}.
\end{equation}

\noindent{\bf Claim:} Given $k>1$. If $x>0$ and $|y|\le \frac{\pi}{2k}$ are such that $k|y|\le x\le\pi-k|y|$, then
\begin{equation*}
\frac{k-1}{k+1}\leq\frac{\sin(x+ y)}{\sin(x- y)}\leq \frac{k+1}{k-1}.
\end{equation*}

\noindent $\lhd$
If $y=0,$ the claim holds trivially. Now assume $y\ne0$, then $|y|\in (0,\frac{\pi}{2k})$.
Since $0<k|y|\le x\le\pi-k| y|<\pi$,  we have
 $|\tan x|\geq\tan k |y|.$
It is ready to check that
$$\tan(ky)\geq k\tan y,\ \forall\ y\in [0,\frac{\pi}{2k}).$$
So we have
$
	\left|\frac{\tan x}{\tan y}\right|\geq k.
$	
Notice that
$$\frac{\sin(x+ y)}{\sin(x- y)}=\frac{\tan x+\tan y}{\tan x-\tan y}
=\frac{\frac{\tan x}{\tan y}+1}{\frac{\tan x}{\tan y}-1}
=\xi(\frac{\tan x}{\tan y}),$$
where $\xi(z)=(z+1)/(z-1)$. Since $\xi$ is decreasing on $(-\infty,1)$ and $(1,\infty)$, we have
$$
\frac{k-1}{k+1}=\xi(-k)\le\frac{\sin(x+ y)}{\sin(x- y)}=\xi(\frac{\tan x}{\tan y})\le \xi(k)=\frac{k+1}{k-1}.
$$
So the claim holds.
\hfill $\rhd$

By \eqref{condi-disj-2}, we have
\begin{equation}\label{ypl}
| y_{p,l}|\le|\hat  y_{p,l}|=\frac{p\epsilon_{p+1,l}+(p+1)\epsilon_{p,l}}{p(p+1)}\frac{\pi}{2}
\le\frac{0.9l}{p(p+1)}\frac{\pi}{2}.
\end{equation}
So we have
\begin{equation*}
\frac{\hat x_{p,l}}{|\hat y_{p,l}|} \ge\frac{10}{9}\ \ \ \text{and}\ \ \			\frac{ x_{p,l}}{| y_{p,l}|}\ge2p+1\ge5.
\end{equation*}
On the other hand, since $p\ge2$ and $l<p$, by \eqref{ypl},
\begin{align*}
  \frac{\pi-\hat{x}_{p,l}}{|\hat{y}_{p,l}|} \ge& \frac{2p(p+1)-l}{0.9l}\ge5,\\
  \frac{\pi-{x}_{p,l}}{|{y}_{p,l}|} \ge& \frac{2p(p+1)-l(2p+1)}{0.9l}\ge \frac{2p(p+1)-(p-1)(2p+1)}{0.9l}\ge3.
\end{align*}
So we have
\begin{equation}\label{lemma-use}
3| y_{p,l}|\le  x_{p,l}\le \pi-3| y_{p,l}|\ \ \text{and}\ \ 	\frac{10}{9}|\hat  y_{p,l}|\le \hat  x_{p,l}\le \pi-\frac{10}{9}|\hat  y_{p,l}|.
\end{equation}
Now by \eqref{ratio-J-p}, the claim and \eqref{lemma-use}, we obtain \eqref{bd-ratio}.

Next we show $d(\mathcal J_p)\ge \frac{1}{20p^3}.$ By Lemma \ref{ratio}(3), we only need to estimate $d(I,J)$ for two consecutive bands $I$ and $J$ in $\mathcal J_p$. For any $1\le l<p$,
\begin{align*}
d(J_{p,l},J_{p+1,l}) \ge&\frac{D(J_{p,l},J_{p+1,l})}{r(\mathcal J_p)}\ge  \frac{D(J_{p,l},J_{p+1,l})}{40}
\ge\frac{2\cos \frac{l\pi}{p+1} -2\cos \frac{l\pi}{p}}{40}\\
=&\frac{1}{10}\sin\left[\frac{1}{p}+\frac{1}{p+1}\right]\frac{l\pi}{2}\sin\frac{l\pi}{2p(p+1)}
\ge\frac{1}{10}\sin\frac{\pi}{2p}\sin \frac{\pi}{4p^2}
\ge\frac{1}{20p^3}.
\end{align*}
Similarly , we have
\begin{align*}
d(J_{p,l},J_{p+1,l+1})\ge  \frac{D(J_{p,l},J_{p+1,l+1})}{40}\ge\frac{2\cos \frac{l\pi}{p} -2\cos \frac{(l+1)\pi}{p+1}}{40}\ge\frac{1}{20p^3}.
\end{align*}
So the result follows.
\end{proof}

\subsection{Proof of  Lemma \ref{geo-lem-gap} (gap lemma)}\

In this part, we write $\B_n:=\B^a_n(\lambda),  B_w:=B^a_w(\lambda)$ for simplicity.

\begin{proof}
Given $B_w\in \mathcal{B}_n$, we
study the gaps of order $n$ which are contained in $B_w$.
Write $q:=a_{n+1}$ and
$$
e_j:=(\one,j)_q,\ \  1\le j\le q+1; \ \  e_{q+2}:=(\two,1)_q;\ \ e_{q+2+j}:=(\three,j)_q,\ \  1\le j\le q.
$$

If  $\tT_w=\one$, by Proposition \ref{basic-struc}(3),  there
exists a unique band $B_{we_{q+2}}\in \mathcal{B}_{n+1}$, which is contained in $B_w$. There is
no gap of order n in $B_w$ in this case.

Now assume $\tT_w=\two$. by Proposition \ref{basic-struc}(4), there are $2q+1$ bands of order $n+1$ in $B_w$,
which are disjoint and ordered as follows (see Figure \ref{figure-gap}):
$$B_{we_{1}}\prec B_{we_{q+3}}\prec B_{we_{2}}\prec B_{we_{q+4}}\prec B_{we_{3}}
\prec \cdots\prec B_{we_{2q+2}}\prec B_{we_{q+1}}.$$
\begin{figure}
		\includegraphics[scale=0.45]{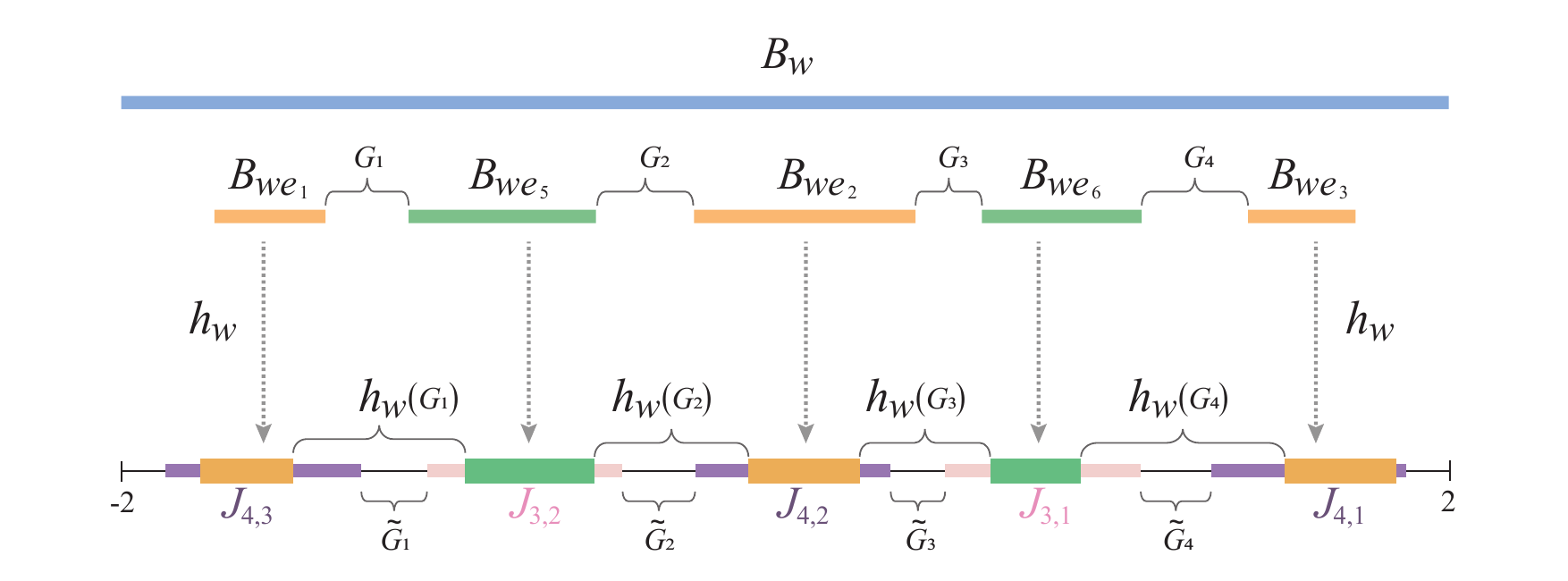}
		\caption{The case $q=2$, $\tT_w=\two$ and $h_w'|_{B_w}>0$}\label{figure-gap}
 \end{figure}
There are $2q$ gaps of order n in $B_w$. We list them from left to right as
$$
G_1\prec G_2\prec \cdots\prec G_{2q}.
$$
By \cite[Proposition 3.1]{FLW} and Proposition \ref{r-I-p}(1), if $h'_w|_{B_w}<0$, then
 $$
\begin{cases}
h_w(B_{we_{l}})\subset I_{q+2,l}\subset J_{q+2,l}, &1\leq l\leq q+1,\\
h_w(B_{we_{q+2+l}})\subset I_{q+1,l}\subset J_{q+1,l}, &1\leq l\leq q.
\end{cases}
$$
If $h'_w|_{B_w}>0$, then
 $$
\begin{cases}
h_w(B_{we_{q+2-l}})\subset I_{q+2,l}\subset J_{q+2,l}, &1\leq l\leq q+1,\\
h_w(B_{we_{2q+3-l}})\subset I_{q+1,l}\subset J_{q+1,l}, &1\leq l\leq q.
\end{cases}
$$

By Proposition \ref{r-I-p}(2),	there are  $2q$ gaps for the set
$\bigcup_{J\in \mathcal J_{q+1}} J\subset [-2,2]$. We list them from left to right as
$$
\tilde G_1<\tilde G_2<\cdots <\tilde G_{2q}.
$$
By Proposition \ref{r-I-p}(3), we have the following lower bound  for the length of $\tilde{G}_j$:
\begin{equation}\label{min-gap}
\min\{|\tilde G_{j}|:1\le j\le 2q\}\ge \frac{1}{20(q+1)^3}\ge\frac{1}{160 q^3}.
\end{equation}

Since $h_w: B_w\to [-2,2]$ is a diffeomorphism,
there exists  $x_*\in B_w$   such that
\begin{equation}\label{point}
|h^{'}_w(x_*)||B_w|=4.
\end{equation}
If we define
$$
\delta_j:=\begin{cases}
j, & \mbox{if } h_w'|_{B_w}>0, \\
2q+1-j, & \mbox{if }  h_w'|_{B_w}<0.
\end{cases}
$$
Then we further have
$h_w(G_j)\supset \tilde{G}_{\delta_j}, 1\le j\le 2q.$
Fix a gap $G_j$
in $B_w$, then there exists $x_j\in G_j$ such that
\begin{equation}\label{point2}
|h_w^{'}(x_j)||G_j|=|h_w(G_j)|\geq|\tilde{G}_{\delta_j}|.
\end{equation}
By \eqref{min-gap}-\eqref{point2},   and Proposition \ref{bv}, there exists a constant
$C_1(\lambda)>0$ such that
$$\frac{|G_j|}{|B_w|}\geq\frac{|\tilde G_{\delta_j}|}{4}\cdot\frac{|h_w^{'}(x_*)|}{|h_w^{'}(x_j)|}\geq\frac{C_1(\lambda)}{q^3}.$$

If $\tT_w=\three$, the same proof as above shows that there exists a constant
$C_2(\lambda)>0$ such that for any gap $G$ in $B_w$,
$$\frac{|G|}{|B_w|}\geq\frac{C_2(\lambda)}{q^3}.$$
Let $C(\lambda):=\min\{C_1(\lambda), C_2(\lambda)\}$, the result follows.
\end{proof}

\subsection{Proof of Lemma \ref{geo-lem-tail} (tail lemma)}

\begin{proof}
Since $S^na=S^mb, u\in \Omega^a_{n+k}, v\in \Omega^b_{m+k}$ and $\tT_u=\tT_v$, by the definition of the symbolic space $\Omega^a$ (see \eqref{Omega^alpha}), we have
$$
ux\in [u]^a \Leftrightarrow vx\in [v]^b.
$$
Thus we can define a bijection   $\Upsilon_{uv}:[u]^a\to [v]^b$ as
$\Upsilon_{uv}(ux):=vx.$
Then we define $\tau_{uv}:X^a_u(\lambda)\to X^b_v(\lambda)$ as
\begin{equation*}\label{def-tau-uv}
\tau_{uv}:=\pi^b_{\lambda}\circ \Upsilon_{uv} \circ (\pi^a_\lambda)^{-1}.
\end{equation*}
It is seen that $\tau_{uv}$ is a bijection. Next we show that $\tau_{uv}$ is bi-Lipschitz.

Fix any   $E,\hat E\in X^a_u(\lambda)$ with $E\ne \hat E$. If we write
$
(\pi^a_\lambda)^{-1}(E)=ux$ and $(\pi^a_\lambda)^{-1}(\hat E)=u\hat x,
$
then $x\ne \hat x$. Write $w:=x\wedge \hat x$, by \eqref{admissible-T-A}, it is seen that $\tT_w=\two $ or $\three.$ Set
$$
s:=a_{n+|w|+1};\ \ \ \ \  q:=
\begin{cases}
s+1, & \mbox{if }\  \tT_w=\two,\\
s,&\mbox{if }\ \tT_w=\three.
\end{cases}
$$
Again by \eqref{admissible-T-A},  there exist $e, \hat e\in \A_s$ with $e\ne \hat e$ and $e,\hat e\ne (\two,1)_s$ such that
$$
E\in B^a_{uwe}(\lambda)\ \ \ \text{ and }\ \ \ \hat E \in B^a_{uw\hat e}(\lambda).
$$
By \cite[Proposition 3.1]{FLW}, there exist two disjoint bands $I,\hat I\in \mathcal I_q$ (see \eqref{I-q}) such that
$$
h_{uw}(B_{uwe}^a(\lambda))\subset I \ \  \text{and} \ \ h_{uw}(B_{uw\hat e}^a(\lambda))\subset \hat I.$$
There exists $\tilde E\in B_{uw}^a(\lambda)$ such that
$$
h_{uw}(E)-h_{uw}(\hat E)=h_{uw}'(\tilde E)(E-\hat E).
$$

Write $E_\ast=\tau_{uv}(E)$ and $\hat E_\ast=\tau_{uv}(\hat E).$ We have
$$
(\pi^b_\lambda)^{-1}(E_\ast)=vx;\ \ \  (\pi^b_\lambda)^{-1}(\hat E_\ast)=v\hat x.
$$
Thus for the same $e,\hat e$ and $I,\hat I$, we have
\begin{eqnarray*}
E_\ast\in B^b_{vwe}(\lambda);\ \  \hat E_\ast \in B^b_{vw\hat e}(\lambda)\ \ \text{ and }\ \
h_{vw}(B_{vwe}^b(\lambda))\subset I; \ \   h_{vw}(B_{vw\hat e}^b(\lambda))\subset \hat I.
\end{eqnarray*}
There exists $\tilde E_\ast\in B_{vw}^b(\lambda)$ such that
$$
h_{vw}(E_\ast)-h_{vw}(\hat E_\ast)=h_{vw}'(\tilde E_\ast)(E_\ast-\hat E_\ast).
$$
So by Proposition \ref{bv}, Lemma \ref{ratio}(4), Proposition \ref{bco} and Proposition \ref{r-I-p}(3), we have
\begin{align*}
\frac{|E-\hat E|}{|E_\ast-\hat E_\ast|}&=\frac{|h_{uw}(E)-h_{uw}(\hat E)|}{|h_{vw}(E_\ast)-h_{vw}(\hat E_\ast)|}\frac{|h_{vw}'(\tilde E_\ast)|}{|h_{uw}'(\tilde E)|}\le\frac{D(I,\hat I)}{d(I,\hat I)}\cdot C(\lambda)^2\cdot\frac{|B_{uw}^a(\lambda)|}{|B_{vw}^b(\lambda)|}\\
&\le r(\mathcal I_q)\cdot C(\lambda)^2\cdot\left(\frac{|B_{uw}^a(\lambda)|}{|B_{u}^a(\lambda)|}/
\frac{|B_{vw}^b(\lambda)|}{|B_v^b(\lambda)|}\right)\frac{|B_{u}^a(\lambda)|}{|B_{v}^b(\lambda)|}\\
&\le r(\mathcal{J}_q)C(\lambda)^2\eta(\lambda)\frac{|B_{u}^a(\lambda)|}{|B_{v}^b(\lambda)|}\le40 C(\lambda)^2\eta(\lambda)\frac{|B_{u}^a(\lambda)|}{|B_{v}^b(\lambda)|}.
\end{align*}

By the same argument, we have
$$
\frac{|E-\hat E|}{|E_\ast-\hat E_\ast|}\ge \frac{1}{40C(\lambda)^2\eta(\lambda) }\frac{|B_{u}^a(\lambda)|}{|B_{v}^b(\lambda)|}.
$$
So we conclude that $\tau_{uv}$ is bi-Lipschitz with the Lipschitz constant
$$
L=40C(\lambda)^2\eta(\lambda)\max\left\{\frac{|B_{u}^a(\lambda)|}{|B_{v}^b(\lambda)|}, \frac{|B_{v}^b(\lambda)|}{|B_{u}^a(\lambda)|}\right\}.
$$
Hence the result follows.
\end{proof}

\noindent{\bf Acknowledgement}.
The authors thank the referees for many valuable suggestions, especially about the structure of paper, the numbering of the equations, geometric and intuitive explanations, which clarify some ambiguities
in mathematics and greatly improve the exposition. They also thank them for pointing out references \cite{BBBRT,BBL,KKL,Luna}, which are closely related to this paper.
Qu was supported by the National Natural Science Foundation of China, No. 11790273, No. 11871098 and No. 12371090.
\begin{appendices}
\section{A table of natations}\label{sec-app}
Due to the complex nature of the spectral properties of Sturmian Hamiltonians, in this paper we need to introduce a lot of notations. For the reader's convenience, we include  a table  of   notations  in this appendix.

\smallskip

\begin{longtable}{ll}
\hline
$\Sigma_{\alpha,\lambda},\Sigma_{a,\lambda}, \NN_{\alpha,\lambda},\NN_{a,\lambda}$&  spectrum, DOS,  see \eqref{def-dos}, \eqref{convention}\\
\hline
$ \II\supset \tilde \II\supset\hat \II $& full measure sets of frequencies, see \eqref{def-I}, \eqref{def-tilde-I}, \eqref{def-hat-I}\\
\hline
$(\II,T,G)$& irrationals in $[0,1]$, Gauss map, Gauss measure, see Section \ref{sec-continued-fra}\\
\hline
$(\N^\N,S,\Gg)$& full shift over $\N$, Gauss measure, see Section \ref{sec-continued-fra} \\
\hline
$\Theta: \II\to \N^\N$& symbolic representation of irrationals $\II$, see \eqref{def-Theta}\\
\hline
$ q_n(\alpha), q_n(a), q_n(\vec a)$\ \ & denominator of $n$-th convergent of $\alpha$, see  \eqref{def-p-q-n}, \eqref{def-q-n-a}, Remark \ref{q-n-vec-a}\\
\hline
$\B^a_n(\lambda), \mathcal G_n^a(\lambda)$& the set of spectral bands, the set of  gaps, see \eqref{def-B-n},  \eqref{def-G-n}\\
\hline
$ B^a_w(\lambda), X^a_w(\lambda)$& spectral band, basic set, see  Section \ref{sec-coding-map}\\
\hline
$\T,\T_0,\A_n, \A, e\to \hat e$& alphabets, admissible relation, see Section \ref{sec-Omega^a}, \eqref{def-A}\\
\hline
$\tT_e,\iI_e,\ell_e,\tT_{w}$& type, index, level of $e$, type of  $w$, see \eqref{type-index-level}, \eqref{type}\\
\hline
$A_{nm}, A, \hat A_n$& incidence matrices, see \eqref{A_ij}, \eqref{A-and-A-nm}, \eqref{hat-A-n}\\
\hline
$\Omega^a, \Omega^a_n,  [w]^a$& coding space, admissible words, cylinder, see Section \ref{sec-Omega^a} \\
\hline
$\Omega_a, \Omega_{\vec a,n}, \Omega_{a,n}, [w]_a$& fiber, admissible words, cylinder, see Section \ref{sec-fiber-Omega}\\
\hline
$\Omega,\sigma$&  global symbolic space, shift map, see \eqref{def-Omega}, \eqref{def-shift}\\
\hline
$\Pi:\Omega\to \N^\N$ & projection from $\Omega$ to $\N^\N$, see \eqref{def-Pi}\\
\hline
$\check a,\widehat \Omega, \iota:\Omega\to\widehat \Omega$&  see \eqref{hat-Omega}, \eqref{Def-iota}\\
\hline
$\Xi(\tT,\vec a), \Xi(\tT,\vec a,\tT')$& two sets of admissible words, see \eqref{def-Xi-N}\\
\hline
$\Xi_{a,m}(w), \Xi_{a,m,\tT}(w)$& descendants of $w$, see \eqref{des-w}\\
\hline
$\pi^a_\lambda,\pi_{a,\lambda}, \rho_{a,\lambda}$& coding maps, metric on $\Omega_a$, see \eqref{pi^alpha}, \eqref{pi_a-lambda}, \eqref{rho-a-lambda} \\
\hline
$\QQ(a,\lambda,t,n)$& logarithm of partition function, see \eqref{def-Q}
\\
\hline
$\overline{\rm P}_{a,\lambda}(t), \underline{\rm P}_{a,\lambda}(t), \mathbf{P}_{\lambda}(t)$& upper, lower, relativized  pressure functions, see \eqref{def-lu-pre}, \eqref{relative-pre} \\
\hline
$\phi_{n},\Phi$ & entropic potential,  see \eqref{phi-poten}  \\
\hline
$\n, \n_a$&Gibbs measure for $\Phi$, fiber of $\n$,  see Proposition \ref{Gauss-measure-Omega}, Lemma \ref{imp} \\
\hline
$\psi_{\lambda,n}, \Psi_{\lambda}, (\Psi_\lambda)_\ast(\n),$& geometric potential, Lyapunov exponent, see \eqref{def-Psi-lambda}, \eqref{Psi-ast-n}
\\
\hline
$s_\ast(a,\lambda), s^\ast(a,\lambda)$& pre-dimensions of $\Sigma_{a,\lambda}$, see \eqref{pre-dim}\\
\hline
$ \underline{D}(a,\lambda),\overline{D}(a,\lambda)$& ``zeros" of the pressure functions, see \eqref{def-d-D}\\
\hline
$ \underline{d}(a,\lambda),\overline{d}(a,\lambda)$& Hausdorff and packing dimensions of $\n_a$, see \eqref{def-d-a-lambda}\\
\hline
$ D(\lambda),d(\lambda)$& almost sure dimension of $\Sigma_{a,\lambda}$, $\NN_{a,\lambda}$, see Proposition \ref{as-pressure}(3), \eqref{def-dim-dos}\\
	\hline
	$ \rho, \varrho$& the asymptotic constants of $D(\lambda)$ and  $d(\lambda)$, see \eqref{def-rho}, \eqref{def-var-rho}  \\
\hline
$\F_1\supset \F(\lambda)\supset\F_2\supset \F_3$&  $\Gg$-full measure sets, see Proposition \ref{F-proposition},  \eqref{def-F-lambda}, \eqref{def-tilde-F}, Lemma \ref{imp}\\
\hline
$\F_3\supset \F_4\supset\widehat \F(\lambda)\supset \widehat\F$& $\Gg$-full measure sets, see
Lemma \ref{imp}, Propositions  \ref{lip-fiber} and \ref{exact-dim-fix-lambda}, \eqref{def-hat-F} \\
\hline
$\vartheta, \theta$& see \eqref{def-var-theta}, \eqref{def-theta}\\
\hline
$ \mathcal I_p, \mathcal J_p$& two families of sub-intervals of $[-2,2],$ see  \eqref{I-q}, \eqref{J-p}\\
\hline
\end{longtable}

\end{appendices}



\begin{thebibliography}{9999}


\bibitem{BBBRT}
Band R., Beckus S.,  Biber B.,  Raymond L.,  Thomas Y., {\it A review of a work by Raymond: Sturmian Hamiltonians with a large coupling constant -- periodic approximations and gap labels,} arXiv:2409.10920.

\bibitem{BBL} Band R.,  Beckus S.,  Loewy R., {\it The Dry Ten Martini Problem for Sturmian Hamiltonians,} arXiv:2402.16703.


\bibitem{BIST}  Bellissard J.,  Iochum B.,  Scoppola E. and  Testart D.,
\emph{ Spectral properties of one dimensional quasi-crystals},
Commun. Math. Phys. {\bf 125}(1989), 527-543.

\bibitem{Bi} Billingsley P., {\em Probability and measure}. Third edition. Wiley Series in Probability and Mathematical Statistics. A Wiley-Interscience Publication. John Wiley \& Sons, Inc., New York, 1995.

   \bibitem{Bowen}  Bowen R., {\em Equilibrium states and the ergodic theory of Anosov diffeomorphisms.} Second revised edition. With a preface by David Ruelle. Edited by Jean-Ren\'e Chazottes. Lecture Notes in Mathematics, 470. Springer-Verlag, Berlin, 2008.


\bibitem{C} Cantat S., \emph{  Bers and H\'enon, Painlev\'e and Schr\"odinger}, Duke Math. J. 149 (2009), 411-460.

\bibitem{CQ} Cao J.,  Qu Y.-H., \emph{Almost sure multifratal formalism for the density of states of Sturmian Hamiltonian}, in preparation.


\bibitem{CL}  Carmona R. and  Lacroix J.,  \emph{ Spectral theory of random Schr\"odinger operators.} Probability and its Applications,  Birkh\"auser Boston, Inc., Boston, MA, 1990.

\bibitem{Ca}  Casdagli M., \emph{ Symbolic dynamics for the renormalization map of a quasiperiodic Schr\"odinger equation},  Comm. Math. Phys. 107 (1986), no. 2, 295-318.


\bibitem{Da07} Damanik D., {\em Strictly ergodic subshifts and associated operators.} Spectral theory and mathematical physics: a Festschrift in honor of Barry Simon's 60th birthday, 505-538, Proc. Sympos. Pure Math., 76, Part 2, Amer. Math. Soc., Providence, RI, 2007.

\bibitem{Da17}    Damanik D., {\em  Schr\"odinger operators with dynamically defined potentials}. Ergodic Theory Dynam. Systems 37 (2017), no. 6, 1681-1764.

\bibitem{DEG15} Damanik D., Embree M., Gorodetski A., {\em Spectral properties of Schr\"odinger operators arising in the study of quasicrystals}. Mathematics of aperiodic order, 307-370, Progr. Math., 309, Birkh\"auser/Springer, Basel, 2015.

\bibitem{DEGT} Damanik D., Embree M., Gorodetski A., and Tcheremchantsev S., \emph{ the fractal dimension of the spectrum of the
Fibonacci Hamiltonian}, Commun. Math. Phys. 280:2(2008), 499-516.

\bibitem{DG} Damanik D. and  Gorodetski A.,  \emph{ Hyperbolicity of the trace map for the weakly coupled Fibonacci Hamiltonian}, Nonlinearity 22 (2009), 123-143.

\bibitem{DG2} Damanik D. and  Gorodetski A., \emph{ Spectral and quantum dynamical properties of the weakly coupled Fibonacci Hamiltonian}, Commun. Math. Phys. 305 (2011), 221-277.

\bibitem{DG3}Damanik D. and  Gorodetski A.,  \emph{ The density of states measure of the weakly coupled Fibonacci Hamiltonian}, Geom. Funct. Anal. 22 (2012), no. 4, 976-989.

\bibitem{DG4}Damanik D. and  Gorodetski A.,  \emph{H\"older continuity of the integrated density of states for the Fibonacci Hamiltonian},  Comm. Math. Phys. 323 (2013), no. 2, 497-515.

\bibitem{DG15} Damanik D. and Gorodetski A., {\em Almost sure frequency independence of the dimension of the spectrum of Sturmian Hamiltonians}. Comm. Math. Phys. 337 (2015), no. 3, 1241-1253.


\bibitem{DGLQ} Damanik D.,  Gorodetski A.,  Liu Q.-H. and  Qu Y.-H.,
\emph{Transport Exponents of Sturmian Hamiltonians}, Journal of Functional Analysis. 269 (2015), no. 5, 1404-1440.

\bibitem{DGY} Damanik D., Gorodetski A., Yessen W., {\em The Fibonacci Hamiltonian}. Invent. Math. 206 (2016), no. 3, 629-692.

\bibitem{DKL} Damanik D.,  Killip R.,  Lenz D.,
{\em Uniform spectral properties of one-dimensional quasicrystals,III. $\alpha$-continuity},  Commun. Math. Phys.  212,(2000),191-204.

\bibitem{EW2011} Einsiedler M.,  Ward T.,  {\em Ergodic theory with a view towards number theory.} Graduate Texts in Mathematics, 259. Springer-Verlag London, Ltd., London, 2011.



\bibitem{Fal97}  Falconer K., {\em techniques in fractal geometry}, John Wiley\& Sons, 1997.

\bibitem{FLW} Fan S.,  Liu Q.-H., Wen Z.-Y.,
\emph{ Gibbs like measure for spectrum of a class of quasi-crystals},
Ergodic Theory Dynam. Systems, {\bf 31}(2011), 1669-1695.

\bibitem{FS}Feng D.-J., Shu L., {\em Multifractal analysis for disintegrations of Gibbs measures and conditional Birkhoff averages}. Ergodic Theory Dynam. Systems 29 (2009), no. 3, 885-918.

\bibitem{Gi}  Girand A., \emph{Dynamical Green Functions and Discrete Schr\"odinger Operators with Potentials Generated by Primitive Invertible Substitution},
Nonlinearity,  27 (2014) 527-543.

\bibitem{IY} Iommi G., Yayama Y., {\em Almost-additive thermodynamic formalism for countable Markov shifts}. Nonlinearity 25 (2012), no. 1, 165-191.

    \bibitem{IK} Iosifescu M.,  Kraaikamp C., {\em Metrical theory of continued fractions.}
Mathematics and its Applications, 547. Kluwer Academic Publishers, Dordrecht, 2002.

\bibitem{JL}  Jitomirskaya S. and Last Y., \emph{ Power-law subordinacy and singular spectra. II. Line operators}, Commun. Math. Phys. 211 (2000), 643-658.

\bibitem{JZ} Jitomirskaya S., Zhang S.-W., {\em Quantitative continuity of singular continuous spectral measures and arithmetic criteria for quasiperiodic Schr\"odinger operators.} J. Eur. Math. Soc. (JEMS) 24 (2022), no. 5, 1723-1767.

\bibitem{YK}Khinchin Y., {\em Zur metrischen Kettenbrucheorie}, Comp. Math.3(1936),276-285.

\bibitem{KKL}  Killip R.,  Kiselev A.,  Last Y.,  {\it Dynamical upper bounds on wavepacket spreading.}  Amer. J. Math. 125 (2003), no. 5, 1165-1198.

\bibitem{KKT} Kohmoto M.,  Kadanoff L. P.,  Tang C., {\it Localization problem in one dimension: mapping and escape}, {\em Phys. Rev. Lett.} {\bf 50} (1983), 1870-1872.

\bibitem{LS} L\'evy P., {\em Sur le development en fraction continue d'un nombre choisi au hazar}, Comp. Math3(1936),286-303.

\bibitem{LPW07} Liu Q.-H.,  Peyri\`ere J. and  Wen Z.-Y.,
\emph{ Dimension of the spectrum of one-dimensional discrete Schr\"odinger operators with Sturmian potentials}, Comptes Randus Mathematique, {\bf 345:12}(2007), 667--672.

\bibitem{LQW}   Liu Q.-H.,   Qu Y.-H.,   Wen Z.-Y.,  \emph{ The fractal dimensions of the spectrum of Sturm Hamiltonian},  Adv. Math. 257 (2014), 285-336.

\bibitem{LW} Liu Q.-H.,    Wen Z.-Y.,
\emph{ Hausdorff dimension of spectrum of one-dimensional Schr\"odinger operator with Sturmian potentials}, { Potential Analysis} {\bf 20:1}(2004), 33--59.

\bibitem{LW05} Liu Q.-H.,    Wen Z.-Y., \emph{ On dimensions of multitype Moran sets},
Math. Proc. Camb. Phyl. Soc. {\bf 139:3}(2005), 541--553.




\bibitem{Luna}
  Luna A., {\it
On the spectrum of Sturmian Hamiltonians of bounded type in a small coupling regime}, arXiv:2408.01637.

\bibitem{Mat}Mattila P., {\em Geometry of sets and measures in Euclidean spaces},
Cambridge Stud. Adv. Math., 44 Cambridge University Press, Cambridge, 1995.

\bibitem{Mei}  Mei M., \emph{  Spectra of Discrete Schr\"odinger Operators with Primitive Invertible Substitution Potentials}, J. Math. Phys. 55 (2014), no. 8, 082701, 22 pp.

\bibitem{Munger}   Munger P., {\em Frequency dependence of H\"older continuity for quasiperiodic Schr\"odinger operators}. J. Fractal Geom. 6 (2019), no. 1, 53-65.

\bibitem{OPRSS}  Ostlund S.,  Pandit R.,  Rand D.,  Schellnhuber H., Siggia E., \emph{ One-dimensional Schr\"odinger equation with an almost periodic potential}, {  Phys. Rev. Lett.} 50 (1983), 1873-1877.

\bibitem{Pa}Parry W., {\em Entropy and generators in ergodic theory}. W. A. Benjamin, Inc., New York-Amsterdam 1969.

\bibitem{P}  Pollicott M.,  \emph{Analyticity of dimensions for hyperbolic surface diffeomorphisms},  Proc. Amer. Math. Soc. 143 (2015), no. 8, 3465-3474.

\bibitem{Q1} Qu Y.-H., {\em The spectral properties of the strongly coupled Sturm Hamiltonian of eventually constant type}. Ann. Henri Poincar\'e 17 (2016), no. 9, 2475-2511.

\bibitem{Q2}Qu Y.-H., {\em Exact-dimensional property of density of states measure of Sturm Hamiltonian}. Int. Math. Res. Not. IMRN 2018, no. 17, 5417-5454.

\bibitem{R}  Raymond L.,
{\em A constructive gap labelling for the discrete schr\"odinger
operater on a quasiperiodic chain}.(Preprint,1997)

\bibitem{Ro}Rokhlin V. A., {\em On the fundamental ideas of measure theory}. Amer. Math. Soc. Translation 1952, (1952). 1-52.


\bibitem{Sa}Sarig O.,  {\em Lecture Notes on Thermodynamic Formalism for Topological Markov shifts}, Penn. State Uni., available 2009.


\bibitem{Su87}  S\"ut\"o A.,
\emph{ The spectrum of a quasiperiodic Schr\"odinger operator},  Comm. Math. Phys. 111 (1987), no. 3, 409-415.

\bibitem{Su}  S\"ut\"o A.,
\emph{  Singular continuous spectrum on a Cantor set of zero Lebesgue measure for the Fibonacci Hamiltonian},  J. Stat. Phys. 56 (1989), 525-531.

\bibitem{T}  Toda M.,
{\em Theory of Nonlinear Lattices}, Number 20 in Solid-State Sciences,
Springer-Verlag, second enlarged edition, 1989. Chap. 4.

\bibitem{Walters} Walters P., {\em An introduction to ergodic theory.} Graduate Texts in Mathematics, 79. Springer-Verlag, New York-Berlin, 1982.

\bibitem{Young} Young L.-S.,
{\em Dimension, entropy and Lyapunov exponents.}
Ergodic Theory Dynam. Systems 2 (1982), no. 1, 109-124.

\end{thebibliography}
\end{document}